\let\mathcal\mathscr
\numberwithin{equation}{section}
\renewcommand{\d}{\mathrm{d}}
\renewcommand{\phi}{\varphi}
\newcommand{\PP}{\mathbb{P}}
\renewcommand{\AA}{\mathbb{A}}
\newcommand{\ZZ}{\mathbb{Z}}
\newcommand{\NN}{\mathbb{N}}
\newcommand{\QQ}{\mathbb{Q}}
\newcommand{\RR}{\mathbb{R}}
\newcommand{\XX}{\boldsymbol{X}}
\newcommand{\YY}{\boldsymbol{Y}}
\newcommand{\UU}{\boldsymbol{U}}
\newtheorem{thm}{Theorem}[section] 
\newtheorem*{thm*}{Theorem}
\newtheorem{lemma}{Lemma}[section] 
\renewcommand{\leq}{\leqslant}
\renewcommand{\le}{\leqslant}
\renewcommand{\geq}{\geqslant}
\renewcommand{\ge}{\geqslant}
\newcommand{\al}{\alpha}
\newcommand{\e}{\ensuremath{\mathrm e}}
\newcommand{\bet}{\boldsymbol{\eta}}
\newcommand{\bal}{\boldsymbol{\alpha}}
\newcommand{\x}{\mathbf{x}}
\newcommand{\ve}{\varepsilon}
\DeclareMathOperator{\rank}{rank}
\DeclareMathOperator{\rad}{rad}
\DeclareMathOperator{\Pic}{Pic}
\DeclareMathOperator{\Area}{meas}
\DeclareMathOperator{\meas}{meas}
\DeclareMathOperator{\sign}{sign}
\theoremstyle{definition}
\newtheorem*{ack}{Acknowledgements}
\newcommand{\dif}{\mathrm{d}}
\begin{document}

\title[Inhomogeneous cubic congruences]{Inhomogeneous cubic
  congruences and 
rational points on del Pezzo surfaces}

\author{S.\ Baier}
\address{School of Mathematics\\
University of Bristol\\ Bristol\\ BS8 1TW\\ United Kingdom}
\email{stephan.baier@bristol.ac.uk}
\author{T.D.\ Browning}
\address{School of Mathematics\\
University of Bristol\\ Bristol\\ BS8 1TW\\ United Kingdom}
\email{t.d.browning@bristol.ac.uk}

\date{\today}

\begin{abstract}
For given non-zero integers $a,b,q$ we investigate
the density of solutions $(x,y)\in \ZZ^2$ to the binary cubic congruence 
$
ax^2+by^3\equiv 0 \bmod{q},
$
and use it to establish the Manin conjecture for a singular del
Pezzo surface of degree $2$ defined over $\QQ$.
\end{abstract}

\subjclass{11D45 (11G05, 14G05)}

\maketitle
\tableofcontents

\section{Introduction}\label{s:intro}

The quantitative arithmetic of low degree del Pezzo surfaces has
received a great deal of attention in recent years. The aim of the
present investigation is to provide a new tool in the analysis of such
questions and to show how it can be used to 
estimate the number of $\QQ$-rational 
points of bounded height on a del Pezzo surface of degree $2$
defined over $\QQ$.  Such surfaces arise as subvarieties $X\subset \PP(2,1,1,1)$
of weighted projective space and are given by equations
of the shape
$$
x_0^2+F(x_1,x_2,x_3)=0,
$$ 
where $F\in \QQ[x_1,x_2,x_3]$ is a quartic form. 
In the classification of del Pezzo surfaces it is those of degree $1$ and
$2$ whose arithmetic remains the most elusive.  

If $X(\QQ)\neq \emptyset$ and 
$H:X(\QQ)\rightarrow \RR_{\geq 0}$ denotes the 
anticanonical height function 
then it is natural to study the counting function
$$
N_{U}(B):=\#\{x \in U(\QQ): H(x) \leq B\},
$$
as $B\rightarrow \infty$, for a Zariski open subset $U\subseteq X$
obtained by deleting the accumulating subvarieties.
These are the exceptional divisors arising from the bitangents of the 
plane quartic curve $F(x_1,x_2,x_3)=0$. There are $28$ of these when
$F$ is non-singular, producing $56$ exceptional curves on $X$.
A well-known conjecture of Manin 
\cite{f-m-t} predicts the existence of constants $c_{X}>0$ and
$\rho_X\in \NN$ such that
\begin{equation}\label{manin}
N_{U}(B) = c_{X} B (\log B)^{\rho_X-1}\big(1+o(1)\big),
\end{equation}
as $B \rightarrow \infty$. Moreover, if 
$\widetilde{X}$ denotes the minimal desingularisation of $X$, with
$\widetilde{X}=X$ if it is non-singular, then 
it is expected that $\rho_X=\rank  (\Pic \widetilde{X})$, 
where $\Pic \widetilde{X}$ is the Picard group of $\widetilde{X}$.
There is a prediction of Peyre \cite{p} concerning the value of
the constant $c_{X}$. These refined conjectures have received a great deal of
attention in the context of del Pezzo surfaces of degree at least $3$,  
an account of which can be found in Browning's treatise
\cite{ferran}. Our success in degree $2$ has been rather
limited however.  

When $X$ is non-singular it follows from work of Broberg
\cite{broberg} that $N_U(B)=O_{\ve, X}(B^{9/4+\ve})$ for any $\ve>0$. 
This argument uses Siegel's lemma to cover the rational points of
height at most $B$ on $X$ with $O(B^{3/2})$ plane sections $H$ defined
over $\QQ$. For each
of these one is left with counting points of bounded height on the
curves $C_H\subset \PP(2,1,1)$ given by 
$$
y_0^2+G(y_1,y_2)=0,
$$
with $G\in \QQ[y_1,y_2]$ a binary form of degree $4$. This one needs
to do uniformly with respect to the coefficients of $G$ and is achieved
via a modification of Heath-Brown's
determinant method \cite{annal}. 
For generic $H$ the curve $C_H$ has genus $1$ and so one expects it
to have very few rational points. Nevertheless it is difficult to demonstrate this
with the requisite degree of uniformity.

In this paper we will consider split singular $X$, by which
we mean that the quartic form $F$ is singular and the 
singularities and exceptional curves of $X$ are all defined over
$\QQ$. According to the classification of Alexeev and Nikulin
\cite{a-n} the minimal desingularisation
$\widetilde{X}$ can then be realised as the blow-up of $\PP^2$  along $7$
$\QQ$-points in ``almost general position''.  As such one verifies
that $\Pic \widetilde{X}\cong \ZZ^8$, so that 
$\rho_X=8$ in the Manin conjecture. 
We will impose the condition that $F$ is reducible and we will assume 
that $X\subset
\mathbb{P}(2,1,1,1 )$ takes the shape 
\begin{equation}
  \label{eq:X}
  x_0^2+x_1x_2^3+x_1^3x_3=0.
\end{equation}
This defines a del Pezzo surface 
of degree $2$ 
with a unique exceptional curve $x_0=x_1=0$ and a unique singularity $[0,0,0,1]$. 
According to the classification of Arnol'd \cite{arnold} a hypersurface has 
a simple $\mathbf{E}_7$ singularity if it can be put in the local
normal form
$
z_1^3+z_1z_2^3+z_3^2+\cdots + z_k^2,
$
where $k-1$ is the dimension of the hypersurface. 
Dividing through by $x_2^4$ and making a
change of variables one is directly led to this normal form,
so that $X$ has a simple $\mathbf{E}_7$ singularity. 
It follows from work of Ye \cite[Lemma 4.6]{Ye} that there is only one such
surface up to isomorphism. We let $U$ be the Zariski open subset
formed by deleting the curve $x_0=x_1=0$ from $X$. 

When one returns to the above argument involving plane sections, the
situation is more favourable since  the resulting curves
$C_H$ generically define elliptic curves with rational
$2$-torsion. In this way one can show that
$N_U(B)=O_{\ve}(B^{3/2+\ve})$ for any $\ve>0$.  Our goal is to show
how the full Manin conjecture \eqref{manin} can be 
established for this particular surface using analytic
number theory. 
We will establish the following result.

\begin{thm}\label{main}
We have
$$
N_{U}(B)=c B(\log B)^7(1+o(1)),
$$
as $B\rightarrow \infty$,  
where $c>0$ is the constant predicted by Peyre.
\end{thm}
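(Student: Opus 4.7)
The idea is to encode $N_U(B)$ as a sum involving the cubic congruence $ax^2+by^3\equiv 0 \bmod q$ and then feed in the asymptotic established in the earlier part of the paper. Since $U$ omits the line $x_0=x_1=0$, every point of $U(\QQ)$ has a primitive integer representative with $x_1\neq 0$, and solving \eqref{eq:X} for $x_3$ gives
$$
x_3=-\frac{x_0^2+x_1x_2^3}{x_1^3},
$$
so $x_3\in\ZZ$ iff $x_0^2+x_1x_2^3\equiv 0\bmod x_1^3$. The anticanonical height on $\PP(2,1,1,1)$ amounts to $|x_1|,|x_2|\leq B$ together with $|x_0^2+x_1x_2^3|\leq B|x_1|^3$, so modulo a coprimality condition $N_U(B)$ becomes the count of integer triples $(x_0,x_1,x_2)$ in this region satisfying the congruence.

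After M\"obius inversion to untangle the primitivity constraints and the common factors between $x_1$ and $x_2$, the inner sum over $(x_0,x_2)$ with $x_1=q$ fixed is of precisely the shape addressed by the main estimate for binary cubic congruences. Inserting that asymptotic and summing the result against the induced weights in $q$ produces a main term of order $B$, nested with several divisor-type summations coming from the auxiliary parameters. Tracking the number of such summations, the answer takes the shape $cB(\log B)^7$, the exponent $7=\rho_X-1$ matching the eight effective parameters one expects from a universal torsor description of $\widetilde X$.

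The final step is to reconcile the constant $c$ with Peyre's prediction. This is a formal but lengthy calculation in which one reorganises the local factors emerging from the singular series for the cubic congruence into local densities $\tau_p$ on $\widetilde X$, while the archimedean volume arising from the height box is identified with $\omega_\infty$. Combined with the computed $\alpha(\widetilde X)$ and $\beta(\widetilde X)$, this recovers Peyre's formula with $\rho_X=\rank\Pic\widetilde X=8$.

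The principal obstacle is the uniformity of the asymptotic for the cubic congruence itself, particularly when the ranges of the variables are short compared with the modulus; in such regimes the usual completion and character sum techniques do not directly apply and a more delicate analytic treatment is required. This is the technical heart of the earlier sections of the paper. Once that input is in hand with a power-saving error term, the deduction of Theorem \ref{main} is structurally transparent but demanding in book-keeping, since one must control the error after several layers of summation without spoiling the predicted exponent $(\log B)^7$.
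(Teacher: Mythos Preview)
Your broad outline is right in spirit but misses a decisive structural point. The congruence you write down, $x_0^2+x_1x_2^3\equiv 0\bmod x_1^3$, has $b=x_1$ sharing every prime with the modulus $q=x_1^3$, so the hypothesis $(ab,q)=1$ of Theorem~\ref{t:final} is violated outright. A routine M\"obius step over $\gcd(x_1,x_2)$ does not repair this: the obstruction is the factor $x_1$ in front of $x_2^3$, and stripping it requires tracking common factors of $x_1$ with both $x_0$ and $x_2$ simultaneously through several iterations. Carried out systematically this forces eight auxiliary variables $\eta_1,\dots,\eta_8$ and recovers precisely the universal torsor equation \eqref{count}, which is how the paper actually proceeds (Lemma~\ref{lem:torsor}). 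Only then does the congruence take the shape $\eta_7\alpha_2^2+\eta_1^2\eta_2\alpha_1^3\equiv 0\bmod \eta_4\eta_5^2\eta_6^3\eta_8^4$ with genuinely coprime data, so that Theorem~\ref{t:final} applies.

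There is a second, subtler gap. The modulus $q=\eta_4\eta_5^2\eta_6^3\eta_8^4$ produced by the torsor is power-full, and this is not cosmetic: the error terms $F_3,F_4$ in Theorem~\ref{t:final} carry the factors $s(q)^{1/2}s_1(q)^{1/2}$ from \eqref{eq:s-s1}, which for this $q$ are controlled by $\eta_4$ and $\eta_6$ alone rather than by the full modulus. With a generic square-free modulus these errors would swamp the main term when summed over the height region. Finally, your expectation of a ``power-saving error term'' is too optimistic: Theorem~\ref{t:final} only saves $(\log\log B)^{-1/6}$ relative to the main term, and the secondary errors $\mathcal{F}_i(B)$ in \S 8 are bounded by $B(\log B)^{6+\varepsilon}$, a single logarithm below the target $B(\log B)^7$. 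The book-keeping is therefore genuinely tight and leans on the specific exponents in the torsor factorisation; it is not a routine divisor-sum cleanup.
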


The investigations of Derenthal and Loughran \cite{cox, dl}
show that $X$ is neither toric nor an equivariant compactification of
$\mathbb{G}_a^2$. Thus Theorem \ref{main} is not a special case of
\cite{toric} or \cite{CLT}. 
The proof of our theorem relies on a passage to the 
universal torsor above the minimal desingularisation $\widetilde{X}$ of $X$,
which in this setting is a subset of the affine hypersurface 
$T\subset \AA^{11}$, given by the equation
\begin{equation} \label{count}
\eta_1^2\eta_2\alpha_1^3+\eta_7\alpha_2^2+
\eta_4\eta_5^2\eta_6^3\eta_8^4\alpha_3=0.
\end{equation}
The idea is to establish a bijection between
$U(\QQ)$ and a suitable subset of $T(\ZZ)$.
This step underpins many proofs of the Manin conjecture, 
such as that found in work of la Bret\`eche, Browning and Derenthal
\cite{e6} 
dealing with the split cubic surface
of  singularity type $\mathbf{E}_6$.  Here, as there, it 
is useful to view the torsor equation as a congruence 
$\eta_1^2\eta_2\alpha_1^3+\eta_7\alpha_2^2\equiv 0 \bmod{
\eta_4\eta_5^2\eta_6^3\eta_8^4}$, with $\alpha_1,\alpha_2$ being thought of 
as the main variables.
Regrettably one finds  that the arguments developed
in \cite{e6} no longer bear fruit
in the present setting.  Rather one is forced to consider in
general terms the counting function
\begin{equation}
  \label{eq:Mq}
M(B,\XX,\YY;a,b;q):=
\#\left\{
(x,y)\in\ZZ^2: 
\begin{array}{l}
0<x\leq \XX, ~|y|\leq \YY, ~(xy,q)=1, \\
ax^2+by^3\equiv 0 \bmod{q}, ~|ax^2+by^3|\leq qB
\end{array}
\right\},
\end{equation}
for $B\geq 2$, $\boldsymbol{X},\boldsymbol{Y}\geq 1$ and non-zero integers
$a,b,q$ such that $q>0$.  
In our case of interest we have 
$a=\eta_7$, $x=\alpha_2$, $b=\eta_1^2\eta_2$, $y=\alpha_1$ and
$q=\eta_4\eta_5^2\eta_6^3\eta_8^4$.
One seeks an asymptotic formula for 
$M(B,\boldsymbol{X},\boldsymbol{Y};a,b;q)$ 
which is completely uniform in the relevant parameters. This will 
ultimately be achieved in \S \ref{s:final}, where it is recorded as 
Theorem~\ref{t:final}.   A substantially easier problem is to produce a
good upper bound for the counting function in which the
condition 
$|ax^2+by^3|\leq qB$ is dropped. Let us denote 
by $M(\XX,\YY;a,b;q)$ this counting function. During the course of our
argument we will be led to the following result in \S
\ref{s:proof-upper}, 
which we record here for ease of use.

\begin{thm}\label{cor:0}
Let $\ve>0$, let $\XX,\YY\geq 1$ 
and assume that $(ab,q)=1$. Then we have 
\begin{align*}
M(\XX,\YY;a,b;q)
\ll~&
(q\XX\YY)^\ve
 \Bigg\{\frac{\XX \YY}{q}
+ 
\frac{a^{1/2}b^{-1/4}\XX+
a^{1/4} \XX^{1/2}\YY^{3/4}}{q^{1/2}}
\\
& +
(s(q) s_1(q) q)^{1/2} \left(
  \frac{b^{1/2}\YY}{\XX} +
\frac{a^{1/2}}{\YY^{1/2}}\right)\Bigg\},
\end{align*}
where
\begin{equation}\label{eq:s-s1}
s(q):=\prod_{p\| q}p, \quad 
s_1(q):=\prod_{\substack{p^\nu \| q \\ 2\nmid \nu}}p.
\end{equation}
\end{thm}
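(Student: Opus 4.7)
The plan is to detect the congruence $ax^2+by^3\equiv 0\bmod q$ via the orthogonality of additive characters modulo $q$, thereby recasting the counting problem as a sum of products of one-variable exponential sums. Writing $\mathbf{1}_{q\mid n} = q^{-1}\sum_{h\bmod q} e(hn/q)$, one obtains the exact identity
\[
M(\XX,\YY;a,b;q) \;=\; \frac{1}{q}\sum_{h\bmod q} S_a(h)\,T_b(h),
\]
where $S_a(h) := \sum_{0<x\leq \XX,\,(x,q)=1} e(hax^2/q)$ is a quadratic Gauss-type sum and $T_b(h) := \sum_{|y|\leq \YY,\,(y,q)=1} e(hby^3/q)$ is a cubic Weyl-type sum. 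The term $h=0$ contributes the product of cardinalities $S_a(0)T_b(0)/q\ll \XX\YY/q$, which supplies the first term of the stated bound.

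The main work is then to estimate the off-diagonal contribution from $h\neq 0$. Grouping frequencies by $d=(h,q)$ and writing $q_1=q/d$, $h=dh'$ with $(h',q_1)=1$, the sum $S_a(h)$ becomes an incomplete quadratic Gauss sum with effective modulus $q_1$, while $T_b(h)$ becomes a cubic exponential sum with the same effective modulus. These two families are estimated by complementary means. For $S_a(h)$ one uses completion against the reciprocal Fourier kernel, reducing the problem to the classical evaluation of complete quadratic Gauss sums modulo $q_1$; standard bounds for such sums, which become weaker on the non-squarefree and even prime-power parts of $q_1$, produce the factor $(s(q_1)s_1(q_1)q_1)^{1/2}$ responsible for the third term of the stated bound. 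For $T_b(h)$ one applies a combination of Weyl's inequality for cubic exponential sums in the incomplete regime $\YY<q_1$, and completion followed by a Weil-type bound for complete cubic character sums in the regime $\YY\geq q_1$; made explicit after the rescaling $y\mapsto b^{1/3}y$ inside the cubic phase, this is what produces the two contributions $a^{1/2}b^{-1/4}\XX/q^{1/2}$ and $a^{1/4}\XX^{1/2}\YY^{3/4}/q^{1/2}$ after pairing with suitable bounds for $S_a(h)$.

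Putting these estimates together and organising the $h$-summation over the divisors $d\mid q$, together with a Cauchy--Schwarz-type step (exploiting Parseval's identity $\sum_h|S_a(h)|^2\ll q\XX$ or its analogue for $T_b$) to avoid a trivial loss of order $q$, will deliver the three error terms in Theorem~\ref{cor:0}. The main technical obstacle is uniformity in the five parameters $a,b,q,\XX,\YY$, and in particular the dependence on the prime-power factorisation of $q$: the factor $(s(q)s_1(q)q)^{1/2}$ precisely captures the loss incurred when complete quadratic Gauss sums are evaluated over prime powers $p^\nu\|q$ with $\nu$ odd, where the bound $p^{(\nu+1)/2}$ replaces the naive ``square-root'' bound $p^{\nu/2}$. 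A subsidiary difficulty is to choose correctly between completion and Weyl's inequality for $T_b(h)$ according to the relative sizes of $\YY$ and $q_1$; this dichotomy is what produces the sum of two incomparable contributions in the middle error term.
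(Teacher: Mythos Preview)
Your decomposition $M=q^{-1}\sum_{h\bmod q}S_a(h)T_b(h)$ is correct, but the plan that follows is essentially the ``direct'' approach that the paper explicitly identifies as insufficient for the stated bound. As the introduction notes, estimating the relevant exponential sums directly recovers only a Pierce-type estimate; this coincides with Theorem~\ref{cor:0} when $q$ is square-free (where $s(q)=s_1(q)=q$) but is strictly weaker for power-full $q$. The whole point of the theorem is the saving encoded in the factor $(s(q)s_1(q)q)^{1/2}$ as opposed to $q^{3/2}$, and your proposal contains no mechanism capable of producing that saving. In particular, your attribution of this factor to ``quadratic Gauss sums being weaker on odd prime powers'' is backwards: the factor is a \emph{gain} for power-full $q$, not a loss, and it does not come from one-dimensional Gauss-sum bounds at all.

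The paper's route is genuinely different. After Poisson summation in both variables one meets the two-variable sum $\mathcal{E}(m,n;q)$, which via the parametrisation $(c,d)=(\overline{a^2b}z^3,-\overline{ab}z^2)$ collapses to a single-variable sum $E(ab^2m,abn;q)=\sum_{z}^{*}e((cz^3-dz^2)/q)$. The key step is an \emph{explicit evaluation} of $E(c,d;p^t)$ for $t\ge 2$ (Lemma~\ref{explicitevaluation} and Lemma~\ref{composi}), which produces a single exponential $e(-A\overline{c'}^2d'^3/\tilde q)$ times a Jacobi-symbol twist. This term is then ``flipped'' via the identity $e(\overline{l}/k)e(\overline{k}/l)=e(1/(kl))$, drastically reducing the denominator, after which a second Poisson summation and estimation of the resulting cubic exponential sums/integrals completes the argument. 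The computation of $S_q$ at the end of \S\ref{s:proof-upper} is where $(s(q)s_1(q))^{1/2}$ actually emerges, and it relies entirely on this explicit evaluation for higher prime powers. None of this structure survives once you factor the problem into separate quadratic and cubic one-variable sums, so your plan as it stands cannot reach the theorem for non-square-free $q$.
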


The implied constant in this estimate is allowed to depend on the
choice of small parameter $\ve>0$, a convention that we adhere to 
in all of our estimates. Our estimate 
for $M(\XX,\YY;a,b;q)$ is  sharpest when $q$ has small square-free
kernel or when $\YY$ is small compared with $\XX$.  It is the cornerstone
of our entire investigation. It should be noted that when $q$ is
square-free 
and $a=b=1$ Theorem~\ref{cor:0} does not give anything sharper
than what is available through  the work of Pierce
\cite{pierce}.

We proceed to give a simplified description of the method
behind Theorem \ref{cor:0}.  The 
$x,y$ appearing in \eqref{eq:Mq} are constrained to lie in a certain
region and our first task will be to 
cover this region by small boxes and to 
approximate their characteristic
functions by smooth weights. This facilitates an application of the Poisson
summation formula, which we invoke after breaking the sums over $x$ and $y$
into residue classes modulo $q$. This transformation leads to
expressions involving exponential sums of the form
$$
E(m,n;q)=
\sum\limits_{\substack{x=1\\ (x,q)=1}}^q
\e\left(\frac{mx^3-nx^2}{q}\right),
$$
for $m,n\in \ZZ$, where $e(\cdot):=\exp(2\pi i \cdot)$.
If one were now to estimate these sums directly 
one would retrieve an estimate of the sort obtained by 
Pierce \cite{pierce}.   In the setting of Theorem \ref{main}, however, 
this 
would only yield a  final upper bound of the form $N_U(B)=O_\ve(B^{3/2+\varepsilon})$.
A key point in the method is to evaluate the sums 
$E(m,n;q)$ explicitly for power-full
moduli $q$, a situation that explicitly arises in the application to Theorem
\ref{main} since then $q=\eta_4\eta_5^2\eta_6^3\eta_8^4$ contains high
powers. This is carried out in \S \ref{s:expsum}, where an easy 
multiplicativity property renders it sufficient to study 
$E(m,n;p^t)$ for primes $p$ and suitable $t\geq 2$. 
It turns out that considerable labour is required to deal with the
primes $p=2$ and $p=3$ and the reader may be inclined to take the results of
this section on faith at a first reading. 

After explicitly evaluating the exponential sums $E(m,n;q)$ at power-full moduli 
we encounter
terms of the shape
$$
\e\left(\frac{\overline{m}^2n^3}{q}\right),
$$
for $m,n\in \ZZ$ such that $(m,q)=1$ and where $\overline{m}$ is the
multiplicative inverse of $m$ modulo $q$. 
We need to sum these up non-trivially. Our second key innovation is to flip
the numerator and denominator, 
using the familiar identity 
\begin{equation} \label{genflip}
\e\left(\frac{\overline{l}}{k}\right) \e\left(\frac{\overline{k}}{l}\right)= \e\left(\frac{1}{kl}\right),
\end{equation}
for any non-zero integers $k,l,\overline{k},\overline{l}$ such that 
$k\overline{k}+l\overline{l}=1$.
This has the desired effect of reducing the size of the denominator
drastically. Next, we break up the summation over $n$ into residue
classes modulo the new denominator and use Poisson summation in $n$
again. This time  we encounter new cubic exponential sums and cubic
exponential integrals which we need to estimate non-trivially. 
This leads to
an asymptotic estimate for the number of solutions of the congruence
$ax^2+by^3\equiv 0 \bmod{q}$ in small boxes. 
The final step is to sum up all these
contributions. In fact this summation is also  carried out
non-trivially,  with parts of the  averaging process 
replaced by an integration, in order to take advantage of 
extra cancellations.  While this leads to substantial extra work it is 
nonetheless essential for obtaining the asymptotic formula in Theorem
\ref{main}.

Aside from its intrinsic utility in the proof of Theorem~\ref{main} we
can use Theorem~\ref{cor:0} and its refinement Theorem \ref{t:final} 
to tackle other questions in Diophantine
geometry.   Firstly, an inspection of the various universal torsors
calculated by Derenthal \cite{cox}
arising  in the theory of split singular del Pezzo surfaces, shows
that the underlying counting function precisely 
matches \eqref{eq:Mq} whenever its singularity type is maximal. Although we will not present details
here, it is possible to provide independent proofs of the Manin
conjecture for such cases, albeit 
with weaker error terms than are already available. This
includes  the $\mathbf{E}_6$ cubic surface 
considered by la Bret\`eche, Browning and Derenthal \cite{e6}, 
the $\mathbf{D}_5$ degree $4$ del Pezzo surface considered by la Bret\`eche and
Browning \cite{dp4-d5} and the  $\mathbf{A}_4$ degree
$5$ del Pezzo surface. The latter two surfaces
are equivariant compactifications of $\mathbb{G}_a^2$ by 
\cite{dl} and so are covered by the investigation  of 
Chambert-Loir 
and Tschinkel \cite{CLT}.

A second and rather different application of Theorem \ref{cor:0} lies
in the theory of elliptic curves over $\QQ$. Such curves 
may be brought into
Weierstrass form
$$
E_{A,B}: \quad y^2=x^3+Ax+B,
$$
for $A,B\in \ZZ$ with non-zero discriminant
$-16(4A^3+27B^2)=-16\Delta_{A,B}$, say. 
It is presently unknown whether there are infinitely many $E_{A,B}$
for which $\Delta_{A,B}$ is prime. An easier questions concerns the
square-freeness of $\Delta_{A,B}$. 
Let $\max\{|A|^{1/4}, |B|^{1/6}\}$ be the exponential height of
$E_{A,B}$ and let $\mu$ be the M\"obius function. 
We maintain  the conventions $\mu(0)=0$ and
$\mu(-n)=\mu(n)$. A measure of the density of
elliptic curves with square-free discriminant  is achieved by studying
the quantity
$$
S(X):= \sum_{\substack{(A,B)\in \ZZ^2\\ 
|A|\leq X^4, |B|\leq X^6}} \mu^2(\Delta_{A,B}).
$$
We will use Theorem~\ref{cor:0} to establish
the following result in \S \ref{s:ec}.

\begin{thm}\label{t:ec}
For $q\in \NN$ let
$\varrho(q):= \#\{\alpha, \beta \bmod{q}: \Delta_{\alpha,\beta}\equiv 0\bmod{q}\}$. 
Then for any $\ve>0$ we have 
$$
S(X)=4X^{10}\prod_{p} \left( 1-\frac{\varrho(p^{2})}{p^{4}} \right) +O(X^{7+\ve}).
$$
\end{thm}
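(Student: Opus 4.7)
The plan is to detect squarefreeness via the classical identity $\mu^2(n) = \sum_{d^2 \mid n}\mu(d)$ for $n\ne 0$. After isolating the $O(X^2)$ pairs $(A,B)$ with $\Delta_{A,B}=0$ (those parameterised by $A=-3t^2$, $B=2t^3$ for $t\in\ZZ$), interchanging summation yields
$$
S(X) = \sum_{d\ge 1}\mu(d) N(X;d^2) + O(X^2),
$$
where $N(X;q) := \#\{(A,B)\in\ZZ^2 : |A|\le X^4,\, |B|\le X^6,\, \Delta_{A,B}\equiv 0\pmod q\}$; only squarefree $d$ contribute. I would split the $d$-sum at a parameter $D$, to be optimised at the end.

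For $d \le D$, a standard lattice point count, partitioning the rectangle $[-X^4,X^4]\times[-X^6,X^6]$ into fundamental domains of side $d^2$, produces
$$
N(X;d^2) = \frac{4X^{10}\varrho(d^2)}{d^4} + O\!\left(\frac{X^6\varrho(d^2)}{d^2}\right).
$$
Since $\varrho$ is multiplicative with $\varrho(p^2)\ll p^2$, we have $\varrho(d^2)\ll d^{2+\ve}$, so the error is $O(X^{6+\ve})$ per $d$. Summing $\mu(d)$ times the main term over $d\le D$ and completing the series (the tail is controlled by $\varrho(d^2)/d^4 \ll d^{-2+\ve}$) produces the Euler product
$$
4 X^{10} \prod_p\!\left(1-\frac{\varrho(p^2)}{p^4}\right) + O\!\left(\frac{X^{10+\ve}}{D}\right),
$$
together with a lattice remainder of size $O(DX^{6+\ve})$.

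For $d > D$, I would invoke Theorem~\ref{cor:0}. The congruence $27B^2 + 4A^3 \equiv 0 \pmod{d^2}$ matches the template $ax^2+by^3\equiv 0\pmod q$ with $a=27$, $b=4$, $x=B$, $y=A$, $\XX=X^6$, $\YY=X^4$, $q=d^2$. Squarefreeness of $d$ forces $s(d^2)=s_1(d^2)=1$, whence Theorem~\ref{cor:0} yields
$$
N(X;d^2) \ll X^{\ve}\!\left(\frac{X^{10}}{d^2} + \frac{X^6}{d} + \frac{d}{X^2}\right),
$$
after disposing of the coprimality hypothesis $(AB,d^2)=1$ by stratifying on common factors: for any prime $p\mid d$ with $p\nmid 6$, the conditions $p^2\mid\Delta$ and $p\mid A$ force $p\mid B$, so factoring $p$ out of both coordinates reduces the count to a smaller instance of the same problem.

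The main obstacle is the third error term $d/X^2$: summed naively to the trivial bound $d\ll X^6$ coming from $d^2\le|\Delta|\le O(X^{12})$, it already contributes $O(X^{10})$. To overcome this I would introduce a secondary threshold $D'$ and, for $d>D'$, exploit that $d^2\mid\Delta$ combined with $|\Delta|\le O(X^{12})$ restricts $\Delta$ to one of $O(X^{12}/d^2)$ admissible values $v$; for each $v\ne 0$ the Mordell curve $4A^3+27B^2=v$ has its integer points in the box bounded uniformly (for instance via Heath-Brown's determinant method). A judicious choice of $D$ and $D'$ then balances all error contributions against the target $O(X^{7+\ve})$, and the Euler product identifies the leading constant as predicted by the statement.
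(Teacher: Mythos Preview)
Your three-range skeleton (small $d$ by lattice count, middle $d$ by Theorem~\ref{cor:0}, large $d$ by passing to an equation) is exactly the paper's strategy, and your reduction to coprime variables before applying Theorem~\ref{cor:0} is also right. However, two steps as written do not close to $O(X^{7+\ve})$.

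\textbf{Small range.} Your ``standard lattice point count'' gives an error $O(X^{6}\varrho(d^{2})/d^{2})\ll X^{6+\ve}$ per $d$, hence $O(DX^{6+\ve})$ overall. But the completion of the Euler product and the $X^{10}/d^{2}$ term from Theorem~\ref{cor:0} both force $D\geq X^{3}$, at which point $DX^{6+\ve}\geq X^{9+\ve}$. The paper sharpens this by summing over $B$ first for each fixed $A$, obtaining a factor $\gamma_{k^{2}}(-4A^{3})$ (the number of square roots modulo $k^{2}$), and then evaluating $\sum_{|A|\leq X^{4}}\gamma_{k^{2}}(-4A^{3})$ via an elementary exponential-sum argument to get error $O(k^{1+\ve})$ rather than $O(\varrho(k^{2}))$. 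This yields a small-range error $O(X^{4}\xi_{1}^{1+\ve}+X^{6}\xi_{1}^{\ve})$, compatible with $\xi_{1}=X^{3}$.

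\textbf{Large range.} Bounding integer points on each Mordell curve $4A^{3}+27B^{2}=v$ ``uniformly via the determinant method'' only gives $O(X^{2+\ve})$ points per curve (Bombieri--Pila for degree $3$), and no uniform $O(|v|^{\ve})$ bound is available. Summing over the $O(X^{12}/d^{2})$ values of $v$ and then over $d>D'$ yields $O(X^{14+\ve}/D')$, which would need $D'\geq X^{7}$ --- impossible since $d\ll X^{6}$. The paper instead fixes $A$ and $m$ (writing $\Delta=k^{2}m$) and views $mk^{2}-27B^{2}=4A^{3}$ as a representation of $4A^{3}$ by an indefinite binary quadratic form in $(k,B)$; Estermann's classical argument then gives $O_{\ve}((|Am|)^{\ve})$ solutions, producing $S_{3}(X)\ll X^{16+\ve}/\xi_{2}^{2}$ and permitting $\xi_{2}=X^{9/2}$.
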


For comparison Wong \cite[Proposition 6]{wong} has shown a similar
asymptotic formula but with only a logarithmic saving in the error
term. In fact, as we shall see in \S \ref{s:ec}, 
an  adaptation of an argument due to
Estermann \cite{estermann} would permit a power saving but only leads
to an error  term of order $X^{8+\ve}$.  
Through M\"obius inversion the problem is 
to count solutions to the congruence 
$\Delta_{A,B}=4A^3+27B^2\equiv 0\bmod{k^2}$ for square-free integers $k$. 
Estermann's approach helps us to deal with the
contribution from both small and large $k$. Theorem~\ref{cor:0} is
the key ingredient in the treatment of medium $k$.

\begin{ack}
Some of this work was done while the authors were visiting the 
{\em Institute for Advanced Study} in Princeton, 
the hospitality and financial
support of which is gratefully acknowledged. 
While working on this paper the  authors were 
supported by EPSRC grant number \texttt{EP/E053262/1}. The authors are
grateful to Ulrich Derenthal for drawing their attention to the
$\mathbf{E}_7$ del Pezzo surface of degree $2$ and  to both Pierre Le Boudec 
and the referee for useful comments on an earlier version. 
\end{ack}

\section{Elliptic curves with square-free discriminant}\label{s:ec}

In this section we show how Theorem~\ref{t:ec} follows from Theorem
\ref{cor:0}.  Using the M\"obius function to detect the 
square-freeness condition we may write
$$
S(X)= \sum_{k=1}^\infty\mu(k)
\sum_{\substack{(A,B)\in \mathcal{A}(X)\\ k^2\mid 4A^3+27B^2}}1,
$$
where $\mathcal{A}(X):=\{
(A,B)\in \ZZ^2: |A|\leq X^4, |B|\leq X^6\} $.
We will estimate this inner sum differently according to the size of
$k$. For parameters $\xi_1<\xi_2$ let us write
$S_i(X)$ for the overall contribution to $S(X)$ from $k$
in the interval $I_i$, with
$$
I_1=(0,\xi_1]\cap\NN, \quad 
I_2=(\xi_1, \xi_2]\cap \NN, \quad 
I_3=(\xi_2,\infty)\cap \NN.
$$
Finally for $q\in \NN$ we recall the definition
of $\varrho(q)$ from Theorem~\ref{t:ec}. 
This is a multiplicative function of $q$ and it is easy to see
that 
$\varrho(p^2)=O(p^{2})$ for any prime $p$.   
Hence we have $\varrho(k^2)=O(k^{2+\ve})$ for any
square-free $k\in \NN$. 

Beginning with the contribution from small $k$, the idea is to break the sum over
$A,B$ into congruence classes modulo $k^2$. 
Beginning with the sum over $B$ one sees that 
\begin{align*}
S_1(X)
&= \sum_{k\leq \xi_1}
\mu(k)
\sum_{|A|\leq X^4}
\sum_{\substack{\beta \bmod{k^2}\\\Delta_{A,\beta}\equiv 0 \bmod{k^2}}}
\left(
\frac{2X^6}{k^2}+O(1)\right),
\end{align*}
where we recall that $\Delta_{A,B}=4A^3+27B^2$. For $n \in \ZZ$ and
square-free $k\in \NN$ let $\gamma_{k^2}(n)$ 
denote the number of incongruent
solutions $\beta$ modulo $k^2$ of $n\equiv 27\beta^2\bmod{k^2}$. 
It is trivial to see that $\gamma_{k^2}(n)\ll k^\ve(k,n)$ 
for
any $\ve>0$.  We may now write
\begin{align*}
S_1(X)
&= 2X^6\sum_{k\leq \xi_1}
\frac{\mu(k)}{k^2}
\sum_{|A|\leq X^4}\gamma_{k^2}(-4A^3) 
+O(\xi_1^{2+\ve}+X^4\xi_1^{1+\ve}),
\end{align*}
where the error term $\xi_1^{2+\ve}$ comes from $A=0$.
Next we claim that 
\begin{equation}
  \label{eq:onion}
  \sum_{|A|\leq z}\sigma_{k^2}(-4A^3)= \frac{2z\rho(k^2)}{k^2} +O(k^{1+\ve}),
\end{equation}
for any square-free $k \in \NN$. Once armed with this it is then
straightforward to see that 
\begin{equation}
  \label{eq:S1}
S_1(X)  
=4
X^{10}
\prod_{p} \left( 1-\frac{\varrho(p^{2})}{p^{4}} \right)
+O\left(\frac{X^{10}}{\xi_1^{1-\ve}}+
\xi_1^{2+\ve}+X^4\xi_1^{1+\ve}+
X^6 \xi_1^{\ve}\right),
\end{equation}
on extending the summation over $k$ to infinity.
We establish the claim using a simple argument involving exponential
sums. 

Breaking the sum over $A$ into residue classes modulo $k^2$ we
find that
\begin{align*}
 \sum_{|A|\leq z}\sigma_{k^2}(-4A^3)
&=\sum_{\substack{\alpha, \beta
 \bmod{k^2}\\ \Delta_{\alpha,\beta}\equiv 0 \bmod{k^2}}} 
\sum_{\substack{|A|\leq z\\ A\equiv \alpha \bmod{k^2}}}1
\\
&=\frac{1}{k^2}\sum_{\ell \bmod{k^2}}
\sum_{\substack{\alpha, \beta
 \bmod{k^2}\\ \Delta_{\alpha,\beta}\equiv 0 \bmod{k^2}}} 
\sum_{|A|\leq z} \e \left(\frac{\ell(\alpha-A)}{k^2}\right).
\end{align*}
The contribution from $\ell=0$ is clearly 
$$
\frac{\rho(k^2)}{k^2}\left(2z+O(1)\right) = 
\frac{2z\rho(k^2)}{k^2}+O(k^\ve),
$$
which is satisfactory. Likewise 
one finds that the contribution from non-zero $\ell$
is
$$
\ll 
\frac{1}{k^2}\sum_{\substack{-k^2/2< \ell \leq k^2/2\\ \ell\neq 0}}
\tau(k^2;\ell) \min\left\{ z , \left\|\frac{\ell}{k^2}\right\|^{-1}\right\}
\ll 
\sum_{1\leq  \ell \leq k^2/2}
\frac{\tau(k^2;\ell) }{\ell},
$$
where 
$$
\tau(q;\ell)=
\sum_{\substack{\alpha, \beta
 \bmod{q}\\ \Delta_{\alpha,\beta}\equiv 0 \bmod{q}}} \e \left(
\frac{\ell \alpha}{q}\right).
$$ 
The exponential sum $\tau(q;\ell)$ satisfies a basic
multiplicativity property in $q$ rendering it sufficient to understand when $q$ is
a prime power. In this way one easily concludes that
$\tau(k^2;\ell)\ll k^{1+\ve}(k,\ell)$. Thus the contribution from
non-zero $\ell$ is also seen to be satisfactory for \eqref{eq:onion},
after redefining $\ve$.

Turning to the large values of $k$ we write the congruence as an
equation and note that 
\begin{align*}
|S_3(X)|
&\leq 
\sum_{0<|m|\ll X^{12}/\xi_2^2}
\# \left\{
(A,B,k)\in \mathcal{A}(X)\times I_3: 
4A^3+27B^2=k^2m
\right\}.
\end{align*}
To estimate the summand we fix $A$ and consider it as a problem about
counting the representations of $4A^3$ by the binary quadratic form 
$mx^2-27y^2$.  The classical argument of Estermann \cite{estermann} shows that
there are $O((Am)^\ve)$ solutions $(x,y)$ for any $\ve>0$, whence
\begin{equation}
  \label{eq:S3}
S_3(X)\ll \frac{X^{16+\ve}}{\xi_2^2}.
\end{equation}
At this point  we can recover a preliminary estimate for $S(X)$ by
taking $\xi_1=\xi_2=X^{4}$. This gives a version of Theorem~\ref{t:ec}
with the weaker error term $O(X^{8+\ve})$, as remarked in the
introduction. 

We now come to the treatment of the middle range for $k$. 
Thus we have 
\begin{align*}
|S_2(X)|
&\leq  \sum_{\xi_1<k\leq \xi_2}
\mu^2(k) \# \{(A,B)\in \mathcal{A}(X): 4A^3+27B^2\equiv
0\bmod{k^2}\}.
\end{align*}
For given $k$ let us write 
$k=k_2k_3k' $ where $k_2=(k,2)$, $k_3=(k,3)$ and $k'$ is
coprime to $6$.  It readily follows that 
$k_2\mid B$ and $k_3\mid A$ in the summand. 
Making the change of variables $A=k_3A'$ and $B=k_2B'$
we deduce that
\begin{align*}
|S_2(X)|
&\leq  \sum_{\substack{\xi_1<k\leq \xi_2\\
k=k_2k_3k' }}
\mu^2(k) \# \left\{(x,y)\in \ZZ^2: 
\begin{array}{l}
k_3|A'|\leq X^4, ~k_2|B'|\leq X^6 \\
a^3B'^2+b^2A'^3\equiv 0\bmod{k'^2}
\end{array}
\right\},
\end{align*}
where $a=3/k_3$ and $b=2/k_2$.  
In particular it follows that $(ab,k')=1$ and $a,b\leq 3$.
We will need to account for possible common factors of $A'B'$ and
$k'^2$.   Drawing out the greatest common divisor of $B'$ and $k'$ we write 
$B'=hx$ and $k'=h\ell$, with $(x,\ell)=1$. It easily follows 
from the square-freeness of $k$ that
$h\mid A'$ and we can write $A'=hy$ with $(hxy,\ell)=1$. 
Hence 
$$
|S_2(X)|
\leq  \sum_{\substack{\xi_1<k\leq \xi_2\\
k=k_2k_3h\ell}}
\mu^2(k)
M\left(
\frac{X^6}{h},\frac{X^4}{h};a^3,b^2h;\ell^2\right),
$$
in the notation of \S \ref{s:intro}, with $(a^3b^2h,\ell^2)=1$.

Everything is now in place for an application of 
Theorem~\ref{cor:0}. 
 On noting that 
$s(\ell^2)=s_1(\ell^2)=1$, we deduce that 
\begin{align*}
M\left(\frac{X^6}{h},\frac{X^4}{h};a^3,b^2h;\ell^2\right)
\ll 
(\ell X)^\ve
 \left\{
\frac{X^{10}}{h^2\ell^2}
+ 
\frac{X^6}{h\ell}
+\frac{h^{1/2}\ell}{X^2}\right\}.
\end{align*}
Inserting  this into our bound for $S_2(X)$ we conclude that
$$
S_2(X)
\ll 
(\xi_2 X)^\ve \left\{
\frac{X^{10}}{\xi_1}
+ 
X^6
+\frac{\xi_2^{2}}{X^2}\right\}.
$$
We must now combine this with \eqref{eq:S1}, \eqref{eq:S3} and a
suitable choice of $\xi_1,\xi_2$. Taking $\xi_1=X^{3}$ and
$\xi_2=X^{9/2}$ 
readily leads to the statement of Theorem~\ref{t:ec}.

\section{Technical preliminaries}

\subsection{Gaussian weights}\label{s:gauss}

There are numerous ways in which one can approximate the
characteristic function of intervals using smooth weights. 
In our work, which will involve repeated applications of the Poisson
summation formula, it will be useful to have weights which transform
well under the Fourier
transform. We are naturally drawn to construct weight functions from
the Gaussian 
\begin{equation} \label{gaussiandef}
\Gamma(x):=\exp\left(-\pi x^2\right).
\end{equation}
Note that the usual Gamma function does not occur anywhere in our work and so we trust that this choice of notation doesn't cause confusion. 
Let $\chi$ denote the characteristic function 
of the interval $[-1/2,1/2)$. We will approximate this using the
weights 
$$
\Phi_{\pm}(x):=L\int\limits_{-1/2\mp \Delta^{1/2}}^{1/2 \pm
  \Delta^{1/2}} 
\Gamma\left((x-\mu)L\right) \dif \mu \pm \varepsilon_1 \Gamma(x),
$$
for any $L>2$, where 
$
\Delta:=L^{-1}$ and $\varepsilon_1:=24\exp(-L).$ 
Clearly, $\Phi_+$ and $\Phi_-$ are smooth (infinitely differentiable)
functions that have rapid decay at $-\infty$ and $\infty$. Thus 
$\Phi_{\pm}(x)=O\left(x^{-N}\right)$ as $|x|\rightarrow \infty$
for any fixed $N>0$. We proceed to show how they 
approximate $\chi$ in the following result.

\begin{lemma} \label{weightslemma} Assume that $L>2$. Then
$
\Phi_{-}(x)\le \chi(x)\le \Phi_{+}(x)
$
for all $x\in \mathbb{R}$. Moreover, we have 
\begin{equation} \label{closeto1}
\int\limits_{-\infty}^{\infty} \Phi_{\pm}(x) \dif x = 1\pm (2\Delta^{1/2}+\varepsilon_1). 
\end{equation}
\end{lemma}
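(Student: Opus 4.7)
The plan is to verify the two assertions in turn, beginning with the integral formula \eqref{closeto1} and then the pointwise inequalities.

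For \eqref{closeto1}, I would swap the order of integration by Fubini (justified by positivity) and use the normalization $\int_{-\infty}^{\infty} L\Gamma((x-\mu)L)\dif x = 1$ for each $\mu$. The main term then contributes the length $1\pm 2\Delta^{1/2}$ of the $\mu$-interval, while the correction $\pm\varepsilon_1\Gamma(x)$ integrates to $\pm\varepsilon_1$, giving the claimed formula immediately.

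For the pointwise inequalities I would first rewrite the main integral by the substitution $u=(x-\mu)L$ as
$$
g_\pm(x) := \int_{(x-1/2\mp\Delta^{1/2})L}^{(x+1/2\pm\Delta^{1/2})L} \Gamma(u)\dif u,
$$
so that $\Phi_\pm(x) = g_\pm(x) \pm \varepsilon_1\Gamma(x)$, and then split into the cases $x\in[-1/2,1/2)$ and $x\notin[-1/2,1/2)$. Inside the interval, the bound $\Phi_-(x)\leq 1$ is immediate from $g_-(x)\leq\int_{-\infty}^\infty \Gamma = 1$ and the non-negativity of $\varepsilon_1\Gamma(x)$. For $\Phi_+(x)\geq 1$ the endpoints of $g_+(x)$ straddle $\mp L^{1/2}$, so $g_+(x)\geq 1-2\int_{L^{1/2}}^\infty\Gamma(u)\dif u$, and I would verify that this Gaussian tail is absorbed by $\varepsilon_1\Gamma(x)$ using $\Gamma(x)\geq e^{-\pi/4}$ on $[-1/2,1/2]$ together with the standard tail estimate $\int_a^\infty\Gamma(u)\dif u\leq e^{-\pi a^2}/(2\pi a)$ for $a>0$. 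Outside the interval, the evenness of $\Gamma$ lets me assume $x\geq 1/2$; then $\Phi_+(x)\geq 0$ is trivial, so it suffices to show $g_-(x)\leq\varepsilon_1\Gamma(x)$. Both endpoints of $g_-$ now exceed $L^{1/2}$, so applying the tail bound at the lower endpoint gives
$$
g_-(x)\leq \frac{\exp\bigl(-\pi((x-1/2)L+L^{1/2})^2\bigr)}{2\pi((x-1/2)L+L^{1/2})},
$$
which I would compare to $\varepsilon_1\Gamma(x) = 24e^{-L}e^{-\pi x^2}$.

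Writing $t=x-1/2\geq 0$, the dominant exponent terms $\pi t^2 L^2+2\pi t L^{3/2}$ from the tail exceed the corresponding $\pi t^2 + \pi t$ from $\Gamma(x)=e^{-\pi(t+1/2)^2}$ for any $L\geq 1$, so the tightest case is $t=0$; there the verification reduces to $e^{-(\pi-1)L}\leq 48\pi L^{1/2}e^{-\pi/4}$, which is comfortable for $L>2$. The main obstacle is therefore not conceptual but a careful bookkeeping of Gaussian tails at the boundary $x=\pm 1/2$, where the same inequality governs both the lower bound on $\Phi_+$ and the upper bound on $\Phi_-$; the specific constant $24$ in $\varepsilon_1$ is chosen precisely so that this boundary estimate holds uniformly in $L>2$.
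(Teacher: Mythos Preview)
Your proposal is correct and follows essentially the same strategy as the paper: Fubini for the integral formula, and for the pointwise bounds a split into $x\in[-1/2,1/2)$ versus $x\notin[-1/2,1/2)$ controlled by Gaussian tail estimates. The only difference is organizational: the paper further subdivides the outside case into $1/2\le|x|\le 1$ (where it bounds $g_-$ by $\int_{\mathbb{R}}-\int_{x-\Delta^{1/2}}^{x+\Delta^{1/2}}$) and $|x|>1$ (where it uses the crude sup bound $g_-\le L\,\Gamma(xL/2)$), whereas your single tail bound from the lower endpoint of $g_-$ after the substitution $u=(x-\mu)L$ handles all $x\ge 1/2$ at once --- a slightly more economical packaging of the same idea.
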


\begin{proof} The equation \eqref{closeto1} is easily proved as
  follows.  We have 
\begin{align*} 
  \int\limits_{-\infty}^{\infty} \Phi_{\pm}(x) \dif x &=
   \int\limits_{-1/2\mp \Delta^{1/2}}^{1/2 \pm \Delta^{1/2}}
   \left(L\int\limits_{-\infty}^{\infty} \Gamma\left((x-\mu)L\right) 
\dif x\right) \dif \mu \pm \varepsilon_1 
   \int\limits_{-\infty}^{\infty} \Gamma\left(x\right) \dif x\\ 
   &=  \int\limits_{-1/2\mp \Delta^{1/2}}^{1/2 \pm \Delta^{1/2}}
   \left(\int\limits_{-\infty}^{\infty} \Gamma\left(y\right) \dif
   y\right) \dif \mu \pm \varepsilon_1 \\ 
  &=  1\pm (2\Delta^{1/2}+\varepsilon_1),
\end{align*}
as claimed.

Let $I=[-1/2,1/2)$. 
Next, we want to show that $\chi(x)\le \Phi_+(x)$ for all $x\in
\mathbb{R}$. If $x\not\in I$ then obviously
$\Phi_+(x)>0$. If $x \in I$ then 
\begin{align*}
\Phi_{+}(x) &\ge 
L\int\limits_{x-\Delta^{1/2}}^{x+\Delta^{1/2}}
\Gamma\left((x-\mu)L\right) \dif \mu
+\varepsilon_1\exp\left(-\frac{\pi}{4}\right) \\ 
&=  \int\limits_{-L^{1/2}}^{L^{1/2}} \Gamma\left(-y\right) \dif y
+ \varepsilon_1\exp\left(-\frac{\pi}{4}\right)\\ 
&=   1-2\int\limits_{L^{1/2}}^{\infty} \Gamma\left(y\right) \dif
y +  \varepsilon_1\exp\left(-\frac{\pi}{4}\right)\\ 
&=   1-\int\limits_{\pi L}^{\infty}
\frac{\exp\left(-z\right)}{\sqrt{\pi z}} \dif z +
\varepsilon_1\exp\left(-\frac{\pi}{4}\right)\\ 
&\ge  1-\exp(-L) + \frac{\varepsilon_1}{3}= 1+7 \exp(-L)\ge  1. 
\end{align*}
Hence we have established that $\chi(x)\le \Phi_+(x)$ for all $x\in \mathbb{R}$.

Finally we want to show that $\Phi_-(x)\le \chi(x)$ for all $x\in
\mathbb{R}$. If $x\in I$ then 
\begin{equation*} 
\Phi_{-}(x) < L\int\limits_{-\infty}^{\infty}
\Gamma\left((x-\mu)L\right) 
\dif \mu = \int\limits_{-\infty}^{\infty} \Gamma\left(y\right) \dif y = 1.
\end{equation*}
If $1/2\le |x|\le 1$ then
\begin{align*}
\Phi_{-}(x) &\le 
L\left(\int\limits_{-\infty}^{\infty}
  -\int\limits_{x-\Delta^{1/2}}^{x+\Delta^{1/2}}\right) \Gamma\left(
(x-\mu)L\right) \dif \mu -  
\varepsilon_1\exp(-\pi) \\
&=   2\int\limits_{L^{1/2}}^{\infty} \Gamma\left(y\right) \dif y - 
\varepsilon_1\exp(-\pi) \\
&=   \int\limits_{\pi L}^{\infty} 
\frac{\exp\left(-z\right)}{\sqrt{\pi z}} \dif z -
\varepsilon_1\exp(-\pi) \\ 
&\le  \exp(-L) - \frac{\varepsilon_1}{24} = 0. 
\end{align*}
If $|x|>1$ then for all $\mu \in \left[-1/2+\Delta^{1/2},1/2-\Delta^{1/2}\right]$ we have 
\begin{equation*}
|x-\mu|\ge |x|-|\mu|>|x|-1/2>|x|/2.
\end{equation*}
Hence
\begin{align*}
\Phi_{-}(x) &\le 
L \Gamma\left(\frac{x}{2} \cdot L\right)
\int\limits_{-1/2+\Delta^{1/2}}^{1/2-\Delta^{1/2}}  \dif \mu - 
\varepsilon_1 \Gamma(x) 
\le  L \exp\left(-\pi \frac{x^2L^2}{4}\right)
 - 
\varepsilon_1 \exp\left(-\pi x^2\right). 
\end{align*}
By taking logarithms one easily verifies that the last line is $\le 0$ if $|x|>1$ and 
\begin{equation*} 
\log \frac{L}{24}+L \le \pi\left(\frac{L^2}{4}-1\right),
\end{equation*}
which is true for all $L>2$. This completes the proof of $\Phi_-(x)\le \chi(x)$ for all $x\in \mathbb{R}$ and therefore the proof of the lemma.
\end{proof}

From Lemma \ref{weightslemma} one can easily deduce a similar result
for the characteristic function of a general interval $[a,b)$ by
making the change of variables $x\rightarrow (x-a)/(b-a)-1/2$, which
maps the interval $[a,b)$ bijectively onto $[-1/2,1/2)$. 
We include this
result here because it may be useful for future applications.  

\begin{lemma} 
Let $a,b$ be real numbers with $a<b$. Denote the characteristic function of the interval $[a,b)$ by  $\chi_{[a,b)}$. Suppose that $L>2$ and set 
$$
\Delta:=L^{-1},  \quad  \varepsilon_1:=24\exp(-L).
$$ 
Define
$$
\Phi^{a,b}_{\pm}(x):=L
\int\limits_{-1/2\mp \Delta^{1/2}}^{1/2 \pm \Delta^{1/2}} 
\Gamma\left(\left(\frac{x-a}{b-a}-\frac{1}{2}-\mu\right)L\right) \dif
\mu \pm \varepsilon_1 \Gamma
  \left(\frac{x-a}{b-a}-\frac{1}{2}\right). 
$$
Then we have 
$
\Phi^{a,b}_{-}(x)\le \chi_{[a,b)}(x)\le \Phi^{a,b}_{+}(x) 
$
for all $x\in \mathbb{R}$ and 
$$
\int\limits_{-\infty}^{\infty} \Phi_{\pm}^{a,b}(x) \dif x = \left(1\pm
  (2\Delta^{1/2}+\varepsilon_1)\right) (b-a).  
$$
Moreover $\Phi^{a,b}_+$ and $\Phi^{a,b}_-$ are infinitely
differentiable functions that have rapid decay at $\pm\infty$.
\end{lemma}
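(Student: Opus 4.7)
The plan is to deduce this lemma from Lemma \ref{weightslemma} by a linear change of variables. Define the affine map $T\colon \RR\to\RR$ by
$$
T(x):=\frac{x-a}{b-a}-\frac{1}{2},
$$
which sends $[a,b)$ bijectively onto $[-1/2,1/2)$ and is smooth with inverse $T^{-1}(y)=a+(b-a)(y+1/2)$. In particular, writing $\chi$ for the characteristic function of $[-1/2,1/2)$ as in the previous lemma, we have $\chi_{[a,b)}(x)=\chi(T(x))$ for every $x\in\RR$.

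The key observation is that the proposed weight is the pull-back of the original weight under $T$: directly from the definition of $\Phi_\pm$ in \S\ref{s:gauss}, evaluating at the argument $T(x)$ gives
$$
\Phi_{\pm}(T(x))=L\int_{-1/2\mp\Delta^{1/2}}^{1/2\pm\Delta^{1/2}}\Gamma\bigl((T(x)-\mu)L\bigr)\,\dif\mu\pm\varepsilon_1\Gamma(T(x))=\Phi^{a,b}_{\pm}(x).
$$
Combining this identity with Lemma \ref{weightslemma} applied at the point $T(x)\in\RR$ yields the chain of inequalities $\Phi^{a,b}_{-}(x)=\Phi_{-}(T(x))\le\chi(T(x))=\chi_{[a,b)}(x)\le\Phi_{+}(T(x))=\Phi^{a,b}_{+}(x)$, which is the required pointwise bound.

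For the integral identity I would perform the substitution $y=T(x)$, under which $\dif x=(b-a)\,\dif y$, obtaining
$$
\int_{-\infty}^{\infty}\Phi^{a,b}_{\pm}(x)\,\dif x=(b-a)\int_{-\infty}^{\infty}\Phi_{\pm}(y)\,\dif y=\bigl(1\pm(2\Delta^{1/2}+\varepsilon_1)\bigr)(b-a),
$$
where the second equality is precisely \eqref{closeto1}. Finally, smoothness of $\Phi^{a,b}_{\pm}$ is inherited from the smoothness of $\Phi_{\pm}$ through the smooth affine change $T$, and the rapid decay at $\pm\infty$ follows because $|T(x)|\to\infty$ as $|x|\to\infty$, so the bound $\Phi_{\pm}(t)=O(t^{-N})$ for every $N$ transfers to $\Phi^{a,b}_{\pm}$.

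There is no genuine obstacle: the entire content is the verification that the displayed weight really is $\Phi_\pm\circ T$ and bookkeeping of the Jacobian $b-a$. The only thing to be mildly careful about is checking the limits of integration $-1/2\mp\Delta^{1/2}$ and $1/2\pm\Delta^{1/2}$ are unchanged by the substitution (which they are, since $\mu$ in the original definition is a dummy variable untouched by the reparametrisation of the outer argument $x$).
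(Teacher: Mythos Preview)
Your proof is correct and follows exactly the approach indicated in the paper, which simply remarks that the lemma follows from Lemma~\ref{weightslemma} via the change of variables $x\mapsto (x-a)/(b-a)-1/2$ without spelling out the details. Your write-up supplies precisely those details.
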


\subsection{Estimation of exponential integrals}\label{s:airy}

Recall the definition \eqref{gaussiandef} of the Gaussian function. We
begin with a general upper bound for exponential integrals weighted by
the Gaussian.

\begin{lemma}\label{lem:IK}
Let $f:\RR\rightarrow \RR$ be a smooth function and suppose that there
exists $\Lambda>0$ and $k \in \NN$ such that $|f^{(k)}(x)|\geq
\Lambda$ for every $x \in \RR$. Then we have 
$$
\int_{-\infty}^\infty \Gamma(x)\e\left(f(x)\right)\d x \ll \Lambda^{-1/k}.
$$
\end{lemma}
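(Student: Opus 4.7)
The plan is to strip the Gaussian weight by a single integration by parts, thereby reducing the problem to the classical van der Corput estimate on finite intervals, where the uniform lower bound on $|f^{(k)}|$ already suffices.

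First, I would introduce the anti-derivative $F(x):=\int_0^x \e(f(t))\,\d t$ and establish the uniform bound $\|F\|_\infty \ll \Lambda^{-1/k}$ directly from the hypothesis. For $k\geq 2$ this is the classical van der Corput lemma applied to the interval with endpoints $0$ and $x$; no extra monotonicity assumption is required since $f^{(k-1)}$ is automatically monotone on all of $\RR$ under the given hypothesis. For $k=1$, continuity of $f'$ together with $|f'|\geq \Lambda$ forces $f'$ to have constant sign, so that $f$ is strictly monotone, and a one-step integration by parts on $[0,x]$ yields a bound of order $1/\Lambda$ uniformly in $x$ via the substitution $u=f(t)$.

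Next, I would integrate by parts in the integral of the lemma, using the super-polynomial decay $\Gamma(x)\to 0$ as $|x|\to\infty$ together with the uniform boundedness of $F$ to discard the boundary terms:
$$
\int_{-\infty}^{\infty} \Gamma(x)\,\e(f(x))\,\d x
=\bigl[\Gamma(x)F(x)\bigr]_{-\infty}^{\infty}-\int_{-\infty}^{\infty}\Gamma'(x)F(x)\,\d x
=-\int_{-\infty}^{\infty}\Gamma'(x)F(x)\,\d x.
$$
Combined with the elementary identity $\Gamma'(x)=-2\pi x\,\Gamma(x)$, which gives $\|\Gamma'\|_1=2\pi\int_{-\infty}^\infty|x|\e^{-\pi x^2}\,\d x=2$, the H\"older inequality yields
$$
\left|\int_{-\infty}^{\infty}\Gamma(x)\,\e(f(x))\,\d x\right|\leq \|F\|_\infty\,\|\Gamma'\|_1 \ll \Lambda^{-1/k},
$$
as required.

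The one delicate point in this plan is the first step: a priori the van der Corput bound on a finite interval $[a,b]$ might be feared to depend on $b-a$, in which case the passage to the unbounded interval via $F$ would fail. The saving grace is exactly that the van der Corput estimate is independent of the interval length, so the uniform control $\|F\|_\infty\ll\Lambda^{-1/k}$ propagates to all of $\RR$ and can be fed cleanly into the integration-by-parts identity above. Everything else is routine.
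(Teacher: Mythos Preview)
Your proof is correct and follows essentially the same approach as the paper: both integrate by parts against the antiderivative $F(x)=\int_0^x \e(f(t))\,\d t$, invoke the van der Corput bound (Lemma~8.10 in Iwaniec--Kowalski) to control $\|F\|_\infty$ uniformly, and then use the finiteness of $\|\Gamma'\|_1$. The only cosmetic difference is that the paper splits into half-lines $[0,a]$ and lets $a\to\infty$, whereas you work directly on $\RR$ and discard the boundary terms via the decay of $\Gamma$ and the boundedness of $F$; the paper's version has the mild advantage that the vanishing of the boundary contribution is made explicit through the identity $\Gamma(a)+\int_0^a|\Gamma'|=1$.
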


\begin{proof}
It will be sufficient to prove the corresponding bound for the integral over $[0,\infty)$ since the treatment of the integral over $(-\infty,0]$ is similar. 
For any $a>0$ it follows from integration by parts that 
$$
\int_{0}^{a} \Gamma(x)\e\left(f(x)\right)\d x =
\Gamma(a)\int_{0}^{a} \e\left(f(x)\right)\d x -
\int_{0}^{a} \Gamma'(y)\int_0^y \e\left(f(x)\right)\d x\d y.
$$
Now it follows from 
Lemma 8.10 in \cite{IwKo} and the hypotheses of the lemma that 
$$
\left| 
\int_0^y \e\left(f(x)\right)\d x \right|
\leq k2^k \Lambda^{-1/k},
$$
for any $y\leq a$. Hence it readily follows that 
$$
\left|\int_{0}^{a} \Gamma(x)\e\left(f(x)\right)\d x \right| \leq 
k2^k
\Lambda^{-1/k} \left(\Gamma(a)+
\int_{0}^{a} |\Gamma'(y)| \d y\right)=
k2^k
\Lambda^{-1/k}.
$$
Taking the limit $a\rightarrow \infty$ we are easily led to the desired bound. This completes the
proof. \end{proof}

The special case $f(x)=-\beta x^3-\alpha x$ will feature quite heavily
in our work, corresponding to weighted Airy--Hardy integrals. 
For any $\alpha,\beta \in \RR$ such that $\beta>0$ we set 
\begin{equation} \label{Fdef1}
F(\alpha,\beta):=\int\limits_{-\infty}^{\infty} \Gamma(x)\e(-\beta
x^3-\alpha x) \dif x. 
\end{equation}
We begin by recording the trivial estimate
\begin{equation} \label{Fbound2} 
|F(\alpha,\beta)|\leq 1.
\end{equation}
Furthermore, applying Lemma \ref{lem:IK} with $k=3$, we deduce
that 
\begin{equation} \label{another}
F(\alpha,\beta)\ll \beta^{-1/3}.
\end{equation}  
Moreover, if $\alpha>0$, then for all $x \in \RR$ we have
$$
\frac{\dif(-\beta x^3-\alpha x)}{\dif x} = -3\beta x^2-\alpha \le -\alpha<0.
$$
An application of Lemma \ref{lem:IK} with $k=1$ now gives
\begin{equation} \label{Fbound1}
F(\alpha,\beta)\ll \frac{1}{\alpha} \quad \mbox{for } \alpha>0.
\end{equation}

We henceforth assume that $\alpha<0$. In this case we have
stationary points  which give 
a main term contribution. We write
\begin{equation} \label{Fsplit2}
\begin{split}
F(\alpha,\beta)
&= \int\limits_{0}^{\infty} \Gamma(x)\e(-\beta x^3+|\alpha| x) \dif x +
\int\limits_{0}^{\infty} \Gamma(-x)\e(\beta x^3-|\alpha| x) \dif x\\
&= \overline{\int\limits_{0}^{\infty} \Gamma(-x)\e(\beta x^3-|\alpha|
  x) \dif x} + 
\int\limits_{0}^{\infty} \Gamma(-x)\e(\beta x^3-|\alpha| x) \dif x,
\end{split}
\end{equation}
where we take into account that our function $\Gamma$ is even.
We make a change of variables $t=\beta^{1/3}x$, getting
\begin{equation}\label{Hardy-Airy} 
\int\limits_{0}^{\infty} \Gamma(-x)\e(\beta x^3-|\alpha| x) \dif x = \beta^{-1/3}
\int\limits_{0}^{\infty} \Gamma(-\beta^{-1/3}t)\e(t^3-|\alpha| \beta^{-1/3} t) \dif t.
\end{equation}  
To estimate the integral on the right-hand side we use 
Theorem 2.2 in \cite{Ivic}, which we record here for convenience.

\begin{lemma}[Stationary phase with weights] \label{stationary} Let
  $f(z)$, $g(z)$ be two functions of the complex variable $z$ and
  $[a,b]$ be a real interval such that the following hold.
\begin{itemize} 
\item[(a)] 
For $a\le x\le b$ the function $f(x)$ is real and $f''(x)>0$.
\item[(b)] 
For a certain positive differentiable function $\mu(x)$, defined on
$[a,b]$, $f(z)$ and $g(z)$ are analytic for $a\le x\le b$,
$|z-x|\le \mu(x)$. 
\item[(c)] 
There exist positive functions $F(x)$, $G(x)$ defined on $[a,b]$ such
that for $a\le x\le b$, $|z-x|\le \mu(x)$ we have 
$$
g(z)\ll G(x), \quad f'(z)\ll \frac{F(x)}{\mu(x)}, \quad
|f''(z)|^{-1}\ll \frac{\mu(x)^2}{F(x)},  
$$
and the implied constants are absolute. 
\end{itemize}
Let $k\in \RR$ and
if $f'(x)+k$ has a zero in $[a,b]$ denote it by $x_0$. Let the values
of $f(x)$, $g(x)$ and so on, at $a$, $x_0$, and $b$ be characterised
by the suffixes $a$, $0$ and $b$, respectively. 
Then, for some
absolute constant $C>0$, we have 
\begin{equation} 
\label{einintegral} 
\begin{split}
\int\limits_a^b g(x)\e\left(f(x)+kx\right)\dif x 
=~&
\frac{g_0}{\sqrt{f_0''}}\cdot
\e\left(f_0+kx_0+\frac{1}{8}\right) 
+O\left(G_0\mu_0F_0^{-3/2}\right)
\\  &  
+O\left(\int\limits_a^b G(x)\exp\left(-C|k|\mu(x)-CF(x)\right)(\dif
  x+|\dif \mu(x)|)\right)\\ 
 & +O\left(G_a\left(|f_a'+k|+\sqrt{f_a''}\right)^{-1}+
G_b\left(|f_b'+k|+\sqrt{f_b''}\right)^{-1}\right).
\end{split}
\end{equation}
If $f'(x)+k$ has no zero in $[a,b]$, then the terms involving $x_0$ are to be omitted. 
\end{lemma}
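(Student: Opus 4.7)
The statement is a verbatim quotation of Theorem~2.2 of \cite{Ivic}, so the simplest plan is just to cite that result. For orientation, let me nonetheless sketch how the proof proceeds: it is a standard saddle-point argument, and the hypotheses on $\mu(x)$, $F(x)$ and $G(x)$ are exactly the data needed to keep every error term uniform in all the parameters.

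First I would split $[a,b]$ into a short neighbourhood $J_0$ of the (at most one) stationary point $x_0$ and its complement. Away from $x_0$, I would use the analyticity in hypothesis~(b) to shift the contour in the direction $\pm i\mu(x)$ along which $\mathrm{Im}(f(z)+kz)$ grows, exploiting that $\sign(f'(x)+k)$ is constant there; the majorants in~(c) then give the damping factor $\exp(-C|k|\mu(x)-CF(x))$, and after multiplication by $G(x)$ this produces the integral error term of \eqref{einintegral}. Near the endpoints the contribution is handled by one integration by parts against $f'+k$ (replacing $f'+k$ by $\sqrt{f''}$ when the former is small), which yields the two boundary error terms containing $(|f_a'+k|+\sqrt{f_a''})^{-1}$ and its counterpart at $b$. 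On $J_0$, I would Taylor-expand $f(x)+kx$ to second order around $x_0$, freeze $g$ at $g_0$, extend the limits of integration to $\pm\infty$ at negligible cost (using the damping secured in the previous step), and change variables $t=\sqrt{f_0''}(x-x_0)$; the resulting Fresnel integral is $e(1/8)$ and produces the main term $g_0(f_0'')^{-1/2}e(f_0+kx_0+1/8)$.

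The main obstacle is the bookkeeping required to show that the error from truncating the Taylor expansion of $f$ and from replacing $g$ by $g_0$ on $J_0$ is exactly $G_0\mu_0 F_0^{-3/2}$: this forces one to estimate the cubic Taylor remainder of $f$ and the first derivatives of $g$ by means of Cauchy's integral formula on the disk $|z-x_0|\le\mu_0$ supplied by hypothesis~(b), rather than by any pointwise bound, which is precisely the reason the auxiliary radius $\mu$ is introduced. Once these Cauchy estimates are in hand, everything assembles into the stated asymptotic, uniformly in $a$, $b$, $k$ and in the functions $f$, $g$ through the controls $F$, $G$, $\mu$.
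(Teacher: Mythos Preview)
Your proposal is correct and matches the paper's approach exactly: the paper simply records this lemma as Theorem~2.2 of \cite{Ivic} without proof, and your first sentence does precisely that. The additional proof sketch you provide is a reasonable outline of the standard saddle-point argument behind the result, but it goes beyond what the paper itself offers.
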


We apply Lemma \ref{stationary} with $0<a<b$ and
\begin{align*}
f(z)&=z^3, \quad g(z)=\exp(-\pi \beta^{-2/3} z^2),\quad  
\mu(x)=x/2,\quad F(x)=x^3,\\ 
G(x)&=\exp(-\pi \beta^{-2/3} x^2/2),\quad k=-|\alpha|\beta^{-1/3},
\end{align*}
and let $a$ tend to $0$ and $b$ to $\infty$. Here we note that the choice of $G(x)$ above is possible since 
$$
g(z) \ll \exp\left(-\pi \beta^{-2/3}x^2/2\right) 
$$
if $z\in \mathbb{C}$ with $|z-x|\leq x/2$. 
From \eqref{Hardy-Airy} and \eqref{einintegral} we deduce that
\begin{equation} \label{Hardy-Airy-eval1}
\begin{split}
\int\limits_{0}^{\infty} \Gamma(-x)\e(\beta x^3-|\alpha|
x) \dif x 
=~& 
\frac{\exp\left(-|\pi\alpha|/(3\beta)\right)}{(12|\alpha|\beta)^{1/4}}
\cdot \e\left(\frac{1}{8}-\frac{2|\alpha|^{3/2}}{3^{3/2}\beta^{1/2}}\right)\\
&+O\left(\frac{1}{|\alpha|}+\frac{\beta^{1/4}}{|\alpha|^{7/4}}\right).
\end{split}
\end{equation}
To see this we note that the stationary point is
\begin{equation} \label{statpoint}
x_0=3^{-1/2}|\alpha|^{1/2}\beta^{-1/6}.
\end{equation}
Using this, we easily calculate the main term on the right-hand side
of \eqref{Hardy-Airy-eval1}. Then 
we simplify the integral on the right-hand side of \eqref{einintegral}
by removing the terms $\exp(-CF(x))$ and $G(x)$  
(dropping these terms can only make the integral larger) and observing that 
$|\dif \mu(x)|$ is the same as $\dif x/2$. What remains in the
integrand is the term $\exp(-Ck \mu(x))=\exp(-Ck x/2)$.
The integral of this term is $\ll 1/|k|$. Plugging in our value of $k$ and
multiplying by the factor $\beta^{-1/3}$, we obtain the term $1/|\alpha|$ in the $O$-term on the right-hand side of \eqref{Hardy-Airy-eval1}.
Furthermore, we calculate the term $G_0\mu_0F_0^{-3/2}$ on the right-hand side of \eqref{einintegral} using \eqref{statpoint}. Multiplying the result with $\beta^{-1/3}$, we get 
the term 
$$
\exp(-|\pi \alpha|/(6\beta))\cdot \frac{\beta^{1/4}}{|\alpha|^{7/4}} \ll 
\frac{\beta^{1/4}}{|\alpha|^{7/4}}
$$ 
on the right-hand side of \eqref{Hardy-Airy-eval1}. 
The term $G_a(|f_a'+k|+\sqrt{f_a''})^{-1}$ on the right-hand side of \eqref{einintegral} tends to $1/|k|$ as $a$ tends to $0$, which is the value with 
which we bounded the integral on the right-hand side of \eqref{einintegral}. Finally,
the term $G_b(|f_b'+k|+\sqrt{f_b''})^{-1}$ tends to 0 as $b$ tends to
infinity. Combining everything we obtain \eqref{Hardy-Airy-eval1}. 

From \eqref{Fsplit2}, \eqref{Hardy-Airy} and \eqref{Hardy-Airy-eval1}, we get
$$
F\left(\alpha,\beta\right) = \frac{2^{1/2}\exp\left(-|\pi \alpha|/(3\beta)\right)}{(3|\alpha|\beta)^{1/4}} \cdot \cos\left(2\pi\left(\frac{1}{8}-\frac{2|\alpha|^{3/2}}{3^{3/2} \beta^{1/2}}\right)\right)
+O\left(\frac{1}{|\alpha|}+\frac{\beta^{1/4}}{|\alpha|^{7/4}}\right),
$$
whence 
\begin{equation}\label{Fbound3c}
F\left(\alpha,\beta\right) = \frac{2^{1/2}\exp\left(-|\pi
    \alpha|/(3\beta)\right)}{(3|\alpha|\beta)^{1/4}} \cdot
\cos\left(2\pi\left(\frac{1}{8}-\frac{2|\alpha|^{3/2}}{3^{3/2}
      \beta^{1/2}}\right)\right)+O\left(\frac{1}{|\alpha|}\right). 
\end{equation}
Indeed, if $\alpha<-\beta^{1/3}$ then the estimate is obvious. 
If instead $-\beta^{1/3}\le \alpha<0$ then the main term is dominated
by the error term 
and the estimate
$F(\alpha,\beta)=O(1/|\alpha|)$ follows from \eqref{another}. Bringing
\eqref{Fbound3c} together with 
\eqref{Fbound1} we may
now record the upper bound 
\begin{equation}\label{Fbound3}
F\left(\alpha,\beta\right) \ll \frac{\exp\left(-|\pi \alpha|/(3\beta)\right)}{(|\alpha|\beta)^{1/4}} +\frac{1}{|\alpha|} \quad \mbox{for any } \alpha\not=0.
\end{equation}
 
We will  need yet another bound that is good for large $|\alpha|$. 
To this end, we take into account that $\Gamma$ extends to the entire
function $\Gamma(z)$ for $z\in \mathbb{C}$ and use
complex analysis. Shifting the line of integration, we get 
\begin{align*}
F(\alpha,\beta) 
&= \int\limits_{-\infty-ic}^{\infty-ic} \Gamma(s)\e(-\beta s^3-\alpha
s) \d s\\
&= \int\limits_{-\infty}^{\infty} \Gamma(x-ic)\e(-\beta
(x-ic)^3-\alpha (x-ic))\dif x\\ 
 &= 
\exp(\pi c^2-2\pi\alpha c+2\pi\beta c^3)
\int\limits_{-\infty}^{\infty} \exp(-\pi x^2-6\pi \beta c x^2)
\e(cx-\beta x^3 +3\beta c^2x-\alpha x)\dif x\\ 
 &\ll 
\exp(\pi c^2-2\pi\alpha c+2\pi\beta c^3)
\int\limits_{-\infty}^{\infty} \exp(-\pi(1+6\beta c) x^2)\dif x, 
\end{align*}
which converges if $c>-1/(6\beta)$. We choose $c:=\sign(\alpha)/(12\beta)$, getting
\begin{equation} \label{Fbound6}
F(\alpha,\beta)\ll \exp\left(\frac{\pi}{12\beta^2}-\frac{\pi
    |\alpha|}{6\beta}\right)
\ll \exp\left(-\pi\cdot\frac{|\alpha|}{12 \beta}\right),
\end{equation}
if $|\alpha|\ge 1/\beta$.

\subsection{Arithmetic functions}\label{s:arith}

In this section we collect together some of the arithmetic functions
that feature in our work and some of their basic properties. 
For any $a\in \RR$ we set 
$$
\sigma_{a}(n):=\sum_{d\mid n}d^{a}.
$$
The cases $a=-1/2$ and $a=-1$ will arise quite often in our analysis. 
Let $\omega(n)$ denote the number of distinct prime factors of $n$. 
Then it is straightforward to show that 
$$
\sigma_{-1/2}(n) \ll (1+\ve)^{\omega(n)},
$$
for any $\ve>0$.
Likewise we have 
$$
\sigma_{-1}(n) \ll \log\log n\ll (\log n)^{\ve}.
$$
Next let
$$
  \phi^*(n):=\frac{\phi(n)}{n}=\prod_{p\mid n} \left(1-
\frac{1}{p}\right).
$$
We observe that $\phi^{*}(ab) \phi^{*}(a,b)=\phi^{*}(a)
\phi^{*}(b)$, where we write $\phi^{*}(a,b)=\phi^{*}((a,b))$ for
short. It is then an elementary exercise to check that 
\begin{equation}
  \label{eq:wot}
  \sum_{\substack{k\mid n\\ (k,a)=1}}
\frac{\mu(k)}{k}=\frac{\phi^*(n)}{\phi^*(a,n)}, \quad 
\sum_{\substack{k\mid n\\ (k,a)=1}}
\frac{\mu(k)}{k\phi^{*}(bk)}=\frac{\phi^*(n,ab)}{\phi^*(bn)\phi^*(a,b,n)}
\prod_{\substack{p\mid n\\ p\nmid ab}}\left(1-\frac{2}{p}\right),
\end{equation}
for any $a,b\in \NN$.

Let $z\in \RR_{\geq 0}$ and $\theta\in [0,1)$. Then  we have the
estimates
\begin{equation} \label{familiarest}
\sum_{n\leq x} \frac{z^{\omega(n)}}{n^\theta}  \ll x^{1-\theta} (\log
x)^{z-1}, \quad
\sum_{n\leq x} \frac{z^{\omega(n)}}{n}  \ll  (\log x)^{z},
\end{equation}
following from \cite[$\S$ II.6]{Ten} and partial summation. Next we record the bound 
\begin{equation} \label{wellknown}
\sum\limits_{1\le n\le x} (n,r)^{1/2} \le \sum\limits_{d\mid r}
\sum\limits_{\substack{1\le n \le x\\ d\mid n}} d^{1/2} \le
x\sum\limits_{d\mid r} d^{-1/2} = x\sigma_{-1/2}(r)\ll
(1+\ve)^{\omega(r)} x,
\end{equation} 
for any $r\in \NN$.
Using partial summation and the fact that the function $\Gamma$, defined in \eqref{gaussiandef}, has rapid decay, it follows that 
\begin{equation} \label{wellknown'}
\sum_{n\in \NN}\Gamma \left(\frac{n}{x}  \right) (n,r)^{1/2} \ll
(1+\ve)^{\omega(r)} x. 
\end{equation} 
Finally we record the estimate
\begin{equation}
  \label{eq:ape}
\sum_{n\leq x} \frac{z^{\omega(n)}\sigma_1(n)}{n^2}  
\leq 
\sum_{kn'\leq x} \frac{z^{\omega(k)+\omega(n')}}{n'^2 k} 
\ll 
(\log x)^z.
\end{equation}
We will make frequent use of the estimates in this section with little
further comment.

\section{Exponential sums}\label{s:expsum}

In this section we collect together some facts concerning complete
exponential sums that will be required in our work. 
We begin with a general upper bound. 
For a polynomial $f\in
\mathbb{Z}[X]$ of degree $n$ with precisely $m$ distinct roots
  $\zeta_1,...,\zeta_m$ and factorisation  
$$
f(X)=A(X-\zeta_1)^{\eta_1}\cdots (X-\zeta_m)^{\eta_m},
$$
define the semi-discriminant of $f$ to be 
$$
\Delta(f):=A^{2n-2} \prod_{i\neq j} (\zeta_i-\zeta_j)^{\eta_i\eta_j}
$$
and the exponent of $f$ to be
$$
\eta(f):=\max\{\eta_1,...,\eta_m\}.
$$
With this notation in mind we have the following result.

\begin{lemma} \label{polyexpsumest} 
Let $Q\in \NN$ and let $g$ be a polynomial over $\mathbb{Z}$ with
degree $n+1$ for $n\ge 2$. Let 
$\Delta=\Delta(g')$ and $\eta=\eta(g')$, where $g'$ is 
the derivative of $g$. Then we have
$$
\left| \sum_{x\bmod{Q}} \e\left( \frac{g(x)}{Q}\right)\right| \leq Q^{1-1/(2\eta)}(\Delta,Q)^{1/(2\eta)} n^{\omega(Q)}.
$$
\end{lemma}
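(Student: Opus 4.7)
The plan is to reduce to prime power moduli by the Chinese Remainder Theorem and then handle each local factor via Weil's bound at the prime level and a standard ``lifting the exponent'' argument at higher prime powers. For the first step, factoring $Q = \prod_p p^{k_p}$ and making the usual change of variables modulo each $p^{k_p}$ produces
\[
\sum_{x \bmod Q} \e\!\left(\frac{g(x)}{Q}\right) = \prod_p\, \sum_{x \bmod p^{k_p}} \e\!\left(\frac{\overline{Q_p}\, g(x)}{p^{k_p}}\right),
\]
where $Q_p = Q/p^{k_p}$. Since multiplication of $g$ by a unit preserves both $\eta(g')$ and the $p$-adic valuation of $\Delta(g')$, it suffices to establish the bound for $Q=p^k$, and the product structure then accounts for the factor $n^{\omega(Q)}$.

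For the prime case $Q=p$, Weil's bound for polynomial exponential sums gives $n p^{1/2}$ whenever $p \nmid \Delta$, which is at most $n p^{1-1/(2\eta)}$ since $\eta \geq 1$. If $p \mid \Delta$, the trivial bound $p$ already suffices because $(\Delta,p)^{1/(2\eta)} = p^{1/(2\eta)}$. For $Q = p^k$ with $k \geq 2$, I would write $x = y + p^m z$ with $m = \lceil k/2 \rceil$, so that the polynomial identity
\[
g(y + p^m z) \equiv g(y) + p^m z\, g'(y) \pmod{p^{2m}}
\]
delivers a congruence modulo $p^k$ via $2m \geq k$. Evaluating the inner sum over $z \bmod p^{k-m}$ forces the condition $p^{k-m} \mid g'(y)$ and produces a factor $p^{k-m}$ out front, reducing the problem to counting the number $N$ of $y \bmod p^m$ satisfying $p^{k-m} \mid g'(y)$.

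The main obstacle is the final step: establishing a sharp count for $N$ in terms of the multiplicity structure of $g'$. To this end, I would factor $g'(X) = A \prod_i (X-\zeta_i)^{\eta_i}$ over $\overline{\ZZ}_p$ and apply Hensel's lemma to each approximate root, with the $p$-adic valuation of $\Delta$ controlling the $p$-adic separation of the $\zeta_i$. Each root of $g'$ modulo $p$ of multiplicity $\eta_i \leq \eta$ admits at most $p^{m-(k-m)/\eta_i}(\Delta, p^k)^{1/\eta_i}$ lifts to a zero of $g'$ modulo $p^{k-m}$. Combining with the outer factor $p^{k-m}$, inserting $m = \lceil k/2 \rceil$, and summing over the at most $n$ distinct roots delivers the desired bound $n\, p^{k(1-1/(2\eta))} (\Delta,p^k)^{1/(2\eta)}$; odd values of $k$ create a mild mismatch in exponent that is absorbed by the $(\Delta, p^k)^{1/(2\eta)}$ factor when $p \mid \Delta$ and by a short direct treatment of a residual quadratic Gauss sum when $p \nmid \Delta$.
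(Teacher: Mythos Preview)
Your proposal is essentially correct and follows the standard method of Loxton and Smith, which is precisely what the paper invokes: the paper's own ``proof'' consists of the single sentence that the bound with $n^{\omega(Q)}$ replaced by $\tau_n(Q)$ is the principal result of Loxton--Smith \cite{loxton}, and that inspection of their argument yields the sharper factor $n^{\omega(Q)}$. Your sketch (CRT reduction to prime powers, Weil at primes, the splitting $x=y+p^m z$ with $m=\lceil k/2\rceil$ at higher powers, and a Hensel-type root count for $g'(y)\equiv 0\pmod{p^{k-m}}$ governed by $\eta$ and $v_p(\Delta)$) is exactly that argument, so there is nothing genuinely different here.

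One small caution on precision: the root-lifting bound you state, ``each root of $g'$ modulo $p$ of multiplicity $\eta_i$ admits at most $p^{m-(k-m)/\eta_i}(\Delta,p^k)^{1/\eta_i}$ lifts,'' is the correct heuristic shape but requires care to make rigorous when $p\mid A$ (the leading coefficient of $g'$) or when roots of $g'$ are $p$-adically close, and the odd-$k$ case is handled not by absorbing into $(\Delta,p^k)^{1/(2\eta)}$ but, as you indicate in your final clause, by an explicit treatment of the residual quadratic character sum after writing $x=y+p^{m-1}z$ with $z$ ranging over $p^2$ residues. These are exactly the points where Loxton--Smith's paper does the work, so for the present purposes simply citing them (as the authors do) is appropriate.
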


\begin{proof}
A version of this result with $n^{\omega(Q)}$ replaced by $\tau_n(Q)$
is the principal result in work of Loxton and Smith \cite{loxton}. An
inspection of the proof reveals that the sharper version recorded here
is also true. 
\end{proof}

Our work will hinge upon a decent understanding of the exponential sum
\begin{equation} \label{Eq0}
E(c,d;q):=\sideset{}{^*} \sum\limits_{z \bmod q} \e\left(\frac{cz^3-dz^2}{q}\right)
\end{equation} 
for $c,d,q\in \ZZ$ such that $q>0$.  
Here the asterisk denotes a summation in which $z$ runs over residue
classes modulo $q$ that are coprime to $q$. 
The rest of this section is
dedicated to its explicit evaluation as far as this is possible.
In fact it will suffice to do so for prime
power moduli $q=p^t$ since the sums satisfy a multiplicativity
property, as recorded in the following result.

\begin{lemma} \label{factorisation} 
Let $c,d\in \ZZ$ and assume 
that $q_1,...,q_r$ are
  pairwise coprime. Then we have
\begin{equation} \label{factorisationeq}
E(c,d;q_1\cdots q_r)=\prod\limits_{j=1}^r E(c \overline{\tilde{q}_j},d \overline{\tilde{q}_j}; q_j),
\end{equation}
where $\tilde{q}_j=q_1\cdots q_r/q_j$ and $\overline{\tilde{q}_j} \tilde{q}_j\equiv 1$ mod $q_j$. 
\end{lemma}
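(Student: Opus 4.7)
The plan is to derive \eqref{factorisationeq} as a direct application of the Chinese Remainder Theorem. Write $q=q_1\cdots q_r$. Since the $q_j$ are pairwise coprime, the map $z\mapsto (z_1,\ldots,z_r)$ defined by $z_j\equiv z\pmod{q_j}$ gives a bijection between $(\ZZ/q\ZZ)^*$ and $\prod_{j=1}^r(\ZZ/q_j\ZZ)^*$, where it is crucial to observe that $(z,q)=1$ if and only if $(z_j,q_j)=1$ for every $j$. Thus the asterisked summation over $z \bmod q$ in $E(c,d;q)$ becomes an $r$-fold summation over tuples $(z_1,\ldots,z_r)$, each entry coprime to the corresponding modulus.

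The next step is to split the phase via the standard partial fraction identity
\begin{equation*}
\frac{1}{q}\equiv \sum_{j=1}^r \frac{\overline{\tilde q_j}}{q_j} \pmod 1.
\end{equation*}
This identity holds because $\tilde q_j\overline{\tilde q_j}\equiv 1\pmod{q_j}$ by definition, while $\tilde q_j\overline{\tilde q_j}\equiv 0\pmod{q_k}$ for $k\neq j$ since $q_k\mid \tilde q_j$; hence $\sum_j \tilde q_j\overline{\tilde q_j}\equiv 1\pmod{q_k}$ for each $k$, so $\sum_j\tilde q_j\overline{\tilde q_j}\equiv 1\pmod q$, and dividing by $q$ yields the claim. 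Multiplying through by any integer $m$, in particular $m=cz^3-dz^2$, and using that $z^n\equiv z_j^n\pmod{q_j}$ for every $n\ge 1$, we obtain
\begin{equation*}
\e\!\left(\frac{cz^3-dz^2}{q}\right)
=\prod_{j=1}^r \e\!\left(\frac{c\overline{\tilde q_j}\,z_j^3-d\overline{\tilde q_j}\,z_j^2}{q_j}\right).
\end{equation*}

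Summing over $z\bmod q$ with $(z,q)=1$ and exchanging with the product via the CRT bijection, the summand factorises into $r$ independent sums, each of which is by definition $E(c\overline{\tilde q_j},d\overline{\tilde q_j};q_j)$. This yields \eqref{factorisationeq}. The argument is entirely routine: no real obstacle arises, since the proof amounts only to the CRT and the partial fraction identity for $1/q$; the mild care required is in checking that the coprimality conditions transport correctly across the bijection and that the polynomials $cz^3-dz^2$ respect reduction modulo each $q_j$, both of which are immediate.
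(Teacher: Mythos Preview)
Your proof is correct and proceeds, like the paper's, via the Chinese Remainder Theorem. The paper treats only $r=2$ explicitly, writing $z=x_1q_2+x_2q_1$, expanding, and then changing variables $x_i\mapsto \overline{q_{3-i}}\,y_i$, with general $r$ handled by induction; your use of the partial fraction identity $1/q\equiv\sum_j\overline{\tilde q_j}/q_j\pmod 1$ is a slightly slicker packaging of the same idea that dispatches all $r$ at once.
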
 

\begin{proof}
We prove \eqref{factorisationeq} only for $r=2$. By induction, the result can then be easily extended to general $r$. If $(q_1,q_2)=1$ then
\begin{align*}
E(c,d;q_1q_2) &= \sideset{}{^*} \sum\limits_{z \bmod{ q_1q_2}} \e\left(\frac{cz^3-dz^2}{q_1q_2}\right) \\
&= \sideset{}{^*} \sum\limits_{x_1 \bmod{ q_1}} \ \sideset{}{^*} \sum\limits_{x_2 \bmod{ q_2}} \e\left(\frac{c(x_1q_2+x_2q_1)^3-d(x_1q_2+x_2q_1)^2}{q_1q_2}\right) \\
&= \sideset{}{^*} \sum\limits_{x_1 \bmod{ q_1}} \e\left(\frac{cq_2^2x_1^3-dq_2x_1^2}{q_1}\right) \sideset{}{^*} \sum\limits_{x_2 \bmod{ q_2}} \e\left(\frac{cq_1^2x_2^3-dq_1x_2^2}{q_2}\right).
\end{align*}
Making the change of variables $x_1=\overline{q_2}y_1$, $x_2=
\overline{q_1}y_2$ with $\overline{q_1}q_1\equiv \bmod{ \ q_2}$ and
$\overline{q_2}q_2\equiv \bmod{\  q_1}$, we deduce  that 
\begin{equation} \label{mult}
E(c,d;q_1q_2)=E(c\overline{q_2},d\overline{q_2};q_1)E(c\overline{q_1},d\overline{q_1};q_2).
\end{equation}
This completes the proof.
\end{proof}

It shall transpire 
that the evaluation of $E(c,d;p^t)$ for $p=2,3$ is more complicated
than for primes $p>3$. We begin by making a useful observation
concerning  these bad primes.

\begin{lemma} \label{badqs} Suppose that $4\mid q$ or $9\mid q$. Then we have
$$
E(c,d;q)\not=0 \Rightarrow \begin{cases} 2\mid c & \mbox{if $4\mid q$}, 
\\ 3\mid d & \mbox{if  $9\mid q$}. \end{cases}
$$
\end{lemma}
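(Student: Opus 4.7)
The plan is to exploit simple additive substitutions on $z$ in the definition \eqref{Eq0} of $E(c,d;q)$ that fix the summation range $(\ZZ/q\ZZ)^*$, but shift the exponent by a controllable amount modulo $q$. For the case $4\mid q$ this will produce a single relation forcing the vanishing unless $2\mid c$; for $9\mid q$ we will need to average two shifts together with the original sum and invoke orthogonality modulo~$3$.

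First assume $4\mid q$ and consider the substitution $z\mapsto z+q/2$. Since $q$ is even, any $z$ coprime to $q$ is odd, and since $q/2$ is even (as $4\mid q$) the shifted element $z+q/2$ is still odd; for any odd prime $p\mid q$ the shift is $\equiv z\bmod p$. Hence the substitution is a bijection on $(\ZZ/q\ZZ)^*$. A direct expansion gives $(z+q/2)^2=z^2+zq+q^2/4\equiv z^2\bmod q$, using $4\mid q$ to kill the last term. For the cube, the only non-trivial contribution modulo $q$ comes from $3z^2q/2$: writing $3z^2=2k+1$ (odd, since $z$ is odd) shows $3z^2q/2\equiv q/2\bmod q$, while the remaining terms $3zq^2/4$ and $q^3/8$ are divisible by $q$ when $4\mid q$. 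Thus $(z+q/2)^3\equiv z^3+q/2\bmod q$, which yields
\[
E(c,d;q)=\e(c/2)\,E(c,d;q)=(-1)^c E(c,d;q),
\]
and hence $E(c,d;q)=0$ unless $2\mid c$.

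Now assume $9\mid q$ and consider, for each $a\in\{1,2\}$, the substitution $z\mapsto z+aq/3$. The hypothesis $9\mid q$ ensures $3\mid q/3$, so $z+aq/3\equiv z\bmod 3$; for every prime $p\mid q$ with $p\neq3$ we likewise have $p\mid q/3$, so coprimality to $q$ is preserved and the map is a bijection on $(\ZZ/q\ZZ)^*$. Expanding $(z+aq/3)^2=z^2+2azq/3+a^2q^2/9\equiv z^2+2azq/3\bmod q$ (since $9\mid q$), and $(z+aq/3)^3=z^3+az^2q+a^2zq^2/3+a^3q^3/27\equiv z^3\bmod q$ (the last term is handled by $27\mid q^2$, which follows from $9\mid q$), we obtain
\[
E(c,d;q)=\sum_{z}{}^{\!\!*}\,\e\!\left(\frac{cz^3-dz^2}{q}\right)\e\!\left(\frac{-2adz}{3}\right).
\]
Since $-2\equiv1\bmod 3$ and $-4\equiv2\bmod 3$, choosing $a=1,2$ and adding these identities to the trivial case $a=0$ gives
\[
3E(c,d;q)=\sum_{z}{}^{\!\!*}\,\e\!\left(\frac{cz^3-dz^2}{q}\right)\bigl(1+\e(dz/3)+\e(2dz/3)\bigr).
\]
The inner bracket equals $3$ when $3\mid dz$ and $0$ otherwise; since $(z,3)=1$, this forces $E(c,d;q)=0$ whenever $3\nmid d$.

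The only delicate point is the bookkeeping of which powers of $2$ or $3$ survive modulo $q$, and this is exactly where the hypotheses $4\mid q$ and $9\mid q$ (rather than merely $2\mid q$ or $3\mid q$) are needed to guarantee that the cubic substitution produces a clean constant shift in the exponent rather than a more complicated $z$-dependent residue. No reduction to prime powers via Lemma~\ref{factorisation} is required; the argument works directly on $q$.
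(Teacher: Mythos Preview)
Your proof is correct. It is a variant of the paper's argument rather than a fundamentally different approach: both exploit a shift $z\mapsto z+(\text{multiple of }q/p)$ and the fact that, under the hypothesis $p^2\mid q$, the cubic and quadratic increments reduce to something very simple modulo $q$, producing an orthogonality relation. The paper first reduces to $q=p^t$ via the multiplicativity Lemma~\ref{factorisation} and then writes $z=x+yp^{t-1}$, summing over $y\bmod p$ to extract the character sum $\sum_y \e((3cx^2-2dx)y/p)$, which vanishes for every unit $x$ unless $p\mid c$ (for $p=2$) or $p\mid d$ (for $p=3$). You instead work directly with general $q$: for $p=2$ a single shift by $q/2$ already gives the multiplicative relation $E=(-1)^cE$, while for $p=3$ you average the three shifts $aq/3$ to recover orthogonality. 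The advantage of your route is that it is self-contained and avoids invoking the factorisation lemma; the paper's route is more uniform in $p$ and dovetails with the subsequent explicit evaluations at prime powers, where the same $z=x+yp^{t-1-\nu}$ decomposition is reused.
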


\begin{proof}
In view of Lemma \ref{factorisation} it will suffice to show that 
$$
E(c,d;p^t)\not=0 \Rightarrow  \begin{cases} 2\mid c & \mbox{if $p=2$}, 
\\ 3\mid d & \mbox{if  $p=3$}, \end{cases}
$$
if $t\geq 2$. 
But clearly 
\begin{align*} 
E(c,d;p^t) &= \sideset{}{^*} \sum\limits_{x=1}^{p^{t-1}} \sum\limits_{y=1}^p \e\left(\frac{c(x+y p^{t-1})^3-d(x+y p^{t-1})^2}{p^t} \right)\\
&= \sideset{}{^*} \sum\limits_{x=1}^{p^{t-1}} \e\left(\frac{cx^3-dx^2}{p^t}\right)\sum\limits_{y=1}^p \e\left(\frac{3cx^2-2dx}{p}\cdot y \right),
 \end{align*}
and if $p=2$ (resp. $p=3$) the inner sum in the last line equals $0$ unless
$2\mid c$ (resp. $3\mid d$).
\end{proof}

For $q\in \mathbb{N}$, we set
\begin{equation} \label{w2def}
w_2(q):= \begin{cases} 1 & \mbox{if  $4\mid q$},  \\ 
0 & \mbox{otherwise,} \end{cases}
\end{equation}
and
\begin{equation} \label{w3def}
w_3(q):= \begin{cases} 1 & \mbox{if $9\mid q$}, \\
 0 & \mbox{otherwise.} \end{cases}
\end{equation}
By Lemma \ref{badqs} it will suffice to study exponential sums of the form
$$
E_{f_2,f_3}(c,d;q):=E(2^{f_2}c,3^{f_3}d;q),
$$
where 
\begin{equation} \label{generalassump}
w_2(q)\le f_2 \le 1 \quad \mbox{and} \quad w_3(q)\le f_3\le 1.
\end{equation}
We will assume \eqref{generalassump} throughout the sequel. We note that equation \eqref{mult} translates into
\begin{equation} \label{mult1}
E_{f_2,f_3}(c,d;q_1q_2)=E_{f_2,f_3}(c\overline{q_2},d\overline{q_2};q_1)E_{f_2,f_3}(c\overline{q_1},d\overline{q_1};q_2). 
\end{equation}

\subsection{Prime power moduli}
In the following we evaluate the exponential sum $E_{f_2,f_3}(c,d;q)$ for prime power moduli $q=p^t$. We set
\begin{equation} \label{rpdef}
r(p):=
\begin{cases} 6 & \mbox{if  $p=2$},   \\ 
5 & \mbox{if $p=3$}, \\  
2 & \mbox{otherwise.} \end{cases}
\end{equation}
We shall treat $E_{f_2,f_3}(c,d;p^t)$ differently for the cases $t<r(p)$ and $t\ge r(p)$.

\subsubsection*{Case 1: $1\le t<r(p)$}

For $p=2,3$, we shall simply use the trivial estimate
$$
|E_{f_2,f_3}(c,d;p^t)|\le 100 \quad \mbox{if } p=2,3 \mbox{ and } t<r(p).
$$
Assume now that $p>3$ and $t<r(p)$, in which case we have $t=1$. If
$p\mid c$ and $p\mid d$, then we have  
$$
E_{f_2,f_3}(c,d;p)=E_{f_2,f_3}(0,0;p)=\varphi(p).
$$
 If $p\mid c$ and $p\nmid d$, then $E_{f_2,f_3}(c,d;p)$ is a quadratic Gauss sum
\begin{align*}
E_{f_2,f_3}(c,d;p)=E_{f_2,f_3}(0,d;p)=\sum\limits_{z=1}^{p-1}
\e\left(-\frac{3^{f_3}dz^2}{p} \right)
&= \sum\limits_{z=1}^{p}
\e\left(-\frac{3^{f_3}dz^2}{p}\right)-1\\ 
&= \sqrt{p}
\left(\frac{-3^{f_3}d}{p}\right)\epsilon_p-1,  
\end{align*}
where $\left(\frac{\cdot}{p}\right)$ is the Legendre symbol and 
\begin{equation} \label{epsdef}
\epsilon_q := \begin{cases} 1 & \mbox{ if $q\equiv 1 \bmod{ 4}$}, \\
i & \mbox{ if $q\equiv -1 \bmod{ 4}$}. \end{cases}
\end{equation}
If $p\nmid c$, then there is no simple closed expression for
$E_{f_2,f_3}(c,d;p)$, but by \cite{weil} we have the Weil bound
$$
|E_{f_2,f_3}(c,d;p)| \le 2\sqrt{p}.
$$

\subsubsection*{Case 2: $t\ge r(p)$}  Here we first prove the
following result.

\begin{lemma} \label{mncaseslemma} Let $p$ be a prime and let $t\ge
  r(p)=r$. Then we have the following.
\begin{itemize}
\item[{\rm (i)}] 
If $p^{t-r+2} \nmid c$ and $p^{t-r+2} \nmid d$, then
$$
E_{f_2,f_3}(c,d;p^t)\not=0 \Rightarrow (c,p^t)=p^s=(d,p^t) \mbox{ for some $s$.} 
$$
In this case, we have 
\begin{equation} \label{red1}
E_{f_2,f_3}(c,d;p^t)=p^s E_{f_2,f_3}\left(\frac{c}{p^s},\frac{d}{p^s}; p^{t-s}\right).
\end{equation}
\item[{\rm (ii)}] If $p^{t-r+2}\mid c$ or $p^{t-r+2}\mid d$, then 
$$
E_{f_2,f_3}(c,d;p^t)\not=0 \Rightarrow p^{t-r+1}\mid (c,d).
$$
In this case, we have 
$$
E_{f_2,f_3}(c,d;p^t)=p^{t-r+1} E_{f_2,f_3}\left(\frac{c}{p^{t-r+1}},\frac{d}{p^{t-r+1}}; p^{r-1}\right).
$$
\end{itemize}
\end{lemma}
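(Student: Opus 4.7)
The plan is to prove the lemma by iterating a Hensel-type substitution on the inner variable. Set $f(z) := 2^{f_2}cz^3 - 3^{f_3}dz^2$ and write $z = x + p^{t-1}y$ with $x$ modulo $p^{t-1}$ and $y$ modulo $p$. The hypothesis $t \geq r(p) \geq 2$ ensures $2(t-1) \geq t$, so the quadratic and cubic contributions in $y$ drop out modulo $p^t$ and
\[
f(x + p^{t-1}y) \equiv f(x) + f'(x) p^{t-1} y \pmod{p^t},\qquad f'(x) = x\bigl(3\cdot 2^{f_2}cx - 2\cdot 3^{f_3}d\bigr).
\]
Summing over $y$ modulo $p$ collapses the exponential sum to
\[
E_{f_2,f_3}(c,d;p^t) = p \sumstar_{\substack{x\,\bmod\,p^{t-1}\\ p\,\mid\, f'(x)}} \e\!\left(\frac{f(x)}{p^t}\right),
\]
and combined with $(x,p)=1$ the divisibility $p\mid f'(x)$ becomes $p\mid 3\cdot 2^{f_2}cx - 2\cdot 3^{f_3}d$. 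The rest of the argument rests on two ingredients. First, whenever $p^s\mid (c,d)$ the definition immediately yields the reduction identity
\[
E_{f_2,f_3}(c,d;p^t) = p^s\, E_{f_2,f_3}(c/p^s,d/p^s;p^{t-s}),
\]
since the phase then depends only on $z$ modulo $p^{t-s}$ and each such residue coprime to $p$ lifts to $p^s$ residues modulo $p^t$ coprime to $p$. Second, whenever the Hensel constraint on $x$ is inconsistent with $(x,p)=1$, the sum is forced to vanish.

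For $p > 3$ the coefficients $2^{f_2}$ and $3^{f_3}$ are units modulo $p$, and the condition $p\mid 3\cdot 2^{f_2}cx - 2\cdot 3^{f_3}d$ with $(x,p)=1$ is already informative: if exactly one of $c,d$ is divisible by $p$ then $E=0$; if neither is then $x$ is pinned down modulo $p$; if both are then one strips off a common factor of $p$ via the reduction identity and iterates on the reduced sum modulo $p^{t-1}$. Repeating at most $t-1$ times settles both (i) and (ii) in this easy case. For $p\in\{2,3\}$ (forcing $f_2=1$ or $f_3=1$ respectively) the same strategy applies, but the factor of $2$ in $2\cdot 3^{f_3}$ when $p=2$, or the factor of $3$ in $3\cdot 2^{f_2}$ when $p=3$, renders the first Hensel step vacuous, and one must pass to the deeper substitution $z = x + p^{t-j}y$ with $j=2,3,\ldots$ to extract a genuine constraint on $(c,d)$. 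The values $r(2)=6$ and $r(3)=5$ are precisely the thresholds at which, after $r-1$ iterations of such a descent, one can force $v_p(c) = v_p(d)$ as in (i), or $p^{t-r+1}\mid(c,d)$ as in (ii).

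The main obstacle is the careful bookkeeping for the small primes $p=2,3$. Here one must iterate the substitution $z = x + p^{t-j}y$ for successive $j$, each time balancing the loss of information caused by the small-prime factors inside $f'$ against the gain from the higher-order expansion (which only stays integral and truncates cleanly for $2(t-j)\geq t$, i.e.\ $j\leq t/2$). One checks case by case that after $r-1$ such descents the non-vanishing of the sum forces precisely the valuation statements in (i) and (ii), and the residual modulus drops to exactly $p^{r-1}$. Parts (i) and (ii) then follow by combining this descent with the reduction identity: in (i) we extract $p^s$ with $s = v_p(c) = v_p(d) \leq t-r+1$; in (ii) the hypothesis that $p^{t-r+2}$ divides one of $c,d$ forces the descent to run to its full depth $t-r+1$, yielding $p^{t-r+1}\mid(c,d)$ and the residual modulus $p^{r-1}$.
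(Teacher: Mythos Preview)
Your approach is the same as the paper's: both rest on (a) the reduction identity $E_{f_2,f_3}(c,d;p^t)=p\,E_{f_2,f_3}(c/p,d/p;p^{t-1})$ whenever $p\mid(c,d)$, and (b) a Hensel-type substitution showing $E_{f_2,f_3}(c,d;p^t)=0$ whenever $(c,p)\neq(d,p)$, followed by induction on $t$. For $p>3$ your argument is complete and matches the paper exactly.

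For $p\in\{2,3\}$ your treatment is too vague and contains a confused claim. You correctly note that the depth-$1$ substitution $z=x+p^{t-1}y$ is vacuous, but you do not need to ``iterate the substitution for successive $j$'' or perform ``$r-1$ iterations''. A \emph{single} depth-$2$ substitution already does the job. The paper handles all primes uniformly by writing $z=x+yp^{t-1-\nu}$ with $\nu=1$ for $p\in\{2,3\}$ and $\nu=0$ otherwise, and summing $y$ modulo $p^{1+\nu}$. The resulting constraint $p^{1+\nu}\mid x\bigl(3\cdot 2^{f_2}cx-2\cdot 3^{f_3}d\bigr)$ has, by the standing assumption $f_p=1$, a forced factor of $p$; cancelling it leaves precisely $3^{1-f_3}cx\equiv 2^{1-f_2}d\pmod p$, which with $(x,p)=1$ gives $(c,p)=(d,p)$, the same conclusion as in the easy case. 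From there the induction on $t$ proceeds identically for all primes. Your explanation of the specific values $r(2)=6$ and $r(3)=5$ is also off: for the present lemma one only needs $t\geq 2(1+\nu)$, i.e.\ $t\geq 4$ when $p\in\{2,3\}$; the larger thresholds are imposed by the subsequent explicit evaluation (Lemma~\ref{explicitevaluation}), not by this argument, and the number of reduction steps in case~(ii) is $t-r+1$, not $r-1$.
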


\begin{proof} We recall that we assume \eqref{generalassump} holds, where
  here $q=p^t$. Hence, under the conditions of this lemma, we have
  $f_2=1$ if $p=2$ and $f_3=1$ if $p=3$.  

Our starting point is the following computation, valid for all primes $p$. Set 
$$
\nu=\nu(p):=\begin{cases} 1 & \mbox{if  $p=2,3$},  \\ 0 & 
\mbox{otherwise.} \end{cases}
$$
Then
\begin{align*}
E_{f_2,f_3}(c,d;p^t) &=
 \sideset{}{^*} \sum\limits_{x=1}^{p^{t-1-\nu}} \ \sum\limits_{y=1}^{p^{1+\nu}} 
\e\left(\frac{2^{f_2}c(x+yp^{t-1-\nu})^3-3^{f_3}d(x+yp^{t-1-\nu})^2}{p^t}
\right)\\ 
&= \sideset{}{^*} \sum\limits_{x=1}^{p^{t-1-\nu}} \e\left(\frac{2^{f_2}cx^3-3^{f_3}dx^2}{p^t}\right)\sum\limits_{y=1}^{p^{1+\nu}} 
\e\left(\frac{3\cdot 2^{f_2}cx^2-2\cdot 3^{f_3}dx}{p^{1+\nu}}\cdot y \right) \\
&= p^{1+\nu} \sideset{}{^*} \sum\limits_{\substack{x=1\\ 3\cdot 2^{f_2}cx \equiv 2\cdot 3^{f_3}d \bmod{ p^{1+\nu}}}}^{p^{t-1-\nu}} \e\left(\frac{2^{f_2}cx^3-3^{f_3}dx^2}{p^t}\right)\\
&= p^{1+\nu} \sideset{}{^*} \sum\limits_{\substack{x=1\\ 3^{1-f_3}cx
 \equiv 2^{1-f_2}d \bmod{ p}}}^{p^{t-1-\nu}}
 \e\left(\frac{2^{f_2}cx^3-3^{f_3}dx^2}{p^t}\right). 
\end{align*}
 We observe that this equals $0$ if $(c,p)\not= (d,p)$ because in this case the congruence 
 $$
 3^{1-f_3}cx\equiv 2^{1-f_2}d \bmod{ p}
 $$ 
 is not solvable for $x$ coprime to $p$. Moreover, if $(c,p)=p=(d,p)$, then we have 
\begin{equation} \label{reduction}
E_{f_2,f_3}(c,d;p^t)=pE_{f_2,f_3}\left(\frac{c}{p},\frac{d}{p};p^{t-1}\right).
\end{equation}
Using this observation, we prove Lemma \ref{mncaseslemma} by induction
over $t$. The base case $t=r$ is a trivial consequence of our work so
far, where, for the proof of \eqref{red1}, it is important to note that $t>s$ by the assumptions in (i).

Now assume that $t>r$ and that the
assertion has been established for the exponent $t-1$.  
First, let the conditions $p^{t-r+2}\nmid c$ and $p^{t-r+2} \nmid d$
in  case (i) be satisfied and suppose that
$E_{f_2,f_3}(c,d;p^t)\not=0$. Then, by the afore-mentioned
observation, we have $(c,p^t)=1$ if and only if $(d,p^t)=1$. If
$(c,p^t)>1$ and $(d,p^t)>1$, then from \eqref{reduction} and the
induction hypothesis, it follows that
$(c/p,p^{t-1})=p^{s-1}=(d/p,p^{t-1})$ and hence $(c,p^t)=p^s=(c,p^t)$
for some $s$, and 
$$
E_{f_2,f_3}(c,d;p^t)=pE_{f_2,f_3}\left(\frac{c}{p},\frac{d}{p};p^{t-1}\right)=p^s E_{f_2,f_3}\left(\frac{c}{p^s},\frac{d}{p^s}; p^{t-s}\right),
$$
as claimed.

For case (ii) we suppose that $p^{t-r+2}\mid c$ and
$E_{f_2,f_3}(c,d;p^t)\not=0$. Then, by our observation above, we have
$(d,p^t)>1$, and from \eqref{reduction} and the induction hypothesis,
it follows that $p^{t-r}\mid (d/p)$ and hence $p^{t-r+1}\mid d$. Thus 
$$
E_{f_2,f_3}(c,d;p^t)=pE_{f_2,f_3}\left(\frac{c}{p},\frac{d}{p};p^{t-1}\right)=p^{t-r+1}
E_{f_2,f_3}\left(\frac{c}{p^{t-r+1}},\frac{d}{p^{t-r+1}};
  p^{r-1}\right), 
$$
as claimed. The case $p^{t-r+2}\mid d$ is handled  similarly.
\end{proof}

Let $t\geq r(p)=r$. By Lemma \ref{mncaseslemma} we have 
$$
E_{f_2,f_3}(c,d;p^t)=p^{t-r+1}E_{f_2,f_3}\left(\frac{c}{p^{t-r+1}},\frac{d}{p^{t-r+1}};p^{r-1}\right)
$$
if $p^{t-r+1}\mid (c,d)$, and the exponential sum on the right-hand
side has been dealt with before. If $p^{t-r+1} \nmid (c,d)$, then by
the same Lemma \ref{mncaseslemma}, we have $(c,p^t)=p^s=(d,p^t)$ for
some $s\le t-r$. The following result 
gives an explicit evaluation of $E_{f_2,f_3}(c,d;p^t)$ in this
situation.  
Before stating the lemma, we define a residue symbol 
\begin{equation} \label{strangesymbol}
\left\{\frac{\cdot }{2^u}\right\}:=\begin{cases}
\frac{1}{\sqrt{2}}  \left(1+\e(\cdot /4)\right) &  \mbox{ if  $2\nmid
  u$},  \\ \e(\cdot /8) & \mbox{ if $2\mid u$}. 
\end{cases}
\end{equation}
for $u\ge 3$. Furthermore for any odd integer $n$ we let
$\left(\frac{\cdot}{n}\right)$ denote the Jacobi symbol. Throughout the following, in place of $\overline{x}$, we often use the notation $x^{-1}$ for the multiplicative inverse. 
We now have the following result. 

\begin{lemma} \label{explicitevaluation} Let $p$ be a prime and $t\ge r(p)=r$. Suppose that $(c,p^t)=p^s=(d,p^t)$ and $s\le t-r$. Set $c_1=c/p^s$, $d_1=d/p^s$ and $u=t-s$. Suppose that
$$
A\equiv 4^{1-f_2}\cdot 27^{f_3-1} \bmod{p^u}.
$$
Then
\begin{equation} \label{primepowersexplicit}
E_{f_2,f_3}(c,d;p^t)= p^{t-u/2} \e\left(\frac{-A  \overline{c_1}^2d_1^3}{p^{u}}\right) R(f_3,d_1,p^u),
\end{equation}
where 
$$
R(f_3,d_1,p^u):=\begin{cases}
\left(\frac{3^{f_3}d_1}{p^u}\right)\epsilon_{p^u} & 
\mbox{ if $p>3$},  \\
\sqrt{3} \left(\frac{d_1}{3^{u-1}}\right)\epsilon_{3^{u-1}} & \mbox{
  if $p=3$}, \\ 
\sqrt{2} \left\{\frac{3^{f_3}d_1}{2^u}\right\} & \mbox{ if $p=2$}.
\end{cases} 
$$
\end{lemma}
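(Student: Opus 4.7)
The plan is to reduce $E_{f_2,f_3}(c,d;p^t)$ to a primitive case via Lemma \ref{mncaseslemma}(i), then to complete the cube around the stationary point modulo $p^u$ and reduce the resulting cubic exponential sum to a classical quadratic Gauss sum. First, an $s$-fold application of Lemma \ref{mncaseslemma}(i) gives $E_{f_2,f_3}(c,d;p^t) = p^s E_{f_2,f_3}(c_1, d_1; p^u)$ with $(c_1 d_1, p) = 1$, so it suffices to evaluate the right-hand side. Writing $g(z) := 2^{f_2} c_1 z^3 - 3^{f_3} d_1 z^2$, I would locate the stationary point via $g'(z_0) \equiv 0 \bmod{p^u}$, giving $z_0 \equiv 2 \cdot 3^{f_3-1} d_1 \cdot \overline{2^{f_2} c_1} \bmod{p^u}$; the conditions $f_2 \ge w_2(q)$ and $f_3 \ge w_3(q)$ ensure that any apparent factor of $p$ in this denominator cancels a factor in the numerator when $p \in \{2,3\}$, so $z_0$ is a $p$-adic unit.

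Next I would shift $z = z_0 + y$ and apply the integer Taylor expansion of $g$: the linear term in $y$ vanishes modulo $p^u$ by construction, the quadratic coefficient simplifies to $3^{f_3} d_1$ (using $3 \cdot 2^{f_2} c_1 z_0 \equiv 2 \cdot 3^{f_3} d_1$), and a direct calculation gives $g(z_0) \equiv -A \overline{c_1}^2 d_1^3 \bmod{p^u}$, matching the phase in \eqref{primepowersexplicit}. Pulling this phase out, the problem reduces to evaluating
\[
T := \sum_{\substack{y \bmod p^u \\ y \not\equiv -z_0 \bmod p}} \e\left(\frac{3^{f_3} d_1 y^2 + 2^{f_2} c_1 y^3}{p^u}\right).
\]
To handle $T$, I would eliminate the cubic term by an iterative polynomial substitution $y = \phi(v) = v + \beta_1 v^2 + \beta_2 v^3 + \cdots$, determined recursively so that $3^{f_3} d_1 \phi(v)^2 + 2^{f_2} c_1 \phi(v)^3 \equiv 3^{f_3} d_1 v^2 \bmod{p^u}$. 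The leading coefficient $\beta_1 = -2^{f_2-1} c_1 \overline{3^{f_3} d_1}$ and its successors are well-defined modulo $p^u$ precisely because $u \ge r(p)$; the choice of $r(p)$ is dictated by the need to absorb the factors of $2$ and $3$ that surface in this Hensel-type recursion. Since $\phi$ is bijective on $\ZZ/p^u\ZZ$, the sum $T$ transforms into a pure quadratic Gauss sum minus the contribution of one excluded residue class $v_* \bmod p$; the latter vanishes by orthogonality after writing $v = v_* + p w$ and isolating an inner character sum $\sum_{\delta \bmod p} \e(a\delta/p)$ in which $a$ is a nonzero multiple of $v_*$.

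It then remains to evaluate $\sum_{v \bmod p^u} \e(3^{f_3} d_1 v^2/p^u)$ by classical Gauss-sum formulas. For $p>3$ this is $\left(\frac{3^{f_3} d_1}{p^u}\right)\epsilon_{p^u} p^{u/2}$, directly matching $p^{u/2} R(f_3, d_1, p^u)$. For $p = 3$ the factor $3^{f_3} = 3$ in the quadratic coefficient lowers the effective modulus by one, giving $3 \sum_{v \bmod 3^{u-1}} \e(d_1 v^2/3^{u-1}) = 3^{(u+1)/2}\left(\frac{d_1}{3^{u-1}}\right)\epsilon_{3^{u-1}}$ and thus the $\sqrt{3}$ prefactor in $R$; the analogous argument for $p = 2$, using $f_2 = 1$ and the standard $2$-adic Gauss-sum evaluation that distinguishes odd from even $u$, produces the $\sqrt{2}\{3^{f_3}d_1/2^u\}$ prefactor. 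Combining with the factor $p^s$ from the initial reduction gives $p^{s + u/2} = p^{t - u/2}$, as claimed. The main obstacle is the bookkeeping at $p \in \{2,3\}$: each of the cube-completion, the Hensel substitution, and the Gauss-sum evaluation picks up anomalous factors of $2$ or $3$ that must be carefully tracked in order to recover the precise form of $R(f_3, d_1, p^u)$.
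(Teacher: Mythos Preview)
Your reduction via Lemma~\ref{mncaseslemma}(i) and the shift $z = z_0 + y$ are correct, as are the computations $g(z_0) \equiv -A\,\overline{c_1}^{\,2} d_1^3$ and of the quadratic Taylor coefficient $3^{f_3}d_1$. The gap lies in your treatment of $T$: the substitution $\phi(v) = v + \beta_1 v^2 + \cdots$ you propose does not define a map on $\ZZ/p^u\ZZ$. The recursion determines $\phi$ only as a \emph{formal} power series; for $p>3$ the coefficient $\beta_1 = -2^{f_2-1}c_1\overline{3^{f_3}d_1}$ is a $p$-adic unit and, since the cubic term in $h(y) = 3^{f_3}d_1 y^2 + 2^{f_2}c_1 y^3$ is not a $p$-adic perturbation of the quadratic one, the successive $\beta_k$ do not tend to zero $p$-adically. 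In fact no bijection of $\ZZ/p^u\ZZ$ (nor of the subset $\{y \not\equiv -z_0 \bmod p\}$) can intertwine $h(y)$ with $3^{f_3}d_1 v^2$: a direct count shows that $h(y) \equiv 0 \bmod p^u$ has $p^{\lfloor u/2\rfloor} + 1$ solutions (those with $p^{\lceil u/2\rceil} \mid y$, arising from the double root at $0$, together with the Hensel lift of the simple root $y \equiv -3^{f_3}d_1\overline{2^{f_2}c_1} \bmod p$), whereas $3^{f_3}d_1 v^2 \equiv 0 \bmod p^u$ has only $p^{\lfloor u/2\rfloor}$. Since that simple root is not congruent to $-z_0$ modulo $p$, excluding one residue class on each side cannot repair the discrepancy. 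The situation for $p \in \{2,3\}$ is no better.

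The paper proceeds by a different mechanism. Rather than a global change of variable, it writes $z = x + y p^v$ with $v = \lfloor u/2\rfloor$ (with a minor variant when $p \in \{2,3\}$), so that the expansion of $g(x + y p^v)$ in $y$ truncates modulo $p^u$ after the linear or quadratic term. The inner sum over $y$ is then a geometric or quadratic Gauss sum; the former forces $x$ into the single residue class $3 \cdot 2^{f_2}c_1 x \equiv 2 \cdot 3^{f_3}d_1 \bmod p^v$, and the short sum over the remaining lifts of $x$ is either a single term (when $u$ is even and $p>3$) or a quadratic Gauss sum modulo $p$, $4$ or $8$. This $p$-adic stationary-phase decomposition is what actually produces $R(f_3,d_1,p^u)$, and it is the step you need in place of the invocation of $\phi$.
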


\begin{proof}
By \eqref{red1}, we have
\begin{equation} \label{red4}
E_{f_2,f_3}(c,d;p^t)=p^sE_{f_2,f_3}(c_1,d_1;p^u)=p^sE(p^u),
\end{equation} 
say. 
We further observe that
$$
(c_1,p)=1=(d_1,p), \quad  u\ge r.
$$
Now it remains to consider $E(p^u)$.

If $u$ is even then we set $u=2v$ and if $u$ is odd then we set $u=2v+1$. Note that $v\ge 1$ since $u\ge 2$. The starting point of our proof are the identities
\begin{equation} 
\begin{split}
\label{evencase}
E(p^u) &= \sideset{}{^*} \sum\limits_{x \bmod{ p^v}} \sum\limits_{y \bmod{ p^v}} \e\left(\frac{2^{f_2}c_1(x+yp^{v})^3-3^{f_3}d_1(x+yp^{v})^2}{p^{2v}} \right)\\
&= \sideset{}{^*} \sum\limits_{x \bmod{ p^{v}}} \e\left(\frac{2^{f_2}c_1x^3-3^{f_3}d_1x^2}{p^{2v}}\right)\sum\limits_{y \bmod{ p^v}} \e\left(\frac{3\cdot 2^{f_2}c_1x^2-2\cdot 3^{f_3}d_1x}{p^v}\cdot y \right) \\
&= p^v\sideset{}{^*} \sum\limits_{\substack{x \bmod{ p^v}\\ 3 \cdot 2^{f_2}c_1x \equiv 2 \cdot 3^{f_3}d_1 \bmod{ p^v}}} \e\left(\frac{2^{f_2}c_1x^3-3^{f_3}d_1x^2}{p^{2v}}\right)
 \end{split}
\end{equation}
for $u$ even, and 
\begin{equation} 
\begin{split}
\label{oddcase} 
E(p^u) &= \sideset{}{^*} \sum\limits_{x \bmod{ p^{v+1}}} \sum\limits_{y \bmod{ p^v}} \e\left(\frac{2^{f_2}c_1(x+yp^{v+1})^3-3^{f_3}d_1(x+yp^{v+1})^2}{p^{2v+1}} \right)\\
&= \sideset{}{^*} \sum\limits_{x \bmod{ p^{v+1}}} \e\left(\frac{2^{f_2}c_1x^3-3^{f_3}d_1x^2}{p^{2v+1}}\right)\sum\limits_{y \bmod{ p^v}} \e\left(\frac{3\cdot 2^{f_2}c_1x^2-2\cdot 3^{f_3}d_1 x}{p^v}\cdot y \right) \\
&= p^v\sideset{}{^*} \sum\limits_{\substack{x \bmod{ p^{v+1}}\\ 3 \cdot 2^{f_2}c_1x \equiv 2 \cdot 3^{f_3}d_1\bmod{ p^{v}}}} \e\left(\frac{ 2^{f_2}c_1x^3-3^{f_3}d_1x^2}{p^{2v+1}}\right)
\end{split}
\end{equation}
for $u$ odd. In the following we treat the cases $p>3$, $p=3$, $p=2$, $u$ even, $u$ odd separately.

\subsubsection*{Case I: $p>3$} For convenience, we use the notations 
\begin{equation} \label{uppsala}
m=2^{f_2}c_1,  \quad n=3^{f_3}d_1. 
\end{equation}
Suppose first that $u$ is even.  
From \eqref{evencase}, it follows that
$$
E(p^u) = p^v\sideset{}{^*} \sum\limits_{\substack{x \bmod{ p^v}\\ 3mx \equiv 2n \bmod{ p^v}}} \e\left(\frac{mx^3-nx^2}{p^{2v}}\right).
$$
 The congruence in the last line has precisely one solution modulo
 $p^v$, and using Hensel's lemma, this solution can be lifted uniquely
 to a solution of $3mx \equiv 2n \bmod{ p^{2v}}$, given by 
$$
 x\equiv \overline{3m}\cdot 2n \bmod{ p^{2v}},
$$
 where $\overline{3m}$ is the multiplicative inverse of $3m$ modulo
 $p^{2v}$. Hence the sum collapses into  a single term  
\begin{align*}
\e\left(\frac{mx^3-nx^2}{p^{2v}}\right) =
\e\left(\frac{m(\overline{3m}\cdot 2n)^3-n(\overline{3m}\cdot
    2n)^2}{p^{2v}}\right)
&= \e\left(\frac{(8\cdot \overline{27}-4\cdot
    \overline{9})\overline{m}^2n^3}{p^{2v}}\right)
\\ &=\e\left(\frac{-4\cdot
    \overline{27}\cdot \overline{m}^2n^3}{p^{2v}}\right),
\end{align*}
whence 
\begin{equation} \label{cop3}
E(p^u)=p^{u/2} \e\left(\frac{-4\cdot \overline{27}\cdot \overline{m}^2n^3}{p^{u}}\right).
\end{equation}

Next suppose that $u$ is odd.  From \eqref{oddcase}, it follows that
\begin{equation} 
\begin{split}
\label{cop4} 
E(p^u) &= p^v\sideset{}{^*} \sum\limits_{\substack{x \bmod{ p^{v+1}}\\ 3mx \equiv 2n \bmod{ p^{v}}}} \e\left(\frac{mx^3-nx^2}{p^{2v+1}}\right)\\
&= p^v\sum\limits_{h=1}^p \e\left(\frac{m(x+hp^v)^3-n(x+hp^v)^2}{p^{2v+1}}\right) \\
&= p^v \e\left(\frac{mx^3-nx^2}{p^{2v+1}}\right)\sum\limits_{h=1}^p \e\left(\frac{3mx^2hp^v+3mxh^2p^{2v}-2nxhp^v-nh^2p^{2v}}{p^{2v+1}}\right), 
\end{split}
\end{equation}
where in the last two lines $x$ satisfies the congruence  $3mx \equiv
2n \bmod{ p^{v}}$. Again this solution  can be
lifted uniquely to a solution 
$$
 x\equiv \overline{3m}\cdot 2n \bmod{ p^{2v+1}},
$$
 where $\overline{3m}$ is the multiplicative inverse of $3m$ modulo
 $p^{2v+1}$.  Now, similarly as in the case of even $u$,  the numerator in the
 first exponential in the last line of \eqref{cop4} takes the form  
$mx^3-nx^2\equiv -4 \cdot \overline{27} \cdot \overline{m}^2n^3$ mod $p^{2v+1}$.
Moreover, we have $3mx^2 \equiv \overline{3m} \cdot 4n^2$ mod $p^{2v+1}$, $2nx\equiv \overline{3m} \cdot 4n^2$ mod $p^{2v+1}$ and $3mx\equiv 2n$ mod $p^{2v+1}$. Therefore, the numerator in the second exponential in the last line of \eqref{cop4} simplifies into $nh^2p^{2v}$. It follows that
$$
E(p^u)=p^v \e\left(\frac{-4 \cdot \overline{27} \cdot
    \overline{m}^2n^3}{p^{2v+1}}\right) \sum\limits_{h=1}^p
\e\left(\frac{nh^2}{p}\right). 
$$
Evaluating the quadratic Gauss sum on the right-hand side, we obtain
$$
E(p^u)=p^{v+1/2} \e\left(\frac{-4 \cdot \overline{27} \cdot \overline{m}^2n^3}{p^{2v+1}}\right) \left(\frac{n}{p}\right) \epsilon_p=
p^{u/2} \e\left(\frac{-4 \cdot \overline{27} \cdot \overline{m}^2n^3}{p^{u}}\right) 
\left(\frac{n}{p}\right) \epsilon_p.
$$
Combining this with \eqref{cop3} into a single equation, we deduce that
\begin{equation} \label{finalprimepowers}
E(p^u)=p^{u/2} \e\left(\frac{-4 \cdot \overline{27} \cdot \overline{m}^2n^3}{p^{u}}\right) 
\left(\frac{n}{p^u}\right) \epsilon_{p^u},
\end{equation}
where $\epsilon_{p^u}$ is defined as in \eqref{epsdef}.  Combining
\eqref{red4}, \eqref{uppsala} and \eqref{finalprimepowers}, we obtain
\eqref{primepowersexplicit} for $p>3$. 

\subsubsection*{ Case II: $p=3$} 

We note that $f_3=1$ by \eqref{generalassump} and use the notations
\begin{equation} \label{uppsala1}
m:=2^{f_2}c_1, \quad n:=d_1. 
\end{equation}
Suppose first that $u$ is even.  Note that $v\ge 3$ since $u\ge
6$. From \eqref{evencase} it follows that 
\begin{equation} 
\begin{split}
\label{cop23}
E(3^u) = ~& 
3^v\sideset{}{^*} \sum\limits_{\substack{x \bmod{ 3^v}\\ mx \equiv 2n
    \bmod{ 3^{v-1}}}} \e\left(\frac{mx^3-3nx^2}{3^{2v}}\right)\\ 
= ~& 
3^v\sum\limits_{h=1}^3 \e\left(\frac{m(x+h\cdot 3^{v-1})^3-3n(x+h\cdot 3^{v-1})^2}{3^{2v}}\right)\\
= ~& 3^v \e\left(\frac{mx^3-3nx^2}{3^{2v}}\right)\\
&\times \sum\limits_{h=1}^3
\e\left(\frac{mx^2h\cdot 3^v+mxh^2\cdot 3^{2v-1}-2nxh\cdot
    3^v-nh^2\cdot 3^{2v-1}}{3^{2v}}\right),  
\end{split}
\end{equation}
where in the last two lines $x$ satisfies the congruence  $mx \equiv 2n \bmod{ 3^{v-1}}$. The only solution of this congruence can be lifted uniquely to a solution of $mx \equiv 2n \bmod{ 3^{2v}}$, given by
$$
 x\equiv \overline{m}\cdot 2n \bmod{ 3^{2v}},
$$
 where $\overline{m}$ is the multiplicative inverse of $m$ modulo $3^{2v}$.  Now, the numerator in the first exponential in the last line of \eqref{cop23} takes the form 
$mx^3-3nx^2\equiv -4 \cdot \overline{m}^2n^3$ mod $3^{2v}$.
Moreover 
the numerator in the second exponential in the last line of
 \eqref{cop23} simplifies into $nh^2\cdot 3^{2v-1}$. It follows that 
$$
E(3^u)=3^v \e\left(\frac{-4 \cdot \overline{m}^2n^3}{3^{2v}}\right) \sum\limits_{h=1}^3 \e\left(\frac{nh^2}{3}\right).
$$
Evaluating the quadratic Gauss sum on the right-hand side, we obtain
\begin{equation} \label{cop63}
E(3^u)=3^{v+1/2} \e\left(\frac{-4 \cdot \overline{m}^2n^3}{3^{2v}}\right) \left(\frac{n}{3}\right) \epsilon_3=
3^{(u+1)/2} \e\left(\frac{-4 \cdot \overline{m}^2n^3}{3^{u}}\right) 
\left(\frac{n}{3}\right) \epsilon_3.
\end{equation} 

Next suppose that $u$ is odd.
Note that $v\ge 2$ since $u\ge 5$. Here we slightly differ from
\eqref{oddcase} and observe that 
\begin{equation} 
\begin{split}
\label{cop43}
E(3^u) &= \sideset{}{^*} \sum\limits_{x \bmod{ 3^{v}}} \sum\limits_{y \bmod{ 3^{v+1}}} \e\left(\frac{m(x+y\cdot 3^{v})^3-3n(x+y\cdot 3^{v})^2}{3^{2v+1}} \right)\\
&= \sideset{}{^*} \sum\limits_{x \bmod{ 3^{v}}} \e\left(\frac{mx^3-3nx^2}{3^{2v+1}}\right)\sum\limits_{y \bmod{ 3^{v+1}}} \e\left(\frac{mx^2-2nx}{3^v}\cdot y \right) \\
&= 3^{v+1} \sideset{}{^*} \sum\limits_{\substack{x \bmod{ 3^v}\\ mx \equiv 2n \bmod{ 3^{v}}}} \e\left(\frac{mx^3-3nx^2}{3^{2v+1}}\right).
\end{split}
\end{equation}
The only solution of the congruence in the last line can be lifted
uniquely to the solution 
$$
 x\equiv \overline{m}\cdot 2n \bmod{ 3^{2v+1}},
$$
 where $\overline{m}$ is the multiplicative inverse of $m$ modulo
 $3^{2v+1}$.  Now the numerator in the
 exponential in the last line of \eqref{cop43} takes the form
 $mx^3-3nx^2\equiv -4 \cdot \overline{m}^2n^3$ mod $3^{2v+1}$. Hence
\begin{equation} \label{cop63*}
E(3^u)=3^{v+1} \e\left(\frac{-4 \cdot \overline{m}^2n^3}{3^{2v+1}}\right)=
3^{(u+1)/2} \e\left(\frac{-4 \cdot \overline{m}^2n^3}{3^{u}}\right).
\end{equation} 

We combine \eqref{cop63} and \eqref{cop63*} into a single equation, namely
\begin{equation} \label{finalprimepowers3}
E(3^u)=3^{(u+1)/2} \e\left(\frac{-4 \cdot \overline{m}^2n^3}{3^{u}}\right) 
\left(\frac{n}{3^{u-1}}\right) \epsilon_{3^{u-1}}.
\end{equation}
Combining \eqref{red4}, \eqref{uppsala1} and \eqref{finalprimepowers3}, we obtain \eqref{primepowersexplicit} for $p=3$.

\subsubsection*{Case III: $p=2$} 

We note that $f_2=1$ by \eqref{generalassump} and use the notations
\begin{equation} \label{uppsala2}
m:=c_1,  \quad  n:=3^{f_3}d_1. 
\end{equation}
Suppose first that $u$ is even.  Note that $v\ge 3$ since $u\ge 6$. From \eqref{evencase}, it follows that
\begin{equation} 
\begin{split}
\label{cop22}
E(2^u) =~& 2^v\sideset{}{^*}
\sum\limits_{\substack{x \bmod{ 2^v}\\ 3mx \equiv n \bmod{ 2^{v-1}}}}
\e\left(\frac{2mx^3-nx^2}{2^{2v}}\right)\\ 
=~& 2^v \sum\limits_{h=1}^2 \e\left(\frac{2m(x+h\cdot
    2^{v-1})^3-n(x+h\cdot 2^{v-1})^2}{2^{2v}}\right)\\ 
=~& 2^v \e\left(\frac{2mx^3-nx^2}{2^{2v}}\right)\\
&\times\sum\limits_{h=1}^2
\e\left(\frac{3mx^2h\cdot 2^v+6mxh^2\cdot 2^{2v-2}-nxh\cdot
    2^v-nh^2\cdot 2^{2v-2}}{2^{2v}}\right),  
\end{split}
\end{equation}
where in the last two lines, $x$ satisfies the congruence  $3mx \equiv
n \bmod{ 2^{v-1}}$. The only solution of this congruence can be lifted
uniquely to the solution 
$$
 x\equiv \overline{3m}\cdot n \bmod{ 2^{2v}},
$$
 where $\overline{m}$ is the multiplicative inverse of $m$ modulo $2^{2v}$.  Now the numerator in the first exponential in the last line of \eqref{cop22} takes the form 
$2mx^3-nx^2\equiv -\overline{27} \cdot \overline{m}^2n^3$ mod $2^{2v}$.
Moreover the numerator in the 
second exponential in the last line of \eqref{cop22} simplifies into
$nh^2\cdot 2^{2v-2}$. It follows that
\begin{equation}
\label{cop62*}\begin{split}
E(2^u)&=2^v \e\left(\frac{-\overline{27} \cdot
    \overline{m}^2n^3}{2^{2v}}\right) \sum\limits_{h=1}^2
\e\left(\frac{nh^2}{4}\right)\\ 
&= 2^v  \e\left(\frac{-\overline{27} \cdot \overline{m}^2n^3}{2^{2v}}\right)  \left(1+\e\left(\frac{n}{4}\right)\right)\\
&= 2^{(u+1)/2} \e\left(\frac{-\overline{27} \cdot
    \overline{m}^2n^3}{2^{u}}\right) \cdot
\frac{\left(1+\e\left(n/4\right)\right)}{\sqrt{2}}.
 \end{split}\end{equation}

Next suppose that $u$ is odd.  
Note that $v\ge 3$ since $u\ge 7$. From \eqref{oddcase}, it follows that
\begin{equation} 
\begin{split}
\label{cop42}
E(2^u)
=~& 2^{v} \sideset{}{^*} \sum\limits_{\substack{x \bmod{ 2^{v+1}}\\
    3mx \equiv n \bmod{ 2^{v-1}}}}
\e\left(\frac{2mx^3-nx^2}{2^{2v+1}}\right)\\ 
=~& 2^{v} \sum\limits_{h=1}^4 \e\left(\frac{2m(x+h\cdot 2^{v-1})^3-n(x+h\cdot 2^{v-1})^2}{2^{2v+1}}\right)\\
=~& 2^{v} \cdot \e\left(\frac{2mx^3-nx^2}{2^{2v+1}}\right) \\
&\times \sum\limits_{h=1}^4 \e\left(\frac{3mx^2h\cdot 2^{v}+6mxh^2\cdot
    2^{2v-2}-nxh\cdot 2^v-nh^2\cdot 2^{2v-2}}{2^{2v+1}}\right), 
\end{split}
\end{equation}
where in the last two lines, $x$ satisfies the congruence  $3mx \equiv n \bmod{ 2^{v-1}}$.
The only solution of the congruence in the last line can be lifted
uniquely to the solution 
$$
 x\equiv \overline{3m}\cdot n \bmod{ 2^{2v+1}},
$$
where $\overline{m}$ is the multiplicative inverse of $m$ modulo
$2^{2v+1}$.  Now the numerator in the
first exponential in the last line of \eqref{cop42} takes the form
$2mx^3-nx^2\equiv -\overline{27}\cdot \overline{m}^2n^3$ mod
$2^{2v+1}$, and the numerator in the second exponential simplifies
into $nh^2 \cdot 2^{2v-2}$. Hence 
$$
E(2^u)=2^{v} \e\left(\frac{-\overline{27} \cdot \overline{m}^2n^3}{2^{2v+1}}\right) \sum\limits_{h=1}^4 \e\left(\frac{nh^2}{8}\right).
$$
Recall that we suppose $(n,2)=1$. In this case it is easily seen that
$$
\sum\limits_{h=1}^4 \e\left(\frac{nh^2}{8}\right)=2\e\left(\frac{n}{8}\right).
$$
Hence
\begin{equation} \label{ugly}
E(2^u)=2^{v+1} \cdot \e\left(\frac{-\overline{27} \cdot \overline{m}^2n^3}{2^{2v+1}}\right) \cdot \e\left(\frac{n}{8}\right)=2^{(u+1)/2} \cdot \e\left(\frac{-\overline{27} \cdot \overline{m}^2n^3}{2^{u}}\right) \cdot \e\left(\frac{n}{8}\right).
\end{equation}

We combine \eqref{cop62*} and \eqref{ugly} into a single equation, namely
\begin{equation} \label{finalprimepowers2}
E(2^u)=2^{(u+1)/2} \cdot \e\left(\frac{-\overline{27} \cdot \overline{m}^2n^3}{2^{u}}\right)\cdot \left\{\frac{n}{2^u}\right\},
\end{equation}
where the residue symbol $\left\{\frac{\cdot}{2^u}\right\}$ is defined as in \eqref{strangesymbol}.
Combining \eqref{red4}, \eqref{uppsala2} and \eqref{finalprimepowers2}, we obtain
\eqref{primepowersexplicit} for $p=2$. 
\end{proof}

\subsection{Composite moduli}

Now we look at composite moduli $q$, beginning with the following
general estimate. 

\begin{lemma} \label{generalexpestimate} 
Let $c,d,q\in \ZZ$ such that $q>0$ and let 
$f_2,f_3\in \{0,1\}$. Then we have  
$$
|E_{f_2,f_3}(c,d;q)| \le C (c,d,q)^{1/2}q^{1/2}2^{\omega(q)},
$$
where $C=10000$.
\end{lemma}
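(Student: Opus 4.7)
The strategy is to reduce to prime power moduli via the multiplicativity identity \eqref{mult1} and then aggregate the explicit evaluations and direct bounds already established in the preceding lemmas.

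First I would factorise $q=\prod_j p_j^{t_j}$ and iterate \eqref{mult1} to obtain
\[
|E_{f_2,f_3}(c,d;q)|=\prod_j |E_{f_2,f_3}(c_j,d_j;p_j^{t_j})|,
\]
where $c_j,d_j$ arise from $c,d$ by multiplication with units modulo $p_j^{t_j}$. Since units preserve $p_j$-adic valuations, $(c_j,d_j,p_j^{t_j})=(c,d,p_j^{t_j})$, so $\prod_j(c_j,d_j,p_j^{t_j})^{1/2}=(c,d,q)^{1/2}$. It therefore suffices to establish a per-prime inequality
\[
|E_{f_2,f_3}(c,d;p^t)|\le 2\gamma_p\,(c,d,p^t)^{1/2}p^{t/2},
\]
with absolute constants $\gamma_p$ such that $\gamma_p=1$ for $p>3$; the factor $2$ per prime combines into the $2^{\omega(q)}$ appearing in the statement.

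For $p>3$ I would split on $t$. When $t=1$, this is immediate from the three direct estimates recorded just before Lemma \ref{mncaseslemma}: the complete case gives $\varphi(p)$, the quadratic Gauss sum case gives at most $\sqrt{p}+1$, and Weil's bound gives $2\sqrt{p}$, each comfortably matching the target with $\gamma_p=1$. When $t\ge 2$, I would apply Lemma \ref{mncaseslemma} with $r(p)=2$. In case~(i), setting $(c,d,p^t)=p^s$, the identity \eqref{red1} reduces the sum to modulus $p^{t-s}$: if $t-s\ge 2$ then Lemma \ref{explicitevaluation} gives $|E|=p^{t-u/2}=p^{(t+s)/2}$ exactly (noting $|R|=1$ for $p>3$), while if $t-s=1$ the reduction lands on a $t=1$ sum handled by the previous step. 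Case~(ii) factors out $p^{t-1}$ and leaves a sum modulo $p$, again handled by the $t=1$ analysis.

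For $p\in\{2,3\}$ the same skeleton applies but with larger constants. When $t\ge r(p)$, Lemma \ref{explicitevaluation} supplies $|E|=p^{t-u/2}|R(f_3,d_1,p^u)|$; inspection of the formula and the definition \eqref{strangesymbol} shows that $|R|$ is bounded by an absolute constant, and since $t-u/2=(t+s)/2$ this yields the target with a bounded $\gamma_p$. When $t<r(p)$, the sum is trivially bounded by $p^t\le p^{r(p)-1}\le 81$, while the right-hand side of the target is at least $2\gamma_p p^{1/2}$; choosing $\gamma_p$ sufficiently large, for instance $\gamma_2=\gamma_3=50$, absorbs this. Then $\gamma_2\gamma_3\le 2500$ fits comfortably inside the stated constant $C=10000$. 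The main obstacle is bookkeeping rather than mathematics: the substantive analytic content has already been carried out in Lemmas \ref{polyexpsumest}--\ref{explicitevaluation}, and the only genuine verification required here is the arithmetic identity $t-u/2=(t+s)/2$ converting Lemma \ref{explicitevaluation} into precisely the factor $(c,d,p^t)^{1/2}p^{t/2}$.
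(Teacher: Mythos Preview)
Your proposal is correct and follows essentially the same route as the paper: reduce to prime powers via the multiplicativity identity and then aggregate the per-prime bounds already established in Lemmas~\ref{mncaseslemma} and~\ref{explicitevaluation} together with the direct $t=1$ estimates. The paper's own proof is a two-line sketch (``with $C=1$ for $p>3$ and $C=100$ for $p\in\{2,3\}$, then multiply''); your version simply fills in the bookkeeping, and your constants $\gamma_2=\gamma_3=50$ correspond to the paper's choice $C_p=100$ at the bad primes. One small expository point: for $p\in\{2,3\}$ and $t\ge r(p)$ you invoke Lemma~\ref{explicitevaluation} directly, but that lemma also requires $(c,p^t)=p^s=(d,p^t)$ with $s\le t-r$; so, just as in your $p>3$ argument, one must first pass through Lemma~\ref{mncaseslemma} to reach either that situation or a reduction to modulus $p^{r(p)-1}$ (handled by your trivial bound). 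Your phrase ``the same skeleton applies'' is accurate but could be made explicit.
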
 

\begin{proof}
Combining all of our results from the previous section 
we deduce that the estimate in the lemma holds with $C=1$ if $q$ is a power of
a prime $p>3$ and with $C=100$ if $q$ is a power of $2$ or $3$.  This
together with Lemma \ref{factorisation} implies the statement of the
lemma for general $q$. 
\end{proof}

The following lemma contains a precise evaluation of
$E_{f_2,f_3}(c,d;q)$ for power-full moduli $q$. Throughout the sequel,
we denote by $\rad(n)$ the largest square-free divisor of $n\in
\mathbb{N}$. 

\begin{lemma} \label{compositeexplicitevaluation1}  
Let $c,d,q\in \ZZ$ such that $q>0$, let 
$f_1,f_2\in
  \{0,1\}$ and assume that
  \eqref{generalassump} holds. Suppose that $q=2^{t_2}3^{t_3}q_1$,
  where $(q_1,6)=1$. Set $e_2:=\sign(t_2)$ and
  $e_3:=\sign(t_3)$. Then we have the following.
\begin{itemize}
\item[{\rm (i)}] If $2^{6e_2}3^{5e_3}\rad(q_1)^2$ divides $q/(c,q)$ or $q/(d,q)$, then 
\begin{equation} \label{noteqzero1}
E_{f_2,f_3}(c,d;q)\not= 0 \Rightarrow (c,q)=(d,q).
\end{equation}
In this case, set
\begin{equation} \label{df*}
\begin{split}
q'&=(c,q)=(d,q), \quad \tilde{q}=\frac{q}{q'}, \quad c'=\frac{c}{q'}, \quad  d'=\frac{d}{q'},\\
q_1'&=(c,q_1)=(d,q_1), \quad \tilde{q}_1=\frac{q_1}{q_1'}.
\end{split}
\end{equation}
Then we have
\begin{equation} \label{compositemoduliexplicit}
E_{f_2,f_3}(c,d;q)= \frac{q}{\sqrt{\tilde{q}}} \cdot \e\left(\frac{-A\cdot  \overline{c'}^2{d'}^3}{\tilde{q}}\right) \left(\frac{d'}{\tilde{q}_1}\right) \cdot P,
\end{equation}
where $|P|\leq 2 \sqrt{3}$ depends at most on $\tilde{q}$, $f_3$ and the residue class of
 $d'$ modulo $8^{e_2}3^{e_3}$, and moreover, 
\begin{equation} \label{wAdefi}
A\equiv 4^{1-f_2}\cdot 27^{f_3-1} \bmod{\tilde{q}}.
\end{equation}

\item[{\rm (ii)}] 
If $E_{f_2,f_3}(c,d;q)\not=0$, then the following equivalences hold:
\begin{align*}
\mu^2\left(\frac{q_1}{(c,q_1)}\right)=1 &\Longleftrightarrow \mu^2\left(\frac{q_1}{(d,q_1)}\right)=1,\\
\frac{2^{t_2}}{(c,2^{t_2})}\mid 2^5 &\Longleftrightarrow \frac{2^{t_2}}{(d,2^{t_2})}\mid 2^5,\\
\frac{3^{t_3}}{(c,3^{t_3})}\mid 3^4 &\Longleftrightarrow \frac{3^{t_3}}{(d,3^{t_3})}\mid 3^4.
\end{align*}
\end{itemize}
\end{lemma}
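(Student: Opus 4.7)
My plan is to use the multiplicativity relation \eqref{mult1} iteratively to write
\[
E_{f_2,f_3}(c,d;q)=\prod_{p\mid q}E_{f_2,f_3}(c\overline{Q_p},d\overline{Q_p};p^{t_p}),
\]
where $Q_p:=q/p^{t_p}$ and $\overline{Q_p}Q_p\equiv 1\pmod{p^{t_p}}$, and then to apply the prime-power results in Lemmas \ref{mncaseslemma} and \ref{explicitevaluation} to each local factor.

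Part (ii) follows directly from Lemma \ref{mncaseslemma}: at every prime $p\mid q$ at which the local factor is non-zero, the equivalence
\[
p^{t_p-r(p)+1}\mid c\ \Longleftrightarrow\ p^{t_p-r(p)+1}\mid d
\]
holds, since case (i) of that lemma yields $v_p(c)=v_p(d)$ when both valuations are $\le t_p-r(p)+1$, while case (ii) imposes a matching lower bound on the other valuation as soon as one is $\ge t_p-r(p)+2$. Specialising to $p\mid q_1$, $p=2$, and $p=3$ (so that $t_p-r(p)+1$ becomes $t_p-1$, $t_2-5$, $t_3-4$ respectively) gives the three displayed equivalences.

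For part (i), the divisibility hypothesis on $q/(c,q)$ (or on $q/(d,q)$) puts every local factor in the range of Lemma \ref{explicitevaluation}, and case (i) of Lemma \ref{mncaseslemma} then forces $v_p(c)=v_p(d)$ at every $p\mid q$, which yields \eqref{noteqzero1}. Writing $s_p:=v_p(c)=v_p(d)$, $u_p:=t_p-s_p$, $c_1:=c/p^{s_p}$, $d_1:=d/p^{s_p}$, Lemma \ref{explicitevaluation} supplies an explicit formula for each local factor, and the powers of $p$ multiply immediately to $q/\sqrt{\tilde q}$. For the phase I would first simplify
\[
\overline{(c_1\overline{Q_p})}^2(d_1\overline{Q_p})^3\equiv \overline{c_1}^2d_1^3\overline{Q_p}\pmod{p^{u_p}},
\]
then rewrite $c_1=c'(q'/p^{s_p})$ and $d_1=d'(q'/p^{s_p})$, and apply $\overline{Q_p}\equiv \overline{\tilde Q_p}\,\overline{(q'/p^{s_p})}\pmod{p^{u_p}}$ (where $\tilde Q_p:=\tilde q/p^{u_p}$) to reduce the local phase to $\e(-A\,\overline{\tilde Q_p}\,\overline{c'}^2d'^3/p^{u_p})$. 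The standard CRT identity $\sum_p\overline{\tilde Q_p}/p^{u_p}\equiv 1/\tilde q\pmod 1$ then assembles the product over $p$ into the global phase $\e(-A\overline{c'}^2d'^3/\tilde q)$. Multiplicativity of the Jacobi symbol in its lower argument turns $\prod_{p\mid q_1}(d_1\overline{Q_p}/p^{u_p})$ into $(d'/\tilde q_1)$ times a factor depending only on $\tilde q$; the $\epsilon_{p^{u_p}}$ signs and the residue symbols at $p=2,3$ are collected into $P$. Since $|\sqrt{2}\{\cdot/2^{u_2}\}|\le 2$ and $|\sqrt{3}(\cdot/3^{u_3-1})\epsilon_{3^{u_3-1}}|=\sqrt 3$, one obtains $|P|\le 2\sqrt{3}$, and the only dependence on $d'$ enters through its class modulo $8^{e_2}3^{e_3}$.

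The hard part will be the CRT bookkeeping in the phase, namely verifying that the spurious $\overline{Q_p}$ factors introduced by \eqref{mult1} cancel correctly against the $q'/p^{s_p}$ terms to produce the clean global phase and Jacobi symbol recorded in \eqref{compositemoduliexplicit}. Everything else consists of prime-by-prime accounting against the explicit local formulas of Lemma \ref{explicitevaluation}.
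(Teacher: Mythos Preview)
Your proposal is correct and follows essentially the same strategy as the paper: reduce to prime-power moduli via the multiplicativity relation \eqref{mult1}, invoke Lemmas \ref{mncaseslemma} and \ref{explicitevaluation} locally, and then reassemble the phase, Jacobi symbol, and auxiliary factors. The only organisational difference is that the paper peels off one prime at a time (first proving the clean formula \eqref{compositemoduliexplicit***} for odd $q$ by induction on the number of prime divisors, then appending the $p=3$ and $p=2$ factors), and in doing so invokes quadratic reciprocity explicitly to combine the $\epsilon$-factors, whereas you factor over all primes at once and absorb the cross-terms $\prod_{p\mid q_1}(\tilde Q_p/p^{u_p})\epsilon_{p^{u_p}}$ directly into $P$. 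Both routes work; the paper's incremental version makes the dependence of $P$ on $\tilde q$, $f_3$, and $d'\bmod 8^{e_2}3^{e_3}$ a little more transparent, while your CRT identity $\sum_p \overline{\tilde Q_p}/p^{u_p}\equiv 1/\tilde q\pmod 1$ is a cleaner way to assemble the global phase than the paper's two-term splitting.
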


\begin{proof}  
Part (ii) and the implication \eqref{noteqzero1} in part (i) are 
straightforward consequences of Lemmas \ref{factorisation} and \ref{mncaseslemma}.  
It remains to establish \eqref{compositemoduliexplicit} under the conditions of part (i). Throughout the following, we set
\begin{equation} \label{df**}
\begin{split}
2^{s_2}&=(c,2^{t_2})=(d,2^{t_2}), \quad u_2=t_2-s_2\ge 6e_2,\\ 
3^{s_3}&=(c,3^{t_3})=(d,3^{t_3}), \quad u_3=t_3-s_3\ge 5e_3.
\end{split}
\end{equation}
First we assume that $t_2=0$, so that $q$ is odd. In this case we shall prove that
\begin{equation} \label{compositemoduliexplicit***}
E_{f_2,f_3}(c,d;q)= \frac{q}{\sqrt{\tilde{q}/3^{e_3}}} \cdot
\e\left(\frac{-A\cdot  \overline{c'}^2{d'}^3}{\tilde{q}}\right)
\left(\frac{3^{f_3-e_3}d'}{w\tilde{q}_1}\right)
\epsilon_{w\tilde{q}_1}, 
\end{equation}
where 
$
w=3^{u_3-e_3}.
$ 

We first treat the case $t_2=0=t_3$, which is equivalent to
$\tilde{q}_1=\tilde{q}$. In this case
\eqref{compositemoduliexplicit***} takes the form 
\begin{equation} \label{compositemoduliexplicitsimple}
E_{f_2,f_3}(c,d;q)=\frac{q}{\sqrt{\tilde{q}}} \cdot \e\left(\frac{-A\cdot  \overline{c'}^2{d'}^3}{\tilde{q}}\right) \left(\frac{3^{f_3}d'}{\tilde{q}}\right) \epsilon_{\tilde{q}}.
\end{equation}
This holds trivially if $q=1$.  For $q>1$ and $(q,6)=1$,  we prove
\eqref{compositemoduliexplicitsimple}  by induction over the number of
prime divisors of $q$. If $q=p^t$ for some prime $p\not=2,3$ and $p^2$
divides $q/(c,q)$, then \eqref{compositemoduliexplicitsimple}
coincides with \eqref{primepowersexplicit}. Now assume that $g$ is the
number of prime divisors of $q$ and that
\eqref{compositemoduliexplicitsimple} has been established for all
moduli $r$ with $g-1$ prime divisors such that $(r,6)=1$ and
$\rad(r)^2$ divides $r/(r,d)$. Suppose that $q=r_1r_2$, where
$(r_1,r_2)=1$ and $r_2=p^t$ is a prime power. 
Let $\overline{r_1}, \overline{r_2}$ be such that  
$r_1\overline{r_1}+r_2 \overline{r_2}=1$. Then using
\eqref{mult1}, the induction hypothesis and Lemma
\ref{explicitevaluation}, we have 
\begin{equation} 
\begin{split}
\label{compfac}
E_{f_2,f_3}(c,d;q) =~&
E_{f_2,f_3}(c\overline{r_2},d\overline{r_2};r_1)E_{f_2,f_3}(c\overline{r_1},d\overline{r_1};r_2)\\ 
=~& \left(\frac{r_1}{\sqrt{\tilde{r}_1}} \cdot \e\left(\frac{-A \cdot
      \overline{r_2}\cdot \overline{c_1}^2d_1^3}{\tilde{r}_1}\right)
  \left(\frac{3^{f_3}\overline{r_2}d_1}{\tilde{r}_1}\right)
  \epsilon_{\tilde{r}_1}\right)\\ 
& \times 
 \left(\frac{r_2}{\sqrt{\tilde{r}_2}} \cdot \e\left(\frac{-A \cdot
       \overline{r_1}\cdot \overline{c_2}^2d_2^3}{\tilde{r}_2}\right)
   \left(\frac{3^{f_3}\overline{r_1}d_2}{\tilde{r}_2}\right)
   \epsilon_{\tilde{r}_2}\right),\end{split}
\end{equation} 
where for $i=1,2$ we set 
$$
r_i'=(c,r_i)=(d,r_i), \quad  \tilde{r_i}=\frac{r_i}{r_i'}, \quad
c_i=\frac{c}{r_i'}, \quad d_i=\frac{d}{r_i'}. 
$$
We note that
\begin{align*}
c_1&=\frac{c}{r_1'}=\frac{cr_2'}{q'}=c'r_2', \quad  
c_2=\frac{c}{r_2'}=\frac{cr_1'}{q'}=c'r_1',\\ 
d_1&=\frac{d}{r_1'}=\frac{dr_2'}{q'}=d'r_2', \quad  
d_2=\frac{d}{r_2'}=\frac{dr_1'}{q'}=d'r_1'.
\end{align*}
Combining the exponential terms we obtain
\begin{align*}
\e\left(\frac{-A \cdot \overline{r_2}\cdot
    \overline{c_1}^2d_1^3}{\tilde{r}_1}\right) &\cdot \e\left(\frac{-A
    \cdot \overline{r_1}\cdot
   \overline{c_2}^2d_2^3}{\tilde{r}_2}\right)  \\ 
&= \e\left(\frac{-A
    \cdot \overline{r_1}\cdot
    \overline{c'r_1'}^2(d'r_1')^3}{\tilde{r}_2}\right) \cdot
\e\left(\frac{-A \cdot \overline{r_2}\cdot \overline{c'
      r_2'}^2(d'r_2')^3}{\tilde{r}_1}\right)\\ 
&= \e\left(\frac{-A \cdot
    \overline{c'}^2{d'}^3(\overline{r_2}r_2'\tilde{r}_2+\overline{r_1}r_1'
    \tilde{r}_1)}{\tilde{r}_1\tilde{r}_2}\right)\\ 
&= \e\left(\frac{-A \cdot \overline{c'}^2{d'}^3}{\tilde{q}}\right), 
\end{align*}
where $\overline{c'}c' \equiv 1 \bmod{\tilde{q}}$. Furthermore, by the
multiplicativity of the Jacobi symbol, we have 
\begin{align*}
 \left(\frac{3^{f_3}\overline{r_2}d_1}{\tilde{r}_1}\right) 
 \left(\frac{3^{f_3}\overline{r_1}d_2}{\tilde{r}_2}\right) &= \left(\frac{3^{f_3}\overline{r_2}d'r_2'}{\tilde{r_1}}\right) 
 \left(\frac{3^{f_3}\overline{r_1}d'r_1'}{\tilde{r}_2}\right)\\ 
&=  \left(\frac{3^{f_3}\overline{r_2}d'r_2 / \tilde{r}_2}{\tilde{r}_1}\right)  
 \left(\frac{3^{f_3}\overline{r_1}d'r_1 / \tilde{r}_1}{\tilde{r}_2}\right)\\ 
 &= \left(\frac{3^{f_3}d'}{\tilde{r}_1}\right)  \left( \frac{3^{f_3}d'}{\tilde{r}_2}\right)  \left(\frac{\overline{r_2}r_2/\tilde{r}_2}{\tilde{r}_1}\right)  \left(\frac{\overline{r_1}r_1/\tilde{r}_1}{\tilde{r}_2}\right)\\ &= \left(\frac{3^{f_3}d'}{\tilde{q}}\right)  
 \left(\frac{\tilde{r}_1}{\tilde{r}_2}\right)  \left(\frac{\tilde{r}_2}{\tilde{r}_1}\right).
 \end{align*}
 Moreover, by quadratic reciprocity,
$$
 \left(\frac{\tilde{r}_1}{\tilde{r}_2}\right) \left(\frac{\tilde{r}_2}{\tilde{r}_1}\right)\epsilon_{\tilde{r}_1}\epsilon_{\tilde{r}_2} = \epsilon_{\tilde{r}_1\tilde{r}_2}=\epsilon_{\tilde{q}}.
$$
 Combining these with \eqref{compfac} we obtain \eqref{compositemoduliexplicitsimple}. 
 
Next, we turn to the case when $t_2=0$ and $t_3>0$. Note that 
$f_3=e_3=1$. Using \eqref{mult1}, we have 
$$
E_{f_2,f_3}(c,d;q)=E(2^{f_2}c,3d;q)=E(2^{f_2}3^{-t_3}c,3^{1-t_3}d;q_1)E(2^{f_2}c\overline{q_1},3d\overline{q_1};3^{t_3}).
$$
Applying \eqref{compositemoduliexplicitsimple} and Lemma \ref{explicitevaluation} for $p=3$ to the right-hand side, we obtain
\begin{align*}
E(2^{f_2}c,3^{f_3}d;q) =~& 
\frac{q_1}{\sqrt{\tilde{q}_1}} \cdot \e\left(\frac{-A\cdot
    3^{-t_3}\cdot  \overline{c_1}^2d_1^3}{\tilde{q}_1}\right)
\left(\frac{3^{1-t_3}d_1}{\tilde{q}_1}\right)
\epsilon_{\tilde{q}_1}\\ &\times 
3^{t_3-(u_3-1)/2}\cdot \e\left(\frac{-A \cdot \overline{q_1}\cdot \overline{c_2}^2d_2^3}{3^{u_3}}\right) 
\left(\frac{\overline{q_1}d_2}{3^{u_3-1}}\right) \epsilon_{w},
\end{align*}
where $w=3^{u_3-e_3}$ and 
$$
c_1=\frac{c}{q_1'}, \quad d_1=\frac{d}{q_1'}, \quad c_2=\frac{c}{3^{s_3}}, \quad d_2=\frac{d}{3^{s_3}}.
$$
Recall the definitions in \eqref{df*} and \eqref{df**}. 
Proceeding along the same lines as in the case $(q,6)=1$, we deduce that
$$
E_{f_2,f_3}(c,d;q)=
 \frac{q}{\sqrt{\tilde{q}/3}}\cdot  \e\left(\frac{-A \cdot
     \overline{c'}^2{d'}^3}{\tilde{q}}\right)
 \left(\frac{d'}{w\tilde{q}_1}\right) \epsilon_{w\tilde{q}_1}, 
$$
which establishes \eqref{compositemoduliexplicit***} for the case
$t_2=0$ and $t_3>0$ and so completes its proof. 

From \eqref{compositemoduliexplicit***}, it follows that if $t_2=0$ 
then \eqref{compositemoduliexplicit} holds with
$$
P=\sqrt{3^{e_3}} \left(\frac{3^{f_3-e_3}}{w\tilde{q}_1}\right)
\left(\frac{d'}{w}\right) \epsilon_{w\tilde{q}_1}. 
$$
Thus $|P|\leq \sqrt{3}$ and one sees that $P$ depends at most on 
$\tilde{q}, f_3$ and the residue class of $d'$ modulo $3$.

It remains to consider the case when $t_2>0$, for which 
$f_2=e_2=1$. Using  \eqref{mult1} we  have
$$
E_{f_2,f_3}(c,d;q)=E(2c,3^{f_3}d;q)=E(2^{1-t_2}c,2^{-t_2}3^{f_3}d;3^{t_3}{q}_1)E(2\cdot 3^{-t_3}\overline{{q}_1}c,3^{f_3-t_3}\overline{{q}_1}d;2^{t_2}).
$$
Applying Lemma \ref{explicitevaluation} for $p=2$ and \eqref{compositemoduliexplicit***} to the right-hand side, we obtain
\begin{align*}
E_{f_2,f_3}(c,d;q) =~& \frac{3^{t_3}q_1}{\sqrt{3^{u_3-e_3}\tilde{q}_1}} \cdot \e\left(\frac{- A \cdot 2^{-t_2}\cdot \overline{c_1}^2{d_1}^3}{3^{u_3}\tilde{q}_1}\right) \left(\frac{2^{-t_2}3^{f_3-e_3}d_1}{w\tilde{q}_1}\right) \epsilon_{w\tilde{q}_1}\\  &\times
2^{t_2-(u_2-1)/2}\cdot \e\left(\frac{- A\cdot 3^{-t_3}\cdot \overline{q_1}\cdot \overline{c_2}^2d_2^3}{2^{u_2}}\right) 
\left\{\frac{3^{f_3-t_3}\overline{{q}_1}d_2}{2^{u_2}}\right\},
\end{align*} 
where 
$$
c_1=\frac{c}{3^{s_3}q_1'}, \quad d_1=\frac{d}{3^{s_3}q_1'}, \quad c_2=\frac{c}{2^{s_2}}, \quad d_2=\frac{d}{2^{s_2}}.
$$
Proceeding along the same lines as in the case $(q,6)=1$, we deduce that
$$
E_{f_2,f_3}(c,c;q)=
\frac{q}{\sqrt{\tilde{q}/(2\cdot 3^{e_3})}}\cdot  \e\left(\frac{-A \cdot \overline{c'}^2{d'}^3}{\tilde{q}}\right) \left(\frac{2^{u_2}3^{f_3-e_3}d'}{w\tilde{q}_1}\right) \epsilon_{w\tilde{q}_1} 
\cdot \left\{\frac{3^{f_3-u_3}\overline{\tilde{q}_1}d'}{2^{u_2}}\right\}.
$$
It follows that \eqref{compositemoduliexplicit} holds with
$$
P=\sqrt{2\cdot 3^{e_3}} \cdot \left(\frac{2^{u_2}3^{f_3-e_3}}{w\tilde{q}_1}\right) \left(\frac{d'}{w}\right) \epsilon_{w\tilde{q}_1} \cdot \left\{\frac{3^{f_3-u_3}\overline{\tilde{q}_1}d'}{2^{u_2}}\right\}
$$
in this case. Recalling the definition of the 
residue symbol \eqref{strangesymbol} one easily deduces that $|P|\leq 2\sqrt{3}$
and $P$ depends only on $\tilde{q}, f_3$ and the residue of 
$d'$ modulo $8 \cdot 3^{e_3}$.
\end{proof}

Since it will turn out to be very convenient to have the condition 
$$
2^{6e_2}3^{5e_3}\rad(q_1)^2\mid \frac{q}{(c,q)}
$$
in part (i) of Lemma \ref{compositeexplicitevaluation1} replaced by the weaker condition 
$$
\rad(q_1)^2\mid \frac{q}{(c,q)},
$$
we provide a result based on the previous one.  
Let $v_p(n)$ the $p$-adic
valuation of a natural number $n$ and recall the definition 
\eqref{rpdef} of $r(p)$. Then we have the following result. 

\begin{lemma} \label{composi}  
Let $c,d,q\in \ZZ$ such that $q>0$,  let 
$f_2,f_3\in \{0,1\}$
and assume that \eqref{generalassump} holds, with 
$
E_{f_2,f_3}(c,d;q)\not=0.
$
Suppose that $q=2^{t_2}3^{t_3}q_1$, where $(q_1,6)=1$. Set
$$
q':=(c,q), \quad \tilde{q}:=\frac{q}{q'}, \quad q_1':=(c,q_1), \quad 
\tilde{q}_1:=\frac{q_1}{q_1'}, \quad c':=\frac{c}{(c,q)}=\frac{c}{q'},
\quad d':=\frac{d}{(d,q)}. 
$$
Suppose that $\rad(q_1)^2\mid \tilde{q}$. Let 
$$
q_0:=\prod\limits_{\substack{p\\ 
v_p(\tilde{q})\ge r(p)}} p^{v_{p}(q)}, \quad q_0':=(c,q_0),\quad
\tilde{q}_0:=\frac{q_0}{q_0'},
$$
and 
\begin{equation} \label{jdef}
j_p:=v_p((d,q))-v_p(q').
\end{equation}
Then we have
\begin{equation} \label{bruch}
\frac{\tilde{q}}{\tilde{q}_0}=2^{l_2}3^{l_3} \quad \mbox{for some } l_2,l_3\in \mathbb{Z} \mbox{ with } 0\le l_2\le 5 \mbox{ and } 0\le l_3\le 4, 
\end{equation}
\begin{equation} \label{dasistzuzeigen}
|j_2|\le 5, \quad  |j_3|\le 4, \quad j_p=0 \mbox{ if } p>3.
\end{equation}
Furthermore
\begin{equation} \label{compositemoduliexplicitnew}
E_{f_2,f_3}(c,d;q)= \frac{q}{\sqrt{\tilde{q}}} \cdot \e\left(\frac{-  2^{g_2}3^{g_3} \cdot  \overline{c'}^2{d'}^3}{\tilde{q}_0}\right) \left(\frac{d'}{\tilde{q}_1}\right) \cdot Q,
\end{equation}
where $g_2,g_3\in \mathbb{Z}$ depend at most on $q$, $q'$, $f_2$,
$f_3$ and satisfy 
$$
|g_2|\le 20, \quad |g_3|\le 17, \quad g_p= 0 \mbox{ if }
p\mid\tilde{q}_0 \mbox{ for } p=2,3,   
$$
and $Q$ depends at most on $q$, $q'$, $f_2$, $f_3$, $j_2$, $j_3$ 
and the residue classes of $c'$ and $d'$ modulo $2^{5}3^4$
and satisfies  $Q=O(1)$, with an absolute implied constant. 
\end{lemma}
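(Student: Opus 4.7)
The plan is to factor $q = q_0 q_b$, where $q_b := q/q_0 = 2^{l_2}3^{l_3}$ collects the ``bad'' $2$- and $3$-parts for which $v_p(\tilde q) < r(p)$, and then to apply Lemma~\ref{compositeexplicitevaluation1}(i) to the modulus $q_0$, handling the bounded modulus $q_b \leq 2^5 3^4$ trivially. By the very definition of $q_0$, one has $l_2 \leq r(2)-1 = 5$ and $l_3 \leq r(3)-1 = 4$, which proves \eqref{bruch}. For \eqref{dasistzuzeigen}, I argue prime-by-prime via Lemma~\ref{mncaseslemma}. When $p>3$, the hypothesis $\rad(q_1)^2 \mid \tilde q$ yields $v_p(c,q) \leq v_p(q)-2$, placing us in case (i) and forcing $v_p(c,q)=v_p(d,q)$, hence $j_p=0$. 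When $p\in\{2,3\}$, either $v_p(\tilde q) \leq r(p)-1$ (so $v_p(c,q), v_p(d,q) \geq v_p(q)-r(p)+1$) or we are in case (ii) of that lemma; in both scenarios $|j_p| \leq r(p)-1$.

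Next, since $(q_0,q_b)=1$, Lemma~\ref{factorisation} yields
\[
E_{f_2,f_3}(c,d;q) \;=\; E_{f_2,f_3}(c\bar{q_b},\,d\bar{q_b};\,q_0)\cdot E_{f_2,f_3}(c\bar{q_0},\,d\bar{q_0};\,q_b).
\]
By construction every prime $p \mid q_0$ satisfies $v_p(\tilde q_0) = v_p(\tilde q) \geq r(p)$, and $\rad(q_1)^2 \mid \tilde q$ forces $(q_0)_1 = q_1$. The hypotheses of Lemma~\ref{compositeexplicitevaluation1}(i) are therefore met for the first factor, producing an expression
\[
\frac{q_0}{\sqrt{\tilde q_0}}\cdot \e\left(\frac{-A\,\overline{c_0'}^{\,2}\,d_0'^{\,3}}{\tilde q_0}\right)\left(\frac{d_0'}{\tilde q_1}\right) P_0
\]
with $|P_0|\leq 2\sqrt 3$, where $c_0':=c/(c,q_0)$ and $d_0':=d/(c,q_0)$; we use here that $E_{f_2,f_3}(c,d;q)\neq 0$ implies the first factor is non-zero, whence $(c,q_0)=(d,q_0)$ by that same lemma. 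The second factor is a complete sum modulo $q_b\leq 2^5 3^4$ and is therefore bounded by an absolute constant.

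The remaining step is a bookkeeping computation to convert this product into the claimed form. From $(c,q) = (c,q_0)(c,q_b)$ one reads off $c_0' \equiv c'(c,q_b)$ and $d_0' \equiv d'(d,q_b)$ modulo $\tilde q_0$, so substituting and collecting the powers of $2$ and $3$ coming from $A\equiv 4^{1-f_2}27^{f_3-1}$, $(c,q_b)$, $(d,q_b)$, $q_b$ and $\bar{q_b}$ collapses the coefficient of $\overline{c'}^{\,2}d'^{\,3}/\tilde q_0$ in the exponential to $-2^{g_2}3^{g_3}$ for integers $g_2, g_3$ which are bounded by inspection of \eqref{bruch} (one verifies $|g_2|\leq 20$ and $|g_3|\leq 17$). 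The prefactor reshapes via $q_0/\sqrt{\tilde q_0} = (q/\sqrt{\tilde q})/\sqrt{q_b}$, with the $\sqrt{q_b}$ absorbed together with the $O(q_b)$-bounded second sum and the sign $(\pm 1)$ from the multiplicative expansion $(d_0'/\tilde q_1) = (d'/\tilde q_1)\cdot((d,q_b)/\tilde q_1)$ into an $O(1)$ quantity $Q$. The main obstacle is the clause ``$g_p=0$ whenever $p\mid\tilde q_0$'': in this case $p\mid q_0$ forces $p\nmid q_b$ and hence $l_p = v_p(c,q_b) = v_p(d,q_b) = 0$, while $p^{r(p)}\mid q_0 \mid q$ together with \eqref{generalassump} forces $f_p=1$, so the surviving exponent of $p$ in the collapsed coefficient indeed vanishes. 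Tracking the dependence of $Q$ on $q, q', f_2, f_3, j_2, j_3$ and the residues of $c', d'$ modulo $2^5 3^4$ is then a routine but tedious unwinding of the ingredients above.
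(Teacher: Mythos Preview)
Your overall strategy --- factoring $q = q_0 \cdot q_b$ with $(q_0, q_b) = 1$, applying Lemma~\ref{compositeexplicitevaluation1}(i) to the $q_0$-part, and absorbing the $q_b$-part into $Q$ --- is exactly the paper's approach. However, there is a genuine error: you identify $q_b = q/q_0$ with $\tilde q/\tilde q_0 = 2^{l_2}3^{l_3}$, and these are \emph{not} the same. In fact $q/q_0 = 2^{h_2}3^{h_3}$ with $h_p = t_p$ whenever $p \nmid q_0$, and $t_p$ can be arbitrarily large (take $q=2^{100}$, $c=2^{99}$: then $\tilde q=2$, $q_0=1$, $q_b=2^{100}$). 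Consequently your claims that ``the second factor is a complete sum modulo $q_b \leq 2^53^4$ and is therefore bounded by an absolute constant'' and that $q_0/\sqrt{\tilde q_0} = (q/\sqrt{\tilde q})/\sqrt{q_b}$ are both false.

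The repair is not difficult but requires a point you have missed. First, one only has $|E^{(2)}| \leq q_b = 2^{h_2}3^{h_3}$, while the correct prefactor identity is $q_0/\sqrt{\tilde q_0} = (q/\sqrt{\tilde q}) \cdot \sqrt{2^{l_2}3^{l_3}}/q_b$; thus $Q$ absorbs a factor $E^{(2)} \cdot \sqrt{2^{l_2}3^{l_3}}/q_b$, which is indeed $O(1)$ by \eqref{bruch}. Second --- and this is where the real content lies --- to justify the bounds on $g_2,g_3$ and the dependence of $Q$ only on $c', d' \bmod 2^5 3^4$, one must observe that when $p \nmid q_0$ the hypothesis $v_p(\tilde q) < r(p)$ forces
\[
\max\{0,\,h_p - (r(p)-1)\} \;\leq\; v_p(q'/q_0') \;\leq\; h_p,
\]
which is \eqref{ohoh} in the paper. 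The potentially large exponent $-h_p$ coming from $\overline{q_b}$ in the exponential is then cancelled, up to $O(r(p))$, by the contribution of $q'/q_0'$ arising from $c'' = c'\,(q'/q_0')$ and $d'' = d'\,(q'/q_0')\,2^{j_2}3^{j_3}$; likewise, $E^{(2)}$ depends on $c',d'$ only through their residues modulo $2^{h_p}3^{h_3}/(q'/q_0')$, which divides $2^{5}3^{4}$. Your ``inspection of \eqref{bruch}'' does not supply this cancellation, since \eqref{bruch} bounds $l_p$, not $h_p$.
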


\begin{proof} The assertion in \eqref{bruch} follows from the
  definitions of $\tilde{q}$ and $\tilde{q}_0$ and the hypothesis that
$\rad(q_1)^2\mid \tilde{q}$.  The relations in
  \eqref{dasistzuzeigen} are consequences of this assumption
and part (ii) and
  \eqref{noteqzero1} in part (i) of Lemma
  \ref{compositeexplicitevaluation1}. It remains to show 
\eqref{compositemoduliexplicitnew} with $g_2$, $g_3$ and $Q$ having
the claimed properties. 

By the definition of $q_0$ and the assumption $\rad(q_1)^2\mid \tilde{q}$, we have 
\begin{equation} \label{qfacto}
q=2^{h_2}3^{h_3}q_0,
\end{equation} 
with 
$$
h_p=\begin{cases}
t_p & \mbox{if $p\nmid q_0$,}\\
0 & \mbox{if $p\mid q_0$.}
\end{cases}
$$
In particular 
$(2^{h_2}3^{h_3},q_0)=1$. 
From \eqref{mult1} it therefore follows that  
\begin{equation} \label{multapplied}
\begin{split}
E_{f_2,f_3}(c,d;q)
&=E_{f_2,f_3}(2^{-h_2}3^{-h_3}c,2^{-h_2}3^{-h_3}d;q_0)E_{f_2,f_3}(\overline{q_0}c,\overline{q_0}d;2^{h_2}3^{h_3})\\
&=E^{(1)}E^{(2)},
\end{split}
\end{equation}
say.
We can  apply Lemma \ref{compositeexplicitevaluation1} to the first exponential sum on the right-hand side, obtaining 
$$
(c,q_0)=q_0'=(d,q_0)
$$
since this exponential sum is non-zero by hypothesis. Furthermore we
have 
$$
E^{(1)}= 
\frac{q_0}{\sqrt{\tilde{q}_0}} \cdot \e\left(\frac{-A\cdot
    2^{-h_2}3^{-h_3}\overline{c''}^2{d''}^3}{\tilde{q}_0}\right)
\left(\frac{2^{h_2}3^{h_3}d''}{\tilde{q}_1}\right) \cdot P, 
$$
where $A$ is defined as in \eqref{wAdefi}, 
$$
c''=\frac{c}{q_0'}, \quad d''=\frac{d}{q_0'},
$$
and $P=O(1)$ depends only on $\tilde{q}_0$, $f_3$ and the (well-defined) residue
class of $2^{-h_2}3^{-h_3}d''$ modulo $8^{e_2}3^{e_3}$, with
$e_p:=\sign(v_p(\tilde{q}_0))$. Moreover, we may write 
$$
c''=c'\cdot \frac{q'}{q_0'} \quad \mbox{and} \quad d''=d' \cdot \frac{(d,q)}{q_0'}.
$$
By \eqref{jdef} and \eqref{dasistzuzeigen}, we have
$$
d''=d'\cdot \frac{q'}{q_0'}\cdot 2^{j_2}3^{j_3}
$$
with $|j_2|\le 5$ and $|j_3|\le 4$. 
The construction of $q_0$ ensures that we have $j_p=0$ if $p\mid
\tilde{q}_0$ for $p=2,3$. Combining this with our work so far  
we obtain 
\begin{align*}
E^{(1)}=
\frac{q_0}{\sqrt{\tilde{q}_0}} \cdot \e\left(\frac{-A\cdot
    2^{3j_2-h_2}3^{3j_3-h_3}\cdot \frac{q'}{q_0'}\cdot
    \overline{c'}^2{d'}^3}{\tilde{q}_0}\right)
\left(\frac{2^{h_2+j_2}3^{h_3+j_3}\cdot \frac{q'}{q_0'} \cdot
    d'}{\tilde{q}_1}\right) \cdot P. 
\end{align*}
In view of \eqref{qfacto} we have
$$
\frac{q'}{q_0'}=\frac{(c,2^{h_2}3^{h_3}q_0)}{(c,q_0)}=(c,2^{h_2}3^{h_3}),
$$
whence 
$0\leq v_p(q'/q_0')\leq h_p$. 
It is now easy to deduce that
$v_p(q'/q_0')=0$ if $p>3$ and 
\begin{equation} \label{ohoh}
\max\{0,h_2-5\}\le v_2\left(\frac{q'}{q_0'}\right)\le h_2, 
\quad \max\{0,h_3-4\}
\le v_3\left(\frac{q'}{q_0'}\right)\le h_3.
\end{equation}
It follows that 
\begin{equation} \label{mops}
E^{(1)} =
\frac{q_0}{\sqrt{\tilde{q}_0}} \cdot
\e\left(\frac{-2^{g_2}3^{g_3}\cdot
    \overline{c'}^2{d'}^3}{\tilde{q}_0}\right)
\left(\frac{d'}{\tilde{q}_1}\right) \cdot P', 
\end{equation}
where $g_2$ and $g_3$ have the properties claimed in Lemma
\ref{composi} (where we also use that $f_p=1$ if $p\mid \tilde{q}_0$
for $p=2,3$), and $|P'|\leq 2\sqrt{3}$ depends only on $q$, $q'$, $f_3$, $j_2$,
$j_3$ and the residue class of $d'$ modulo $8^{e_2}3^{e_3}$. Here and
later, note that $q$, $q'$ determine $\tilde{q}$, $q_0$, $q_0'$,
$\tilde{q}_0$, $q_1$, $q_1'$, $\tilde{q}_1$, $h_2$ and $h_3$. 
  
Now we turn to the second exponential sum on the right-hand side of
\eqref{multapplied}. We write  
$$
c=q'c', \quad d=2^{j_2}3^{j_3}q'd', 
$$
and hence
\begin{align*}
E^{(2)}&=E_{f_2,f_3}(\overline{q_0}q'c',2^{j_2}3^{j_3}\overline{q_0}q'd';2^{h_2}3^{h_3})\\ 
&=  E_{f_2,f_3}\left(\overline{\tilde{q}_0}\cdot \frac{q'}{q_0'}\cdot c',\overline{\tilde{q}_0} \cdot 2^{j_2}3^{j_3} \cdot \frac{q'}{q_0'}\cdot d';2^{h_2}3^{h_3}\right),
\end{align*}
where 
$2^{j_2}3^{j_3} \cdot q'/q_0' \in \mathbb{N}.$
In particular $|E^{(2)}|\leq 2^{h_2}3^{h_3}$ and $E^{(2)}$ depends
only on $q,q',f_2,f_3,j_2,j_3$ and the residue classes of $c',d'$ modulo
$2^53^4$. To see the latter claim we note that we are only interested
in 
$\overline{\tilde{q}_0}C\cdot c'$ and $\overline{\tilde{q}_0}D \cdot 
d'$ modulo $2^{h_2}3^{h_3}$, where $C=q'/q_0'$ and 
$D=2^{j_2}3^{j_3}  q'/q_0'$. But then the claim follows from
\eqref{dasistzuzeigen} and \eqref{ohoh}. Once combined with
\eqref{mops} this therefore yields \eqref{compositemoduliexplicitnew} 
with
$g_2,g_3$ and $Q$ having the properties recorded in the lemma. 
\end{proof}

\section{Weighted solutions of $ax^2+by^3\equiv 0
  \bmod{q}$}\label{s:thec}

Let $\Gamma$ denote the Gaussian weight in
\eqref{gaussiandef}. 
We note that the Fourier transform $\hat\Gamma$ of $\Gamma$ satisfies 
$$
\hat\Gamma(x):= \int\limits_{-\infty}^{\infty} \Gamma(t)\e(-tx)\dif t
=\exp(-\pi x^2)=\Gamma(x) 
$$
for all $x\in \mathbb{R}$. 
Our starting point for the analysis of the counting
functions 
$M(B,\XX,\YY;a,b;q)$ and 
$M(\XX,\YY;a,b;q)$ from  \S \ref{s:intro} lies with an  initial
investigation of the weighted sum
\begin{equation} \label{SGammadef}
 \mathcal{S}(x_0,y_0,X,Y;a,b;q)  
:= \sum\limits_{\substack{x,y\in \mathbb{Z}  \\  (xy,q)=1 \\
    ax^2+by^3\equiv 0 \bmod{ q}}}
\Gamma\left(\frac{x-x_0}{X}\right)
\Gamma\left(\frac{y-y_0}{Y}\right),  
\end{equation}
for given $x_0,y_0\in \RR$ and $X,Y\geq 2$. 
We will write 
$\mathcal{S}=\mathcal{S}(x_0,y_0,X,Y;a,b;q)$ for short and we note
that without loss of generality we may assume that $a,b>0$ in all that
follows. 

We analyse $\mathcal{S}$ via Poisson summation, the first task
being to break the sum into residue classes modulo $q$. Thus we may write
$$
 \mathcal{S}=  \sideset{}{^*}\sum\limits_{\substack{c,d \bmod{q}\\ ac^2+bd^3 \equiv 0 \bmod{q}}} 
\sum\limits_{x\equiv c \bmod{q}} \Gamma\left(\frac{x-x_0}{X} \right)
\sum\limits_{y\equiv d \bmod{q}} \Gamma\left(\frac{y-y_0}{Y}\right), 
$$
where the asterisk attached to the summation symbol indicates coprimality
of the variables $c,d$ with $q$.
Applying Poisson summation after a linear change of variables to the
sums over $x$ and $y$ on the right-hand side, we obtain 
\begin{align*}
\sum\limits_{x\equiv c \bmod{q}} \Gamma\left(\frac{x-x_0}{X} \right)
&= \frac{X}{q} \sum\limits_{m\in \mathbb{Z}} \Gamma
\left(\frac{mX}{q}\right)   \e\left(\frac{(c-x_0)m}{q}\right), \\
\quad \sum\limits_{y\equiv d \bmod{q}}
\Gamma\left(\frac{y-y_0}{Y}\right) &= \frac{Y}{q} \sum\limits_{n\in
  \mathbb{Z}} \Gamma \left(\frac{nY}{q}\right)
\e\left(\frac{(d-y_0)n}{q}\right). 
\end{align*}
Hence 
\begin{equation} \label{SRaftertrans}
\mathcal{S}= \frac{XY}{q^2} \sum\limits_{m\in \mathbb{Z}} \sum\limits_{n \in \mathbb{Z}} \e\left(-\frac{mx_0+ny_0}{q}\right) \Gamma\left(\frac{mX}{q}\right) \Gamma\left(\frac{nY}{q}\right) \mathcal{E}(m,n;q),
\end{equation}
where
$$
\mathcal{E}(m,n;q):= \sideset{}{^*}\sum\limits_{\substack{c,d \bmod{ q }\\ ac^2+bd^3\equiv 0 \bmod{ q }}} \e\left(\frac{cm+dn}{q}\right).
$$

Inspired by the work of Pierce \cite{pierce}, it will be convenient to 
rewrite the exponential sum $\mathcal{E}(m,n;q)$
as a sum over a single variable. 
Recall that $(ab,q)=1$. We consider the map
$$
f : \left\{(c,d) \in \mathbb{Z}_q^* \times \mathbb{Z}_q^* : ac^2+bd^3\equiv 0 \bmod q\right\} \rightarrow \mathbb{Z}_q^*,
$$
defined by
$$
f(c,d):=-ac\overline{d}.
$$
In the above, $\mathbb{Z}_q^*$ is the group of units of $\mathbb{Z}_q=\mathbb{Z}/q\mathbb{Z}$, and  $\overline{d}$ is the multiplicative inverse of $d\in \mathbb{Z}_q^*$. This map is bijective. Indeed, we check that 
$$
f^{-1}(z):=\left(\overline{a^2b} z^3,-\overline{ab} z^2\right)
$$
is the inverse map. Thus
$$
f^{-1} \circ f(c,d)=\left(-\overline{a^2b} \cdot a^3c^3 \overline{{d}^3},-\overline{ab} \cdot a^2c^2 \overline{{d}^2}\right)=(c,d)
$$
and 
$$
f\circ f^{-1}(z)=-a \cdot\overline{a^2b}\cdot z^3 \cdot \overline{-\overline{ab}\cdot z^2}=z.
$$
Hence we may parametrize $c$ and $d$ in the definition of $\mathcal{E}(m,n;q)$ by
$c=\overline{a^2b} z^3$ and $d=-\overline{ab} z^2$ for $
z\in \mathbb{Z}_q^*$. It follows that
$$
\mathcal{E}(m,n;q)=\sideset{}{^*} \sum\limits_{z \bmod q} \e\left(\frac{\overline{a^2b} mz^3-\overline{ab} nz^2}{q}\right).
$$
Making the change of variables $z\rightarrow abz$, the above can also be written in the form
\begin{equation} \label{relexp}
\mathcal{E}(m,n;q)=\sideset{}{^*} \sum\limits_{z \bmod q}
\e\left(\frac{ab^2mz^3-ab nz^2}{q}\right)=E(ab^2m,abn;q),
\end{equation}
in the notation of \eqref{Eq0}.

\bigskip

We now split the sum $\mathcal{S}$ in \eqref{SRaftertrans}
into three. We let $\mathcal{S}_{0}$ denote the
contribution of $m=0$ and $n=0$, we let $\mathcal{S}_{1}$ denote the contribution of
$m\not=0$ and $n$ arbitrary, and we let 
$\mathcal{S}_{2}$ denote the contribution of
$m=0$ and $n\not=0$. The treatment of $\mathcal{S}_{0}$ and
$\mathcal{S}_{2}$ is straightforward.  
 
The term $\mathcal{S}_{0}$ will be the main term. Obviously, we have
$\mathcal{E}(0,0;q)=\varphi(q)$ and therefore
\begin{equation} \label{S00ev}
\mathcal{S}_{0}= \frac{\varphi(q)}{q^2} \cdot XY.
\end{equation}
Since $(ab,q)=1$, it follows from Lemma
\ref{compositeexplicitevaluation1}(ii) that
$\mathcal{E}(0,n;q)=0$ unless $(q/\rad(q))\mid 2^43^3n$. Setting 
$2^43^3n=n_1\cdot q/\rad(q)$, we therefore get
\begin{align*}
\mathcal{S}_{2} &= \frac{XY}{q^2} \sum\limits_{n\not=0}  \e\left(-\frac{ny_0}{q}\right)\Gamma\left(\frac{nY}{q}\right)   \mathcal{E}(0,n;q) \\
&\ll \frac{XY}{q^2} \sum\limits_{n_1\not= 0}  \left| \Gamma\left(\frac{n_1Y}{2^43^3\rad(q)}\right)\right|  \left|\mathcal{E}
\left(0,\frac{n_1q}{2^43^3\rad(q)};q\right)\right|, 
\end{align*}
where we use the convention that $\mathcal{E}(0,z;q)=0$ if $z\not\in \mathbb{Z}$.
Using Lemma \ref{generalexpestimate}, we further have
$$
\left|\mathcal{E}\left(0,\frac{n_1q}{2^43^3\rad(q)};q\right)\right|  \ll \frac{q}{\rad(q)^{1/2}} \cdot (n_1,\rad(q))^{1/2}\cdot 2^{\omega(q)}. 
$$
Combining these estimates with \eqref{wellknown'}, we get
\begin{equation} \label{S10est}
\mathcal{S}_{2}\ll \frac{(2+\ve)^{\omega(q)}\rad(q)^{1/2}}{q} \cdot X.
\end{equation} 
Here we have observed that $(1+\ve)^{\omega(\rad(q))}2^{\omega(q)}\leq
(2+\ve)^{\omega(q)}$, where henceforth we adhere to the convention that $\ve$ may
take different values from appearance to appearance. 

We now turn to the estimation 
 \begin{equation} \label{S11}
\mathcal{S}_{1}= \frac{XY}{q^2} \sum\limits_{m\not=0} \sum\limits_{n} \e\left(-\frac{mx_0+ny_0}{q}\right) \Gamma\left(\frac{mX}{q}\right) \Gamma\left(\frac{nY}{q}\right) \mathcal{E}(m,n;q).
\end{equation} 
We always keep in mind that by \eqref{relexp} and the fact that 
$(ab,q)=1$, statements about $E(c,d;q)$ translate into corresponding statements about $\mathcal{E}(m,n;q)$.
Using Lemma \ref{badqs} we deduce  that 
$$
\mathcal{S}_{1}= \frac{XY}{q^2} \sum\limits_{m\not=0} \sum\limits_{n}
\e\left(-\frac{2^{f_2}mx_0+3^{f_3}ny_0}{q}\right)
\Gamma\left(\frac{2^{f_2}mX}{q}\right)
\Gamma\left(\frac{3^{f_3}nY}{q}\right) \mathcal{E}_{f_2,f_3}(m,n;q), 
$$
where $f_i:=w_i(q)$ for $i=2,3$, defined as in \eqref{w2def} and \eqref{w3def}, and
$$
\mathcal{E}_{f_2,f_3}(m,n;q):=\mathcal{E}(2^{f_2}m,3^{f_3}n;q).
$$
 We split $\mathcal{S}_{1}$ according to
 the greatest common divisor of $m$ and $q$. Thus we write 
$$
\mathcal{S}_{1}= \frac{XY}{q^2} \sum\limits_{d\mid q}
\sum\limits_{\substack{m\not=0\\ (m,q)=d}} \sum\limits_{n}
\e\left(-\frac{2^{f_2}mx_0+3^{f_3}ny_0}{q}\right)
\Gamma\left(\frac{2^{f_2}mX}{q}\right)
\Gamma\left(\frac{3^{f_3}nY}{q}\right)
\mathcal{E}_{f_2,f_3}(m,n;q). 
$$
Throughout the sequel, we denote by $\hat{k}$ the largest divisor
coprime to $6$ of $k\in \mathbb{N}$. Now, we uniquely factorise $q$ and
$d$ in the form
\begin{align*}
& q=r_1r_2,\quad  \mbox{$d=d_1d_2$ where $d_1\mid r_1, d_2\mid r_2$},
\\
& (6r_1,r_2)=1=(d_1,d_2), \quad \rad(\hat{r}_1)^2\mid
(r_1/d_1), \quad  \mu^2(r_2/d_2)=1. 
\end{align*}
The existence and uniqueness of this factorisation is obvious in the
case when $q$ is a prime power and then follows for the general case
by multiplicativity. We may now write
 \begin{equation} \label{split1}
 \begin{split}
\mathcal{S}_{1} =~&  \frac{XY}{q^2}  \sum\limits_{\substack{(6r_1,r_2)=1\\ r_1r_2=q}} \ \sum\limits_{\substack{d_1\mid r_1,\  d_2\mid r_2\\ \mbox{\scriptsize rad}(\hat{r}_1)^2\mid e_1\\ \mu^2(e_2)=1}} \
\sum\limits_{\substack{m\not=0\\ (m,r_1)=d_1\\ (m,r_2)=d_2}}
\sum\limits_{n} \e\left(-\frac{2^{f_2}mx_0+3^{f_3}ny_0}{q}\right)
\\ & \times 
\Gamma\left(\frac{2^{f_2}mX}{q}\right) \Gamma\left(\frac{3^{f_3}nY}{q}\right) \mathcal{E}_{f_2,f_3}(m,n;q),
\end{split}
\end{equation} 
where
\begin{equation} \label{q1q2def}
e_i:=\frac{r_i}{d_i} \quad \mbox{for } i=1,2.
\end{equation}

Using \eqref{mult1} we factorise the exponential sum as
\begin{equation} \label{expsplit}
\mathcal{E}_{f_2,f_3}(m,n;q)=\mathcal{E}_{f_2,f_3}(m\overline{r_2},n\overline{r_2};r_1)\mathcal{E}_{f_2,f_3}(m\overline{r_1},n\overline{r_1};r_2).
\end{equation}
We shall use Lemma \ref{composi} to evaluate the first exponential sum
on the right-hand side.  To this end we recall the definition of 
 $r(p)$ from \eqref{rpdef}. Then we set 
\begin{equation} \label{e0def}
r_0:=\prod\limits_{\substack{p\\ 
v_p(e_1)\ge r(p)}} p^{v_{p}(r_1)}, \quad d_0:=(m,r_0),\quad
e_0:=\frac{r_0}{d_0},
\end{equation}
and 
\begin{equation}\label{m1n1def}
d_1':=(n,r_1),\quad  e_1':=\frac{r_1}{d_1'}, \quad m_1:=\frac{m}{d_1}, \quad n_1:=\frac{n}{d_1'}.
\end{equation}
Now, from \eqref{relexp} and Lemma \ref{composi}, we deduce that
if $\mathcal{E}_{f_2,f_3}(m\overline{r_2},n\overline{r_2};r_1)\neq0$
then 
\begin{equation} \label{d1primecondi}
d_1'=2^{j_2}3^{j_3}d_1\in \mathbb{N} \mbox{ for some } j_2,j_3\in \mathbb{Z} \mbox{ with } |j_2|\le 5 \mbox{ and } |j_3|\le 4,
\end{equation}
and, if this is the case, then
\begin{equation} \label{firstexplicit}
\mathcal{E}_{f_2,f_3}(m\overline{r_2},n\overline{r_2};r_1)= \sqrt{r_1d_1} \cdot \e\left(\frac{-2^{g_2}3^{g_3}\cdot a \cdot \overline{br_2}\cdot \overline{m_1}^2{n_1}^3}{e_0}\right) \left(\frac{n_1}{\hat{e}_1}\right)\cdot Q, 
\end{equation}
where $g_2,g_3\in \mathbb{Z}$ depend at most on $d_1$, $d_1'$, $r_1$, $f_2$, $f_3$ and satisfy
$$
|g_2|\le 20, \quad |g_3|\le 17, \quad g_p= 0 \mbox{ if }
p\mid e_0 \mbox{ for } p=2,3,   
$$
and $Q$ depends at most on $a$, $b$, $d_1$, $d_1'$, $r_1$, $r_2$,
$f_2$, $f_3$ and the residue classes of $m_1$ and $n_1$ modulo
$2^{5}3^4$ and satisfies the bound 
$Q=O(1)$. Moreover, from \eqref{bruch}, it follows that
\begin{equation} \label{e0e1rel}
e_1=2^{l_2}3^{l_3}e_0\quad \mbox{for some } l_2,l_3\in \mathbb{Z} \mbox{ with } 0\le l_2\le 5\mbox{ and } 0\le l_3\le 4.  
\end{equation}

Now we turn to the second exponential sum on the right-hand side of
\eqref{expsplit}. Since $r_2/d_2=e_2$ is square-free and $d_2\mid m$,
we have that $(r_2/\rad(r_2))\mid m$. Moreover,  
from Lemma \ref{compositeexplicitevaluation1}(ii) and the fact that 
$(r_2,6)=1$, we deduce 
$$
\mathcal{E}_{f_2,f_3}(m\overline{r_1},n\overline{r_1};r_2)\not= 0 \Rightarrow r_2/(n,r_2) \mbox{ is square-free}
\Rightarrow (r_2/\rad(r_2))\mid n.
$$
We set
\begin{equation} \label{sdef}
d_2':=\frac{r_2}{\rad(r_2)}
\end{equation}
and note that $d_2'\mid d_2$ since $e_2=r_2/d_2$ is supposed to be square-free. We further set
\begin{equation} \label{primeprime}
d_2'':=\frac{d_2}{d_2'}=\frac{d_2\rad(r_2)}{r_2}=\frac{\rad(r_2)}{e_2}.
\end{equation}
In particular $(d_2'',e_2)=1$ since $\rad(r_2)$ is square-free. Now it follows that
\begin{equation} \label{secondexplicit}
\mathcal{E}_{f_2,f_3}(m\overline{r_1},n\overline{r_1};r_2)=d_2' \mathcal{E}_{f_2,f_3}(m_2\overline{r_1},n_2\overline{r_1};\rad(r_2)),
\end{equation}
where 
\begin{equation} \label{m2n2def}
m_2:=\frac{m}{d_2'}, \quad n_2:=\frac{n}{d_2'}.
\end{equation}
We also note that
\begin{equation} \label{stardef}
d_2^*:=\frac{r_2d_2^2}{(d_2')^3}={d_2''}^2  \rad(r_2) = {d_2''}^3e_2 \in \mathbb{N},
\end{equation}
which we will need in the following.

From \eqref{split1} and the above considerations, we conclude that $m=d_1d_2u$ for some nonzero integer $u$ with $(u,e_1e_2)=1$, and $n=d_1'd_2'v$ for some integer $v$ with $(v,e_1')=1$. Note that if $n=0$ then $e_1'=1$ and so $(0,e_1')=1$. By \eqref{m1n1def}, \eqref{firstexplicit}, \eqref{sdef}, \eqref{stardef} and the properties of the Jacobi symbol, we have
\begin{align*}
\mathcal{E}_{f_2,f_3}(m\overline{r_2},n\overline{r_2};r_1) &=
\sqrt{r_1d_1} \cdot \e\left(\frac{-2^{g_2}3^{g_3} a \cdot
    \overline{br_2} \cdot \overline{d_2u}^2 (d_2'v)^3}{e_0}\right)
\left(\frac{d_2'v}{\hat{e}_1}\right) \cdot Q\\ 
&= \sqrt{r_1d_1} \cdot \e\left(\frac{-2^{g_2}3^{g_3} \cdot a \cdot \overline{bd_2^*} \cdot \overline{u}^2 v^3}{e_0}\right) \left(\frac{v}{e}\right) \cdot Q',
 \end{align*}
where $e$ is the square-free kernel of $\hat{e}_1$, the unique square-free number for which $\hat{e}_1/e$ is a perfect square,  and
$$
Q':=\left(\frac{d_2'}{e}\right)\cdot Q=O(1)
$$
and depends at most on $a$, $b$, $d_1$, $d_1'$,  $r_1$, $r_2$
and the residue classes of $u$ and $v$ modulo $2^53^4$. Note that
the dependence on $d_2'$ is really a dependence on $r_2$ by
\eqref{sdef}, and the dependence on $f_2,f_3$ is one on $r_1r_2=q$.  Moreover, we write
$$
2^{g_2}3^{g_3}\equiv A\overline{B} \bmod{e_0},
$$
where $(A,B)=1$ and $A$ and $B$ are integers of the form $2^{g_2'}3^{g_3'}$ and $2^{g_2''}3^{g_3''}$, respectively, and set
\begin{equation} \label{a1b1}
a_1:=\frac{Aa}{(Aa,Bbu^2)}, \quad b_1:=\frac{Bbu^2}{(Aa,Bbu^2)}.
\end{equation}
We note that $(a_1,b_1)=1$. Now the exponential term 
takes the shape
$$
\e\left(\frac{-2^{g_2}3^{g_3} \cdot a \cdot \overline{bd_2^*} \cdot \overline{u}^2 v^3}{e_0}\right)=
\e\left(\frac{-a_1 \cdot \overline{b_1d_2^*} \cdot v^3}{e_0}\right). 
$$

Using \eqref{q1q2def}, \eqref{m1n1def}, \eqref{primeprime},
\eqref{secondexplicit} and \eqref{m2n2def}, we have 
$$
\mathcal{E}_{f_2,f_3}(m\overline{r_1},n\overline{r_1};r_2) = d_2'
\mathcal{E}_{f_2,f_3}(\overline{r_1}d_1 d_2''
u,\overline{r_1}d_1'v;\rad(r_2))=d_2'\mathcal{E}_{f_2,f_3}(\overline{e_1}d_2''u,\overline{e_1'}v;\rad(r_2)). 
$$
We further note that $d_2''e_2=\rad(r_2)$. Since $(d_2'',e_2)=1$,  we
can use \eqref{mult1} again to factorise the last exponential sum as 
$$
\mathcal{E}_{f_2,f_3}(\overline{e_1}d_2''u,\overline{e_1'}v;\rad(r_2))=
\mathcal{E}_{f_2,f_3}(\overline{e_1}u,\overline{e_1'd_2''}v;e_2)
\mathcal{E}_{f_2,f_3}(0,\overline{e_1'e_2}v;d_2'').
$$
Combining everything, we obtain
\begin{equation} \label{split2}
\begin{split}
\mathcal{S}_{1} =~ & \frac{XY}{q^2} \sum\limits_{\substack{(6r_1,r_2)=1\\ r_1r_2=q}} \ \sum\limits_{\substack{d_1\mid r_1,\  d_2\mid r_2\\ \mbox{\scriptsize rad}(\hat{r}_1)^2\mid e_1\\ \mu^2(e_2)=1}} \ \sideset{}{'} \sum\limits_{d_1'\mid r_1} \ d_2'\sqrt{r_1d_1}   \\
&\times  \sum\limits_{\substack{u\not=0\\ (u,e_1e_2)=1}}
\e\left(-\frac{2^{f_2}d_1d_2ux_0}{q}\right)\Gamma\left(\frac{2^{f_2}d_1d_2uX}{q}\right)
S(d_1,d_1',d_2,r_1,r_2,u), 
\end{split}
\end{equation} 
where the $'$ attached to the third summation symbol on the right-hand
side encodes the condition that \eqref{d1primecondi} holds
and  $S(u)=S(d_1,d_1',d_2,r_1,r_2,u)$ is given by 
\begin{equation} \label{sumovervdef}
\begin{split}
  S(u)
  :=~& \sum\limits_{(v,e_1')=1}
  \e\left(-\frac{3^{f_3}d_1'd_2'vy_0}{q}\right)\Gamma\left(\frac{3^{f_3}d_1'd_2'vY}{q}\right)
  \e\left(\frac{-a_1 \cdot \overline{b_1d_2^*} \cdot
  v^3}{e_0}\right) \left(\frac{v}{e}\right)\\ 
  &\times    \mathcal{E}_{f_2,f_3}(\overline{e_1}u,\overline{e_1'd_2''}v;e_2)
  \mathcal{E}_{f_2,f_3}(0,\overline{e_1'e_2}v;d_2'')
  Q'_{d_1,d_1',r_1,r_2,u}( v^\flat),
\end{split}
\end{equation}
and we henceforth use $v^\flat$ to denote the residue class of 
$v$ modulo $2^{5}3^4$. 
We have further dropped the dependency of $Q'$ on $a$ and
$b$ since these are treated as fixed integers. 

\bigskip

Now our strategy is to utilise the cancellation in the sum over $v$
coming from the exponential term. 
The main obstacle is that in the generic case, the denominator
$e_0$ is very large compared to the length of the sum over $v$ (note
that the sum over $v$ can be freely truncated at
$q^{1+\varepsilon}/(d_1'd_2'Y)$ since $\Gamma$ has rapid decay). To
reduce the size of the denominator in the exponential term, we flip
the numerator and denominator by means of the identity 
\eqref{genflip}, which gives
$$
\e\left(\frac{-a_1\cdot \overline{b_1d_2^*}\cdot v^3}{e_0}\right)=\e\left(\frac{-a_1v^3}{b_1e_0d_2^*}\right) \cdot \e\left(\frac{a_1 \overline{e_0} v^3}{b_1d_2^*}\right),
$$
where $\overline{e_0}$ is the multiplicative inverse of $e_0$ modulo
$b_1d_2^*$. 
The first factor on the right-hand side will turn
out to be a slowly oscillating weight function. The cancellation in
the sum over $v$ will come from the second factor.
The sum in \eqref{sumovervdef} now takes the form
\begin{equation} \label{sumoverv}
\begin{split}
   S(u)  =~& 
\sum\limits_{(v,e_1')=1}
\Gamma\left(\frac{3^{f_3}d_1'd_2'vY}{q}\right) 
\e\left(-\frac{3^{f_3}d_1'd_2'vy_0}{q}\right) 
\e\left(\frac{-a_1v^3}{b_1e_0d_2^*}\right) \e\left(\frac{a_1
    \overline{e_0} v^3}{b_1d_2^*}\right) \\  
  & \times 
  \left(\frac{v}{e}\right) 
  \mathcal{E}_{f_2,f_3}(\overline{e_1}u,\overline{e_1'd_2''}v;e_2)
  \mathcal{E}_{f_2,f_3}(0,\overline{e_1'e_2}v;d_2'') 
Q'_{d_1,d_1',r_1,r_2,u}(v^\flat).
\end{split}
\end{equation} 
The idea is now to write this as a short sum of complete exponential
sums to modulus $2^{5}3^{4}b_1d_2^*e$.  This will be done in the next
section using the Poisson summation formula.

\section{Second application of Poisson summation } \label{s:second}

We begin by removing the coprimality condition $(v,e_1')=1$ in
\eqref{sumoverv} using M\"obius inversion. This gives
\begin{equation} \label{remcop}
S(u)=\sum\limits_{\substack{t\mid e_1'\\ (t,e)=1}} \mu(t)\left(\frac{t}{e}\right) S(u,t),
\end{equation}
where $S(u,t)=S(d_1,d_1',d_2,r_1,r_2,u,t)$ is given by 
\begin{equation} \label{sumovervt}
\begin{split}
  S(u,t) =~ &
  \sum\limits_{v} \Gamma\left(\frac{3^{f_3}d_1'd_2'tvY}{q}\right)
  \e\left(-\frac{3^{f_3}d_1'd_2'tvy_0}{q}\right)
  \e\left(\frac{-a_1t^3v^3}{b_1e_0d_2^*}\right)
\e\left(\frac{a_1 \overline{e_0} t^3v^3}{b_1d_2^*}\right)
\\
  &\times 
  \left(\frac{v}{e}\right) 
  \mathcal{E}_{f_2,f_3}(\overline{e_1}u,\overline{e_1'd_2''}tv;e_2)
 \mathcal{E}_{f_2,f_3}(0,\overline{e_1'e_2}tv;d_2'') 
  Q''(v^\flat),
\end{split}
\end{equation}
with 
$$
Q''(v^\flat)=Q''_{d_1,d_1',r_1,r_2,u,t}(v^\flat):=Q'_{d_1,d_1',
r_1,r_2,u}(t^\flat v^\flat).
$$
To simplify the notations, we 
define a function $\Psi : \mathbb{R} \rightarrow \mathbb{C}$ by
$$
\Psi\left(\frac{3^{f_3}d_1'd_2'tY}{q}\cdot v\right)
:=\Gamma\left(\frac{3^{f_3}d_1'd_2'tY}{q}\cdot v\right) 
\e\left(-\frac{3^{f_3}d_1'd_2'ty_0}{q} \cdot v\right) 
\e\left(\frac{-a_1t^3}{b_1e_0d_2^*}\cdot v^3\right). 
$$
Equivalently, if $C:=2^{l_2-3j_2}3^{l_3-3j_3-3f_3}$, then 
\begin{equation} \label{Phidef2}
\begin{split}
\Psi(z) &= \Gamma(z) \e\left(-\frac{3^{f_3}d_1'd_2'ty_0}{q} \cdot 
\frac{q}{3^{f_3}d_1'd_2'tY} \cdot z\right) \e\left(-\frac{a_1t^3}{b_1e_0d_2^*}  \cdot \left(\frac{q}{3^{f_3}d_1'd_2'tY}\right)^3 \cdot z^3\right)\\ 
&= \Gamma(z) \e\left(-\frac{y_0}{Y} \cdot z\right)  \e\left(-C\cdot \frac{a_1q^2}{b_1d_1^2d_2^2Y^3} \cdot z^3 \right),
\end{split}
\end{equation}
where in the last equation we have used \eqref{q1q2def},
\eqref{e0def}, \eqref{d1primecondi}, \eqref{e0e1rel}, \eqref{stardef}
and $q=r_1r_2$. We note that $\Psi$ depends on
$d_1,d_1',d_2,r_1,r_2,u,y_0$ and $Y$. We may now write 
\begin{align*}
S(u,t) 
=~&  \sum\limits_{v} \Psi\left(\frac{3^{f_3}d_1'd_2'tY}{q} \cdot
  v\right) 
 \e\left(\frac{a_1 \overline{e_0} t^3v^3}{b_1d_2^*}\right)  \left(\frac{v}{e}\right) \\
& \times \mathcal{E}_{f_2,f_3}(\overline{e_1}u,\overline{e_1'd_2''}tv;e_2)
\mathcal{E}_{f_2,f_3}(0,\overline{e_1'e_2}tv;d_2'') Q''(v^\flat).
\end{align*}

Let $D:=2^{5}3^4$. We shall  split the summation over $v$ into residue classes modulo $Db_1d_2^*e$. We note that
$\rad(r_2)\mid d_2^*$ by \eqref{stardef}. Hence we get
\begin{align*}
  S(u,t)=~& \sum\limits_{k \bmod{Db_1d_2^*e}} \e\left(\frac{a_1
      \overline{e_0} t^3k^3}{b_1d_2^*}\right) 
  \left(\frac{k}{e}\right) 
  \mathcal{E}_{f_2,f_3}(\overline{e_1}u,\overline{e_1'd_2''}tk;e_2)
  \mathcal{E}_{f_2,f_3}(0,\overline{e_1'e_2}tk;d_2'') Q''(k^\flat)\\  
&\times  \sum\limits_{v\equiv k
    \bmod{Db_1d_2^*e}} \Psi\left(\frac{3^{f_3}d_1'd_2'tY}{q}\cdot v \right),
  \end{align*}
where $k^\flat$ denotes the residue class of $k$ modulo $D$. We detect this residue class using additive characters, getting
\begin{equation} \label{furthersplit}
\begin{split}
  S(u,t) =~ & \frac{1}{D} \sum\limits_{x=1}^D \sum\limits_{y=1}^D
  \e\left(\frac{xy}{D}\right)Q''(y) \sum\limits_{k
    \bmod{Db_1d_2^*e}} \e\left(\frac{a_1 \overline{e_0}
      t^3k^3}{b_1d_2^*}\right)  \left(\frac{k}{e}\right) 
 \mathcal{E}_{f_2,f_3}(\overline{e_1}u,\overline{e_1'd_2''}tk;e_2)
 \\ 
  & \times 
 \mathcal{E}_{f_2,f_3}(0,\overline{e_1'e_2}tk;d_2'') 
  \e\left(-\frac{xk}{D}\right) \sum\limits_{v\equiv k
    \bmod{Db_1d_2^*e}} \Psi\left(\frac{3^{f_3}
d_1'd_2'tY}{q}\cdot v \right).
\end{split}
\end{equation}
Applying Poisson summation to the inner-most sum over $v$ after a
linear change of variables, we see that it is 
\begin{equation} \label{afterpoi}
\begin{split}
&= 
\frac{q}{3^{f_3}Db_1d_2^*ed_1'd_2'tY} 
\sum\limits_{h\in \mathbb{Z}}
\e\left(\frac{kh}{Db_1d_2^*e} \right)
\hat\Psi\left(\frac{q}{3^{f_3}Db_1d_2^*ed_1'd_2'tY} \cdot h\right)\\ 
&= \frac{r_1}{3^{f_3}Db_1ed_1'{d_2''}^2tY} \sum\limits_{h\in
  \mathbb{Z}} \e\left(\frac{kh}{Db_1d_2^*e} \right)
\hat\Psi\left(\frac{r_1}{3^{f_3}Db_1ed_1'{d_2''}^2tY}\cdot h\right),
\end{split}
\end{equation}
where in the last equation, we have used 
\eqref{sdef}, \eqref{stardef} and $q=r_1r_2$.

Combining  \eqref{furthersplit} and \eqref{afterpoi} we get
\begin{equation} \label{Srew}
 S(u,t)
= 
\frac{r_1}{3^{f_3}
D^2b_1ed_1'{d_2''}^2tY}  \sum\limits_{x=1}^D \sum\limits_{y=1}^D 
\e\left(\frac{xy}{D}\right)Q''(y)\sum\limits_{h\in \mathbb{Z}} 
\hat\Psi\left(\frac{r_1}{3^{f_3}Db_1ed_1'{d_2''}^2tY}\cdot h\right) F_x(h),
\end{equation}
where $F_x(h)$ is the complete exponential sum
\begin{align*}
F_x(h) := ~& \sum\limits_{k \bmod{Db_1d_2^*e}} \e\left(\frac{kh}{Db_1d_2^*e} \right) 
\e\left(\frac{a_1 \overline{e_0} t^3k^3}{b_1d_2^*}\right) 
\left(\frac{k}{e}\right) \e\left(-\frac{xk}{D}\right)  \\
&\times
\mathcal{E}_{f_2,f_3}(\overline{e_1}u,\overline{e_1'd_2''}tk;e_2) 
\mathcal{E}_{f_2,f_3}(0,\overline{e_1'e_2}tk;d_2''). 
\end{align*}
We now split the exponential sum $F_x(h)$ into parts and estimate
them. We observe that 
$$
Db_1d_2^*e=Db_1{d_2''}^3e_2e
$$ 
by \eqref{stardef} and a little thought reveals that 
$Db_1{d_2''}^3$, $e_2$, and $e$ are pairwise coprime. Therefore we
may write $k$ mod $Db_1d_2^*e$ in the form $k=\alpha \cdot e_2e +\beta
\cdot Db_1{d_2''}^3e_2+\gamma\cdot Db_1{d_2''}^3e$, where $\alpha$
runs over residue classes mod $Db_1{d_2''}^3$, $\beta$ runs over
residue classes mod $e$, and $\gamma$ runs over residue
classes mod $e_2$. It follows that 
\begin{equation} \label{factorisierung}
F_x(h)=G_1(h)G_2(h)G_3(h),
\end{equation}
where
\begin{align*}
G_1(h)
&:= \sum\limits_{\alpha
  \bmod{Db_1{d_2''}^3}}\e\left(\frac{Da_1\overline{e_0}t^3e^3e_2^2\alpha^3+h\alpha}{Db_1{d_2''}^3}\right) \e\left(-\frac{xe_2e\alpha }{D}\right)  \mathcal{E}_{f_2,f_3}(0,\overline{e_1'}te\alpha;d_2''),\\ 
G_2(h)&:=\left(\frac{Db_1\rad(r_2)}{e}\right) \sum\limits_{\beta
  \bmod{e}}  \e\left(\frac{h\beta}{e} \right)\cdot
\left(\frac{\beta}{e}\right),\\ 
G_3(h) &:= 
\sum\limits_{\gamma \bmod{e_2}}
\e\left(\frac{h\gamma}{e_2}\right)   \e\left(\frac{D^3a_1
    \overline{e_0}t^3e^3b_1^2{d_2''}^6\gamma^3}{e_2}\right) 
\mathcal{E}_{f_2,f_3}(\overline{e_1}u,D\overline{e_1'}tb_1{d_2''}^2e\gamma;e_2). 
\end{align*}
We note that only $G_1(h)$ depends on $x$. Our next task is to provide
satisfactory upper bounds for the modulus of these exponential sums.

\bigskip

Beginning with $G_2(h)$ we see that this is a Gauss sum with a
quadratic character. Since the modulus $e$ is
squarefree this character is primitive and it follows that
\begin{equation} \label{F2est}
|G_2(h)|\le e^{1/2}.
\end{equation}
Moreover, if $h=0$ then we have
\begin{equation} \label{F2est0}
G_2(0) = \begin{cases} 
0 & \mbox{ if } e>1,\\  
1 & \mbox{ if } e=1. \end{cases}
\end{equation}
Turning to $G_3(h)$, for which we use 
$(\overline{e_1}u,e_2)=1$ and Lemma \ref{generalexpestimate}, we
deduce that
\begin{equation} \label{F3est1}
|G_3(h)| \le \sum\limits_{\gamma \bmod{e_2}} |\mathcal{E}_{f_2,f_3}(\overline{e_1}u,D\overline{e_1'}tb_1{d_2''}^2e\gamma;e_2)| \ll e_2^{3/2}2^{\omega(e_2)}.
\end{equation}

The treatment of $G_1(h)$ is more taxing and 
we will need to introduce some more notation.
For a natural number $n$ 
let $\rho(n)$ be defined multiplicatively by 
\begin{equation} \label{C}
\rho(p^\alpha):=p^{f(\alpha)},
\end{equation}
where
\begin{equation} \label{Cfunc}
f(\alpha):=
\begin{cases}
1/2 & \mbox{if } \alpha=1, \\
1  & \mbox{if } \alpha=2 \mbox{ or } \alpha=3,\\
5/4  & \mbox{if } \alpha=4,\\
\alpha/4 & \mbox{if } \alpha\ge 5. 
\end{cases}
\end{equation}
We note that $\rho(n)\le \sqrt{n}$ for every $n\in\mathbb{N}$.
Furthermore, for any $\alpha\leq \beta$ one easily checks that 
$$
\alpha-f(\alpha)\leq \beta-f(\beta), \quad 
f(\alpha)\leq f(\beta), \quad
f(\alpha+\beta)\leq f(\alpha)+f(\beta).
$$
Hence it follows that $m/\rho(m)\le
n/\rho(n)$ and $\rho(m)\le \rho(n)$ if $m\mid n$, and furthermore 
$$
\rho(n_1n_2)\le \rho(n_1)\rho(n_2),
$$
for all $n_1,n_2$. 
The estimation of $G_1(h)$ is the object of the
following result.

\begin{lemma}\label{lem:star}
We have 
$$
G_1(h)\ll 
\begin{cases}
{d_2''}^{7/2}b_1^{1/2}(h,b_1)^{1/2}2^{\omega(b_1)}(4+\ve)^{\omega(d_2'')} &
\mbox{if $h\neq 0$,}\\ 
a^{1/2}b_1 2^{\omega(b_1)}{d_2''}^3/\rho(bu^2) & 
\mbox{if $h= 0$.} 
\end{cases}
$$
\end{lemma}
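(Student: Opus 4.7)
The plan is to reduce $G_1(h)$ to a single complete cubic exponential sum modulo $M:=Db_1(d_2'')^3$ and then to invoke Lemma~\ref{polyexpsumest}. Using \eqref{relexp} I first expand
$$
\mathcal{E}_{f_2,f_3}(0,\overline{e_1'}te\alpha;d_2'') = \sideset{}{^*}\sum_{z \bmod{d_2''}} \e\!\left(\frac{-ab\cdot 3^{f_3}\overline{e_1'}te\alpha z^2}{d_2''}\right)
$$
and rewrite $\e(-xe_2e\alpha/D)$ as $\e(-xe_2eb_1(d_2'')^3\alpha/M)$. Lifting the $z$-exponential to denominator $M$ by multiplying numerator and denominator by $Db_1(d_2'')^2$ and interchanging the $z$- and $\alpha$-sums gives
$$
G_1(h) = \sideset{}{^*}\sum_{z \bmod{d_2''}} T_h(z), \qquad T_h(z) := \sum_{\alpha \bmod{M}} \e\!\left(\frac{c_3\alpha^3+c_1(z)\alpha}{M}\right),
$$
where $c_3 := Da_1\overline{e_0}t^3e^3e_2^2$ and $c_1(z) := h - xe_2eb_1(d_2'')^3 - Db_1(d_2'')^2 ab \cdot 3^{f_3} \overline{e_1'} te z^2$. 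The key observation is that $v_p(a_1) = \max\{0, v_p(a)-v_p(bu^2)\} = 0$ for every prime $p\mid b_1$ coprime to $6$, so $c_3$ is coprime to the $6$-coprime part $b_1^\flat$ of $b_1$.

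For $h\ne 0$ I would then split $M$ via CRT into its $2$-, $3$- and $6$-coprime parts and apply Lemma~\ref{polyexpsumest} to each. The derivative of the cubic is $3c_3\alpha^2 + c_1(z)$ with semi-discriminant $12c_3c_1(z)$ and exponent $\eta \le 2$. Since $c_1(z) \equiv h \pmod{b_1}$, the gcd with $b_1^\flat$ reduces to $(h,b_1^\flat)$, and Lemma~\ref{polyexpsumest} bounds the $b_1^\flat$-factor by $(b_1^\flat)^{1/2}(h,b_1^\flat)^{1/2}2^{\omega(b_1^\flat)}$. The $D$-factor is $O(1)$ by a trivial estimate. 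On the $(d_2'')^3$-factor, using the divisibility $(d_2'')^2 \mid c_1(z) - h$ and Hensel-lifting around the critical points of the cubic, the same lemma yields a bound of order $(d_2'')^{5/2}(4+\varepsilon)^{\omega(d_2'')}$. Summing over the $\varphi(d_2'') \le d_2''$ residue classes of $z$ picks up an extra factor of $d_2''$, and combining everything gives the claimed bound.

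For $h = 0$ the saving $(h,b_1)^{1/2}$ disappears and a direct prime-by-prime analysis is required. At each prime power $p^\nu \parallel b_1^\flat$ one encounters the essentially pure cubic Gauss sum $\sum_{\alpha \bmod{p^\nu}} \e(c_3\alpha^3/p^\nu)$, whose standard Hensel evaluation is of size $\rho(p^\nu)$, matching the function $f$ in \eqref{Cfunc}. Multiplying across primes of $b_1^\flat$, accounting for the factor $a_1$ in $c_3$ via the identity $v_p(b_1) = v_p(bu^2) - v_p(a)$ valid at primes $p\mid b_1$ coprime to $6$, and using $a_1 \le a$, one finds that the aggregate saving across all primes of $b_1^\flat$ is of order $\rho(bu^2)/a^{1/2}$ relative to the trivial bound $b_1$. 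Multiplying by the $(d_2'')^3$-contribution (treated as in the $h\ne 0$ case) and summing trivially over $z$ then produces the claimed $a^{1/2}b_1 \cdot 2^{\omega(b_1)}(d_2'')^3/\rho(bu^2)$. The main technical obstacle lies in this prime-by-prime analysis, especially at $p^4$, where the non-generic value $f(4) = 5/4$ in \eqref{Cfunc} requires an explicit $p$-adic computation rather than a direct appeal to Lemma~\ref{polyexpsumest}.
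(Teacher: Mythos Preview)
Your reduction of $G_1(h)$ to the family $T_h(z)=\sum_{\alpha\bmod M}\e((c_3\alpha^3+c_1(z)\alpha)/M)$ is correct, but the bound you extract from Lemma~\ref{polyexpsumest} on the $(d_2'')^3$-part does not follow as claimed. The semi-discriminant there is (up to units modulo $d_2''$) the linear coefficient $c_1(z)$, and since $c_1(z)\equiv h\pmod{(d_2'')^2}$ but not modulo $(d_2'')^3$, whenever $p^2\mid h$ for a prime $p\mid d_2''$ there are values of $z$ with $p^3\mid c_1(z)$; for these Lemma~\ref{polyexpsumest} with $\eta=1$ returns only the trivial bound $p^3$, not the $p^{5/2}$ you assert. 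The sharper bound $O(p^2)$ for $\sum_{\alpha\bmod p^3}\e((c\alpha^3+d\alpha)/p^3)$ with $(c,p)=1$ is indeed true, but it requires a separate stationary-phase computation and is not a corollary of the lemma; your ``Hensel-lifting'' remark gestures at this but does not supply it. A second obstruction is that $(b_1,d_2'')$ need not equal $1$: the variable $u$ is only assumed coprime to $e_1e_2$, whereas $d_2''=\rad(r_2)/e_2$ consists precisely of the primes of $r_2$ not dividing $e_2$, so the clean CRT split into a $b_1^\flat$-factor and a $(d_2'')^3$-factor is not generally available.

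The paper avoids both difficulties by the substitution $\alpha=\alpha_1+Dd_2''\alpha_2$ with $\alpha_1\bmod Dd_2''$ and $\alpha_2\bmod b_1(d_2'')^2$. Since $\mathcal{E}_{f_2,f_3}(0,\cdot;d_2'')$ depends only on $\alpha_1$, the inner sum $G_1(h,\alpha_1)$ over $\alpha_2$ is a cubic exponential sum to modulus $b_1(d_2'')^2$ whose leading coefficient acquires an extra factor $(Dd_2'')^2$ from the change of variable; this forces $(d_2'')^2\mid(\Delta,Q)$ automatically, and Lemma~\ref{polyexpsumest} then gives $G_1(h,\alpha_1)\ll b_1^{1/2}(h,b_1)^{1/2}(d_2'')^2\,2^{\omega(b_1 d_2'')}$ directly, with no coprimality hypothesis between $b_1$ and $d_2''$. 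For $h=0$ the same decomposition shows $G_1(0,\alpha_1)=0$ unless $d_2''\mid\alpha_1$, collapsing the problem to a single cubic Gauss sum modulo (essentially) $b_1$, where the prime-by-prime $\rho$-analysis you describe (including the delicate $p^4$ case) then genuinely enters.
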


\begin{proof}
To deal with $G_1(h)$ we write $
\alpha=\alpha_1+Dd_2''\alpha_2,
$
where $\alpha_1$ runs over the residue classes mod $Dd_2''$, and
$\alpha_2$ runs over the residue classes mod $b_1{d_2''}^2$. This gives
\begin{equation} \label{F1splitting}
\begin{split}
G_1(h)=~& \sum\limits_{\alpha_1 \bmod{Dd_2''}}  
\e\left(\frac{D
    a_1\overline{e_0}t^3e^3e_2^2\alpha_1^3+h\alpha_1}{Db_1{d_2''}^3}\right)
\e\left(-\frac{xe_2e\alpha_1 }{D}\right)\\
&\times \mathcal{E}_{f_2,f_3} (0,\overline{e_1'}te\alpha_1;d_2'')
 G_1(h,\alpha_1), 
\end{split}
\end{equation}
where 
$$
G_1(h,\alpha_1):=\sum\limits_{\alpha_2 \bmod{ b_1{d_2''}^2}}  \e\left(\frac{P_{h,\alpha_1}(\alpha_2)}{b_1{d_2''}^2}\right)
$$
and
$$
P_{h,\alpha_1}(X):=D a_1\overline{e_0}t^3e^3e_2^2\cdot \frac{(\alpha_1+Dd_2''X)^3-\alpha_1^3}{Dd_2''}+hX.
$$
Using $(\overline{e_1'}t e,d_2'')=1$, we deduce from Lemma \ref{generalexpestimate} that
\begin{equation} \label{F1part1est}
|\mathcal{E}(0,\overline{e_1'}te\alpha_1;d_2'')| \ll
 (\alpha_1,d_2'')^{1/2} {d_2''}^{1/2}2^{\omega(d_2'')}.  
\end{equation}

Let us begin by considering the case $h\neq 0$, to deal with which 
we call upon the general upper bound 
for complete exponential sums presented in 
Lemma \ref{polyexpsumest}. We wish to apply this with 
$Q=b_1{d_2''}^2$ and
$g(X)=P_{h,\alpha_1}(X)$.
We note that $g'(X)=3D a_1\overline{e_0} t^3 e^3e_2^2 (\alpha_1 +Dd_2''
X)^2 +h$.  Thus we have 
$$
n=2, \quad \eta=1, \quad 
\Delta= 4hA, \quad 
A=3D^3a_1\overline{e_0}t^3e^3e_2^2d_2''^2,
$$
and 
$$
(\Delta,Q)=
{d_2''}^2(12D^3a_1\overline{e_0}t^3e^3e_2^2h,b_1).
$$
Recalling that $D=2^{5}3^4$ and that $t$ can be assumed to be square-free with 
$t/(t,6)$ coprime to $b_1$, so $(t^3,b_1)\mid 2^33^3$, we readily
conclude that $(\Delta,Q)\ll d_2''^2(h,b_1)$. 
Hence 
$$
G_1(h,\alpha_1)\ll
b_1^{1/2}(h,b_1)^{1/2}{d_2''}^{2}
2^{\omega(b_1d_2''^2)}.
$$
Combining this with \eqref{wellknown}, \eqref{F1splitting}
and \eqref{F1part1est} we obtain
the bound for $G_1(h)$ in the statement of the lemma. 

Suppose now that $h=0$ and let
$\delta:=D a_1\overline{e_0}t^3e^3e_2^2$. Then we have
$$
G_1(0,\alpha_1)=\sum\limits_{\alpha_2 \bmod{ b_1{d_2''}^2}}  \e\left(\frac{\delta D^2\alpha_2^3}{b_1}+\frac{3\delta D\alpha_1 \alpha_2^2}{b_1d_2''}+\frac{3\delta \alpha_1^2\alpha_2}{b_1{d_2''}^2}\right).
$$
We may write $\alpha_2=x_1+x_2\cdot b_1d_2''$, with $x_1\in \mathbb{Z}/b_1d_2''\mathbb{Z}$ and $x_2\in \mathbb{Z}/d_2''\mathbb{Z}$.  It follows that
$$
G_1(0,\alpha_1)=\sum\limits_{x_1 \bmod{ b_1{d_2''}}}  \e\left(\frac{\delta D^2 x_1^3}{b_1}+\frac{3\delta D\alpha_1 x_1^2}{b_1d_2''}+\frac{3\delta\alpha_1^2x_1}{b_1{d_2''}^2}\right) 
\sum\limits_{x_2 \bmod{d_2''}} \e\left(\frac{3\delta\alpha_1^2x_2}{d_2''}\right).
$$
Using the facts that $(3\delta,d_2'')=1$ and $d_2''$ is square-free,
we deduce that the innermost sum on the right-hand side of the above equation is $0$ unless $\alpha_1\equiv 0$ mod $d_2''$.
Hence we have
\begin{equation} \label{not0case}
G_1(0,\alpha_1)=0 \quad \mbox{if } \alpha_1\not\equiv 0 \mbox{ mod } d_2''
\end{equation}
and 
\begin{align*}
G_1(0,jd_2'') & =  {d_2''}^2\sum\limits_{x \bmod{b_1}}
\e\left(\frac{\delta D^2 x^3+3\delta D j x^2+3\delta
    j^2x}{b_1}\right)\\ &= 
{d_2''}^2\sum\limits_{x \bmod{b_1}} \e\left(\frac{\delta
    \left((Dx+j)^3 -j^3\right)}{Db_1}\right)\\ &= 
\frac{{d_2''}^2}{D}  \sum\limits_{x \bmod{Db_1}}
\e\left(\frac{\delta\left((Dx+j)^3-j^3\right)}{Db_1}\right).
\end{align*}
Let $E:=(\delta,Db_1)$, $\delta':=\delta/E$ and $b_1':=Db_1/E$. Then
it follows that 
\begin{equation} \label{0caseprime}
G_1(0,jd_2'')=\frac{E{d_2''}^2}{D}  \sum\limits_{x=1}^{b_1'} \e\left(\frac{\delta' \left((Dx+j)^3-j^3\right)}{b_1'}\right).
\end{equation}
As above we have $E=O(1)$.

In the following, we want to bound the sum on the right-hand side of
\eqref{0caseprime}. To this end we consider in general terms 
exponential sums of the form
$$
G(c,r;Q)=\sum\limits_{x=1}^{Q} \e\left(\frac{c\left((Drx+j)^3-j^3\right)/r}{Q}\right).
$$
A standard calculation shows that these exponential sums are
multiplicative in the sense that 
\begin{equation} \label{Gfacto}
G(c,r;Q)=G(c,rQ_2;Q_1) G(c,rQ_1;Q_2) 
\end{equation}
if $Q=Q_1Q_2$ with  $(Q_1,Q_2)=1$. 
Moreover, we note that if $(r,Q)=1$, then
\begin{equation} \label{gausssumme}
G(c,r;Q)=\sum\limits_{x=1}^{Q}
\e\left(\frac{c\overline{r}\left((Drx+j)^3-j^3\right)}{Q}\right). 
\end{equation}
If in addition $(D,Q)=1$, then $Drx+j$ runs through all residue
classes modulo $Q$ as $x$ runs through all of them, and therefore it
follows that
\begin{equation} \label{gausssumme1}
G(c,r;Q)=\e\left(-\frac{c\overline{r}j^3}{Q}\right)
\sum\limits_{x=1}^{Q}\e\left(\frac{c\overline{r}x^3}{Q}\right).
\end{equation}
Hence $G(c,r;Q)$ is essentially a cubic Gauss sum in this case.

We claim the bound 
\begin{equation} \label{cubicsumest}
|G(c,r;Q)| \le (Q,9D^6) \cdot \frac{Q}{\rho(Q)} \cdot 2^{\omega(Q)},
\end{equation}
if $(cr,Q)=1$, where $\rho(Q)$ is given multiplicatively by \eqref{C} and
\eqref{Cfunc}. By \eqref{Gfacto},
to prove this it suffices to
establish this bound for $Q$ a prime power. Suppose that 
$Q=p^{\alpha}$ with $p$ prime and $\al\geq 1$. Clearly 
\eqref{cubicsumest} follows from the Weil bound if
$\alpha=1$ and $p>3$. It follows from
Lemma~\ref{polyexpsumest} with $n=2, \eta=2$ and $\Delta=(3cD^3r^2)^2$
if $\alpha\ge 5$.   Next we note that 
$$
\sum\limits_{x=1}^{p^2} \e\left(\frac{c\overline{r}
    x^3}{p^2}\right)=p+E(c\overline{r},0;p^2), \quad
\quad \sum\limits_{x=1}^{p^3} \e\left(\frac{c\overline{r}
    x^3}{p^3}\right)=p^2+E(c\overline{r},0;p^3), 
$$
in the notation of \eqref{Eq0}. Moreover, by Lemma \ref{mncaseslemma}(ii), we have $E(c\overline{r},0;p^2)=0=E(c\overline{r},0;p^3)$ if $p>3$. Thus 
\eqref{cubicsumest} follows from \eqref{gausssumme1} if $\alpha=2,3$ and $p>3$.
When $\alpha=4$ and $p>3$ we deduce from 
Lemma \ref{mncaseslemma}(ii) that
$$
\sum\limits_{x=1}^{p^4} \e\left(\frac{c\overline{r}
    x^3}{p^4}\right)
=
\sum_{\substack{x=1 \\ p\mid x}}^{p^4} 
\e\left(\frac{c\overline{r}
    x^3}{p^4}\right)
+E(c\overline{r},0,p^4)=
p^2
\sum\limits_{x=1}^{p} \e\left(\frac{c\overline{r}
    x^3}{p}\right),
$$
which has modulus at most $2p^{5/2}$ by the Weil estimate if
$p>3$. Since \eqref{cubicsumest} follows trivially if $1\leq \alpha
\leq 4$ and $p=2,3$, this therefore completes its proof. 

Employing \eqref{cubicsumest}, the definition of $b_1$ in
\eqref{a1b1}  and the facts about the function $\rho(Q)$ stated above
before the lemma, we deduce that
$$
G(\delta',1;b_1')
\ll \frac{b_1'2^{\omega(b_1')}}{\rho(b_1')}
\ll
\frac{b_12^{\omega(b_1)}}{\rho(b_1)}
\ll
\frac{b_12^{\omega(b_1)}\rho(Aa)}{\rho(bu^2)} \ll 
\frac{b_1 2^{\omega(b_1)}a^{1/2}}{\rho(bu^2)}.
$$
Therefore 
$$
G_1(0,\alpha)=\frac{Ed_2''^2}{D}\cdot G(\delta',1;b_1')
\ll
\frac{a^{1/2} b_12^{\omega(b_1)}d_2''^2}{\rho(bu^2)}.
$$
Combining this with \eqref{F1splitting} and 
\eqref{not0case},  and using the trivial
estimate $\mathcal{E}(0,0;d_2'')\le d_2''$, we therefore get 
the second part of the lemma.
\end{proof}

Recall that $d_2''e_2=\rad(r_2)$ and $d_1'\gg d_1$. In particular 
$2^{\omega(e_2)}(4+\ve)^{\omega(d_2'')}\leq
(4+\ve)^{\omega(r_2)}$. 
 Then from 
\eqref{Srew}--\eqref{F3est1} and  
Lemma \ref{lem:star} we obtain 
\begin{equation} \label{Srew2}
S(u,t)
\ll A_0(u,t)+A_1(u,t),
\end{equation}
in \eqref{sumovervt}, 
where 
\begin{align}\label{A0def}
A_0(u,t)
&:=\frac{a^{1/2}r_1\cdot \rad(r_2)e_2^{1/2}2^{\omega(b_1)+\omega(e_2)}}{d_1\rho(bu^2)tY} \cdot |\hat\Psi(0)|,\\
\label{A1def}
A_1(u,t)&:=
\frac{r_1\cdot
  \rad(r_2)^{3/2}2^{\omega(b_1)}(4+\ve)^{\omega(r_2)} }{d_1e^{1/2}b_1^{1/2}tY}
 \sum\limits_{h\not=0} \left| 
\hat\Psi\left(\frac{r_1}{3^{f_2} 
Db_1ed_1'{d_2''}^2tY}\cdot h\right) \right|
 (h,b_1)^{1/2}.
\end{align}

\bigskip

Let 
\begin{equation} \label{alphabeta}
\alpha:=\frac{r_1}{3^{f_2} Db_1ed_1'{d_2''}^2tY}, \quad \beta:=\frac{Ca_1q^2}{b_1d_1^2d_2^2Y^3}, \quad 
\gamma:=\frac{y_0}{Y}.
\end{equation}
We now turn to the estimation of 
$|\hat\Psi(z)|$. By \eqref{Phidef2} we have
\begin{equation} \label{PhidefFourier}
\hat\Psi(z)=F\left(z+\gamma,\beta\right),
\end{equation}
where $F$ is given by \eqref{Fdef1}.
We note that $\alpha>0$ and $\beta>0$ 
since we assumed at the outset that $a,b>0$. Hence we are free to
apply the estimates \S \ref{s:airy}.  
Our goal is to bound $|\hat\Psi(0)|$ in \eqref{A0def} and the series
$$
\sum\limits_{h\not=0} |\hat\Psi(\alpha h)| (h,b_1)^{1/2} 
=
\sum\limits_{h\not=0} |F(\gamma+\alpha h,\beta)|  (h,b_1)^{1/2}
$$
in \eqref{A1def}.  We shall split the latter into three ranges.

By \eqref{Fbound2} and \eqref{wellknown}, we have
\begin{equation} \label{triviality} 
\sum\limits_{1\le |h|\le 2|\gamma|/\alpha} |F(\gamma+\alpha h,\beta)|  
(h,b_1)^{1/2} \ll \sum\limits_{1\le |h|\le 2|\gamma|/\alpha}  
(h,b_1)^{1/2} \ll \frac{|\gamma|}{\alpha} (1+\ve)^{\omega(b_1)}.
\end{equation}
Next let $\kappa$ be any positive real number and let $P:=2abqXY$. 
Using $|\gamma+\alpha h|
\gg \alpha |h|$ for $|h|> 2|\gamma|/\alpha$ and 
\eqref{Fbound3}, we combine \eqref{wellknown} with 
partial summation to obtain 
\begin{equation} \label{nontriviality}
\begin{split}
\sum\limits_{2|\gamma|/\alpha< |h|\le P^{\kappa}} |F(\gamma+\alpha h,\beta)|  
(h,b_1)^{1/2}  \ll ~&    
\sum\limits_{1\le |h|\le P^{\kappa}} \frac{(h,b_1)^{1/2}}{\alpha |h|}\\
&+ \sum\limits_{h\ge 1}  
\frac{\exp\left(-\pi \alpha h/(3\beta)\right)}{(\alpha \beta h)^{1/4}} (h,b_1)^{1/2}
\\ 
 \ll~&  
\left(\frac{\log
    P}{\alpha}+\frac{\beta^{1/2}}{\alpha}\right)(1+\ve)^{\omega(b_1)}. 
\end{split}
\end{equation}
Let us give a bit more explanation about how the second term in the
last line arises.
The term $\exp\left(-\pi \alpha h/(3\beta)\right)$ 
is negligible if $h$ is much larger than $\beta/\alpha$. Moreover
\eqref{wellknown} ensures that the term $(h,b_1)^{1/2}$ has average 
order $O((1+\ve)^{\omega(b_1)})$. Using a dyadic summation one readily
concludes that the sum in 
question is  bounded by the sum over $1\le h \ll \beta/\alpha$ of the
term $(1+\ve)^{\omega(b_1)}/(\alpha \beta h)^{1/4}$,  
which is $\ll (1+\ve)^{\omega(b_1)} \beta^{1/2}/\alpha$. 

Finally if $\kappa>0$ is large enough, then we infer from
\eqref{Fbound6} that 
\begin{equation} \label{largeh}
\sum\limits_{|h|>P^{\kappa}} |F(\gamma+\alpha h,\beta)|  (h,b_1)^{1/2} \ll P^{-1000}.
\end{equation}

Combining \eqref{triviality}, \eqref{nontriviality} and
\eqref{largeh},  we get
\begin{equation} \label{mirfallenkeinenamenmehrein}
\sum\limits_{h\not=0} |\hat\Psi(\alpha h)| (h,b_1)^{1/2}  \ll
\frac{\log P+|\gamma|+\beta^{1/2}}{\alpha} \cdot 
(1+\ve)^{\omega(b_1)}. 
\end{equation}
From \eqref{primeprime}, \eqref{alphabeta},
$d_1'\ll d_1$ and $q=r_1r_2$,
we therefore deduce that 
\begin{equation} \label{series3} 
\begin{split}
\sum\limits_{h\not=0} &|\hat\Psi(\alpha h)|  (h,b_1)^{1/2} \\
&\ll  \left(\frac{b_1ed_1d_2^2\cdot \rad(r_2)^2t\cdot Y_0\log
    P}{r_1r_2^2}+ \frac{(a_1b_1)^{1/2}ed_2\cdot
    \rad(r_2)^2t}{r_2Y^{1/2}}\right)(1+\ve)^{\omega(b_1)},
\end{split}
\end{equation} 
where 
\begin{equation}\label{Y0}
Y_0:=|y_0|+Y.
\end{equation}

We close this section by recording an upper bound for $\hat\Psi(0)$.
Using \eqref{Fbound2} and \eqref{Fbound3}, we have the bound
$$
\hat\Psi(0)\ll
\min\left\{1,\frac{1}{|\gamma|^{1/4}\beta^{1/4}}+\frac{1}{|\gamma|}\right\}
\ll \frac{1}{(|\gamma|+1)^{1/4}\beta^{1/4}}+\frac{1}{|\gamma|}.  
$$
Combining this with 
\eqref{a1b1} 
and \eqref{alphabeta}, we therefore derive the estimate
\begin{equation} \label{Phi0bound2} 
\hat\Psi(0)
\ll \frac{b^{1/4}|u|^{1/2}d_1^{1/2}d_2^{1/2}Y}{a^{1/4}q^{1/2}Y_0^{1/4}}+\frac{Y}{Y_0}, 
\end{equation}
where $Y_0$ is given in \eqref{Y0}.

\section{Conclusion of the  proof of Theorem \ref{cor:0}} \label{s:proof-upper}

We now derive our final  estimate for $\mathcal{S}_{1}$, before
combining it with the contents of \S \ref{s:thec} to complete the asymptotic
formula for $\mathcal{S}$.  This will in turn lead us to the statement
of Theorem \ref{cor:0}. 

Combining \eqref{remcop}, \eqref{Srew2}, \eqref{series3},
\eqref{Phi0bound2}, $q=r_1r_2$, $d_1\gg d_1'\geq d_1$ 
and $e_i=r_i/d_i$, we get
$$
S(u)\ll A_0(u)+A_1(u),
$$
where
\begin{align} \label{A0withoutt}
A_0(u) :=~& 
 \left(
 \frac{a^{1/4}b^{1/4}r_1^{1/2}\rad(r_2)
|u|^{1/2}}{d_1^{1/2}\rho(bu^2)Y_0^{1/4}}+\frac{a^{1/2}r_1\cdot  
 \rad(r_2)r_2^{1/2}}{d_1d_2^{1/2}\rho(bu^2)Y_0}\right)  
2^{\omega(b_1)+\omega(e_2)}
\sigma_{-1}(e_1),\\
\label{A1withoutt}
A_1(u)
:=~& \left(\frac{b_1^{1/2}e^{1/2}d_2^2\cdot
    \rad(r_2)^{7/2} \cdot Y_0 \log P}{r_2^2 Y}
+ \frac{a_1^{1/2}e^{1/2}d_2 r_1\cdot
    \rad(r_2)^{7/2}}{d_1r_2Y^{3/2}}\right)\\
&\times
(2+\ve)^{\omega(b_1)}2^{\omega(e_1)}(4+\ve)^{\omega(r_2)}.
\nonumber 
 \end{align}

To derive an estimate for $\mathcal{S}_1$, we have to sum the above
expressions over $u$ as dictated by \eqref{split2}.
Let 
$$
I_q:=
\sum_{\substack{u\in \ZZ_{\neq 0}\\
(u,e_1e_2)=1
}} \Gamma\left(
\frac{2^{f_2}d_1d_2uX}{q}
\right)|S(u)|.
$$
For any $M>0$ let us
consider the pair of sums
$$
\Sigma_i(M):=\sum_{\substack{0<|u|\leq M\\
(u,e_1e_2)=1
}} A_i(u),
$$
for $i=0,1$, with $A_0(u), A_1(u)$ given by  
\eqref{A0withoutt} and \eqref{A1withoutt}, respectively. 
Estimating these is 
the object of the following pair of results.

\begin{lemma}\label{lem:S1}
We have 
\begin{align*}
\Sigma_0(M)\ll 
\sigma_0(M):=~&(\log (2+ M))^2 
\left(
\frac{a^{1/2}\rad(r_2)r_1r_2^{1/2}}{\rho(b)d_1d_2^{1/2}Y_0}  
 + \frac{a^{1/4}b^{1/4}\rad(r_2)r_1^{1/2}M^{1/2}}{\rho(b)d_1^{1/2}Y_0^{1/4}}
\right) \\
&\times (\log q)^\ve (2+\ve)^{\omega(b)}2^{\omega(e_2)}.
\end{align*}
\end{lemma}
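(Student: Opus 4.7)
The plan is to estimate $\Sigma_0(M)$ by treating each of the two contributions to $A_0(u)$ separately and by exploiting a near-multiplicative lower bound for $\rho(bu^2)$. I would first extract the mild $u$-dependence of the auxiliary factor $2^{\omega(b_1)}\sigma_{-1}(e_1)$: from the definition \eqref{a1b1}, the integer $b_1$ divides $Bbu^2$ with $B$ a bounded integer supported on $\{2,3\}$, so $2^{\omega(b_1)}\ll 2^{\omega(b)+\omega(u)}$, while $\sigma_{-1}(e_1)\ll(\log q)^{\ve}$ by the results of \S\ref{s:arith}. The task then reduces to bounding
$$T_1:=\sum_{|u|\le M}\frac{2^{\omega(u)}|u|^{1/2}}{\rho(bu^2)} \qquad\text{and}\qquad T_2:=\sum_{|u|\le M}\frac{2^{\omega(u)}}{\rho(bu^2)}.$$

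The key analytic step is a lower bound for $\rho(bu^2)$ that cleanly factors out $\rho(b)$. Writing $u=u_1u_2$ with $u_1=\gcd(u,b^\infty)$ and $(u_2,b)=1$, the multiplicativity of $\rho$ yields $\rho(bu^2)\ge\rho(b)\rho(u_2^2)$. A further decomposition $u_2=\alpha\beta\gamma$ into the parts supported on primes appearing in $u_2$ to exponent $1$, exponent $2$, and exponent $\ge 3$ respectively, combined with the prime-by-prime formulae \eqref{C}--\eqref{Cfunc}, gives $\rho(u_2^2)=\alpha\beta^{5/8}\gamma^{1/2}$, whence
$$\frac{|u|^{1/2}}{\rho(bu^2)}\le \frac{u_1^{1/2}}{\rho(b)\,\alpha^{1/2}\beta^{1/8}}.$$
Substituting this into $T_1$ and summing in the order $\alpha,\beta,\gamma,u_1$, I apply \eqref{familiarest} to the innermost sum over the squarefree $\alpha$ to gain a factor $(M/(u_1\beta\gamma))^{1/2}\log M$, after which the sums over the squarefull $\beta$ (a square of a squarefree integer) and over the cubefull $\gamma$ both converge absolutely by elementary Euler product calculations. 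For $T_2$ the same strategy applies, the innermost $\alpha$-sum instead contributing $(\log M)^2$ via \eqref{familiarest} with $\theta=1$.

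What remains in both cases is an outer sum over the $b^\infty$-smooth integers $u_1\le M$, and this is the main technical obstacle: the naive count of such $u_1$ is of order $(\log M)^{\omega(b)}$, far too large for the clean target. One resolves this by Rankin's trick, estimating
$$\sum_{u_1\mid b^\infty,\,u_1\le M}2^{\omega(u_1)}\;\le\; M^\delta\prod_{p\mid b}\Big(1+\tfrac{2p^{-\delta}}{1-p^{-\delta}}\Big)$$
and then choosing $\delta>0$ small enough that $M^\delta$ and the bounded contribution of the small primes in the Euler product are absorbed into $(\log q)^{\ve}$, aided by the standard bound $\omega(b)=O(\log q/\log\log q)$; what survives is $(2+\ve)^{\omega(b)}$ coming from the tail of the product. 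Assembling everything, one obtains $T_1\ll M^{1/2}(\log M)^2 (2+\ve)^{\omega(b)}(\log q)^{\ve}/\rho(b)$ and $T_2\ll(\log M)^2 (2+\ve)^{\omega(b)}(\log q)^{\ve}/\rho(b)$. Multiplying by the prefactors from \eqref{A0def}, keeping $2^{\omega(e_2)}$ separate as given, and collecting terms recovers $\sigma_0(M)$.
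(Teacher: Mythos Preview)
Your overall strategy is sound up to the point where you split $u=u_1u_2$ and invoke the crude inequality $\rho(bu^2)\ge\rho(b)\,\rho(u_2^2)$. That inequality discards the growth of $\rho(bu_1^2)$ relative to $\rho(b)$, and this lost decay is precisely what is needed to control the $u_1$-sum. After your reduction, both $T_1$ and $T_2$ leave you with $\sum_{u_1\mid b^\infty,\ u_1\le M}2^{\omega(u_1)}$, and this sum is \emph{not} $\ll(2+\ve)^{\omega(b)}(\log q)^{\ve}$. Already for $b=2$ it is $\asymp\log M$, and for $b$ with $k$ prime factors it can be of order $(\log M)^{k}$. Rankin's trick does not rescue this: with $\delta\to 0$ every local factor $1+\tfrac{2p^{-\delta}}{1-p^{-\delta}}$ tends to infinity, while for any fixed $\delta>0$ the local factor exceeds $3$ once $p^{-\delta}>\tfrac12$, so one cannot extract the constant $2+\ve$ per prime. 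Thus your claimed bound on the $u_1$-sum fails, and the asserted estimates for $T_1,T_2$ do not follow.

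The paper proceeds more directly and avoids the decomposition $u=u_1u_2$ altogether. First it bounds $|u|^{1/2}\le M^{1/2}$ trivially, so only the single sum $\sum_{|u|\le M}2^{\omega(b_1)}/\rho(bu^2)$ remains. Writing $2^{\omega(b_1)}\ll 2^{\omega(b)}2^{\omega(u)}$, the key step is the Euler product majorant
\[
\sum_{u\ge 1}\frac{\rho(b)\,2^{\omega(u)}}{\rho(bu^2)}
\;\le\;\prod_{p^\beta\parallel b}\!\Bigl(1+\sum_{k\ge1}\frac{2p^{f(\beta)}}{p^{f(\beta+2k)}}\Bigr)
\prod_{\substack{p\le M\\ p\nmid b}}\!\Bigl(1+\sum_{k\ge1}\frac{2}{p^{f(2k)}}\Bigr)
=\Pi_1\Pi_2,
\]
where the monotonicity properties of $f$ give $\Pi_1\ll(1+\ve)^{\omega(b)}$, and Mertens gives $\Pi_2\ll(\log(2+M))^2$. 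The point is that keeping the full $\rho(bu^2)$ in the denominator makes the local factor at primes $p\mid b$ equal to $1+O(p^{-1/4})$, which is what supplies the missing decay you threw away. If you want to salvage your decomposition, you should retain $\rho(bu_1^2)$ rather than $\rho(b)$ in the denominator of the $u_1$-sum; then $\sum_{u_1\mid b^\infty}2^{\omega(u_1)}\rho(b)/\rho(bu_1^2)\ll(1+\ve)^{\omega(b)}$ converges outright and no Rankin argument is needed.
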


\begin{proof}
Recall that $d_2''=\rad(r_2)/e_2$ and 
$e_2=r_2/d_2$.  Applying the bound $\sigma_{-1}(e_1)\ll (\log q)^\ve$
we insert our expression for $A_0(u)$ to see that
$$
\Sigma_0(M)\leq (\log q)^\ve 2^{\omega(e_2)}
\left(
\frac{a^{1/2}\rad(r_2)r_1r_2^{1/2}}{d_1d_2^{1/2}Y_0}+ 
\frac{a^{1/4}
b^{1/4}M^{1/2}\rad(r_2)
r_1^{1/2}}{d_1^{1/2} Y_0^{1/4}} 
\right)\sum_{|u|\leq M} 
\frac{2^{\omega(b_1)}}{\rho(bu^2)},
$$
where $C$ is given multiplicatively by \eqref{C} and 
\eqref{Cfunc}.  
We claim that 
$$
\sum_{|u|\leq M} 
\frac{2^{\omega(b_1)}}{\rho(b u^2)}\ll \frac{(2+\ve)^{\omega(b)}}{\rho(b)}(\log (2+ M))^2,
$$
which once inserted into the above leads to the statement of the
lemma. 

To establish the claim we see that 
$$
\sum_{|u|\leq M} 
\frac{2^{\omega(b_1)}}{\rho(b u^2)} \ll
\frac{2^{\omega(b)}}{\rho(b)} \sum_{u=1}^M \frac{\rho(b)2^{\omega(u)}}{\rho(bu^2)}.
$$
Clearly
$$
\sum_{u=1}^M \frac{\rho(b)2^{\omega(u)}}{\rho(bu^2)}\leq
\prod_{\substack{p\leq M\\ p^\beta\| b}}
\left(
1+\sum_{k\geq 1} \frac{2p^{f(\beta)}}{p^{f(\beta+2k)}}
\right)
\prod_{\substack{p\leq M\\ p\nmid b}}
\left(
1+\sum_{k\geq 1} \frac{2}{p^{f(2k)}}
\right)=\Pi_1\Pi_2,
$$
say. Merten's theorem yields $\Pi_2\ll (\log (2+ M))^2$. Moreover 
we have
$$
\Pi_1\leq \prod_{p^\beta\| b}
\left(
1+\sum_{k\geq 1} \frac{2p^{f(\beta)}}{p^{f(\beta+2k)}}
\right)\ll  (1+\ve)^{\omega(b)}.
$$
Putting these together therefore establishes the claim.
\end{proof}

\begin{lemma}\label{lem:S2}
Let $P=2abqXY$.  We have 
\begin{align*}
\Sigma_1(M)\ll \sigma_1(M):=~& \log (2+ M)^{1+\ve} \cdot
\frac{e^{1/2}r_1\rad(r_2)^{7/2}}{r_2}\left(
  \frac{b^{1/2}d_2^2Y_0\log P }{qY}\cdot M^2 +
\frac{a^{1/2}d_2}{d_1Y^{3/2}}\cdot M\right)\\
&\times
(2+\ve)^{\omega(b)}2^{\omega(e_1)}(4+\ve)^{\omega(r_2)}.
\end{align*}
\end{lemma}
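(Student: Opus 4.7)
The plan is to substitute the definitions of $a_1,b_1$ from \eqref{a1b1} into the expression \eqref{A1withoutt} for $A_1(u)$, bound them crudely in terms of $a$, $b$ and $u$, and then sum over $u$ with $|u|\leq M$ using the standard divisor-type averages from \S \ref{s:arith}.

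First, since the integers $A,B$ in \eqref{a1b1} are of the form $2^{g_2'}3^{g_3'}$ and $2^{g_2''}3^{g_3''}$ with $|g_2'|,|g_2''|\le 20$ and $|g_3'|,|g_3''|\le 17$, we have
\[
a_1 \leq Aa \ll a, \qquad b_1 \leq Bbu^2 \ll bu^2,
\]
so $a_1^{1/2}\ll a^{1/2}$ and $b_1^{1/2}\ll b^{1/2}|u|$. Furthermore $\omega(b_1)\leq \omega(Bbu^2)\leq \omega(b)+\omega(u)+O(1)$, hence
\[
(2+\ve)^{\omega(b_1)} \ll (2+\ve)^{\omega(b)}(2+\ve)^{\omega(u)}.
\]
The remaining factors $e^{1/2},d_1,d_2,r_1,r_2,Y,\rad(r_2)$ and $2^{\omega(e_1)}(4+\ve)^{\omega(r_2)}$ are independent of $u$.

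After inserting these bounds and pulling all $u$-free quantities outside the summation, we are reduced to estimating the two sums
\[
T_1(M):=\sum_{1\leq |u|\leq M}|u|\,(2+\ve)^{\omega(u)},\qquad
T_2(M):=\sum_{1\leq |u|\leq M}(2+\ve)^{\omega(u)}.
\]
Both are controlled by \eqref{familiarest}: the second bound there gives $T_2(M)\ll M(\log(2+M))^{1+\ve}$, while Abel summation applied to the first bound (with $\theta=0$, $z=2+\ve$) yields $T_1(M)\ll M^2(\log(2+M))^{1+\ve}$. The first term of $A_1(u)$, with its factor $b_1^{1/2}\ll b^{1/2}|u|$, produces $T_1(M)$; the second term, which is $u$-free except for $(2+\ve)^{\omega(b_1)}$, produces $T_2(M)$. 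Recombining these estimates with the $u$-free prefactors yields exactly $\sigma_1(M)$. There is no serious obstacle: the argument is a routine substitution followed by the standard divisor-sum averages already recorded in \S \ref{s:arith}.
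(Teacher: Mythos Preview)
Your proof is correct and follows essentially the same approach as the paper: the paper's own proof is the one-line remark ``This follows from insertion of our expression for $A_1(u)$ into $\Sigma_1(M)$, using $a_1\le a$ and $b_1\le bu^2$ and applying \eqref{familiarest}.'' You have simply spelled out these steps in detail, including the bound $(2+\ve)^{\omega(b_1)}\ll (2+\ve)^{\omega(b)+\omega(u)}$ and the partial summation for $T_1(M)$, and verified that $q=r_1r_2$ makes the prefactors match.
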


\begin{proof}
This follows from insertion of our expression for $A_1(u)$ into
$\Sigma_1(M)$, using $a_1\le a$ and $b_1\le bu^2$ and applying \eqref{familiarest}.
\end{proof}

Since $\Gamma$ has exponential decay, it is easily seen by a dyadic summation that Lemmas  \ref{lem:S1} and \ref{lem:S2} imply the bound
$
I_q
\ll \sigma_0(M)+\sigma_1(M),
$
with
$$
M:=\frac{q}{d_1d_2X}.
$$
It now follows that
$$
I_q\ll (\log P)^{2+\ve} \left(I_q^{(0)}
+I_q^{(1)}\right),
$$
where
\begin{align*}
I_q^{(0)}&:=
\frac{\rad(r_2)r_1^{1/2}q^{1/2}}{\rho(b) d_1d_2^{1/2}}
\left(
\frac{a^{1/2}}{Y_0}  
+\frac{a^{1/4}b^{1/4} }{X^{1/2}Y_0^{1/4}}
\right)  (2+\ve)^{\omega(b)}2^{\omega(e_2)},
\\
I_q^{(1)}&:=
\frac{e^{1/2}\rad(r_2)^{7/2}r_1 q}{d_1^2r_2X}\left(
  \frac{b^{1/2}Y_0}{XY} +
\frac{a^{1/2}}{Y^{3/2}}\right)
(2+\ve)^{\omega(b)}2^{\omega(e_1)}(4+\ve)^{\omega(r_2)}.
\end{align*}

Recalling  from 
\eqref{sdef} that $d_2'=r_2/\rad(r_2)$, we deduce from \eqref{split2} 
that our  work so far has shown that
$$
\mathcal{S}_{1} \ll\frac{XY}{q^2}\cdot (\log P)^{2+\ve}
\sum\limits_{\substack{(r_1,r_2)=1\\ r_1r_2=q}} \
\sum\limits_{\substack{d_1\mid r_1\\ \rad(\hat{r}_1)^2|e_1}} \sum\limits_{\substack{d_2 \mid r_2\\ \mu^2(e_2)=1}} \sideset{}{'}  
\sum\limits_{d_1'\mid r_1}
\frac{\sqrt{d_1r_1}r_2}{\rad(r_2)} 
\left(I_q^{(0)}+I_q^{(1)}\right).
$$
Let us write 
$\mathcal{S}_1^{(i)}$ for the overall contribution 
from the term involving $I_q^{(i)}$ for $i=0,1$.

Beginning with the case $i=0$ we see that
\begin{align*}
\mathcal{S}_1^{(0)}\ll~&
\frac{XY (2+\ve)^{\omega(b)}(\log P)^{2+\ve}}{\rho(b) q^{1/2}}
\left(
\frac{a^{1/2}}{Y_0}  
+\frac{a^{1/4}b^{1/4} }{X^{1/2}Y_0^{1/4}}\right) \\
&\times 
\sum\limits_{\substack{(r_1,r_2)=1\\ r_1r_2=q}} \
\sum\limits_{d_1\mid r_1} \sum\limits_{d_2 \mid r_2} \sideset{}{'}  
\sum\limits_{d_1'\mid r_1}
\frac{2^{\omega(e_2)}}{ d_1^{1/2}d_2^{1/2}}.
\end{align*}
Here we recall that the $'$ attached to the inner sum denotes the condition
described in \eqref{d1primecondi}. Thus for fixed $d_1$ the number of
available $d_1'$ is $O(1)$.  
Thus the  sum over $r_1,r_2,d_1,d_2,d_1'$ is clearly at most
$$
\sum_{\substack{(r_1,r_2)=1\\
r_1r_2=q}}
\sigma_{-1/2}(r_1)\sigma_{-1/2}(r_2)
2^{\omega(r_2)}\ll (1+\ve)^{\omega(q)}
\sum_{\substack{r_2\mid q\\
(q/r_2,r_2)=1}}
2^{\omega(r_2)}\leq    (3+\ve)^{\omega(q)},
$$
as can be seen by checking at prime powers. On observing that
$\rho(b)\geq b^{1/4}$ we therefore conclude that
\begin{equation}
  \label{T1term}
\mathcal{S}_1^{(0)}
\ll
\frac{(2+\ve)^{\omega(b)} (3+\ve)^{\omega(q)}(\log
  P)^{2+\ve}}{b^{1/4}  q^{1/2} }
\left(
\frac{a^{1/2}XY}{Y_0}  
+\frac{a^{1/4}b^{1/4}X^{1/2}Y }{Y_0^{1/4}}\right),
\end{equation}
with $Y_0=|y_0|+Y$.

Turning to the case $i=1$ we obtain
\begin{align*}
\mathcal{S}_1^{(1)}
\ll
q^{1/2} (2+\ve)^{\omega(b)} (\log P)^{2+\ve}\left(
  \frac{b^{1/2}Y_0}{X} +
\frac{a^{1/2}}{Y^{1/2}}\right)
S_q
\end{align*}
where
$$
S_q:=
\sum\limits_{\substack{(r_1,r_2)=1\\ r_1r_2=q}} \
\sum\limits_{\substack{d_1\mid r_1
\\
\rad(\hat{r}_1)^2\mid e_1}}
\sum\limits_{\substack{d_2 \mid r_2\\
    \mu^2(e_2)=1}} 
\frac{e^{1/2}\rad(r_2)^{5/2} }{d_1^{3/2}r_2^{3/2}}\cdot 2^{\omega(e_1)
}(4+\ve)^{\omega(r_2)}.
$$
Here we have carried out the summation over $d_1'$ trivially. 
The latter sum is a multiplicative function in $q$ and it will suffice to
analyse it at prime powers $q=p^\nu$ for $\nu\in \NN$. Let us write $r_1=p^{\alpha}$, $r_2=p^{\beta}$ and 
$e\leq e(e_1)$, the square-free kernel of $e_1$. Then the
outer summation is over $\alpha,\beta\geq 0$ for which
$\alpha+\beta=\nu$ and $\min\{\alpha,\beta\}=0$. Put
$S_{p^\nu}=T_1+T_2$ where $T_1$ is the contribution from
$(\alpha,\beta)=(0,\nu)$ and $T_2$ is the contribution from
$(\alpha,\beta)=(\nu,0)$. An easy calculation shows that
$$
T_{1}=
(4+\ve) p^{(5-3\nu)/2}
\sum_{\substack{0\leq \delta_2\leq \nu\\
\nu-\delta_2\leq 1}}1 =(8+\ve) p^{(5-3\nu)/2}
$$
and 
$$
T_2=\sum_{\substack{0\leq \delta_1\leq \nu\\
2\leq \nu-\delta_1 \mbox{ \scriptsize{if} $p>3$}}}
\frac{e(p^{\nu-\delta_1})^{1/2}}{p^{3\delta_1/2}}
\cdot 2^{\omega(p^{\nu-\delta_1})}
\begin{cases}
\le 4,  & \mbox{if $p\leq 3$},\\
=0, & \mbox{if $\nu=1$ and $p>3$,}\\
=2(T_{2,1}+T_{2,2}), & \mbox{if $\nu\geq 2$ and $p>3$,}
\end{cases}
$$
where
$$
T_{2,1}:=\sum_{\substack{0\leq \delta_1\leq \nu-2\\ 2\mid \nu-\delta_1}}
p^{-3\delta_1/2},\quad
T_{2,2}:=\sum_{\substack{0\leq \delta_1\leq \nu-2\\ 2\nmid \nu-\delta_1}}
p^{(1-3\delta_1)/2}.
$$
It is now clear that
$$
T_{2,1}+T_{2,2}\leq p^{\kappa}(1+O(1/p)), 
$$
where $\kappa=0$ if $\nu$ is even, and $\kappa=1/2$ if $\nu$ is odd. Putting this all together we conclude that
$$
S_{p^\nu}\leq \begin{cases} (8+\ve)p, & \mbox{if $\nu=1$,}\\ 2p^{\kappa}\left(1+O(1/\sqrt{p})\right), & \mbox{if $\nu>1$}. \end{cases}
$$
Hence it
follows that
$$
S_q\ll s^{1/2}s_1^{1/2}4^{\omega(s)}(2+\varepsilon)^{\omega(q)}, 
$$
in the notation of \eqref{eq:s-s1}.
Inserting this into our expression for $\mathcal{S}_1^{(1)}$ yields
\begin{equation}
  \label{T2term}
\mathcal{S}_1^{(1)}\ll
s^{1/2}s_1^{1/2} q^{1/2} (2+\ve)^{\omega(b)+\omega(q)}
4^{\omega(s)}
(\log P)^{2+\ve}\left(
  \frac{b^{1/2}Y_0}{X} +
\frac{a^{1/2}}{Y^{1/2}}\right).
\end{equation}
Combining this with \eqref{T1term} we deduce that
\begin{align*}
\mathcal{S}_1\ll~&
\frac{(2+\ve)^{\omega(b)} (3+\ve)^{\omega(q)}(\log
  P)^{2+\ve}}{b^{1/4}  q^{1/2} }
\left(
\frac{a^{1/2}XY}{Y_0}  
+\frac{a^{1/4}b^{1/4}X^{1/2}Y }{Y_0^{1/4}}\right)
\\
&+
s^{1/2}s_1^{1/2} q^{1/2} (2+\ve)^{\omega(b)+\omega(q)}
4^{\omega(s)}
(\log P)^{2+\ve}\left(
  \frac{b^{1/2}Y_0}{X} +
\frac{a^{1/2}}{Y^{1/2}}\right),
\end{align*}
with $Y_0=|y_0|+Y$ and $s,s_1$ given by \eqref{eq:s-s1}.
It will be important in what follows to note that 
\begin{equation} \label{Sestfinal0*}
\mathcal{S}_1=\mathcal{S}_1^{(0)}+\mathcal{S}_1^{(1)},
\end{equation}
where $\mathcal{S}_1^{(0)}$ satisfies \eqref{T1term}
and corresponds to the contribution from $h=0$ and likewise
$\mathcal{S}_1^{(1)}$ satisfies \eqref{T2term}
and corresponds to the contribution from $h\neq 0$.

Drawing together \eqref{SGammadef}, \eqref{S00ev}, \eqref{S10est} and
our estimate for $\mathcal{S}_1$, 
we are now ready to record our final estimate for 
$\mathcal{S}$.

\begin{thm}\label{thm:0}
Let $\ve>0$, let $X,Y\geq 2$ and let $P=2abqXY$.
Let $\rad(q),  s,s_1$ be
given by \eqref{eq:s-s1}
and let $Y_0=|y_0|+Y$.
Then we have 
\begin{align*}
\sideset{}{^*}\sum\limits_{\substack{x,y\\ ax^2+by^3 \equiv 0
    \bmod{q}}} \Gamma\left(\frac{x-x_0}{X} \right)
\Gamma\left(\frac{y-y_0}{Y}\right)
=
\frac{\phi(q)XY}{q^2}+O\left(
E_1+E_2+E_3\right),
\end{align*}
where  
\begin{align*}
E_1&=
\frac{(2+\ve)^{\omega(q)}\rad(q)^{1/2}X}{q},\\
E_2&=
\frac{(2+\ve)^{\omega(b)} (3+\ve)^{\omega(q)}}{b^{1/4}  q^{1/2} }
\left(
\frac{a^{1/2}XY}{Y_0}  
+\frac{a^{1/4}b^{1/4}X^{1/2}Y }{Y_0^{1/4}}\right) (\log
  P)^{2+\ve},
\\
E_3&=
s^{1/2}s_1^{1/2} q^{1/2} (2+\ve)^{\omega(b)+\omega(q)}
4^{\omega(s)}\left(
  \frac{b^{1/2}Y_0}{X} +
\frac{a^{1/2}}{Y^{1/2}}\right) (\log
  P)^{2+\ve}.
\end{align*}
\end{thm}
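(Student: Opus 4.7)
The plan is to assemble the estimates established over the course of Sections 5 and 6. Starting from \eqref{SGammadef}, I would first break the sum into residue classes $(c,d)\bmod q$ satisfying $ac^2+bd^3\equiv 0$, and apply Poisson summation to the inner sums over $x$ and $y$, exploiting the self-duality $\hat\Gamma=\Gamma$ of the Gaussian weight. Parametrising the solution set via the bijection $(c,d)\leftrightarrow z$ with $c=\overline{a^2b}z^3$ and $d=-\overline{ab}z^2$ identifies the resulting exponential sum $\mathcal{E}(m,n;q)$ with $E(ab^2m,abn;q)$ from Section 4. The representation \eqref{SRaftertrans} then splits naturally as $\mathcal{S}=\mathcal{S}_0+\mathcal{S}_1+\mathcal{S}_2$ according to whether $m=n=0$, $m\neq 0$, or $m=0$ and $n\neq 0$.

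The term $\mathcal{S}_0$ from $m=n=0$ gives the main term $\phi(q)XY/q^2$ via \eqref{S00ev}, since $\mathcal{E}(0,0;q)=\varphi(q)$. For $\mathcal{S}_2$, Lemma \ref{compositeexplicitevaluation1}(ii) forces $q/\rad(q)\mid n$, and then Lemma \ref{generalexpestimate} combined with the rapid decay of $\Gamma$ (via \eqref{wellknown'}) yields the bound \eqref{S10est}, which matches $E_1$. The bulk of the work concerns $\mathcal{S}_1$. Here I would factor $q=r_1r_2$ with $r_2$ square-free outside low-ramification primes, apply Lemma \ref{composi} to evaluate $\mathcal{E}_{f_2,f_3}(m,n;q)$ explicitly up to bounded factors, and then apply the reciprocity identity \eqref{genflip} to replace the large denominator $e_0$ in the main cubic phase by the much smaller $b_1 d_2^*$. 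A second application of Poisson summation then transforms the sum over $v$ into a sum of complete exponential sums $F_x(h)$ weighted by $\hat\Psi(\alpha h)$, where $\hat\Psi$ is a weighted Hardy--Airy integral of the form \eqref{Fdef1}.

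The main obstacle is the uniform estimation of the sum over the dual frequency $h$. The complete sum $F_x(h)$ factors via CRT as $G_1G_2G_3$: the quadratic Gauss sum $G_2$ contributes the critical $e^{1/2}$ saving from primitivity of the character; $G_1$ is a cubic-type sum handled by Lemma \ref{polyexpsumest} when $h\neq 0$ and by the Gauss-sum calculation in Lemma \ref{lem:star} when $h=0$; and $G_3$ is bounded by Lemma \ref{generalexpestimate}. To estimate $\hat\Psi(\alpha h)$ against $(h,b_1)^{1/2}$, I would split the $h$-range according to $|h|\leq 2|\gamma|/\alpha$, $2|\gamma|/\alpha<|h|\leq P^\kappa$, and $|h|>P^\kappa$, invoking respectively \eqref{Fbound2}, \eqref{Fbound3}, and \eqref{Fbound6}. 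This is the most delicate analytic step: the non-trivial saving $\beta^{1/2}/\alpha$ of the medium range (coming from the stationary phase/Airy decay) must be combined carefully with \eqref{wellknown} on the common-divisor factor to produce the combined bound \eqref{mirfallenkeinenamenmehrein} and hence the bound \eqref{series3}. The $h=0$ contribution is separately controlled by \eqref{Phi0bound2}.

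The final step is to sum the resulting bounds for $S(u)$ over $u$, $t$, and the decomposition parameters $r_1,r_2,d_1,d_2,d_1'$, as in Lemmas \ref{lem:S1} and \ref{lem:S2}. The $h=0$ contribution yields $\mathcal{S}_1^{(0)}$ satisfying \eqref{T1term}, which matches $E_2$; the $h\neq 0$ contribution yields $\mathcal{S}_1^{(1)}$ satisfying \eqref{T2term}, which matches $E_3$. The arithmetic sums over the divisor parameters are evaluated by verifying the relevant estimate at prime powers using multiplicativity; this is how the factor $s(q)^{1/2}s_1(q)^{1/2}$ appearing in $E_3$ emerges from the case analysis of the auxiliary sum $S_q$ according to the parity of the ramification index at each prime. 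Combining \eqref{S00ev}, \eqref{S10est}, \eqref{T1term}, \eqref{T2term} and \eqref{Sestfinal0*} then completes the proof.
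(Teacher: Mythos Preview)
Your proposal is correct and follows essentially the same approach as the paper: the decomposition $\mathcal{S}=\mathcal{S}_0+\mathcal{S}_1+\mathcal{S}_2$, the explicit evaluation via Lemma~\ref{composi}, the reciprocity flip \eqref{genflip}, the second Poisson step producing the Airy-type integrals $\hat\Psi$, the triple factorisation $F_x(h)=G_1G_2G_3$ with Lemma~\ref{lem:star}, the three-range splitting for the $h$-sum, and the final summation via Lemmas~\ref{lem:S1}--\ref{lem:S2} culminating in \eqref{T1term} and \eqref{T2term} are exactly the steps carried out in \S\S\ref{s:thec}--\ref{s:proof-upper}. The only minor imprecision is in your description of the factorisation $q=r_1r_2$: it is not fixed once and for all but depends on the divisor $d=(m,q)$, with $(6r_1,r_2)=1$ and $r_2/d_2$ squarefree (see \eqref{split1}), though this does not affect the validity of your outline.
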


This result is uniform in everything involved. 
In the application we have in mind it will prove crucial to have the
precise dependence on all of the parameters worked out. Often however
it suffices to bound the arithmetic functions involving $a,b, q$
crudely. 
Assuming without loss of generality that
$a,b\leq q$, Theorem~\ref{cor:0} is now a simple
consequence of Theorem~\ref{thm:0} with $x_0=y_0=0$ and $X=\XX,
Y=\YY$.

\section{Asymptotic formula for $M(B,\XX,\YY;a,b;q)$}

Let $M(B,\XX,\YY;a,b;q)$ be defined in \eqref{eq:Mq}, for 
$B\geq 2$ and $\boldsymbol{X},\boldsymbol{Y}\geq 1$
and non-zero integers $a,b,q$ such that $q>0$.
Again we may assume without loss of generality that $a,b>0$ and
$(ab,q)=1$. 
It is now time to turn to an asymptotic formula for
$M(B,\XX,\YY;a,b;q)$, building on the proof of Theorem \ref{thm:0}.
We will proceed under
the hypothesis that 
\begin{equation} \label{sizecond}
\log (2abq\boldsymbol{X}\boldsymbol{Y})  \ll  \log B
\end{equation}
and
\begin{equation} \label{voraus*}
a\XX^2\ge qB, \quad 
b\YY^3\ge qB. 
\end{equation}
It will be convenient to set
\begin{equation} \label{sigtaudef}
\sigma:=\left(\frac{qB}{a}\right)^{1/2}, \quad 
\tau:=\left(\frac{qB}{b}\right)^{1/3},
\end{equation}
and to make a change of variables
\begin{equation} \label{st}
s:=\frac{x}{\sigma}\quad \mbox{and} \quad t:=\frac{y}{\tau}.
\end{equation}
Then the constraints $0<x\le \boldsymbol{X}$, $|y|\le \boldsymbol{Y}$
and $|ax^2+by^3|\le qB$ on $x$ and $y$ translate into the conditions 
$$
0<s\le \frac{\boldsymbol{X}}{\sigma},\quad |t| \le
\frac{\boldsymbol{Y}}{\tau},\quad |s^2+t^3|\le 1.
$$
Likewise the assumption \eqref{voraus*} becomes
\begin{equation} \label{voraus}
\frac{\boldsymbol{X}}{\sigma}\ge 1, \quad \frac{\boldsymbol{Y}}{\tau}\ge 1.
\end{equation}

For any region $\Omega\subset \mathbb{R}^2$ set
\begin{equation} \label{Somega}
\mathcal{S}(\Omega;a,b;q):=\# \left\{(x,y)\in \mathbb{Z}^2  \ : \  (xy,q)=1,\ ax^2+by^3\equiv 0 \bmod{ q},\ 
 (s,t)\in \Omega\right\}.
 \end{equation}
 Then 
 \begin{equation} \label{Srewrite}
 M(B,\boldsymbol{X},\boldsymbol{Y};a,b;q) = \mathcal{S}(\mathcal{R};a,b;q),
 \end{equation}
with
\begin{equation} \label{region}
\mathcal{R}=\left\{ (s,t)\in \mathbb{R}^2\ :\  0<s\le \frac{\boldsymbol{X}}{\sigma},\quad |t| \le \frac{\boldsymbol{Y}}{\tau},\quad |s^2+t^3|\le 1\right\}.
\end{equation}

We split the region $\mathcal{R}$ into two subregions
$$
\mathcal{R}_1:=\left\{ (s,t)\in \mathcal{R} \ :\  0<s\le L^{1/2}\right\} 
$$
and 
$$
\mathcal{R}_2:=\left\{ (s,t)\in \mathcal{R} \ :\  L^{1/2}<s \le
  \frac{\boldsymbol{X}}{\sigma}\right\}, 
$$
where throughout this paper, we set
$$
L:=\log\log \left(2000 B\right).
$$
Since $B\geq 1$ it follows that $L> 2$.  

Next, we want to bound the areas of regions $\mathcal{R}_1$ and
$\mathcal{R}_2$. To this end, we make the following observation. Let
us assume that $s>2$ and $(s,t)\in \mathcal{R}$ (in particular, this
is satisfied if $(s,t)\in \mathcal{R}_2$). Then we have $t<0$. Indeed,
if one assumes $s>2$ and $t>0$, we would have $2^2<s^2+t^3\le 1$,
which is impossible.  It follows that  
\begin{equation} 
\begin{split}
\label{clever}
1\ge \left| s^2+t^3 \right| = \left|s^2-|t|^3\right| &= \left| \left(|t|-s^{2/3}\right)\left(|t|^2+|t|s^{2/3}+s^{4/3}\right)\right|\\
&> \left| |t|-s^{2/3} \right| s^{4/3}\\
&=\left| t+s^{2/3} \right|
 s^{4/3}. 
\end{split}
\end{equation}
In this way we see that
$
\Area(\mathcal{R}_2)\ll L^{-1/6}.
$
Moreover, using \eqref{voraus}, we have 
\begin{equation} \label{lowerboundarea}
1\ll \Area(\mathcal{R}_1)\ll 1.
\end{equation} 
Hence
\begin{equation} \label{nochwas}
\Area(\mathcal{R}_1)=\Area(\mathcal{R})\left(1+O\left(L^{-1/6}\right)\right).
\end{equation}
Furthermore, we note that
$$
0<s\le L^{1/2}, \quad |t|\ll L^{1/3}
$$ 
for any $(s,t)\in \mathcal{R}_1$.

By our decomposition of $\mathcal{R}$, we have 
\begin{equation} \label{S123split}
\mathcal{S}(\mathcal{R};a,b;q)=
\mathcal{S}(\mathcal{R}_1;a,b;q)+\mathcal{S}(\mathcal{R}_2;a,b;q).
\end{equation}
In what follows
we describe how to bound the terms  $\mathcal{S}(\mathcal{R}_j;a,b;q)$
using sums involving smooth weights. 
To this end, for any smooth function $\Psi : \mathbb{R} \rightarrow
\mathbb{R}$ of rapid decay, we define 
\begin{equation} \label{SPhidef}
 \mathcal{S}(\Psi;s_0,t_0,\mathcal{X},\mathcal{Y};a,b;q)  := \sum\limits_{\substack{x,y\in \mathbb{Z}  \\  (xy,q)=1 \\ ax^2+by^3\equiv 0 \bmod{ q}}} \Psi\left(\frac{s-s_0}{\mathcal{X}}\right) \Psi\left(\frac{t-t_0}{\mathcal{Y}}\right), 
\end{equation}
where $s,t$ depend on $x,y$ via \eqref{st}.
We will choose smooth weights involving the Gaussian weight $\Gamma$
so that we can apply our work above.

\subsection{Treatment of region $\mathcal{R}_1$}

In what follows we set 
$$
\Delta:=\frac{1}{L}=\frac{1}{\log\log (2000B)}.
$$
A little thought reveals that region $\mathcal{R}_1$ can be approximated by regions $\mathcal{R}_1^{-}$ and $\mathcal{R}_1^{+}$ that each are disjoint unions of $O(L^4)$ half-open squares 
$\mathcal{Q}=[s,s+\Delta)\times [t,t+\Delta)$ of side length $\Delta$  such that 
$$
\mathcal{R}_1^{-} \subset \mathcal{R}_1 \subset \mathcal{R}_1^+
$$
and
$$
\left(1-O(\Delta^{1/2})\right)\Area(\mathcal{R}_1) < \Area(\mathcal{R}_1^-) < 
 \Area(\mathcal{R}_1)< \Area(\mathcal{R}_1^+)  < \left(1+O(\Delta^{1/2})\right)\Area(\mathcal{R}_1).
$$
Hence we have 
$$
\mathcal{S}(\mathcal{R}_1^-;a,b;q)\le \mathcal{S}(\mathcal{R}_1;a,b;q)\le \mathcal{S}(\mathcal{R}_1^+a,b;q).
$$
Let $\mathcal{M}^{\pm}$ be the set of squares whose union is $\mathcal{R}_1^{\pm}$.  It follows that
\begin{equation} \label{redtosq}
\mathcal{S}(\mathcal{R}_1^{\pm};a,b;q)=\sum\limits_{\mathcal{Q}\in \mathcal{M}^{\pm}} 
\mathcal{S}(\mathcal{Q};a,b;q).
\end{equation}

Next let $\chi$ be the characteristic function of the interval
$[-1/2,1/2)$. We approximate   $\chi$ by the smooth weight functions
$\Phi_{+},\Phi_{-} : \mathbb{R} \rightarrow \mathbb{R}$ constructed in
Lemma \ref{weightslemma}. 
There it is proved that $\Phi_{+}$, $\Phi_{-}$ have
rapid decay and satisfy the inequalities   
$
 \Phi_{-}(x)\le \chi(x)\le \Phi_{+}(x)$ for all $x\in \mathbb{R}$.
From the definitions \eqref{Somega} and \eqref{SPhidef} and we deduce that
\begin{equation} \label{SPhi-+}
\mathcal{S}(\Phi_{-};s_1,t_1,\Delta,\Delta;a,b;q) \le \mathcal{S}(\mathcal{Q};a,b;q) \le 
\mathcal{S}(\Phi_{+};s_1,t_1,\Delta,\Delta;a,b;q),
\end{equation}
where $(s_1,t_1)$ is the centre of the square $\mathcal{Q}$. 
Using \eqref{SPhidef} and the definitions of $\Phi_-$ and
$\Phi_+$ in \S \ref{s:gauss}, we further have
\begin{equation} 
\begin{split}
\label{redtogauss}
 \mathcal{S}(\Phi_{\pm};s_1,t_1,\Delta,\Delta;a,b;q) =~& 
L^2 \hspace{-0.2cm}
\int\limits_{-1/2\mp \Delta^{1/2}}^{1/2\pm \Delta^{1/2}}
\int\limits_{-1/2\mp \Delta^{1/2}}^{1/2\pm \Delta^{1/2}}
\hspace{-0.3cm}
\mathcal{S}(\Gamma;s_1+\Delta\mu,t_1+\Delta\nu,\Delta^2,\Delta^2;a,b;q)
\dif\mu\dif\nu \\  &\pm  
\varepsilon_1 L \int\limits_{-1/2\mp \Delta^{1/2}}^{1/2\pm
  \Delta^{1/2}} \mathcal{S}(\Gamma;s_1+\Delta\mu,t_1,\Delta^2,\Delta;a,b;q)
\dif \mu\\ & \pm
\varepsilon_1 L \int\limits_{-1/2\mp \Delta^{1/2}}^{1/2\pm
  \Delta^{1/2}} \mathcal{S}(\Gamma;s_1,t_1+\Delta\nu,\Delta,\Delta^2;a,b;q)
\dif\nu  \\ & + \varepsilon_1^2
\mathcal{S}(\Gamma;s_1,t_1,\Delta,\Delta;a,b;q), 
\end{split}
\end{equation} 
where
$$
\varepsilon_1=24 \exp(-L)=\frac{24}{\log (2000B)}. 
$$

In this way, the estimation of 
$\mathcal{S}(\mathcal{R}_1;a,b;q)$ has been reduced to the task of
estimating sums of the form
$\mathcal{S}(\Gamma;s_0,t_0,\mathcal{X},\mathcal{Y};a,b;q)$ in
\eqref{SPhidef}, which are precisely the sums considered in
Theorem~\ref{thm:0}, with  
\begin{equation} \label{x_0y_0XY}
X:=\sigma\mathcal{X},\quad x_0:=\sigma s_0,\quad
Y:=\tau\mathcal{Y},\quad y_0:=\tau t_0. 
\end{equation}
We then have 
$$
s_0=s_1+(i-1) \Delta\mu, \quad t_0=t_1+(j-1) \Delta\nu, \quad
\mathcal{X}=\Delta^i, \quad \mathcal{Y}=\Delta^j,
$$
for $i,j \in \{1,2\}$.
Since $t_1\ll L$, $\nu\ll 1$ and 
$y_0=t_0Y/\mathcal{Y}$ we may deduce that 
$$
Y\leq Y+|y_0|=Y_0\ll YL^3=Y (\log\log 2000B)^3.
$$
Using this and the fact that $(\log \log 2000B)^{-2} =\Delta^{2} \le
\mathcal{X}, \mathcal{Y} \le \Delta=(\log \log 2000B)^{-1}$,
together with \eqref{sigtaudef} and \eqref{x_0y_0XY}, we deduce from Theorem \ref{thm:0}
that 
\begin{align*}
\mathcal{S}(\Gamma;s_0,t_0,\mathcal{X},\mathcal{Y};a,b;q)
=~& 
\frac{\varphi(q)}{q}\cdot
\frac{B^{5/6}}{a^{1/2}b^{1/3}q^{1/6}} \cdot
\mathcal{X}\mathcal{Y}
+
O\left(\frac{(2+\ve)^{\omega(q)}B^{1/2}}{a^{1/2}}\right)\\
&+O\left(
\frac{(2+\ve)^{\omega(b)} (3+\ve)^{\omega(q)}}{b^{1/4}  } \cdot B^{1/2}
(\log B)^{2+\ve}\right)\\
&+ O\left((ss_1)^{1/2} (2+\ve)^{\omega(b)}
(2+\ve)^{\omega(q)}
4^{\omega(s)}
\cdot \frac{a^{1/2}b^{1/6}q^{1/3}(\log
  B)^{2+\ve}}{B^{1/6}}
\right),
\end{align*}
where we note that
$\rad(q)\leq q$. Let 
\begin{equation}
  \label{eq:stone}
f_\ve(a,b;q):=
\frac{(2+\ve)^{\omega(q)}}{a^{1/2} }+
\frac{(3+\ve)^{\omega(q)}}{
b^{1/4-\ve}}.
\end{equation}
Then on 
inserting this into \eqref{redtosq}, \eqref{SPhi-+} and
\eqref{redtogauss}, and recalling Lemma \ref{weightslemma} and \eqref{nochwas}, 
we conclude that
\begin{equation} \label{Sestfinal2komplett}
\begin{split}
\mathcal{S}(\mathcal{R}_1;a,b;q) 
=~& 
\frac{\varphi(q)}{q}\cdot \frac{B^{5/6}}{a^{1/2}b^{1/3}q^{1/6}}
\cdot
\Area(\mathcal{R})\left(1+O\left(L^{-1/6}\right)\right)\\
&+O\left(f_\ve(a,b;q)
B^{1/2}
(\log B)^{2+\ve}\right)\\
&+ O\left((ss_1)^{1/2} (2+\ve)^{\omega(b)+\omega(q)}
4^{\omega(s)}
\cdot \frac{a^{1/2}b^{1/6}q^{1/3}(\log
  B)^{2+\ve}}{B^{1/6}}
\right).
\end{split}
\end{equation}

\subsection{Treatment of region $\mathcal{R}_2$}\label{s:R2}

Next we deal with region $\mathcal{R}_2$. Heuristically,
$\mathcal{R}_2$ should yield an error contribution because the area of
$\mathcal{R}_2$ is small compared to that of
$\mathcal{R}_1$. Therefore it will suffice to cover $\mathcal{R}_2$ by a larger
region whose area is still small compared to that of $\mathcal{R}_1$. 

Using \eqref{clever}, we may cover the region $\mathcal{R}_2$ by a
union of $O(\log B)$ regions of the form 
\begin{equation} \label{Sregion}
\mathcal{R}(U) :=\left\{(s,t)\in \mathbb{R}^2 \ : \ \frac{4}{3}U<s\le \frac{5}{3}U,\ t<0,\ \left|t+s^{2/3}\right| < \frac{1}{U^{4/3}}\right\}, 
\end{equation}
where 
$$
\frac{3}{4}L^{1/2}\le U\le \frac{\boldsymbol{X}}{\sigma}.
$$
We observe that in certain cases the condition $U\le
\boldsymbol{X}/\sigma$ above can be strengthened. Since $s\asymp
|t|^{3/2}$ and $|t|\le \boldsymbol{Y}/\tau$ in region $\mathcal{R}_2$,
it follows that $s\ll \left(\boldsymbol{Y}/\tau\right)^{3/2}$, and
hence it suffices to suppose that 
\begin{equation} \label{Srange*}
\frac{3}{4}L^{1/2}\le U\ll \boldsymbol{U},
\end{equation}
where
\begin{equation} \label{Umathcaldef}
\boldsymbol{U}:= \min\left\{\frac{\boldsymbol{X}}{\sigma},\left(\frac{\boldsymbol{Y}}{\tau}\right)^{3/2}\right\}.
\end{equation}
In particular $\UU \geq 1$ by \eqref{voraus*}.

We may further cover $\mathcal{R}(U)$ by a union of (small) rectangles 
$$
\mathcal{Q}(s_0):=[s_0-\mathcal{X},s_0+\mathcal{X}]\times [t_0-\mathcal{Y},t_0+\mathcal{Y}]
$$ 
of side lengths $2\mathcal{X}$ and $2\mathcal{Y}$, centred at points $(s_0,t_0)$ with
$$
\frac{4}{3}U\le s_0 \le \frac{5}{3}U\quad \mbox{and} \quad
t_0=-s_0^{2/3}, 
$$
and satisfying the condition that 
\begin{equation} \label{r2covered}
\left\{ (s,t)\in \mathcal{R}(U)\ :\ s_0-\mathcal{X}\le s\le s_0+\mathcal{X}\right\} \subseteq \mathcal{Q}(s_0).
\end{equation}

Let $T$ be a parameter with 
\begin{equation} \label{Tcondi}
U^{-1}\le T\le U^{1-\varepsilon},
\end{equation}
to be chosen later appropriately. We set
\begin{equation} \label{sidelengths}
\mathcal{X} := T, \quad \mathcal{Y}:= 2TU^{-1/3}.
\end{equation} 
In the following, we suppose that $s_0\in [4U/3,5U/3]$ and show that the condition \eqref{r2covered} then holds automatically under the above choices of $\mathcal{X}$ and $\mathcal{Y}$. 

Considering the definition of $\mathcal{R}(U)$, and taking into account that the function $f(s)=-s^{2/3}$ is monotonically decreasing for $s>0$, we have
\begin{align*}
\{ (s,t)\in \mathcal{R}(U) & : s_0-\mathcal{X}\le s\le
  s_0+\mathcal{X}\} \\ 
& \subseteq
\left[s_0-\mathcal{X},s_0+\mathcal{X}\right]\times
\left[-(s_0+\mathcal{X})^{2/3}-\frac{1}{U^{4/3}},
  -(s_0-\mathcal{X})^{2/3}+\frac{1}{U^{4/3}}\right], 
\end{align*}
and hence it suffices to prove that
\begin{equation} \label{stineq1}
t_0-\mathcal{Y} \le -(s_0+\mathcal{X})^{2/3}-\frac{1}{U^{4/3}}  
\end{equation}
and 
\begin{equation} \label{stineq2}
t_0+\mathcal{Y} \ge -(s_0-\mathcal{X})^{2/3}+\frac{1}{U^{4/3}}.  
\end{equation}
Obviously we have
$$
-(s_0+\mathcal{X})^{2/3}=-s_0^{2/3}\left(1+\frac{\mathcal{X}}{s_0}\right)^{2/3}>-s_0^{2/3}\left(1+\frac{\mathcal{X}}{s_0}\right)\ge - s_0^{2/3}-\frac{T}{U^{1/3}}=t_0-\frac{T}{U^{1/3}}.
$$
Similarly,
$$
-(s_0-\mathcal{X})^{2/3}=-s_0^{2/3}\left(1-\frac{\mathcal{X}}{s_0}\right)^{2/3}\le -s_0^{2/3}\left(1-\frac{\mathcal{X}}{s_0}\right)\le -s_0^{2/3}+\frac{T}{U^{1/3}}=t_0+\frac{T}{U^{1/3}}. 
$$
Recalling that $\mathcal{Y}= 2TU^{-1/3}$ and $T\ge U^{-1}$, we
therefore obtain \eqref{stineq1} and \eqref{stineq2}, and so
\eqref{r2covered} holds.

Now instead of covering $\mathcal{R}(U)$ discretely by rectangles
$\mathcal{Q}(s_0)$ and summing up their contributions, we rather
integrate, which will be useful because we shall later take advantage
of cancellations occurring in this integration. To be precise, we
claim that
\begin{equation}\label{int1}
\mathcal{S}(\mathcal{R}(U);a,b;q) \le \frac{1}{T}\int\limits_{-\infty}^{\infty}
W\left(\frac{s_0}{U}\right) \mathcal{S}(\mathcal{Q}(s_0);a,b;q) \dif s_0,
\end{equation}
where $W$ is a smooth weight function with 
\begin{equation} \label{Wconditions}
\mbox{supp}(W)\subseteq [1,2] \quad \mbox{and} \quad W(u)=1 \mbox{ for
} \frac{4}{3}\le u\le \frac{5}{3}. 
\end{equation}
The inequality \eqref{int1} is seen as follows. Using \eqref{Somega},
we have 
\begin{align*} 
\frac{1}{T}\int\limits_{-\infty}^{\infty} &W\left(\frac{s_0}{U}\right)
\mathcal{S}(\mathcal{Q}(s_0);a,b;q) \dif s_0\\
&= 
\frac{1}{T}\int\limits_{-\infty}^{\infty} W\left(\frac{s_0}{U}\right)
 \# \left\{(x,y)\in \mathbb{Z}^2   :  
\begin{array}{l}
(xy,q)=1,\ ax^2+by^3\equiv 0 \bmod{ q},\\
(s,t)\in \mathcal{Q}(s_0)
\end{array}
\right\} \dif s_0\\
&\ge  \sum\limits_{x,y} \frac{1}{T}\int\limits_{\mathcal{M}(s,t)} W\left(\frac{s_0}{U}\right) \dif s_0 ,
\end{align*}
where the sum is over $(x,y)\in \ZZ^2$ such that 
$ax^2+by^3\equiv 0 \bmod{ q}, (xy,q)=1$ and  $(s,t)\in \mathcal{R}(U)$,
and where 
$$
\mathcal{M}(s,t):=\left\{ s_0 \in \left[\frac{4}{3}U, \frac{5}{3}U\right] \ : \ (s,t)\in \mathcal{Q}(s_0)\right\}. 
$$
By \eqref{r2covered} and the fact that $\mathcal{X}=T$, we have
$$
\mathcal{M}(s,t)=\left[4U/3, 5U/3\right]\cap [s-T,s+T]  
$$
for any $(s,t) \in \mathcal{R}(U)$. 
Moreover, by \eqref{Wconditions}, we have
$
W\left(s_0/U\right)=1$ whenever $s_0\in \mathcal{M}(s,t)$. 
Hence it follows that
$$
\sum_{x,y} \frac{1}{T}\int\limits_{\mathcal{M}(s,t)} W\left(\frac{s_0}{U}\right) \dif s_0=
\sum_{x,y} \frac{\meas(\mathcal{M}(s,t))}{T} \ge \sum_{x,y} 1 =
\mathcal{S}(\mathcal{R}(U);a,b;q).
$$
Combining these inequalities we therefore 
obtain the desired inequality \eqref{int1}.

From \eqref{SPhidef} and \eqref{int1}, we further deduce that
\begin{equation}\label{int}
\mathcal{S}(\mathcal{R}(U);a,b;q) \ll \frac{1}{T}\int\limits_{-\infty}^{\infty}W\left(\frac{s_0}{U}\right)\mathcal{S}(\Gamma;s_0,t_0,\mathcal{X},\mathcal{Y};a,b;q) \dif s_0,
\end{equation}
where $\Gamma$ is defined as in \eqref{gaussiandef}
and $\mathcal{S}(\Gamma;s_0,t_0,\mathcal{X},\mathcal{Y};a,b;q)$
is given in \eqref{SPhidef}.
Thus the route is open for a modification of the approach 
used to prove 
Theorem \ref{thm:0}. Using  \eqref{SRaftertrans} and \eqref{int}, we
obtain 
\begin{equation} \label{intincluded}
\mathcal{S}(\mathcal{R}(U);a,b;q)
\ll \mathcal{S}(U):=\frac{XY}{q^2T} \sum\limits_{m\in \mathbb{Z}}
\sum\limits_{n \in \mathbb{Z}} \Gamma\left(\frac{mX}{q}\right)
\Gamma\left(\frac{nY}{q}\right) \mathcal{I}(m,n;q)\mathcal{E}(m,n;q), 
\end{equation}
where $X,Y$ are given by \eqref{sigtaudef} and \eqref{x_0y_0XY},  and
furthermore, 
$$
\mathcal{I}(m,n;q):=\int\limits_{-\infty}^{\infty}
W\left(\frac{s_0}{U}\right) \e\left(-\frac{m\sigma s_0-n\tau
    s_0^{2/3}}{q}\right) \dif s_0. 
$$
As previously we split the sum on the right-hand side of
\eqref{intincluded} into  
\begin{equation} \label{SUsplitting}
\mathcal{S}(U)=\mathcal{S}_0(U)+\mathcal{S}_1(U)+\mathcal{S}_2(U),
\end{equation} 
where $\mathcal{S}_{0}(U)$ denotes the contribution of $m=0$ and
$n=0$, $\mathcal{S}_{1}(U)$ the contribution of $m\not=0$ and $n$
arbitrary, and $\mathcal{S}_{2}(U)$ the contribution of $m=0$ and
$n\not=0$. If $m=0$ and $n=0$, then we note that
$$
\mathcal{I}(0,0;q)= O(U).
$$
Since $\mathcal{E}(0,0;q)=\varphi(q)$ we obtain
\begin{equation} \label{conto1}
\mathcal{S}_0(U)\ll \frac{\varphi(q)}{q^2}\cdot XY\cdot \frac{U}{T}.
\end{equation}

Next if $m=0$ and $n\not=0$, then using the remark after Lemma 2.1 in 
\cite{Ivic}, we get
$$
\mathcal{I}(0,n;q)= O\left(\frac{U^{1/3}q}{|n|\tau}\right).
$$
Hence 
$$
\mathcal{S}_2(U) \ll \frac{XYU^{1/3}}{\tau q T} \sum\limits_{n \not=0}
\frac{1}{|n|} \cdot \Gamma\left(\frac{nY}{q}\right) 
 |\mathcal{E}(0,n;q)|. 
$$
Arguing as in the proof of \eqref{S10est} we easily obtain
\begin{equation} \label{conto22}
\mathcal{S}_2(U) \ll \frac{XY\rad(q)^{1/2}U^{1/3}}{\tau q T}\cdot
(\log B)(2+\ve)^{\omega(q)}.
\end{equation}

It remains to estimate
$$
\mathcal{S}_1(U)=\frac{XY}{q^2T} \sum\limits_{m\not=0} \sum\limits_{n
  \in \mathbb{Z}} \Gamma\left(\frac{mX}{q}\right)
\Gamma\left(\frac{nY}{q}\right) \mathcal{I}(m,n;q)\mathcal{E}(m,n;q). 
$$
Recall from \eqref{x_0y_0XY} that $x_0=\sigma s_0$. Pulling out the
integral and making a change of variables $s_0\rightarrow x_0$, we
obtain 
$$
\mathcal{S}_1(U)=\frac{XY}{\sigma q^2T} \int\limits_{-\infty}^{\infty}
\sum\limits_{m\not=0} \sum\limits_{n \in \mathbb{Z}}
\Gamma\left(\frac{mX}{q}\right) \Gamma\left(\frac{nY}{q}\right) 
W\left(\frac{x_0}{\sigma U}\right) \e\left(-\frac{mx_0+ny_0}{q}\right)
\mathcal{E}(m,n;q) \dif x_0, 
$$
where 
\begin{equation} \label{x0y0connect}
y_0=-\delta x_0^{2/3}
\end{equation}
and
\begin{equation} \label{sigmataudelta}
\delta:=\frac{\tau}{\sigma^{2/3}}=\left(\frac{a}{b}\right)^{1/3},
\end{equation}
by \eqref{sigtaudef}.

Denote the term $\mathcal{S}_1$ in \eqref{S11} by $\mathcal{S}_1(x_0,y_0)$. Then 
$$
\mathcal{S}_1(U)=\frac{1}{\sigma T} \int\limits_{-\infty}^{\infty}
W\left(\frac{x_0}{\sigma U}\right) \mathcal{S}_1(x_0,y_0) \dif x_0. 
$$
Before we estimate $\mathcal{S}_1(U)$, we recall the main steps in the
estimation of $\mathcal{S}_1=\mathcal{S}_1(x_0,y_0)$. Our starting
point was the identity \eqref{split2}. In \eqref{remcop} the sum
$S(u)$ therein was further split into
sums $S(u,t)$ for which we derived
the identity \eqref{Srew}. The exponential sum $F_x(h)$ is
independent of $x_0$ and $y_0$ and was estimated 
separately for the cases $h\neq 0$ and $h=0$. In the case $h\not=0$
we got an estimate in which only one factor depends on $h$, namely the
factor $(h,b_1)^{1/2}$. The function $\hat\Psi$ in \eqref{Srew} can be
written as an exponential integral and depends on $y_0$. Using
\eqref{PhidefFourier} its
estimation was the subject of \S \ref{s:airy}. Furthermore, the series
$$
\sum\limits_{h\not=0} |\hat\Psi(h\alpha)| 
 (h,b_1)^{1/2} 
$$
was estimated in \eqref{mirfallenkeinenamenmehrein}. 
For $\hat\Psi(0)$ we obtained the estimate
\eqref{Phi0bound2}. Putting everything together, we arrived at an
estimate for $\mathcal{S}_1$ which 
was recorded in \eqref{T1term}, \eqref{T2term} and \eqref{Sestfinal0*}. 
We recall that $\mathcal{S}_1^{(0)}$ and
$\mathcal{S}_1^{(1)}$ are the contributions of $h=0$ and $h\not=0$,
respectively. Hence the terms on the right-hand sides of
\eqref{mirfallenkeinenamenmehrein} and \eqref{Phi0bound2} correspond
to the terms on the right-hand sides of \eqref{T2term} and
\eqref{T1term}, respectively.   

Our strategy for the estimation of $\mathcal{S}_1(U)$ is as
follows. We denote the terms $\mathcal{S}_1^{(i)}$ in
\eqref{Sestfinal0*} by 
$\mathcal{S}_1^{(i)}(x_0,y_0)$, for $i=0,1$. Then 
\begin{equation} \label{S1Usplit}
\mathcal{S}_1(U)=
\mathcal{S}_1^{(0)}(U)
+\mathcal{S}_1^{(1)}(U),
\end{equation}
where, for $i=0,1$, 
$$
\mathcal{S}_1^{(i)}(U) := 
\frac{1}{\sigma T} \int\limits_{-\infty}^{\infty}
W\left(\frac{x_0}{\sigma U}\right) \mathcal{S}_1^{(i)}(x_0,y_0) \dif
x_0.  
$$

We estimate $\mathcal{S}_1^{(0)}(U)$ by integrating trivially the
right-hand 
side of \eqref{T1term}, giving 
\begin{align*}
\mathcal{S}_1^{(0)}(U) \ll~&
\frac{(\log B)^{2+\ve}}{\sigma T}\int_{\sigma U}^{2\sigma U} 
\frac{(2+\ve)^{\omega(b)} (3+\ve)^{\omega(q)}}{b^{1/4}  q^{1/2} }
\left(
\frac{a^{1/2}XY}{Y_0}  
+\frac{a^{1/4}b^{1/4}X^{1/2}Y }{Y_0^{1/4}}\right)
\d x_0.
\end{align*}
Taking
$Y_0=|y_0|+Y>|y_0|=\delta x_0^{2/3}$ by 
\eqref{x0y0connect} into account, we see that 
$$
\int_{\sigma U}^{2\sigma U} Y_0^{-\theta} \d x_0 
< 
\frac{1}{\delta^{\theta}} 
\int_{\sigma U}^{2\sigma U} x_0^{-2\theta/3} \d x_0 
\ll \frac{(\sigma U)^{1-2\theta/3}}{\delta^{\theta}},
$$
for any $\theta\in [0,3/2).$
Using
\eqref{sigmataudelta} we may therefore conclude that
\begin{equation}\label{T1Uestim}
\mathcal{S}_1^{(0)}(U) \ll
\frac{(2+\ve)^{\omega(b)} (3+\ve)^{\omega(q)} (\log B)^{2+\ve}}{
b^{1/4} q^{1/2}T }
\left(
\frac{a^{1/2}XY U^{1/3}}{\tau}   
+\frac{a^{1/4}b^{1/4}X^{1/2}Y U^{5/6}}{\tau^{1/4}}\right).
\end{equation}

\subsection{Estimation of $\mathcal{S}_1^{(1)}(U)$}

We now turn to the more delicate estimation of
$\mathcal{S}_1^{(1)}(U)$. For this we will first 
write $\mathcal{S}_1^{(1)}$ explicitly as an identity. 
Then we pull in the integral and
integrate non-trivially. Roughly speaking, we will proceed to 
compare termwise
the sizes of the 
integrals and integrands. This will tell us how the estimate
\eqref{T2term} needs to be adjusted to obtain a final
estimate for   $\mathcal{S}_1^{(1)}(U)$.  

Recalling \eqref{split2}, \eqref{remcop} and 
\eqref{Srew}, we may write    
\begin{align*} 
\mathcal{S}_1^{(1)} =~& \frac{XY}{q^2}
\sum\limits_{\substack{(6r_1,r_2)=1\\ r_1r_2=q}} \
\sum\limits_{\substack{d_1\mid r_1,\  d_2\mid r_2\\ \mbox{\scriptsize
      rad}(\hat{r}_1)^2\mid e_1\\ \mu^2(e_2)=1}}\ \sideset{}{'}
\sum\limits_{d_1'\mid r_1} d_2'\sqrt{r_1d_1}  
\sum\limits_{\substack{u\not=0\\ (u,e_1e_2)=1}}
\Gamma\left(\frac{2^{f_2}d_1d_2uX}{q}\right)  \\  
& \times
\sum\limits_{\substack{t\mid e_1'\\ (t,e)=1}}
\mu(t)\left(\frac{t}{e}\right)  
\e\left(-\frac{2^{f_2}d_1d_2ux_0}{q}\right) \frac{\alpha}{D}
\sum\limits_{x=1}^D \sum\limits_{y=1}^D  
\e\left(\frac{xy}{D}\right)Q''(y) \sum\limits_{h\not= 0}
\hat\Psi(\alpha h) F_x(h), 
\end{align*} 
where $\alpha$ is defined as in \eqref{alphabeta}.
It follows that
\begin{align*}
\mathcal{S}_1^{(1)}(U) \ll ~& 
\frac{XY}{q^2}
\sum\limits_{\substack{(6r_1,r_2)=1\\ r_1r_2=q}} \
\sum\limits_{\substack{d_1\mid r_1,\  d_2\mid r_2\\ \mbox{\scriptsize
      rad}(\hat{r}_1)^2\mid e_1\\ \mu^2(e_2)=1}}\  \sideset{}{'}
\sum\limits_{d_1'\mid r_1} d_2'\sqrt{r_1d_1}  
\sum\limits_{\substack{u\not=0\\ (u,e_1e_2)=1}}
\Gamma\left(\frac{2^{f_2}d_1d_2uX}{q}\right)  \\ & \times
\sum\limits_{\substack{t\mid e_1'\\ (t,e)=1}}
|\mu(t)|
\alpha\sum\limits_{x=1}^D 
\sum\limits_{h\not= 0}
|F_x(h)| \\   &
\times  \frac{1}{\sigma T}
\left|
\int\limits_{-\infty}^{\infty} W\left(\frac{x_0}{\sigma U}\right) 
\e\left(-\frac{2^{f_2}d_1d_2ux_0}{q}\right) \hat\Psi_{y_0}(\alpha h)
\dif x_0\right|, 
\end{align*} 
where the subscript $y_0$ indicates the dependency of 
the function
$\hat\Psi$ on $y_0$, which in turn depends on $x_0$.  
A key point in the estimation of $\mathcal{S}_1^{(1)}$ was our bound
\eqref{mirfallenkeinenamenmehrein} for 
$$
\sum\limits_{h\not= 0} \left| \hat\Psi_{y_0}(\alpha h) \right|  (h,b_1)^{1/2}.
$$
We recall that the factor $(h,b_1)^{1/2}$ that appears here 
came from our
estimate of $F_x(h)$ in \S \ref{s:second}.
For the estimation of
$\mathcal{S}_1^{(1)}(U)$, we require a bound for 
\begin{equation} \label{instead}
\mathcal{L}:=\frac{1}{\sigma T} \sum\limits_{h\not= 0} \left|
  \int\limits_{-\infty}^{\infty} W\left(\frac{x_0}{\sigma U}\right) 
\e\left(-\frac{2^{f_2}d_1d_2ux_0}{q}\right) \hat\Psi_{y_0}(\alpha h)
  \dif x_0\right|  (h,b_1)^{1/2} 
\end{equation}
instead. After deriving a bound for this sum, we shall compare the
terms appearing in it with the corresponding terms on the right-hand
side of \eqref{mirfallenkeinenamenmehrein}. This will allow us to
infer a bound for $\mathcal{S}_1^{(1)}(U)$ directly from our bound
\eqref{T2term} for $\mathcal{S}_1^{(1)}$. 

We recall that 
$$
\hat\Psi_{y_0}(\alpha h)=F(\gamma+\alpha
h,\beta)=F\left(\frac{y_0}{Y}+\alpha
  h,\beta\right)=F\left(-\frac{\delta x_0^{2/3}}{Y}+\alpha
  h,\beta\right), 
$$
where $\alpha$ and $\beta$ are defined in \eqref{alphabeta} and $\delta$ is defined as in \eqref{sigmataudelta}. Hence
the integrand in \eqref{instead} is
$$
\mathcal{I}_h(x_0):=
W\left(\frac{x_0}{\sigma U}\right)
\e\left(-\frac{2^{f_2}d_1d_2ux_0}{q}\right) F\left(-\frac{\delta
    x_0^{2/3}}{Y}+\alpha h,\beta\right).
$$

As in \eqref{largeh} the contribution of $|h|>B^{\kappa}$ to
\eqref{instead} is negligible if $\kappa$ is large enough. 
We split the remaining contribution to
\eqref{instead} as follows. 
\begin{equation}
\begin{split}
\label{remaining}
\sum\limits_{1\le |h|\le B^{\kappa}} \left|
  \int\limits_{-\infty}^{\infty} \mathcal{I}_h(x_0) \dif x_0 \right|
 (h,b_1)^{1/2} 
=~& \sum_{i=1,2}
\sum\limits_{1\le |h|\le B^{\kappa}} \left|
  \int\limits_{\mathcal{M}_i} \mathcal{I}_h(x_0) \dif x_0 \right|
 (h,b_1)^{1/2},
\end{split}
\end{equation}
where 
$$
\mathcal{M}_1:=\left\{ x_0\in [\sigma U,2\sigma U] \ : \
  \left|-\frac{\delta x_0^{2/3}}{Y}+\alpha h\right|\le 1\right\} \quad
\mbox{and} \quad 
\mathcal{M}_2:=[\sigma U,2\sigma U] \setminus \mathcal{M}_1.
$$

Let us begin with the treatment of the integral over $\mathcal{M}_1$. 
Using \eqref{Fbound2} we have
\begin{equation} \label{integrandtrivial}
\mathcal{I}_h(x_0)\ll 1.
\end{equation} 
It remains to estimate the Lebesgue measure of $\mathcal{M}_1$. To
this end, we note that  \eqref{x_0y_0XY}, \eqref{Tcondi}, \eqref{sidelengths}
and \eqref{sigmataudelta} imply 
\begin{equation} \label{wirbrauchendas}
\frac{\delta x_0^{2/3}}{Y} \asymp \frac{U}{T}\gg U^{\varepsilon} \quad
\mbox{if } x_0\in[\sigma U,2\sigma U]. 
\end{equation}
Hence we have
\begin{equation} \label{alphahsize}
\alpha |h| \asymp \frac{\delta x_0^{2/3}}{Y} \quad \mbox{if }
x_0\in[\sigma U,2\sigma U] \mbox{ and } \left|-\frac{\delta
    x_0^{2/3}}{Y}+\alpha h\right|\le 1. 
\end{equation}
Using this we claim that $\mathcal{M}_1$ is an interval of length
\begin{equation} \label{thelength}
\mbox{meas}(\mathcal{M}_1) \ll \frac{\sigma U}{\alpha |h|}.
\end{equation} 
To see this we  let $m_1$ and $m_2$ be the two endpoints of
the interval $\mathcal{M}_1$ in question. We have 
$m_1 \asymp m_2 \asymp \sigma U.$ 
Under this condition Taylor's theorem gives
$$
m_1^{2/3}-m_2^{2/3} \asymp (m_1-m_2) m_2^{-1/3}.
$$
Furthermore, by the definition of our interval $\mathcal{M}_1$, we have
$$
\frac{\delta m_1^{2/3}}{Y}-\frac{\delta m_2^{2/3}}{Y} \ll 1.
$$
Combining these two inequalities, we deduce that
$$
m_1-m_2 \ll \frac{Y m_2^{1/3}}{\delta}\ll
\frac{Y\sigma^{1/3}U^{1/3}}{\delta}\ll \frac{\sigma U}{\alpha |h|},
$$
by \eqref{alphahsize} and $x_0\in [\sigma U,2\sigma U]$. This
therefore gives \eqref{thelength}. Employing \eqref{wellknown}, \eqref{integrandtrivial},
\eqref{thelength} and partial summation, we get 
\begin{equation} \label{firstintest}
\sum\limits_{1\le |h|\le B^{\kappa}} \left|
  \int\limits_{\mathcal{M}_1} \mathcal{I}_h(x_0) \dif x_0 \right|
 (h,b_1)^{1/2}\ll \frac{\sigma U}{\alpha} 
\sum\limits_{1\le |h|\le B^{\kappa}} \frac{(h,b_1)^{1/2}}{|h|} \ll
\frac{\sigma U \log B}{\alpha} \cdot (1+\ve)^{\omega(b_1)}. 
\end{equation}

Now we turn to the second integral on the right-hand side of \eqref{remaining}. We write $\mathcal{M}_2=\mathcal{M}_2^-\cup \mathcal{M}_2^+$, where
\begin{align*}
\mathcal{M}_2^-
&:=\left\{ x_0\in [\sigma U,2\sigma U] \ : \ -\frac{\delta x_0^{2/3}}{Y}+\alpha h\le  -1\right\}\\
\mathcal{M}_2^+&:=\left\{ x_0\in [\sigma U,2\sigma U] \ : \ -\frac{\delta x_0^{2/3}}{Y}+\alpha h\ge  1\right\}.
\end{align*}
If $x_0\in \mathcal{M}_2^-$, then we have
\begin{align*}
F\left(-\frac{\delta x_0^{2/3}}{Y}+\alpha h,\beta\right) =~&
\frac{2^{1/2}\exp\left(-\pi \left|-\delta x_0^{2/3}/Y+\alpha
      h\right|/(3\beta)\right)}{\left(3\left|-\delta
      x_0^{2/3}/Y+\alpha h\right|\beta\right)^{1/4}} \\
&\times
\cos\left(2\pi\left(\frac{1}{8}-\frac{2\left|-\delta
        x_0^{2/3}/Y+\alpha h\right|^{3/2}}{3^{3/2}
      \beta^{1/2}}\right)\right) 
+O\left(\frac{1}{\left|-\delta x_0^{2/3}/Y+\alpha h\right|}\right)  
\end{align*}
by \eqref{Fbound3c}. It follows that
\begin{equation} \label{M-split}
\int\limits_{\mathcal{M}_2^-} \mathcal{I}_h(x_0) \dif x_0 \ll 
\frac{1}{\beta^{1/4}} \cdot \left| \int\limits_{\mathcal{M}_2^-}  \frac{Z(x_0)}{\left|-\delta x_0^{2/3}/Y+\alpha h\right|^{1/4}} \dif x_0 \right|+  
\int\limits_{\mathcal{M}_2^-} \frac{\dif x_0}{\left|-\delta x_0^{2/3}/Y+\alpha h\right|}, 
\end{equation}
where 
\begin{equation} \label{oscillator}
\begin{split}
Z(x_0):=~&
W\left(\frac{x_0}{\sigma U}\right)
\exp\left(-\frac{\pi \left|-\delta x_0^{2/3}/Y+\alpha
      h\right|}{3\beta}\right)
\e\left(-\frac{2^{f_2}d_1d_2ux_0}{q}\right)\\
&\times
\cos\left(2\pi\left(\frac{1}{8}-\frac{2\left|-\delta
        x_0^{2/3}/Y+\alpha h\right|^{3/2}}{3^{3/2}
      \beta^{1/2}}\right)\right). 
\end{split}
\end{equation}

We begin by estimating the second integral on the right-hand side of \eqref{M-split}. First assume that $\alpha h\ge \delta (\sigma U)^{2/3}/(2Y)$. Let $x_1$ be a positive real number such that
$$
\alpha h=\frac{\delta x_1^{2/3}}{Y}-1.
$$
If $x_1> 2\sigma U$, then $\mathcal{M}_2^-$ is empty. Thus we may assume that $\sigma U/3\le x_1\le 2\sigma U$ 
since $\delta x_1^{2/3}/Y\geq \alpha h \geq \delta (\sigma U)^{2/3}/(2Y)$. Hence we have
$$
\int\limits_{\mathcal{M}_2^-} \frac{\dif x_0}{\left|-\delta x_0^{2/3}/Y+\alpha h\right|} \le \int\limits_{x_1}^{2\sigma U} \frac{\dif x_0}{\left|-\delta x_0^{2/3}/Y+\alpha h\right|}.
$$
Now making a change of variables $z=\delta x_0^{2/3}/Y-\alpha h$, we
deduce that 
$$
\int\limits_{\mathcal{M}_2^-} \frac{\dif x_0}{\left|-\delta x_0^{2/3}/Y+\alpha h\right|} \le \frac{3}{2} \int\limits_{1}^{a} \frac{Yx_0^{1/3}}{\delta z} \dif z,
$$
where $a=\delta(2\sigma U)^{2/3}/Y-\alpha h$.  Similarly as in
\eqref{wirbrauchendas} and \eqref{alphahsize}, 
we have $\delta(2\sigma U)^{2/3}/Y \asymp U/T$
and 
$$
\alpha |h| \asymp \frac{\delta x_1^{2/3}}{Y} \asymp \frac{\delta (\sigma U)^{2/3}}{Y}.
$$
Hence $\log a\ll \log U$. 
Using this and the fact that $x_0\asymp \sigma U$, it follows that
\begin{equation} \label{afterchanging}
\int\limits_{\mathcal{M}_2^-} \frac{\dif x_0}{\left|-\delta
    x_0^{2/3}/Y+\alpha h\right|} \ll \min\left\{
\frac{Y(\sigma U)^{1/3}}{\delta}, \frac{\sigma U}{\alpha |h|} \right\}
\log U.
\end{equation}
If $\alpha h<\delta (\sigma U)^{2/3}/(2Y)$, then $\left|-\delta x_0^{2/3}/Y+\alpha h\right| \gg \delta (\sigma U)^{2/3}/Y$ and $\left|-\delta x_0^{2/3}/Y+\alpha h\right| \gg \alpha |h|$
for every $x_0\in [\sigma U,2\sigma U]$, and hence
\eqref{afterchanging} holds trivially.  

Next, we estimate the first integral on the right-hand side of \eqref{M-split}. Using integration by parts, we write this integral as 
\begin{equation}
\begin{split} \label{intparts}
\int\limits_{\mathcal{M}_2^-} \frac{Z(x_0)}{\left|-\delta x_0^{2/3}/Y+\alpha h\right|^{1/4}}  \dif x_0
=~& \frac{1}{\left(\delta (2\sigma U)^{2/3}/Y-\alpha h\right)^{1/4}}
\int\limits_{x_2}^{2\sigma U} Z(x_0) \dif x_0 \\
&+ 
\frac{\delta}{6Y} \int\limits_{x_2}^{2\sigma
  U}\frac{1}{z^{1/3}\left(\delta z^{2/3}/Y-\alpha h\right)^{5/4}}
\left(\int\limits_{x_2}^z Z(x_0) \dif x_0\right)\dif z,   
\end{split}
\end{equation}
where $x_2$ is the lower endpoint of the interval $\mathcal{M}_2^-$ and hence $\mathcal{M}_2^-=[x_2,2\sigma U]$.
We note that the second derivative of the function in the cosine on the right-hand side of 
\eqref{oscillator} satisfies
\begin{align*}
\frac{\dif^2}{\dif x_0^2} \left(\frac{1}{8}-\frac{2\left|-\delta x_0^{2/3}/Y+\alpha h\right|^{3/2}}{3^{3/2} \beta^{1/2}}\right)
&= \frac{\dif^2}{\dif x_0^2} \left(\frac{1}{8}-\frac{2\left(\delta x_0^{2/3}/Y-\alpha h\right)^{3/2}}{3^{3/2} \beta^{1/2}}\right)\\
&= -\frac{2\delta \alpha h}{3^{5/2}x_0^{4/3}Y\beta^{1/2}}\left(\frac{\delta x_0^{2/3}}{Y}-\alpha h\right)^{-1/2}.
\end{align*}
Now we write the cosine on the right-hand side of \eqref{oscillator} in the form $\cos(2\pi x)=(\e(x)+\e(-x))/2$, combine the resulting exponential terms with the term 
$$
\e\left(-\frac{2^{f_2}d_1d_2ux_0}{q}\right),
$$
use integration by parts to remove the smooth weight 
\begin{equation} \label{smoothweight}
W\left(\frac{x_0}{\sigma U}\right)
\exp\left(-\frac{\pi \left|-\delta x_0^{2/3}/Y+\alpha
      h\right|}{3\beta}\right)
\end{equation}
and employ Lemma 3.2 in \cite{IwKo} with
$k=2$. This leads to the conclusion that
$$
\int\limits_{x_2}^z Z(x_0) \dif x_0 \ll \frac{z^{2/3} Y^{1/2}
  \beta^{1/4}}{\delta^{1/2}\alpha^{1/2} |h|^{1/2}}\left(\frac{\delta
    z^{2/3}}{Y}-\alpha h\right)^{1/4}. 
$$
Combining this with \eqref{intparts} we get
$$
\int\limits_{\mathcal{M}_2^-} \frac{Z(x_0)}{\left|-\delta x_0^{2/3}/Y+\alpha h\right|^{1/4}}  \dif x_0 \ll \frac{\beta^{1/4}}{\alpha^{1/2} |h|^{1/2}} \left(\frac{(\sigma U)^{2/3} Y^{1/2}}{\delta^{1/2}}+
\frac{(\sigma U)^{1/3} \delta^{1/2}}{Y^{1/2}}\int\limits_{x_2}^{2\sigma U}
\frac{\dif z}{\delta z^{2/3}/Y-\alpha h}\right). 
$$
The integral on the right-hand side of this 
equals that on the left-hand side of \eqref{afterchanging}. Hence
we obtain 
\begin{equation} \label{intpartsest*}
\int\limits_{\mathcal{M}_2^-} \frac{Z(x_0)}{\left|-\delta
    x_0^{2/3}/Y+\alpha h\right|^{1/4}}  \dif x_0 \ll \frac{(\sigma
  U)^{2/3} Y^{1/2} \beta^{1/4}}{\delta^{1/2}\alpha^{1/2}
  |h|^{1/2}}\cdot \log U. 
\end{equation}

To proceed we will make use of the fact that we can assume that
\begin{equation} \label{strangeassumption}
\frac{\delta (\sigma U)^{2/3} }{Y}\ll \beta (\log B)^{1+\varepsilon_0}
\end{equation}
for any $\varepsilon_0>0$. This is seen as follows. The contribution of the terms with 
$$
|u|\geq \frac{q}{d_1d_2X} \cdot (\log B)^{(1+\varepsilon_0)/2}
$$
to $\mathcal{S}_1^{(1)}$ is negligible, since for such $u$ we 
have 
\begin{equation} \label{similarasin}
\Gamma\left(\frac{2^{f_2}d_1d_2uX}{q}\right)\ll  \exp\left(-c_1(\log
  B)^{1+\varepsilon_0}\right) 
\end{equation}
for some constant $c_1$, which is $\ll B^{-c_2}$ for any fixed
$c_2>0$. Hence, we can assume that   
$$
|u|< \frac{q}{d_1d_2X} \cdot (\log B)^{(1+\varepsilon_0)/2}.
$$
Then using
\eqref{a1b1}, 
\eqref{alphabeta}, 
\eqref{sigtaudef}, \eqref{x_0y_0XY}, \eqref{sidelengths} and 
\eqref{sigmataudelta}, we deduce that
$$
\beta (\log B)^{1+\varepsilon_0} \gg \frac{aq^2}{bu^2d_1^2d_2^2Y^3} \cdot (\log B)^{1+\varepsilon_0} \gg \frac{aX^2}{bY^3} \gg \frac{a\sigma^2U}{b\tau^3T} =
\frac{U}{T}\gg  \frac{\tau U^{2/3}}{Y} = \frac{\delta (\sigma
  U)^{2/3}}{Y}. 
$$

Combining \eqref{intpartsest*} with 
\eqref{M-split} and \eqref{afterchanging}, we get
\begin{equation} \label{M-est}
\int_{\mathcal{M}_2^-}
\mathcal{I}_h(x_0)\d x_0\ll 
\left(
\frac{\sigma U}{\alpha |h|}+
\frac{(\sigma U)^{2/3}Y^{1/2}}{\delta^{1/2}\alpha^{1/2}|h|^{1/2}}
\right) \log U,
\end{equation}
with the convention that the second term can be removed unless $
\alpha |h|\leq \beta (\log B)^{1+\ve}$. This convention is justified by the observation that if $\alpha |h|> \beta (\log B)^{1+\ve}$ then 
the integral in \eqref{intpartsest*} becomes negligible. To see this, we 
bound the weight function in \eqref{smoothweight}  similarly as in \eqref{similarasin} upon noting that the term $\alpha h$
dominates the term $\delta x_0^{2/3}/Y$ if $\alpha |h|> \beta (\log B)^{1+\ve}$ and \eqref{strangeassumption} holds with $\varepsilon_0<\varepsilon$.

The same estimate as in  \eqref{M-est} can be established for the
corresponding integral over $\mathcal{M}_2^+$ in a similar way. In
fact $\mathcal{M}_2^+$ 
is handled more easily than $\mathcal{M}_2^{-}$ since the role of
\eqref{Fbound3c} is replaced by the simpler \eqref{Fbound1}. 
Now, using $\mathcal{M}_2=\mathcal{M}_2^+\cup \mathcal{M}_2^-$, we have
$$
\int\limits_{\mathcal{M}_2} \mathcal{I}_h(x_0) \dif x_0 \ll 
\left(
\frac{\sigma U}{\alpha |h|}+
\frac{(\sigma U)^{2/3}Y^{1/2}}{\delta^{1/2}\alpha^{1/2}|h|^{1/2}}
\right) \log U,
$$
where 
the second term can be removed unless $
\alpha |h|\leq \beta (\log B)^{1+\ve}$.
Hence 
\begin{align*}
\sum\limits_{1\le |h|\le B^{\kappa}} &\left| \int\limits_{\mathcal{M}_2} \mathcal{I}_h(x_0) \dif x_0 \right|  (h,b_1)^{1/2}
\\
&
\ll \left(
\frac{\sigma U}{\alpha}
\sum\limits_{1\le |h|\ll B^\kappa}
\hspace{-0.2cm}
\frac{(h,b_1)^{1/2}}{|h|} + 
\frac{(\sigma U)^{2/3} Y^{1/2}}{\delta^{1/2}\alpha^{1/2}}
\hspace{-0.2cm}
\sum\limits_{1\le |h|\le 
\beta (\log B)^{1+\ve}/\alpha} 
\hspace{-0.2cm}
\frac{(h,b_1)^{1/2}}{|h|^{1/2}}\right) \log U\\ 
&\ll \left(
\frac{\sigma U }{\alpha} 
+ \frac{(\sigma U)^{2/3} \beta^{1/2}Y^{1/2}}{
\delta^{1/2}\alpha}\right)
(1+\ve)^{\omega(b_1)} (\log B)^{1+\ve}\log U.
\end{align*}
Combining this with \eqref{remaining} and \eqref{firstintest}
we get
\begin{equation}\label{insteadbound}
\mathcal{L}\ll
\left(
\frac{U }{\alpha T}
+ \frac{U^{2/3} \beta^{1/2}Y^{1/2}}{
\alpha \delta^{1/2}\sigma^{1/3} T} \right) 
(1+\ve)^{\omega(b_1)} (\log B)^{1+\ve}\log U,
\end{equation}
for the expression in \eqref{instead}.

We compare \eqref{insteadbound} with 
\eqref{mirfallenkeinenamenmehrein} for $\log P\ll \log B$ and $\gamma=0$ (and hence $Y_0=Y$). 
The first term on the right-hand side of \eqref{insteadbound} equals the term 
\begin{equation} \label{thefirstterm}
\frac{\log B}{\alpha} \cdot (1+\ve)^{\omega(b_1)} 
\end{equation}
on the right-hand side of \eqref{mirfallenkeinenamenmehrein} times a factor of size
\begin{equation} \label{enlargement1}
\ll \frac{U}{T} \cdot (\log B)^{\varepsilon}\log U.
\end{equation} 
The second term on the right-hand side of \eqref{insteadbound} equals the term
\begin{equation} \label{thesecondterm}
\frac{\beta^{1/2}}{\alpha}\cdot (1+\ve)^{\omega(b_1)} 
\end{equation}
on the right-hand side of \eqref{mirfallenkeinenamenmehrein} times a factor of size
\begin{equation} \label{enlargement2}
\ll \frac{U^{2/3} Y^{1/2}}{\delta^{1/2}\sigma^{1/3}T}\cdot (\log
B)^{1+ \varepsilon}\log U
=\frac{U^{2/3} Y^{1/2}}{\tau^{1/2}T}\cdot (\log
B)^{1+ \varepsilon}\log U.
\end{equation} 
The term \eqref{thefirstterm} in \eqref{mirfallenkeinenamenmehrein}
gave rise to the first term on the right-hand side of \eqref{T2term},
and the term \eqref{thesecondterm} in
\eqref{mirfallenkeinenamenmehrein} gave rise to the second 
term on the
right-hand side of \eqref{T2term}. Hence $\mathcal{S}_1^{(1)}(U)$ is
majorised by the sum of the same terms, multiplied by the factors in
\eqref{enlargement1} and \eqref{enlargement2}, respectively. In this
way  we
obtain 
\begin{equation} 
\begin{split}
\label{T2Uesti}
\mathcal{S}_1^{(1)}(U) \ll~& 
s^{1/2}s_1^{1/2} q^{1/2} (2+\ve)^{\omega(b)+\omega(q)}
4^{\omega(s)}
(\log B)^{3+\ve}
\left(
  \frac{b^{1/2}YU}{XT}
 +
\frac{a^{1/2}U^{2/3} }{\tau^{1/2}T}
\right) \log U.
\end{split}
\end{equation}

\subsection{Conclusion}\label{s:final}

Combining \eqref{SUsplitting}, \eqref{conto1}, \eqref{conto22},
\eqref{S1Usplit}, \eqref{T1Uestim} and \eqref{T2Uesti}, we get 
\begin{align*}
\mathcal{S}(U) \ll~& 
\frac{\phi(q)}{q^2}\cdot XY\cdot \frac{U}{T}
+
\frac{XY\rad(q)^{1/2}(2+\ve)^{\omega(q)}U^{1/3}\log B}{\tau q T}\\
&+
\frac{(2+\ve)^{\omega(b)} (3+\ve)^{\omega(q)} (\log B)^{2+\ve}}{
b^{1/4}  q^{1/2}T }
\left(
\frac{a^{1/2}XY U^{1/3}}{\tau}   
+\frac{a^{1/4}b^{1/4}X^{1/2}Y U^{5/6}}{\tau^{1/4}}\right)\\
&+
s^{1/2}s_1^{1/2} q^{1/2} (2+\ve)^{\omega(b)+\omega(q)}
4^{\omega(s)}
(\log B)^{3+\ve}
\left(
  \frac{b^{1/2}YU}{XT}
 +
\frac{a^{1/2}U^{2/3} }{\tau^{1/2}T}
\right)\log U.
\end{align*}
In view of \eqref{sigtaudef}, \eqref{x_0y_0XY}, \eqref{Tcondi} and \eqref{sidelengths}, we deduce that
\begin{equation} \label{combin1}
\begin{split}
\mathcal{S}(U) \ll~& 
\frac{\varphi(q)}{q}\cdot
\frac{B^{5/6}TU^{2/3}}{a^{1/2}b^{1/3}q^{1/6}}
+f_\ve(a,b;q)B^{1/2}T^{1/2}U^{1/2}(\log B)^{2+\ve}
\\
&+
\frac{a^{1/2}b^{1/6} s^{1/2}s_1^{1/2} q^{1/3} (2+\ve)^{\omega(q)+\omega(b)}
4^{\omega(s)}
(\log B)^{3+\ve} U^{2/3}\log U}{B^{1/6}T},
\end{split}
\end{equation}
where
$f_\ve(a,b;q)$ is given by \eqref{eq:stone}.

Now we set 
$$
Z:=\frac{a^{1/2}(bqss_1)^{1/4}}{B^{1/2}}.
$$
We wish to balance the first and the last term on the right-hand side
 of \eqref{combin1}.
To this end, we choose
$$
T:=\max\left\{U^{-1},\min\{U^{1-\ve},Z\}\right\}.
$$
This is clearly in accordance with \eqref{Tcondi} and leads to the
conclusion in \eqref{combin1}  that
\begin{align*}
\mathcal{S}(U) \ll~& 
\frac{\varphi(q)}{q}\cdot
\frac{B^{5/6}U^{-1/3}}{a^{1/2}b^{1/3}q^{1/6}}
+f_\ve(a,b;q) B^{1/2}(\log B)^{2+\ve}\left(
1+\frac{a^{1/4}(b q s s_1)^{1/8}U^{1/2}}{B^{1/4}}\right)
\\
&+
\frac{a^{1/2}b^{1/6} (ss_1)^{1/2} q^{1/3}(2+\ve)^{\omega(b)}
  (2+\ve)^{\omega(q)} 
4^{\omega(s)}
(\log B)^{3+\ve}\log U}{B^{1/6}U^{1/3-\ve}}\\
&+
\frac{(ss_1)^{1/4} q^{1/12} (2+\ve)^{\omega(b)+\omega(q)}
4^{\omega(s)}
B^{1/3}(\log B)^{4+\ve} U^{2/3}}{b^{1/12}}.
\end{align*}
Recalling \S \ref{s:R2}, and in particular
\eqref{Sregion}, \eqref{intincluded}
and the need to sum over the $O(\log B)$ dyadic intervals for $U$
satisfying   \eqref{Srange*},  we deduce that
\begin{equation} \label{combin2ende}
\begin{split}
\mathcal{S}(\mathcal{R}_2;a,b;q)
\ll~&
\frac{\varphi(q)}{q}\cdot
\frac{B^{5/6}L^{-1/6}}{a^{1/2}b^{1/3}q^{1/6}}+
f_\ve(a,b;q)  B^{1/2}(\log B)^{3+\ve}\\
&+
f_\ve(a,b;q) a^{1/4}(b q s s_1)^{1/8} B^{1/4}(\log
B)^{2+\ve}\UU^{1/2} 
\\
&+
a^{1/2}b^{1/6} (ss_1)^{1/2} q^{1/3} (2+\ve)^{\omega(b)}
(2+\ve)^{\omega(q)}
4^{\omega(s)}
B^{-1/6}(\log B)^{3+\ve}\\
&+
\frac{(ss_1)^{1/4} q^{1/12}(2+\ve)^{\omega(b)+\omega(q)}
4^{\omega(s)}
B^{1/3}(\log B)^{4+\ve} \UU^{2/3}}{b^{1/12}}.
\end{split}
\end{equation}
On combining \eqref{S123split}, \eqref{Sestfinal2komplett} and
\eqref{combin2ende}, and taking \eqref{lowerboundarea} into account,
we get a final estimate for 
$\mathcal{S}(\mathcal{R};a,b;q)$. 
In view of 
\eqref{Srewrite} and 
\eqref{Umathcaldef}
we may now record our final asymptotic
formula for $M(B,\boldsymbol{X},\boldsymbol{Y};a,b;q)$ in the
following result. 

\begin{thm}\label{t:final}
Let $\ve>0$, let $B\geq e^e$, $\XX,\YY\geq 1$ and assume that 
$(ab,q)=1$ are such that \eqref{sizecond} holds. 
Let $\rad(q),  s,s_1$ be
given by \eqref{eq:s-s1} and recall the definition 
\eqref{sigtaudef} of $\sigma, \tau$.
Suppose that
$$
\frac{\boldsymbol{X}}{\sigma}\ge 1, \quad
\frac{\boldsymbol{Y}}{\tau}\ge 1, 
$$
and let
$\mathcal{R}$ be given by \eqref{region}.
Then we have
$$
M(B,\boldsymbol{X},\boldsymbol{Y};a,b;q) = \frac{\varphi(q)}{q}
\cdot\frac{B^{5/6} \Area(\mathcal{R})}{a^{1/2}b^{1/3}q^{1/6}}
\left(1+O\left(\frac{1}{(\log\log B)^{1/6}}\right)\right) +O\left(
\sum_{i=1}^4 F_i\right), 
$$
where 
if $\UU=\min\left\{\boldsymbol{X}/\sigma,  
\left(\boldsymbol{Y}/\tau\right)^{3/2}\right\}$
and $f_\ve(a,b;q)$ is given by \eqref{eq:stone}, then 
\begin{align*}
F_1&:=
f_\ve(a,b;q)  B^{1/2}(\log B)^{3+\ve},\\
F_2&:=
f_\ve(a,b;q) a^{1/4}(b q s s_1)^{1/8} B^{1/4}(\log
B)^{2+\ve}\UU^{1/2},\\
F_3&:=
a^{1/2}b^{1/6} (ss_1)^{1/2} q^{1/3} (2+\ve)^{\omega(b)}(2+\ve)^{\omega(q)}
4^{\omega(s)}
B^{-1/6}(\log B)^{3+\ve},\\
F_4&:=
b^{-1/12}(ss_1)^{1/4} q^{1/12} (2+\ve)^{\omega(b)+\omega(q)}
4^{\omega(s)}
B^{1/3}(\log B)^{4+\ve} \UU^{2/3}.
\end{align*}
\end{thm}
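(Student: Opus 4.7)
The plan is to assemble Theorem~\ref{t:final} directly from the ingredients already in hand. By \eqref{Srewrite} we have $M(B,\boldsymbol{X},\boldsymbol{Y};a,b;q) = \mathcal{S}(\mathcal{R};a,b;q)$ where $\mathcal{R}$ is given by \eqref{region}, and the decomposition \eqref{S123split} splits this into the contributions from the two subregions $\mathcal{R}_1$ (where $0<s\le L^{1/2}$) and $\mathcal{R}_2$ (where $L^{1/2}<s\le \boldsymbol{X}/\sigma$), with $L=\log\log(2000B)$.

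For the region $\mathcal{R}_1$, the approach is to apply \eqref{Sestfinal2komplett}, which was obtained by covering $\mathcal{R}_1$ with small squares, sandwiching characteristic functions of squares between Gaussian weights $\Phi_{\pm}$ of Lemma~\ref{weightslemma}, and then invoking Theorem~\ref{thm:0}. Using \eqref{nochwas} to replace $\Area(\mathcal{R}_1)$ by $\Area(\mathcal{R})(1+O(L^{-1/6}))$ yields the main term $\frac{\varphi(q)}{q}\cdot\frac{B^{5/6}\Area(\mathcal{R})}{a^{1/2}b^{1/3}q^{1/6}}(1+O(L^{-1/6}))$, together with an error of the form contributing to $F_1$ and $F_3$.

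For the region $\mathcal{R}_2$ the approach is to exploit \eqref{combin2ende}, obtained by the dyadic covering by regions $\mathcal{R}(U)$ of \eqref{Sregion} parametrised by $U$ in the range \eqref{Srange*}, followed by the integration device \eqref{int1} and the balancing choice of the parameter $T$ made just before \eqref{combin2ende}. The resulting bound produces a main-term-sized contribution weighted by $L^{-1/6}$, which can be absorbed into the $O((\log\log B)^{-1/6})$ factor of the main term, together with four genuine error terms matching $F_1,F_2,F_3,F_4$ respectively. Summing these with the errors from $\mathcal{R}_1$ then yields the stated asymptotic.

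The main conceptual hurdle has already been cleared in the preceding sections: the two non-trivial steps are (i) showing that the contribution of $\mathcal{R}_2$ does not damage the main term, which relies on the geometric observation \eqref{clever} producing the small factor $L^{-1/6}$, and (ii) showing that the slowly-oscillating integration in $x_0$ performed in \S\ref{s:R2} actually captures cancellation beyond what would be obtained by a trivial bound on each $\mathcal{S}_1(x_0,y_0)$, culminating in the estimate \eqref{T2Uesti}. Once these two inputs are granted, the statement of Theorem~\ref{t:final} follows by straightforward bookkeeping of the error terms in \eqref{Sestfinal2komplett} and \eqref{combin2ende}, with the definition \eqref{Umathcaldef} of $\UU$ absorbing all dependence on $\boldsymbol{X}/\sigma$ and $(\boldsymbol{Y}/\tau)^{3/2}$ into a single quantity.
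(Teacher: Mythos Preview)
Your proposal is correct and follows essentially the same route as the paper: the proof of Theorem~\ref{t:final} is precisely the bookkeeping step of combining \eqref{Srewrite}, \eqref{S123split}, \eqref{Sestfinal2komplett} and \eqref{combin2ende}, using \eqref{lowerboundarea} and \eqref{nochwas} to absorb the $L^{-1/6}$-weighted terms into the multiplicative error on the main term, and \eqref{Umathcaldef} to package the $\boldsymbol{X}/\sigma$, $\boldsymbol{Y}/\tau$ dependence into $\UU$. Your identification of the sources of $F_1$--$F_4$ in \eqref{Sestfinal2komplett} and \eqref{combin2ende} is accurate.
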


\section{Proof of Theorem \ref{main}}

The remaining sections of this paper are concerned with the proof of
Theorem~\ref{main}. 
Let $U=X\setminus\{x_0=x_1=0\}$ be the distinguished open subset of our degree $2$
del Pezzo surface 
 $X$ in \eqref{eq:X}.  Then 
we see that the counting function is
\begin{equation}\label{eq:vic}
N_U(B)
=\frac{1}{2}\#\left\{
\x\in\ZZ^4: 
\begin{array}{l}
x_0^2=x_1x_2^3+x_1^3x_3, ~x_1\neq 0,\\
(x_1,x_2,x_3)=1,\\
\mbox{$|x_i|\leq B$ for $1\leq i\leq 3$},~|x_0|\leq B^2
\end{array}
\right\},
\end{equation}
on taking into account that $-\x$ and $\x$ represent the same point in
$\PP(2,1,1,1)$.

\subsection{Passage to the universal torsor}\label{s:passage}

We begin by establishing an explicit bijection
between the rational points which have to be counted on $U$ 
and the integral points on the
universal torsor above $\widetilde{X}$ which are subject to a number
of coprimality conditions.   We begin by simplifying the expression
for $N_U(B)$ in \eqref{eq:vic}. 
The contribution from solutions for which $x_0=0$ is bounded by the
number of points of height at most $B$ on the plane cubic
$x_2^3+x_1^2x_3=0.$ But the solutions of this equation are
parametrised by $(x_1,x_2,x_3)=(s^3,-s^2t, t^3)$ and so such points
contribute $O(B^{2/3})$ to $N_U(B)$. 
Turning to solutions with $x_0x_1\neq 0$ we note that 
$x_1(x_2^3+x_1^2x_3)>0$ 
in any solution to be counted. Thus we may assume without loss of
generality that $x_0,x_1>0$ on multiplying the final count by
$4$. We record the outcome of these manipulations in the following
result. 

\begin{lemma}\label{lem:prelim}
We have
$$
N_U(B)
=2\#\left\{
\x\in\ZZ^4: 
\begin{array}{l}
x_0^2=x_1x_2^3+x_1^3x_3, ~x_0,x_1>0, \\
(x_1,x_2,x_3)=1,\\
\mbox{$|x_i|\leq B$ for $1\leq i\leq 3$},~|x_0|\leq B^2
\end{array}
\right\}+O(B^{2/3}).
$$
\end{lemma}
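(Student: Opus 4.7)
The plan is to deduce the lemma directly from the explicit count in \eqref{eq:vic} by two independent bookkeeping steps: removing the $x_0=0$ locus, and exploiting the sign symmetries of the equation when $x_0\neq 0$. Note that on any integer solution with $x_1\neq 0$ we have $x_0^2 = x_1(x_2^3+x_1^2x_3)$, so $x_0=0$ is equivalent to $x_2^3+x_1^2x_3=0$, and $x_0\neq 0$ automatically forces $x_1\neq 0$.

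For the first step I will parametrise integer solutions with $x_0=0$, $x_1\neq 0$, $\gcd(x_1,x_2,x_3)=1$ to the singular cubic $x_2^3=-x_1^2x_3$. Writing $d=\gcd(x_1,x_2)$ and $x_1=du$, $x_2=dv$ with $(u,v)=1$, the equation forces $u^2\mid d$, so $d=u^2k$ for some $k\in\ZZ$ and then $x_3=-kv^3$. This gives the parametrisation $(x_1,x_2,x_3)=(ku^3,ku^2v,-kv^3)$, and since $\gcd(u^3,u^2v,v^3)=1$ the global coprimality condition forces $k=\pm 1$. The height bound $|x_i|\leq B$ then yields $|u|,|v|\leq B^{1/3}$, so the total contribution of $x_0=0$ to the count in \eqref{eq:vic} is $O(B^{2/3})$.

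For the second step I will observe that the equation $x_0^2=x_1x_2^3+x_1^3x_3$ is invariant under the two commuting involutions $\sigma_1:(x_0,x_1,x_2,x_3)\mapsto(-x_0,x_1,x_2,x_3)$ and $\sigma_2:(x_0,x_1,x_2,x_3)\mapsto(x_0,-x_1,-x_2,-x_3)$, both of which preserve the coprimality and height conditions. They generate a group $G\cong(\ZZ/2\ZZ)^2$ which acts freely on the set of solutions with $x_0\neq 0$ and $x_1\neq 0$ (since each non-trivial element changes the sign of $x_0$ or of $x_1$), and an obvious fundamental domain is the open quadrant $\{x_0>0,\ x_1>0\}$. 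Consequently the count in \eqref{eq:vic} restricted to $x_0\neq 0$ equals $4M(B)$, where $M(B)$ denotes the count appearing in the statement of the lemma. Combining the two steps yields
$$
N_U(B)=\frac{1}{2}\bigl(4M(B)+O(B^{2/3})\bigr)=2M(B)+O(B^{2/3}),
$$
which is the claim. The whole argument is elementary; the only point requiring even modest care is the divisibility analysis in the parametrisation of the $x_0=0$ locus.
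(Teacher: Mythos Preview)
Your proof is correct and follows essentially the same route as the paper: bound the $x_0=0$ contribution by parametrising the cuspidal cubic $x_2^3+x_1^2x_3=0$ (yielding $O(B^{2/3})$), then exploit the commuting sign involutions on the $x_0\neq 0$ locus to reduce to the quadrant $x_0,x_1>0$ at the cost of a factor~$4$. You supply slightly more detail on both steps than the paper does, but the argument is the same.
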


Our goal is to 
pass from Lemma \ref{lem:prelim}
to a counting function that 
involves counting suitably restricted integer points on the
associated universal torsor.
In the present setting the universal torsor is an open subset
of the hypersurface \eqref{count} 
in $\AA^{11}$. 
Let $\bet=(\eta_1,\ldots, \eta_8)$ and $\bal=(\al_1,\al_2,\al_3)$. 
We let $\mathcal{N}(B)$ denote the number of integral solutions
$(\bet,\bal)\in\ZZ^{11}$
of the equation \eqref{count}
satisfying
\begin{equation} \label{etasgr0}
\eta_1,...,\eta_8,\alpha_2\ge 1,
\end{equation} 
together with the height conditions
\begin{equation} \label{heights1} 
\begin{cases}
\left| \eta_1^2\eta_2^4\eta_3^6\eta_4^5\eta_5^4\eta_6^3\eta_7^3\eta_8^2\right| \le B, \quad
\left| \eta_1^2\eta_2^3\eta_3^4\eta_4^3\eta_5^2\eta_6 \eta_7^2 \alpha_1\right| \le B, \\
\left|\alpha_3\right| \le B, \quad
\left|
  \eta_1^3\eta_2^6\eta_3^9\eta_4^7\eta_5^5\eta_6^3\eta_7^5\eta_8 \alpha_2\right| \le B^2, 
\end{cases}
\end{equation}
and the coprimality conditions
\begin{equation} \label{coprimeconds1}
(\alpha_1,\eta_2\cdots \eta_8)=(\alpha_2,\eta_1\cdots \eta_6 \eta_8)=1,
\end{equation} 
\begin{equation} \label{alpha3}
(\alpha_3,\eta_1\cdots \eta_7)= 1,
\end{equation}
and
\begin{equation} \label{coprimeconds2} 
\begin{cases}
(\eta_8,\eta_1\cdots \eta_5\eta_7)=(\eta_7,\eta_1\eta_2\eta_4 \eta_5
\eta_6)=(\eta_6,\eta_1\cdots
\eta_4)=1, \\
(\eta_5,\eta_1\eta_2\eta_3)=(\eta_4,\eta_1\eta_2)=(\eta_3,\eta_1)
= 1.  
\end{cases}
\end{equation}
We will establish the following result.

\begin{lemma}\label{lem:torsor}
We have $N_U(B)=2\mathcal{N}(B)+O(B^{2/3})$. 
\end{lemma}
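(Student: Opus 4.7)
The plan is to establish an explicit bijection between the integer solutions $(x_0,x_1,x_2,x_3)$ counted in Lemma \ref{lem:prelim} and the torsor points $(\bet,\bal)$ counted by $\mathcal{N}(B)$; the factor $2$ and the error term $O(B^{2/3})$ are then inherited directly from Lemma \ref{lem:prelim}. The forward map is given by a system of monomial expressions of the shape
$$
x_0 = \eta_1^3\eta_2^6\eta_3^9\eta_4^7\eta_5^5\eta_6^3\eta_7^5\eta_8\alpha_2,\quad x_1 = \eta_1^2\eta_2^4\eta_3^6\eta_4^5\eta_5^4\eta_6^3\eta_7^3\eta_8^2,
$$
$$
x_2 = \eta_1^2\eta_2^3\eta_3^4\eta_4^3\eta_5^2\eta_6\eta_7^2\alpha_1,\quad x_3 = \alpha_3,
$$
(with signs arranged so that $x_0^2-x_1x_2^3-x_1^3x_3$ factors as a fixed monomial in the $\eta_i$ times the torsor polynomial \eqref{count}). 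The identity $x_0^2=x_1x_2^3+x_1^3x_3$ then follows from \eqref{count}; positivity $x_0,x_1>0$ is immediate from \eqref{etasgr0}; the height constraints in Lemma \ref{lem:prelim} become \eqref{heights1}; and $(x_1,x_2,x_3)=1$ follows from \eqref{coprimeconds1}--\eqref{coprimeconds2} together with \eqref{alpha3}.

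The inverse map is the substance of the argument and is built by iterated greatest-common-divisor extraction, in the style of the analysis of the $\mathbf{E}_6$ cubic surface in \cite{e6} and fitting the general framework of Derenthal \cite{cox}. Given $(x_0,x_1,x_2,x_3)$ with $x_0,x_1>0$, $(x_1,x_2,x_3)=1$ and $x_0^2=x_1(x_2^3+x_1^2x_3)$, we proceed prime by prime: at each prime $p$, the identity $2v_p(x_0)=v_p(x_1)+v_p(x_2^3+x_1^2x_3)$ together with $(x_1,x_2,x_3)=1$ forces the triple $(v_p(x_1),v_p(x_2),v_p(x_3))$ into a small finite list of admissible patterns. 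Each pattern corresponds to assigning $p$ to exactly one of the eight slots $\eta_1,\ldots,\eta_8$, while primes dividing neither $x_1$ nor $x_2$ are allocated to $\alpha_1$ or $\alpha_3$. Aggregating over all primes produces the unique global factorisation, and the maximality of each extraction encodes exactly the coprimality conditions \eqref{coprimeconds1}--\eqref{coprimeconds2}; the surviving relation among $\alpha_1,\alpha_2,\alpha_3$ and the $\eta_i$ is then precisely \eqref{count}.

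Checking that the two maps are mutual inverses reduces to verifying agreement of $p$-adic valuations prime by prime, while the height inequalities and sign conventions transport directly through the monomial formulas. The main technical obstacle is the combinatorial bookkeeping for the valuation patterns: one must enumerate the various residue classes of $v_p(x_1)$ modulo the relevant exponents in order to see that the iterated extractions really do terminate with the exact coprimalities \eqref{coprimeconds1}--\eqref{coprimeconds2} in the prescribed order, and that no solution of the original equation is missed or double-counted. This is routine but lengthy; once it is carried out, the contribution of the discarded locus $x_0=0$ is already absorbed in the $O(B^{2/3})$ error from Lemma \ref{lem:prelim}, completing the proof.
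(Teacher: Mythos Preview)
Your proposal correctly identifies the overall architecture---forward map via the monomial formulas, inverse map by repeated factor extraction, and the bijection yielding $N_U(B)=2\mathcal{N}(B)+O(B^{2/3})$ via Lemma~\ref{lem:prelim}---but the inverse construction is only sketched, and the sketch contains a misleading simplification. You write that each prime is ``assigned to exactly one of the eight slots $\eta_1,\ldots,\eta_8$''; this is false. The coprimality conditions \eqref{coprimeconds2} do \emph{not} force the $\eta_i$ to be pairwise coprime: for instance $\eta_2$ and $\eta_3$, or $\eta_3$ and $\eta_7$, or $\eta_6$ and $\eta_8$, may share prime factors (these correspond to adjacent nodes in the $\mathbf{E}_7$ Dynkin diagram). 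So a prime-by-prime analysis does not reduce to a simple one-slot assignment, and the ``routine but lengthy'' bookkeeping you defer is genuinely the whole content of the lemma.

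The paper takes a different and cleaner route: it works globally rather than locally, via a chain of explicit substitutions driven by two elementary divisibility facts (the characterisations of $a\mid b^2$ and $a\mid b^3$ in terms of gcd-extractions). Starting from $x_1\mid x_0^2$, then $h\mid x_2^3$, and so on, each step is a concrete change of variables that automatically records the correct coprimality condition as a by-product, and the process terminates after six substitutions with the torsor equation \eqref{count} and exactly the constraints \eqref{etasgr0}--\eqref{coprimeconds2}. This avoids any case analysis on valuations and makes the bijectivity transparent, since each substitution is visibly invertible. Your $p$-adic approach could in principle be made to work, but it would require a careful enumeration of admissible valuation triples and a non-obvious verification that the resulting $\eta_i$ satisfy precisely \eqref{coprimeconds2} and no more; as written, the proposal does not supply this.
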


\begin{proof}
Let $a,b\in\NN$. During the course of our argument we will make repeated use of the
following two facts.  Firstly $a\mid b^2$ if and only if 
$$
a=ha'^2, \quad b=ha'b', \quad (a',b')=1,
$$
and secondly, 
$a\mid b^3$ if and only if 
$$
a=hk^2a'^3, \quad b=hka'b', \quad (a',h)=(a'k,b')=1.
$$
The second fact is readily deduced from the first after drawing out
the greatest common divisor of $a$ and $b$. To establish the first
fact we set $h'=(a,b)$ and $a=h'a'$, $b=h'b'$ for coprime
$(a',b')=1$. But then $a'\mid h'$ and the claim follows on
writing $h'=a'h$ for $h\in \NN$.

In what follows we set $\al_3=x_3$.  For any $\x$ counted by $N_U(B)$
we have $x_1\mid x_0^2$. Hence we may write
$$
x_0=hx_0'\eta_8, \quad x_1=h\eta_8^2, \quad (x_0',\eta_8)=(h\eta_8^2,x_2,\al_3)=1,
$$
for $h,x_0',\eta_8\in\NN$. Once the substitution is made we obtain the
new equation
$hx_0'^2=x_2^3+h^2\eta_8^4\al_3$. It now follows that $h\mid x_2^3$
and so we can write
$$
h=jk^2\eta_6^3, \quad x_2=jk\eta_6\al_1, 
$$
with $j,k,\eta_6\in\NN$ such that $\al_1\neq 0$ and 
$
(\eta_6,j)=(k\eta_6,\al_1)=1,
$
in conjunction with the previous coprimality conditions.  Substituting
back into the equation yields
$x_0'^2=j^2k\al_1^3+jk^2\eta_6^3\eta_8^4\al_3$.  
Writing $\ell=(j,k,x_0')$ and noting that $\ell^3\mid x_0'^2$, so
that $\ell\mid (x_0'/\ell)^2$, we may therefore make the change of
variables
$$
j=m\eta_3^2j', \quad
k=m\eta_3^2k', \quad
x_0'=m^2\eta_3^3x_0'', 
$$
for $m,\eta_3,j',k',x_0''\in\NN$ satisfying 
$(j',k',m\eta_3x_0'')=(\eta_3,x_0'')=1$, in addition to the
previous conditions. 
This substitution now leads to the new equation
$mx_0''^2=j'^2k'\al_1^3+j'k'^2\eta_6^3\eta_8^4\al_3$.  
We are now led to the change of variables
$$
j'=\eta_2j'', \quad
k'=\eta_4k'', \quad
m=\eta_2\eta_4\eta_7, 
$$
for $\eta_2,\eta_4,\eta_7,j'',k''\in\NN$ with $(j''k'',\eta_7)=1$,
together with all the previous coprimality conditions. This yields the
equation
$\eta_7x_0''^2=\eta_2j''^2k''\al_1^3+\eta_4j''k''^2\eta_6^3 
\eta_8^4\al_3$.   
Now $j''k''\mid \eta_7x_0''^2$ and so
$j''k''\mid x_0''^2$. 
Moreover we have $(j'',k'')=1$ since clearly 
$(j'',k'',x_0'')=1$.  We are now led to 
make our final change of variables
$$
j''=u\eta_1^2, \quad
k''=v\eta_5^2, \quad
x_0''=uv\eta_1\eta_5\al_2,
$$
for $u,v,\eta_1,\eta_5,\al_2\in\NN$
such that
$(u\eta_1,v\eta_5)=(uv\eta_1\eta_5,\al_2)=1$. Substituting
this back in gives the equation
$uv\eta_7\al_2^2=u\eta_1^2\eta_2\al_1^3+v\eta_4\eta_5^2\eta_6^3
\eta_8^4\al_3$.   But then $u\mid \eta_4\eta_5^2\eta_6^3
\eta_8^4\al_3$ and $v\mid \eta_1^2 \eta_2\al_1^3$. Consideration of the
coprimality conditions leads to the conclusion that $u=v=1$, and the
equation simply becomes the universal torsor recorded in \eqref{count}. 

Tracing through our argument one sees that we have made the
transformation
\begin{align*}
x_0&=
\eta_1^3\eta_2^6\eta_3^9\eta_4^7\eta_5^5\eta_6^3\eta_7^5\eta_8\alpha_2,\\
x_1&=\eta_1^2\eta_2^4\eta_3^6\eta_4^5\eta_5^4\eta_6^3\eta_7^3\eta_8^2,\\
x_2&=\eta_1^2\eta_2^3\eta_3^4\eta_4^3\eta_5^2\eta_6\eta_7^2 \alpha_1,\\
x_3&=\alpha_3,
\end{align*}
with $(\bet,\bal)\in\ZZ^{11}$ satisfying 
\eqref{etasgr0}. This transformation 
leads directly to the height conditions in \eqref{heights1}.
Furthermore, a straightforward calculation shows that once taken in
the light of \eqref{count} the final coprimality conditions are given
by  \eqref{coprimeconds1}, \eqref{alpha3} and \eqref{coprimeconds2}.

Finally it is trivial to check that through the above transformation any 
point solution $(\bet,\bal)\in\ZZ^{11}$ of \eqref{count} satisfying 
\eqref{etasgr0}--\eqref{coprimeconds2}
produces a vector $\x\in\ZZ^4$ that will be counted  by
$N_U(B)$. This completes the proof of the lemma. 
\end{proof}

\subsection{Counting integral points on the universal torsor}

We aim to establish an asymptotic formula for the quantity
$\mathcal{N}(B)$ defined in \S \ref{s:passage}.
Accordingly we will assume that $B$ is large in all that follows. 
To simplify the situation, we define
\begin{equation} \label{X0X1X2}
\begin{split}
X_0&:=\left(\frac{\eta_1^2\eta_2^4\eta_3^6\eta_4^5\eta_5^4\eta_6^3\eta_7^3\eta_8^2}{B}
\right)^{1/3},\quad 
X_1:=\left(\frac{B\eta_4\eta_5^2\eta_6^3\eta_8^4}{\eta_1^2\eta_2}\right)^{1/3},\\
X_2&:=\left(B\eta_1\eta_2^2\eta_3^3\eta_4^4\eta_5^5\eta_6^6\eta_8^7\right)^{1/3}. 
\end{split}
\end{equation}
Then one sees that the height conditions \eqref{etasgr0} and
\eqref{heights1} can be reformulated as
$$
0<X_0 \le 1, \quad \left|\alpha_1\cdot \frac{X_0^2}{X_1} \right| \le
1, \quad 0<\alpha_2\cdot \frac{X_0^5}{X_2}\le 1, \quad 
|\alpha_3|\leq B,
$$
with $\eta_1,\ldots,\eta_8\geq 1$. 

Once taken together with \eqref{count} it is easy to see that 
the coprimality conditions
\eqref{coprimeconds1}--\eqref{coprimeconds2} are equivalent to the
same conditions, but with \eqref{alpha3} replaced by
$$
(\alpha_3,\eta_3\cdots \eta_6)= 1.
$$
Let us write $S(B,\XX,\YY;a,b;q)$ for the set underpinning the
cardinality introduced in \eqref{eq:Mq}.
Then it follows from
M\"obius inversion that
$$
\mathcal{N}(B)=\sum_{\substack{\bet\in \NN^8\\
{\mbox{\scriptsize{\eqref{coprimeconds2} holds}}}\\ X_0\leq 1
}}\sum_{k_3\mid \eta_3\cdots \eta_6}\mu(k_3)N_{k_3},
$$
where
$$
N_{k_3}
:=\#\left\{(x,y)\in
S\left(\frac{B}{k_3},\frac{X_2}{X_0^5},\frac{X_1}{X_0^2};\eta_7,\eta_1^2\eta_2;
k_3\eta_4\eta_5^2\eta_6^3\eta_8^4\right):\begin{array}{l}
(x,\eta_1\eta_2\eta_3)=1,\\
(y,\eta_2\eta_3\eta_7)=1\end{array}
\right\}.
$$
Note that $x=\al_2$ and $y=\al_1$ in this correspondence. The
coprimality condition  
$(k_3,xy)=1$ is automatic, since $k_3\mid \eta_3\cdots \eta_6$ and 
$(xy,\eta_3\cdots\eta_6)=1$. Furthermore, we clearly have 
$(\eta_1^2\eta_2\eta_7,
k_3\eta_4\eta_5^2\eta_6^3\eta_8^4)=1$. In particular
$N_{k_3}=0$ unless
$(k_3,\eta_1\eta_2\eta_7)=1$. Since $k_3\mid \eta_3\cdots \eta_6$
and $(\eta_1,\eta_3\cdots \eta_6)=1$ we see that $k_3$ is
automatically coprime to $\eta_1$, whence
$$
\mathcal{N}(B)=\sum_{\substack{\bet\in \NN^8\\
{\mbox{\scriptsize{\eqref{coprimeconds2} holds}}}\\ X_0\leq 1
}}
\sum_{\substack{k_3\mid \eta_3\cdots \eta_6\\
(k_3,\eta_2\eta_7)=1}} 
\mu(k_3) N_{k_3}.
$$
A further application of M\"obius inversion
therefore leads to the expression
\begin{equation}
\begin{split}
  \label{eq:main}
  \mathcal{N}(B)
=~&\sum_{\substack{\bet\in \NN^8\\
{\mbox{\scriptsize{\eqref{coprimeconds2} holds}}}\\ X_0\leq 1
}}
\sum_{\substack{k_3\mid \eta_3\cdots \eta_6\\
(k_3,\eta_2\eta_7)=1}} \mu(k_3)
\sum_{\substack{k_1\mid \eta_2\eta_3\eta_7\\
k_2\mid \eta_1\eta_2\eta_3\\
(k_1k_2,k_3\eta_4\eta_5\eta_6\eta_8)=1
}}
\mu(k_1)\mu(k_2) N_{k_1,k_2,k_3},
\end{split}
\end{equation}
where
$$
N_{k_1,k_2,k_3}:=
M\left(\frac{B}{k_3},\frac{X_2}{k_2X_0^5},
\frac{X_1}{k_1X_0^2};k_2^2\eta_7,k_1^3\eta_1^2\eta_2; 
k_3\eta_4\eta_5^2\eta_6^3\eta_8^4\right),
$$
in the notation of \eqref{eq:Mq}.

Thus we are led to apply Theorem \ref{t:final} with $B$ replaced by
$B/k_3$,  
$$
  a=k_2^2\eta_7 \in \NN, \quad b=k_1^3\eta_1^2\eta_2 \in \NN,\quad q=
k_3\eta_4\eta_5^2\eta_6^3\eta_8^4 \in \NN,
$$ 
and
\begin{equation} \label{nochnochwas}
\XX=\frac{X_2}{k_2X_0^5}\geq 1, \quad \YY=\frac{X_1}{k_1X_0^2}\geq 1
\end{equation}
One readily checks that $(ab,q)=1$ and the inequalities in \eqref{nochnochwas} are satisfied by our height conditions. Moreover,  
consultation with \eqref{sigtaudef} and \eqref{X0X1X2} reveals that
\begin{align*}
\frac{\boldsymbol{X}}{\sigma}
&=
\frac{X_2}{k_2X_0^5} 
\left(\frac{k_2^2\eta_7 }{\eta_4\eta_5^2\eta_6^3\eta_8^4 B}\right)^{1/2}
=X_0^{-9/2}\\
\frac{\boldsymbol{Y}}{\tau}
&=\frac{X_1}{k_1X_0^2} 
\left(\frac{k_1^3\eta_1^2\eta_2}{\eta_4\eta_5^2\eta_6^3\eta_8^4
    B}\right)^{1/3}=X_0^{-2}.
\end{align*}
It follows that 
$$
\frac{\XX}{\sigma}\geq \left(\frac{\YY}{\tau}\right)^{3/2}= X_0^{-3}\geq 1,
$$
since $X_0\leq 1$. Hence $\UU=X_0^{-3}$.
Thus all the hypotheses of Theorem \ref{t:final} are
met and we are therefore free to apply this result. 
To do so we note from \eqref{eq:s-s1} that in the present setting we have 
$$
s\mid \frac{k_3\eta_4}{(k_3,\eta_6)}, \quad
s_1\mid  \frac{k_3\eta_4\eta_6}{(k_3,\eta_6)},
$$
whence in particular
$$
ss_1\leq \frac{k_3^2\eta_4^2 \eta_6}{(k_3,\eta_6)^2}.
$$
Furthermore \eqref{eq:stone} yields
$$
f_\ve(a,b;q)\ll (\eta_2\cdots \eta_7)^\ve \left(
\frac{(2+\ve)^{\omega(\eta_8)}}{k_2}+ 
\frac{(3+\ve)^{\omega(\eta_8)}}{
\eta_1^{1/2-\ve}}\right).
$$

Let $\phi^*(q)=\phi(q)/q$ be the function introduced in \S
\ref{s:arith} and define
\begin{equation}
  \label{eq:gamma}
\gamma(\bet):=
\sum_{\substack{k_3\mid \eta_3\cdots \eta_6\\
(k_3,\eta_2\eta_7)=1}} 
\frac{\mu(k_3)}{k_3}
\sum_{\substack{k_1\mid \eta_2\eta_3\eta_7\\
k_2\mid \eta_1\eta_2\eta_3\\
(k_1k_2,k_3\eta_4\eta_5\eta_6\eta_8)=1 
}}
\frac{\mu(k_1)}{k_1}\cdot \frac{\mu(k_2)}{k_2}
\cdot \phi^{*}(k_3\eta_4\eta_5^2\eta_6^3\eta_8^4)
\end{equation}
if \eqref{coprimeconds2} holds and $\gamma(\bet)=0$ otherwise.
For $1\leq i\leq 4$ let $\mathcal{F}_i(B)$ denote the overall 
contribution from the term $F_i$ in Theorem~\ref{t:final} 
once inserted into \eqref{eq:main}. Then we have 
\begin{equation}
  \label{eq:final'}
  \mathcal{N}(B)=\mathcal{M}(B)\left(1+O\left(\frac{1}{(\log\log
      B)^{1/6}}\right)
\right)+O\left(\sum_{i=1}^4
\mathcal{F}_i(B)
\right), 
\end{equation}
where 
\begin{equation}
  \label{eq:MM}
\mathcal{M}(B):=
\sum_{\substack{\bet\in \NN^8\\
X_0\leq 1
}}\frac{
\Area(\mathcal{R})
B^{5/6}\gamma(\bet)}{\eta_1^{2/3}\eta_2^{1/3}\eta_4^{1/6}\eta_5^{1/3}\eta_6^{1/2}\eta_7^{1/2}\eta_8^{2/3}}
\end{equation}
and $\mathcal{R}$ is given by \eqref{region}.

\bigskip

Let us now turn to the estimation of 
$$
\mathcal{F}_i(B)\ll 
\sum_{\substack{\bet\in \NN^8\\
X_0\leq 1
}}
\sum_{\substack{
k_1\mid \eta_2\eta_3\eta_7\\
k_2\mid \eta_1\eta_2\eta_3\\
k_3\mid \eta_3\cdots \eta_6}}
|\mu(k_1)\mu(k_2)\mu(k_3)| F_i,
$$
for $1\leq i\leq
4$. Note that $F_i$ depends on $k_1,k_2,k_3$. During the course of these arguments we will make use of
the estimates from \S \ref{s:arith}. 

Beginning with the case $i=1$, we 
clearly have 
\begin{align*}
\mathcal{F}_1(B)
&\ll B^{1/2}(\log B)^{3+\ve}
\sum_{\substack{\bet\in \NN^8\\
X_0\leq 1
}}
\sum_{\substack{
k_2\mid \eta_1\eta_2\eta_3}}
|\mu(k_2)|(\eta_2\cdots \eta_7)^\ve \left(
\frac{(2+\ve)^{\omega(\eta_8)}}{k_2}+ 
\frac{(3+\ve)^{\omega(\eta_8)}}{
\eta_1^{1/2-\ve}}\right)\\
&\ll B^{1/2}(\log B)^{3+\ve}
\sum_{\substack{\bet\in \NN^8\\
X_0\leq 1
}}
(\eta_2\cdots \eta_7)^\ve \left(
(2+\ve)^{\omega(\eta_8)}
+\frac{(3+\ve)^{\omega(\eta_8)}}{
\eta_1^{1/2-\ve}}\right).
\end{align*}
On summing first over $\eta_8$ and then over $\eta_1$ we therefore
obtain $\mathcal{F}_1(B)\ll B (\log B)^{5+\ve}$, 
which is satisfactory. 

For the case $i=2$ 
we first note that 
\begin{align*}
F_2\ll~&
(\eta_2\cdots \eta_7)^\ve \left(
\frac{(2+\ve)^{\omega(\eta_8)}}{k_2}+ 
\frac{(3+\ve)^{\omega(\eta_8)}}{
\eta_1^{1/2-\ve}}\right)  
(k_2^2\eta_7)^{1/4} 
(k_1^3\eta_1^2\eta_2)^{1/8}\\
&\times 
(k_3\eta_4\eta_5^2\eta_6^3\eta_8^4)^{1/8}
 (k_3^2\eta_4^2 \eta_6)^{1/8}  \left(\frac{B}{k_3}\right)^{1/4}
(\log B)^{2+\ve}X_0^{-3/2}\\
=~&  k_1^{3/8}k_2^{1/2}k_3^{1/8}
(\eta_2\cdots \eta_7)^\ve \left(
\frac{(2+\ve)^{\omega(\eta_8)}}{k_2}+ 
\frac{(3+\ve)^{\omega(\eta_8)}}{
\eta_1^{1/2-\ve}}\right) \\
&\times 
\frac{B^{3/4}(\log B)^{2+\ve}}{
\eta_1^{3/4}\eta_2^{15/8}\eta_3^{3}\eta_4^{17/8}\eta_5^{7/4}\eta_6\eta_7^{5/4}\eta_8^{1/2}
}.
\end{align*}
Recall that $k_1,k_2,k_3$ do not depend on $\eta_8$. Summing 
over $\eta_8\leq
B^{1/2}/(\eta_1\eta_2^2\eta_3^3\eta_4^{5/2}\eta_5^2\eta_6^{3/2}\eta_7^{3/2})$,
we therefore find that 
\begin{align*}
\sum_{\eta_8} F_2\ll  B(\log B)^{4+\ve} 
\cdot \frac{ k_1^{3/8}k_2^{1/2}k_3^{1/8}
(\eta_2\cdots \eta_7)^\ve}{
\eta_1^{5/4}\eta_2^{23/8}\eta_3^{9/2}\eta_4^{27/8}\eta_5^{11/4}\eta_6^{7/4}
\eta_7^{2}}
 \left(
\frac{1}{k_2}+ \frac{1}{\eta_1^{1/2-\ve}}\right).
\end{align*}
It is now trivial to confirm that 
 $\mathcal{F}_2(B)\ll B (\log B)^{4+\ve}$, 
which is also satisfactory.

For the case $i=3$
we begin by noting that 
\begin{align*}
F_3
\ll~&
(\eta_2\cdots \eta_7)^\ve
 (k_2^2\eta_7)^{1/2}
 (k_1^3\eta_1^2\eta_2)^{1/6}  \left(
\frac{k_3^2\eta_4^2 \eta_6}{(k_3,\eta_6)^2}\right)^{1/2} \\
&\times
(k_3\eta_4\eta_5^2\eta_6^3\eta_8^4)^{1/3} 
(2+\ve)^{\omega(\eta_1)+\omega(\eta_8)}
 (B/k_3)^{-1/6}(\log B)^{3+\ve}\\
=~& \frac{k_1^{1/2}k_2 k_3^{3/2}}{(k_3,\eta_6)}\cdot 
(\eta_2\cdots \eta_7)^\ve  
(2+\ve)^{\omega(\eta_1)+\omega(\eta_8)}\\
&\times 
\frac{\eta_1^{1/3}\eta_2^{1/6}\eta_4^{4/3}\eta_5^{2/3}\eta_6^{3/2}\eta_7^{1/2}
\eta_8^{4/3} (\log B)^{3+\ve}}{B^{1/6}}.
\end{align*}
Summing first over $\eta_8$ we conclude that 
\begin{align*}
\sum_{\eta_8} F_3
&\ll 
B(\log B)^{4+\ve}\cdot 
\frac{k_1^{1/2}k_2 k_3^{3/2}}{(k_3,\eta_6)}\cdot 
\frac{(\eta_2\cdots \eta_7)^\ve (2+\ve)^{\omega(\eta_1)}}{
\eta_1^{2}\eta_2^{9/2}\eta_3^7\eta_4^{9/2}\eta_5^{4} \eta_6^{2}
\eta_7^{3}}\\
&\ll 
B(\log B)^{4+\ve}\cdot 
\frac{k_2(\eta_2\cdots \eta_7)^\ve (2+\ve)^{\omega(\eta_1)}}{
\eta_1^2 \eta_2^{4}\eta_3^{5}\eta_4^{3}\eta_5^{5/2} \eta_6^{3/2} \eta_7^{5/2}},
\end{align*}
since 
$k_1\leq \eta_2\eta_3\eta_7$ and 
$k_3^{3/2}/(k_3,\eta_6)\leq (\eta_3\eta_4\eta_5)^{3/2}\eta_6^{1/2}$.
It therefore follows from \eqref{eq:ape} that 
\begin{align*}
\mathcal{F}_3(B)
&\ll 
B(\log B)^{4+\ve}
\sum_{\eta_1,\ldots,\eta_7\leq B}
\frac{(\eta_2\cdots \eta_7)^\ve (2+\ve)^{\omega(\eta_1)}}{
\eta_1^2 \eta_2^{4}\eta_3^{5}\eta_4^{3}\eta_5^{5/2} \eta_6^{3/2}
\eta_7^{5/2}} 
\sum_{k_2\mid \eta_1\eta_2\eta_3} |\mu(k_2)|k_2\\
&\ll 
B(\log B)^{4+\ve}
\sum_{\eta_1\leq B}
\frac{(2+\ve)^{\omega(\eta_1)}\sigma_1(\eta_1)}{
\eta_1^2}\\
&\ll 
B(\log B)^{6+\ve},
\end{align*}
which is  satisfactory.

Finally for $i=4$ we note that 
\begin{align*}
F_4\ll ~&
(\eta_2\cdots \eta_7)^\ve
 (k_1^3\eta_1^2\eta_2)^{-1/12} 
\left(
\frac{k_3^2\eta_4^2 \eta_6}{(k_3,\eta_6)^2}\right)^{1/4}
(k_3\eta_4\eta_5^2\eta_6^3\eta_8^4)^{1/12}\\
&\times (2+\ve)^{\omega(\eta_1)}(2+\ve)^{\omega(\eta_8)}
 \left(\frac{B}{k_3}\right)^{1/3}(\log B)^{4+\ve} X_0^{-2}\\
\ll ~& \frac{(\eta_2\cdots \eta_7)^\ve 
(2+\ve)^{\omega(\eta_8)}(2+\ve)^{\omega(\eta_1)} B(\log B)^{4+\ve}}{
\eta_1^{3/2}\eta_2^{11/4}\eta_3^{15/4}\eta_4^{5/2}\eta_5^{9/4}\eta_6^{3/2}
\eta_7^{2}\eta_8},
\end{align*}
on taking  $k_3/(k_3,\eta_6)\leq \eta_3\eta_4\eta_5$. 
On summing first over $\eta_8$ one is readily led to the conclusion
that $\mathcal{F}_4(B)\ll B (\log B)^{6+\ve}$, which is
satisfactory. 

Recalling \eqref{eq:final'}, we may  summarise our investigation so
far in the following result.

\begin{lemma}\label{lem:treated_error_1}
We have 
$$
  \mathcal{N}(B)=\mathcal{M}(B)\left(1+O\left(\frac{1}{(\log\log
      B)^{1/6}}\right)
\right)+O\left(B (\log B)^{6+\ve}
\right), 
$$
as $B\rightarrow \infty$, 
where $\mathcal{M}(B)$ is given by \eqref{eq:MM} and \eqref{eq:gamma}.
\end{lemma}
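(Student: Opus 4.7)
The plan is to observe that the lemma is essentially a packaging of equation \eqref{eq:final'} together with four separate bounds of the form $\mathcal{F}_i(B) \ll B(\log B)^{6+\varepsilon}$ for $i=1,2,3,4$. So the real work is to verify each of these four bounds by routine, if somewhat delicate, summation arguments using Theorem \ref{t:final} and the auxiliary estimates collected in \S \ref{s:arith}.

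For each index $i$, my plan is to substitute the expression for $F_i$ (with $a=k_2^2\eta_7$, $b=k_1^3\eta_1^2\eta_2$, $q=k_3\eta_4\eta_5^2\eta_6^3\eta_8^4$, $\mathbf{U}=X_0^{-3}$, $ss_1 \leq k_3^2\eta_4^2\eta_6/(k_3,\eta_6)^2$) into the sum defining $\mathcal{F}_i(B)$, then sum first over $\eta_8$ whose range of summation is cut off by the height constraint $X_0 \leq 1$ from \eqref{X0X1X2}; this translates into $\eta_8 \ll B^{1/2}/(\eta_1\eta_2^2\eta_3^3\eta_4^{5/2}\eta_5^2\eta_6^{3/2}\eta_7^{3/2})$ and immediately produces the governing factor of $B$. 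The remaining sum over $\eta_1,\ldots,\eta_7$ will then have strictly negative exponents on every variable, and the divisor sums over $k_1 \mid \eta_2\eta_3\eta_7$, $k_2 \mid \eta_1\eta_2\eta_3$, $k_3 \mid \eta_3\cdots\eta_6$ will only cost logarithmic factors via bounds like \eqref{familiarest}.

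The main obstacle will be the term $F_3$, which contains $a^{1/2}b^{1/6}(ss_1)^{1/2}q^{1/3}$ and therefore produces positive powers of nearly every $\eta_j$; in fact after the $\eta_8$ summation one ends up with a factor of $\sigma_1(\eta_1)(2+\varepsilon)^{\omega(\eta_1)}/\eta_1^2$, whose summability requires estimate \eqref{eq:ape}. A secondary difficulty is to track the interplay between $(k_3,\eta_6)$ in $ss_1$ and the divisor $k_3 \mid \eta_3\cdots\eta_6$ to ensure the exponent of $\eta_6$ remains negative after summing over $k_3$; this is achieved by the elementary bound $k_3/(k_3,\eta_6) \leq \eta_3\eta_4\eta_5$ and its square.

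Once the four bounds $\mathcal{F}_i(B)\ll B(\log B)^{6+\varepsilon}$ have been verified, the statement follows immediately upon inserting them into \eqref{eq:final'}; the largest contribution comes from $\mathcal{F}_3$ and $\mathcal{F}_4$, which set the final exponent of $6+\varepsilon$ on the logarithm.
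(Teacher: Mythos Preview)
Your proposal is correct and follows essentially the same approach as the paper: reduce to \eqref{eq:final'}, then bound each $\mathcal{F}_i(B)$ by summing first over $\eta_8$ using the cutoff from $X_0\le 1$, with $\mathcal{F}_3$ handled via \eqref{eq:ape} and the inequality $k_3/(k_3,\eta_6)\le \eta_3\eta_4\eta_5$ used for $\mathcal{F}_3,\mathcal{F}_4$. The paper obtains the slightly sharper $\mathcal{F}_1\ll B(\log B)^{5+\ve}$ and $\mathcal{F}_2\ll B(\log B)^{4+\ve}$, but your uniform $B(\log B)^{6+\ve}$ is all that is needed.
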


It remains to estimate $\mathcal{M}(B)$ as $B\rightarrow \infty$. 
Let us begin by considering the factor $\Area(\mathcal{R})$. 
For $u\in (0,1)$ we write 
$$
v(u)
:=\int\int_{\{|s^2+t^3|\le 1, ~0 < s<u^{-9/2},  
~|t|\le u^{-2}\}} \d s \d t.
$$
Then we have $\Area(\mathcal{R})=v(X_0)$.
Writing 
$n=\eta_1^2\eta_2^4\eta_3^6\eta_4^5\eta_5^4\eta_6^3\eta_7^3\eta_8^2$, so
that $X_0=(n/B)^{1/3}$ in \eqref{X0X1X2}, we deduce that
\begin{equation}
  \label{eq:jim}
  \mathcal{M}(B)=
B^{5/6}\sum_{n\leq B} v\left((n/B)^{1/3}\right)\Delta(n),
\end{equation}
where 
$$
\Delta(n):=\sum_{\substack{
\bet\in \NN^8\\
n=\eta_1^2\eta_2^4\eta_3^6\eta_4^5\eta_5^4\eta_6^3\eta_7^3\eta_8^2}}
\frac{\gamma(\bet)n^{1/6}}{\eta_1\eta_2\eta_3\eta_4\eta_5\eta_6\eta_7\eta_8}
$$
and $\gamma$ is given by  \eqref{eq:gamma}.
Let us bring $\gamma$ into a simpler shape under the assumption that
\eqref{coprimeconds2} holds. 
Recall the definition of $\phi^{*}$ from \S \ref{s:arith} and its
properties recorded there.  Carrying out the inner summation in \eqref{eq:gamma}, we get
$$
\gamma(\bet)=\sum\limits_{\substack{k_3|\eta_3\cdots \eta_6\\ (k_3,\eta_2\eta_7)=1}} \frac{\mu(k_3)}{k_3}\cdot \frac{\phi^*(\eta_2\eta_3\eta_7)\phi^*(\eta_1\eta_2\eta_3)
\phi^*(k_3\eta_4\eta_5\eta_6\eta_8)}{\phi^*(\eta_2\eta_3\eta_7,k_3\eta_4\eta_5\eta_6\eta_8) \phi^*(\eta_1\eta_2\eta_3,k_3\eta_4\eta_5\eta_6\eta_8)}.
$$
Since $\phi^*(ab)\phi^*(a,b)=\phi^*(a)\phi^*(b)$ we see that the
summand is 
$$
\frac{\mu(k_3)}{k_3}\cdot \frac{\phi^*(\eta_1\cdots \eta_6\eta_8)\phi^*(\eta_2\cdots \eta_8)}{\phi^*(k_3\eta_4\eta_5\eta_6\eta_8)}.
$$
Applying \eqref{eq:wot}, we obtain 
$$
\gamma(\bet)=
\frac{
\phi^{*}(\eta_2\cdots \eta_8)
\phi^{*}(\eta_1\cdots \eta_6\eta_8)
\phi^{*}(\eta_4\eta_5\eta_6(\eta_3,\eta_2 \eta_7))}
{\phi^{*}(\eta_3\cdots \eta_6\eta_8)}
\prod_{\substack{p\mid \eta_3\\ p\nmid \eta_2\eta_4\cdots \eta_8}}
\left(1-\frac{2}{p}\right)
$$
if \eqref{coprimeconds2} holds and $\gamma(\bet)=0$ otherwise.
Using this it is now straightforward to calculate
the corresponding Dirichlet series 
\[F(s+1/6) = \sum_{n=1}^\infty \frac{\Delta(n)}{n^{s}} =
\sum_{\bet\in \NN^8} \frac{\gamma(\bet)}
{\eta_1^{2s+1}\eta_2^{4s+1}\eta_3^{6s+1}
  \eta_4^{5s+1}\eta_5^{4s+1}\eta_6^{3s+1}\eta_7^{3s+1}\eta_8^{2s+1}},
\] 
which is absolutely convergent for $\Re e (s)>0$ and features
non-negative Dirichlet coefficients.
By multiplicativity we have an Euler product $F(s)= \prod_p
F_{p}(s)$, and one finds that
$F_{p}(s+1/6)=1+(1-1/p)H_p(s)$, with 
\begin{align*}
H_p(s) =
&\frac{p^{-(2s+1)}(1-p^{-(3s+2)})}{(1-p^{-(2s+1)})
  (1-p^{-(3s+1)})} 
+
\frac{p^{-(3s+1)}(1-p^{-(6s+2)})}{(1-p^{-(3s+1)})
  (1-p^{-(6s+1)})}\\
&+
\frac{p^{-(3s+1)}(1-p^{-1})}{(1-p^{-(3s+1)})
  (1-p^{-(4s+1)})}
+
\frac{p^{-(4s+1)}(1-p^{-1})}{(1-p^{-(4s+1)})
  (1-p^{-(5s+1)})}\\
&+
\frac{p^{-(5s+1)}(1-p^{-1})}{(1-p^{-(5s+1)})
  (1-p^{-(6s+1)})}
+
\frac{p^{-(6s+1)}(1-2p^{-1}+p^{-(4s+2)})}{(1-p^{-(6s+1)})
  (1-p^{-(4s+1)})}\\
&+
\frac{p^{-(4s+1)}(1-p^{-1})}{(1-p^{-(4s+1)})
  (1-p^{-(2s+1)})}
+
\frac{p^{-(2s+1)}}{1-p^{-(2s+1)}}.
\end{align*}
Let $\mathbf{k}=(2,4,6,5,4,3,3,2)$.  
Then there exists $\delta_1>0$ and a function $G(s)$ which is
holomorphic in the region $\Re e(s) \ge -\delta_1$ for which 
$$
F(s+1/6)=G(s)\prod_{j=1}^8\zeta(k_js+1).
$$
Thus $F(s)$ has a meromorphic 
continuation to  the half-plane $\Re e(s) \geq 1/6-\delta_1$, 
with a pole of order $8$ at $s=1/6$. 
Moreover it will be useful to note that
\begin{equation}
  \label{eq:G0}
G(0)=\prod_p \left(1-\frac 1 p\right)^8 \left(1+\frac 8 p +\frac
  1{p^2}\right).
\end{equation}
To estimate $\mathcal{M}(B)$ we now have everything in place to apply a
standard Tauberian theorem, such as that 
recorded in work of Chambert-Loir and Tschinkel
\cite[Appendice~A]{perron}, for example. On noting that  
$\lim_{s\rightarrow 1/6} (s-1/6)^8F(s) = G(0)/(k_1\ldots k_8)$, 
we therefore conclude that 
$$
\sum_{n\leq t} \Delta(n)
= \frac{6 G(0)t^{1/6}P(\log t)}{7! \cdot 17280}
  +O(t^{1/6-\delta_2}),
$$
for $t\ge 1$, some $\delta_2>0$ and a monic polynomial $P$ of degree $7$.

Write $c=6G(0)/(
7! \cdot 17280)$ and $E=B^{1/6}(\log B)^6$ for short. Now we may
deduce from an application of partial summation and integration by parts that
\begin{align*}
\sum_{n\leq B}v\left((n/B)^{1/3}\right)\Delta(n)
&=
c\int_1^B v\left((t/B)^{1/3}\right) 
\frac{\d}{\d t} \left(
t^{1/6}(\log t)^7
\right)\d t +O(E)\\
&=\frac{1}{2} \cdot c B^{1/6}(\log B)^7 
\int_0^1 \frac{v(u)}{\sqrt{u}} \d u +O(E).
\end{align*}
Thus, on returning to our expression for $\mathcal{M}(B)$ in
\eqref{eq:jim}, we 
deduce that
\begin{equation}
  \label{eq:ben}
    \mathcal{M}(B)=
\frac{1}{2}\cdot \frac{\omega_\infty G(0)}{87091200}\cdot B (\log B)^7
+O\left(B(\log B)^6 \right),  
\end{equation}
where 
\begin{equation}
  \label{eq:om-inf}
  \omega_\infty=
6\int\int\int_{\{|s^2+t^3|\le 1, ~0 < s<u^{-9/2},  
~0<u<1, ~|t|\le u^{-2}\}} \frac{\d u\d s \d t}{\sqrt{u}}.
\end{equation}

\subsection{Final deduction of Theorem~\ref{main}}

Corralling Lemmas~\ref{lem:torsor} and \ref{lem:treated_error_1} 
with \eqref{eq:ben} we have therefore shown
that 
$$
N_U(B)= 
\frac{\omega_\infty G(0)}{87091200}\cdot
B(\log B)^7\left(1+O\left(\frac{1}{(\log\log B)^{1/6}}\right)\right),
$$
where $G(0)$ is given by \eqref{eq:G0} and
$\omega_\infty$ by \eqref{eq:om-inf}. 
It remains to show that the leading constant achieved in this estimate
agrees with Peyre's prediction \cite{p}.  

Let $\widetilde X$ 
denote a minimal desingularisation of $X$.
According to Peyre we should have 
$c_{X} =\alpha(\widetilde X)\omega_H(\widetilde X)$, 
where
$\alpha(\widetilde X)$ is a constant related to the geometry of 
$X$ and 
$\omega_H(\widetilde X)$
is related to the densities of rational points on $X$ over $\RR$ and 
$\QQ_p$ for all primes $p$. 
We claim that 
$\alpha(\widetilde X)= {1}/{87091200}$
and 
$$
\omega_H(\widetilde X) = \omega_\infty
\prod_p\Big(1-\frac{1}{p}\Big)^8\Big(1+\frac 8 p
+ \frac 1 {p^2}\Big),
$$
with $\omega_\infty$ given by \eqref{eq:om-inf}.
This will clearly suffice to complete the proof of Theorem~\ref{main}. 

Beginning with the local densities
it follows from \cite{p} that 
\begin{align*}
\omega_H(\widetilde X)
&= \lim_{s \to 1}\left((s-1)^{\rank
      \Pic(\widetilde X)} L(s,\Pic(\widetilde X))\right)  
\omega_\infty \prod_p \frac{\omega_p}{L_p(1,\Pic(\widetilde X))}\\
&= \omega_\infty \prod_p \Big(1 - \frac{1}{p}\Big)^8 \omega_p,
\end{align*}
since $L(s, \Pic(\widetilde X)) = \zeta(s)^8$, where $\omega_\infty$
and $\omega_p$ are the real and $p$-adic densities of points on $X$,
respectively. Applying a calculation of Loughran 
\cite[Lemma 2.3]{loughran} we obtain
$$
\omega_p= 1+\frac{8}{p}+\frac{1}{p^2},
$$
as claimed. 
To compute $\omega_\infty$ we parametrise the points by writing $x_3$ as a function of
$x_0,x_1,x_2$ in $f(\x)=x_0^2+x_1x_2^3+x_1^3x_3$.
Since $\x = -\x$ in $\PP(2,1,1,1)$, we may assume $x_0
\ge 0$. On observing that
$\frac{\partial f}{\partial x_3} (\x)= x_1^3$, 
the
Leray form
$\omega_L(\widetilde{X})$ is given by $x_1^{-3} \d x_0 \d x_1 \d x_2$.
Hence 
\[
\omega_\infty=
2\int\int\int_{\{|x_1^{-3}(x_0^2+x_1x_2^3)|\le 1, ~0 \le x_0, x_1
\le 1, ~|x_2|\le 1\}} x_1^{-3} \d x_0 \d x_1 \d x_2.
\]
But then the change of variables
$x_0 = sx_1^{3/2}$, $x_2 = tx_1^{2/3}$ and $x_1 = u^{3}$, easily yields
the value of $\omega_\infty$ given in \eqref{eq:om-inf}.

Turning to the calculation of 
$\alpha(\widetilde X)$ we follow the approach of Derenthal, Joyce and
Teitler \cite{d-j-t}. Thus \cite[Theorem~1.3]{d-j-t} shows that 
$$
\alpha(\widetilde X)=\frac{\alpha(Y)}{\# W(R_{\widetilde{X}})},
$$
where $\alpha(Y)$ is the ``$\alpha$-constant'' associated to the 
split non-singular del Pezzo surface $Y$ of degree $2$ and $W(R_{\widetilde{X}})$
is the Weyl group of the root system $R_{\widetilde{X}}$ whose simple roots
are the $(-2)$-curves on $\widetilde{X}$.  For us the root system is
$\mathbf{E}_7$ by \cite[Remark~5.6]{d-j-t}, whence an application of
\cite[Table~2]{d-j-t} and \cite[Theorem~4]{derenthal} shows that 
$$
\alpha(\widetilde X)=\frac{1}{30}\cdot 
\frac{1}{2^{10}\cdot 3^4\cdot 5\cdot 7}=\frac{1}{87091200},
$$
as required.


\begin{thebibliography}{99}
\bibitem{a-n}
V. Alexeev and V.V. Nikulin,
{\em Del Pezzo and $K3$ surfaces}.
MSJ Memoirs {\bf 15} Math.\  Soc.\ Jap., Tokyo, 2006.

\bibitem{arnold}
V.I. Arnol'd, Normal forms of functions in the neighborhood of
degenerate critical points. 
{\em Uspehi Mat.\ Nauk}  {\bf 29}  (1974), 11--49.

\bibitem{toric}
V. Batyrev and Y. Tschinkel, 
Manin�s conjecture for toric varieties. {\em 
J.\ Algebraic Geom.} {\bf 7} (1998), 15--53.

\bibitem{dp4-d5} R. de la Bret\`eche and T.D. Browning,
On Manin's conjecture for singular del Pezzo surfaces of degree four,
{I}. {\em Michigan Math. J.} {\bf 55} (2007), 51--80.

\bibitem{e6}
R. de la Bret\`eche, T.D. Browning and U. Derenthal, 
On Manin's conjecture for a certain singular cubic surface.  
{\em Ann.\ Sci.\ \'Ecole Norm.\ Sup.\ }  {\bf 40}  (2007), 1--50.

\bibitem{broberg}
N. Broberg,  Rational points on finite covers of $\PP^1$ and $\PP^2$. 
{\em  J.\ Number Theory}  {\bf 101}  (2003), 195--207.

\bibitem{ferran} T.D. Browning,
{\em Quantitative arithmetic of projective varieties.}
Progress in Math.\ {\bf 277}, Birkh\"auser, 2009. 

\bibitem{perron}
A. Chambert-Loir and Y. Tschinkel,
Fonctions z\^eta des hauteurs des espaces fibr\'es,
71--115.
{\em Rational points on algebraic varieties}, 
Progress in Math.\ {\bf 199}, Birkh\"auser, 2001.

\bibitem{CLT}
A. Chambert-Loir and Y. Tschinkel, 
On the distribution of points of bounded
height on equivariant compactifications of vector groups. {\em Invent.\ Math.}
{\bf 148} (2002), 421--452.

\bibitem{cox}
U. Derenthal,
Singular Del Pezzo surfaces whose universal torsors are hypersurfaces.
{\em arXiv:math.AG/0604194} (2006).

\bibitem{derenthal}
U. Derenthal, 
On a constant arising in Manin's conjecture for del Pezzo surfaces. 
{\em Math.\ Res.\ Lett.\ }  {\bf 14}  (2007), 481--489. 

\bibitem{dl}
U. Derenthal and D. Loughran,
Singular del Pezzo surfaces that are equivariant compactifications, 26--43.
{\em Proceedings of Hausdorff Trimester on Diophantine equations},  
Zapiski POMI {\bf 377}, 2010.

\bibitem{d-j-t}
U. Derenthal, M. Joyce, and Z. Teitler, 
The nef cone volume of generalized del Pezzo surfaces.  {\em 
Algebra \& Number Theory}  {\bf 2}  (2008),  157--182.


\bibitem{estermann}
T. Estermann, Einige S\"atze \"uber quadratfreie Zahlen. 
{\em Math.\ Ann.\ } {\bf 105}  (1931), 653--662.


\bibitem{f-m-t}
J. Franke, Y.I. Manin and Y. Tschinkel,
Rational points of bounded height on {F}ano varieties.
{\em Invent.\ Math.\ } {\bf 95} (1989), 421--435.

\bibitem{annal}
D.R. Heath-Brown, 
The density of rational points on curves and surfaces. {\em 
Ann.\ Math.\ } {\bf 155} (2002), 553--595. 

\bibitem{Ivic} 
A. Ivi\'c, {\em The Riemann zeta-function}. Dover Publications, 
2003. 


\bibitem{IwKo} 
H. Iwaniec and E. Kowalski, {\em Analytic number theory}. American Math.\ Soc.\ Colloq.\ Pub.\ {\bf 53}, American Math.\ Soc., 2004. 



\bibitem{loughran}
D. Loughran,
Manin's conjecture for a singular sextic del Pezzo surface.
{\em J.\ Th\'eor.\ Nombres Bordeaux} {\bf  22} (2010), 675--701.


\bibitem{loxton}
J.H. Loxton and R.A. Smith, 
On Hua's estimate for exponential sums.
{\em J.\ London Math.\ Soc.\ } {\bf 26} (1982), 15--20. 

\bibitem{p}
E. Peyre,
Hauteurs et nombres de Tamagawa sur les
vari\'et\'es de Fano.
{\em Duke Math.\ J.\ }  {\bf 79} (1995),
101--218.

\bibitem{pierce}
L.B. Pierce,
The $3$-part of class numbers of quadratic fields.  {\em 
J.\ London Math.\ Soc.\ } {\bf 71}  (2005),  579--598.

\bibitem{Ten}
G. Tenenbaum, {\em Introduction to analytic and probabilistic number
  theory}. Cambridge Studies in Advanced Mathematics {\bf 46}, Cambridge Univ. Press (1995).

\bibitem{weil}
A. Weil, On some exponential sums.
{\em Proc.\ Nat.\ Acad.\ Sci.\ U.\ S.\ A.\ } {\bf 34} (1948), 204--207. 


\bibitem{wong}
S. Wong,
On the density of elliptic curves.  {\em Compositio Math.\ }  {\bf 127}  
(2001), 23--54. 

\bibitem{Ye}
Q. Ye,  On Gorenstein log del Pezzo surfaces. {\em Japan.\ J.\ Math.} {\bf 28} 
(2002), 87--136.

\end{thebibliography}
\end{document}